\newcommand{\n}{\mathrm{n}}
\newcommand{\CC}{\mathbb{C}}
\newcommand{\ZZ}{\mathbb{Z}}
\newcommand{\PP}{\mathbb{P}}
\newcommand{\QQ}{\mathbb{Q}}
\newcommand{\Cf}{\mathfrak{C}}
\newcommand{\Xf}{\mathfrak{X}}
\newcommand{\Zf}{\mathfrak{Z}}
\newcommand{\fb}{\mathfrak{f}}
\newcommand{\I}{{\mathcal{I}}}
\newcommand{\EEE}{\mathscr{E}}
\newcommand{\JJJ}{{\mathscr{J}}}
\newcommand{\III}{{\mathscr{I}}}
\newcommand{\OOO}{{\mathscr{O}}}
\newcommand{\NNN}{{\mathscr{N}}}
\newcommand{\CCC}{\mathscr{C}}
\newcommand{\RRR}{\mathscr{R}}
\newcommand{\ord}{\operatorname{ord}}
\newcommand{\wt}{\operatorname{wt}}
\newcommand{\rk}{\operatorname{rk}}
\newcommand{\ow}{\operatorname{ow}}
\newcommand{\aw}{\operatorname{aw}}
\newcommand{\Sing}{\operatorname{Sing}}
\newcommand{\Def}{\operatorname{Def}}
\newcommand{\Cl}{\operatorname{Cl}}
\newcommand{\id}{\operatorname{id}}
\newcommand{\Exc}{\operatorname{Exc}}
\newcommand{\Proj}{\operatorname{Proj}}
\newcommand{\tors}{\mathrm{tors}}
\newcommand{\Pic}{\operatorname{Pic}}
\newcommand{\gr}{\operatorname{gr}}
\newcommand{\Hom}{\operatorname{Hom}}
\newcommand{\HHom}{\mathscr{H}{om}}
\newcommand{\red}{\operatorname{red}}
\newcommand{\Clsc}{\operatorname{Cl^{\operatorname{sc}}}}
\newcommand{\Spec}{\operatorname{Spec}}
\newcommand{\mult}{{\operatorname{mult}}}
\newcommand{\Supp}{{\operatorname{Supp}}}
\newcommand{\Res}{{\operatorname{Res}}}
\newcommand{\Coker}{\operatorname{Coker}}
\newcommand{\p}{{{p}_{\operatorname{a}}}}
\newcommand{\typec}[1]{$\mathrm{(#1)}$}
\newcommand{\type}[1]{$\mathrm{#1}$}
\newcommand{\mumu}{{\boldsymbol{\mu}}}
\newcommand{\ii}{\operatorname{i}}
\renewcommand{\emptyset}{\varnothing}
\newcommand{\comp}\circ
\newcommand{\len}{\operatorname{len}}
\newcommand{\lin}{\text{---}}
\newcommand{\xref}[1]{\textup{\ref{#1}}}
\renewcommand{\theequation}{\arabic{section}.\arabic{subsection}.\arabic{equation}}
\renewcommand{\thesubsection}{\arabic{section}.\arabic{subsection}}
\newcommand{\nom}{\refstepcounter{equation}{\rm{\theequation}}}
\theoremstyle{plain}
\newtheorem{theorem}[subsection]{Theorem}
\newtheorem{lemma}[subsection]{Lemma}
\newtheorem{proposition}[subsection]{Proposition}
\newtheorem{stheorem}[equation]{Theorem}
\newtheorem{corollary}[subsection]{Corollary}
\newtheorem{scorollary}[equation]{Corollary}
\newtheorem*{claim*}{Claim}
\newtheorem{sclaim}[equation]{Claim}
\newtheorem{slemma}[equation]{Lemma}
\newtheorem{sproposition}[equation]{Proposition}
\newtheorem{emptytheorem}[equation]{}
\theoremstyle{definition}
\newtheorem{definition}[subsection]{Definition}
\newtheorem*{definition*}{Definition}
\newtheorem{sdefinition}[equation]{Definition}
\newtheorem{example-remark}[subsection]{Remark-Example}
\newtheorem{subexample-remark}[equation]{Remark-Example}
\newtheorem{scase}[equation]{}
\newtheorem{setup}[subsection]{Setup}
\newtheorem*{notation*}{Notation}
\newtheorem{example}[subsection]{Example}
\newtheorem{sexample}[equation]{Example}
\newtheorem{sexamples}[equation]{Examples}
\newtheorem{sremark}[equation]{Remark}
\newtheorem{construction}[subsection]{Construction}
\newtheorem{sconstruction}[equation]{Construction}
\title{Threefold extremal curve germs 
\\
with one non-Gorenstein point}
\author{Shigefumi Mori}
\address{
Shigefumi~Mori:
Kyoto University Institute for Advanced Study,
Kyoto University, Kyoto, Japan
\newline\indent
Research Institute for Mathematical Sciences,
Kyoto University, Kyoto, Japan
}
\email{mori@kurims.kyoto-u.ac.jp}
\author{Yuri Prokhorov}
\address{Yuri~Prokhorov:
Steklov Mathematical Institute of Russian Academy of Sciences, Moscow, Russia
\newline\indent
Department of Algebra, 
Moscow State Lomonosov University
\newline\indent
National Research University Higher School of Economics
}
\email{prokhoro@mi.ras.ru}
\thanks{
This work was supported by the Research Institute for Mathematical
Sciences, a Joint Usage/Research Center located in Kyoto University.
The first author was supported by the JSPS Grant Numbers JP25287005 and JP17H06127. The second author was 
supported by the Program of the Presidium of the Russian Academy of Sciences No.~01 ``Fundamental Mathematics and
its Applications'' under grant \mbox{PRAS-18-01} and the Russian Academic Excellence Project ``5-100''. 
}
\date{}
\begin{document}
\begin{abstract}
 An extremal curve germ is the analytic germ of a threefold with terminal singularities
along a reduced complete curve admitting a contraction 
whose fibers have dimension at most one.  
The aim of the present paper 
is to review the results concerning those contractions whose  central fiber is irreducible 
and contains  only one non-Gorenstein point. 
\end{abstract}
\maketitle
\tableofcontents

\section{Introduction}
One of the most important problems in the three-dimensional birational 
geometry is to describe explicitly all the steps of the Minimal Model Program (MMP).
These steps consist of certain maps, called divisorial, flipping, and fiber-type contractions (Mori contractions).
The structure of these maps is still unknown in its complete generality, though 
much progress has been made in this direction. 
We refer to \cite{CKM} for an introduction to the subject.
The aim of the present paper 
is to review the results concerning those contractions whose fibers have dimensioв специальном случаеn 
at most one. The project was started in the initial paper \cite{Mori:flip} where 
the minimal model problem was solved in the three-dimensional case.
To study Mori contractions in this situation one needs to work in the analytic 
category and analytic counterparts of the corresponding notions are needed.
The central objects of this paper are so-called \emph{extremal curve germs}.

An extremal curve germ is the analytic germ of a threefold with terminal singularities
along a reduced complete curve admitting a contraction 
whose fibers have dimension at most one. 
The present paper is a survey of known results on the classification of objects of this type.
Basically we concentrate on the case of irreducible central fiber with only
one non-Gorenstein point. In this case the results are complete, however 
they are scattered in the literature. This is the main reason to write 
this survey.

The classification of extremal curve germs is done in terms of a general element 
$H$ of the linear system $|\OOO_X|$ of 
trivial Cartier divisors containing $C$. In many cases this element $H$ is a 
normal surface
and then the threefold can be viewed as a one-parameter deformation of $H$.

A birational extremal curve germ $f: (X, C)\to (Z,o)$ is said to be 
\emph{semistable}, if for a general member $D\in |-K_Z|$, the germ 
$D_Z:=\Spec_{Z}f_*\OOO_D$ is Du Val of type~\type{A} \cite{KM92}. The 
semistable case is subdivided into two cases \typec{k1A} and \typec{k2A} according to the 
number of non-Gorenstein points of $X$ on $C$. Other cases are called 
\emph{exceptional}. It turns out that treating semistable and exceptional germs 
uses different approaches. For example, in the exceptional flipping case, 
\cite{KM92} provides relatively simple computations of flipped variety. For 
semistable germs these computations become more explicit; in the \typec{k2A} 
case, from the general member $H\in |\OOO_X|$ one can decide whether $(X,C)$ is 
flipping or divisorial \cite[Corollary 4.1]{Mori:ss} and furthermore describe 
the flipped variety \cite[Theorem 4.7]{Mori:ss} and $Z$ \cite[Theorem 
4.5]{Mori:ss}, respectively; the \typec{k1A} case is similarly treated by 
\cite{HTU} under additional assumption ``$b_2(X_s)=1$'' (see \ref{HTU}). According to 
local classification (see Propositions \xref{prop:local-primitive} and 
\xref{prop-imp-types}), a semistable extremal curve germ of type \typec{k1A} can be 
of type \typec{IA^\vee} or \typec{IA}. They are treated in Sect. 
\xref{sect:index2}, \xref{sect:imprimitive}, and \xref{sect:IA}. 

Here are summary of some of the results.

\begin{theorem}
Let $f:(X,C)\to (Z,o)$ be a flipping extremal curve germ
with irreducible central fiber $C$
and let $H_Z\in |\OOO_Z|$ be a general hyperplane section containing $C$.
Then $H_Z$ and $f^{-1}(H_Z)$ are normal and have rational singularities. The 
singularity $(H_Z,o)$ 
is log terminal except for the cases described in \xref{cD/3:flip:.3.2)}
and \xref{IIA:flip:iP=2}. Moreover, $(H_Z,o)$ is a cyclic quotient singularity 
if and only if $(X,C)$ is semistable.
\end{theorem}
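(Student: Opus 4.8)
The plan is to localize the whole problem at $o\in Z$ and along the central fiber $C$, and then to run through the local classification of the non-Gorenstein point provided by Propositions~\xref{prop:local-primitive} and \xref{prop-imp-types}. Put $H:=f^{-1}(H_Z)=f^{*}H_Z\in|\OOO_X|$. Since $f$ is an isomorphism over $Z\setminus\{o\}$ and $X$ has only finitely many singular points, Bertini's theorem applied to the general $H_Z$ shows that $H$ is smooth away from $C\cup\Sing X$ and $H_Z$ is smooth away from $o$; so every assertion is really a statement about the germ of $H$ along $C$ and of $H_Z$ at $o$. Moreover $f|_H\colon H\to H_Z$ is proper birational with connected fibres, and applying $f_{*}$ to $0\to\OOO_X(-H)\to\OOO_X\to\OOO_H\to0$, using the vanishing $R^if_{*}\OOO_X=0$ ($i>0$), valid for extremal curve germs, together with $\OOO_X(-H)=f^{*}\OOO_Z(-H_Z)$, yields $f_{*}\OOO_H=\OOO_{H_Z}$ and $R^i(f|_H)_{*}\OOO_H=0$ for $i>0$. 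Hence, once $H$ is shown to be normal, so is $H_Z$.

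The normality of the general $H\in|\OOO_X|$ through $C$ is the ``general elephant'' statement for the system $|\OOO_X|$; it is checked case by case in the local classification, the delicate point being to control the base locus of $|\OOO_X|$ along $C$ near the non-Gorenstein point $P$. Granting normality, the singularities of $H$ away from $P$ are rational (at worst Du Val, coming from the Gorenstein terminal points of $X$), and at $P$ the classification presents $(H,P)$ as one of an explicit list of surface singularities, each of which one verifies to be rational; hence $H$ has rational singularities, and therefore so does $H_Z$ by the vanishing $R^i(f|_H)_{*}\OOO_H=0$ above and the normality of $H_Z$. (Equivalently one may cite that $X$ is a terminal threefold and $Z$ is the base of a threefold flipping contraction, both of which have rational singularities, and that a sufficiently general hyperplane section inherits rational singularities.)

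It remains to analyze $(H_Z,o)$, which is the main part of the argument and is genuinely case-by-case. In the semistable cases \typec{k1A} and \typec{k2A} a general $D\in|{-}K_Z|$ gives $D_Z$ Du Val of type~\type{A} by definition, and the explicit descriptions of $Z$ and of the flipped variety — \cite[Theorems~4.5, 4.7]{Mori:ss} in case \typec{k2A}, and the treatment of types \typec{IA^\vee} and \typec{IA} in Sections~\xref{sect:index2}, \xref{sect:imprimitive}, \xref{sect:IA} (using \cite{HTU} when $b_2(X_s)=1$) in case \typec{k1A} — show that $(H_Z,o)$ is a cyclic quotient singularity. For an exceptional germ one instead computes $(H,C)$ and its image $(H_Z,o)$ directly from the explicit equation of $(X,P)$ and the curve $C$: in every case this turns out to be a quotient surface singularity, hence log terminal, and a \emph{non-cyclic} one (a dihedral, tetrahedral, octahedral or icosahedral quotient, reflecting the $cD$- or $cE$-type of $P$), with the two exceptions \xref{cD/3:flip:.3.2)} and \xref{IIA:flip:iP=2}, where the same computation produces a rational but not log terminal surface singularity. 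Assembling the two lists gives the statement that $(H_Z,o)$ is log terminal outside the two listed configurations, while the fact that every exceptional germ yields a non-cyclic singularity gives the equivalence ``$(H_Z,o)$ is a cyclic quotient singularity $\iff$ $(X,C)$ is semistable''.

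The main obstacle is precisely this last step for exceptional germs: it presupposes the full local classification, and then requires, type by type, an explicit blow-up and hyperplane-section computation to identify $(H_Z,o)$ and in particular to pin down the two configurations \xref{cD/3:flip:.3.2)} and \xref{IIA:flip:iP=2} in which log terminality fails. A subtle point underlying both this step and the normality of $H$ in the second paragraph is that the general member of $|\OOO_X|$ may acquire base points or degenerate along $C$ at $P$, so a careful study of $|\OOO_X|$ and of the pair $(H,C)$ near the non-Gorenstein point is where the argument is most technical.
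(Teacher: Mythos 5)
Your overall skeleton is the same as the paper's: general vanishing/Cohen--Macaulay arguments take care of normality and rationality of $H_Z$ (your derivation via $f_*\OOO_H=\OOO_{H_Z}$ and $R^i(f|_H)_*\OOO_H=0$ is a harmless variant of \xref{H}), and everything else is read off from the case-by-case classification of germs with one non-Gorenstein point, exactly as the paper does by quoting \xref{index2flipping}, \xref{imprimitiveIA}, \xref{IC-main}, \xref{theorem-main-birational}, \xref{cD/3:thm)}, \xref{IIA:thm}. The genuine gap is that your case division ``semistable versus exceptional with its single non-Gorenstein point $P$'' is not exhaustive. A flipping germ with irreducible $C$ which is \emph{not} semistable may have \emph{two} non-Gorenstein points, namely the germs of type \typec{kAD} (one point of type \type{cA/m} and one index-two point of type \type{cAx/2} or \type{cD/2}); the paper disposes of these by referring to \cite[\S~9]{KM92}. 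Your exceptional-case analysis is phrased entirely in terms of the explicit equation at a unique point $P$ ``of \type{cD}- or \type{cE}-type'', so for \typec{kAD} you neither verify nor cite that $(H_Z,o)$ is log terminal and is not a cyclic quotient; both facts are needed, the first for the ``except \xref{cD/3:flip:.3.2)} and \xref{IIA:flip:iP=2}'' claim and the second for the ``only if'' half of the equivalence with semistability.

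Two smaller points. For the semistable case the paper does not go through the classifications of \cite{Mori:ss} and \cite{HTU}: it uses Lemma \xref{lemma:lc} --- the type \type{A} Du Val member $D_Z\in|-K_Z|$ plus inversion of adjunction makes $(Z,D_Z+H_Z)$ log canonical, whence $(H_Z,o)$ is a cyclic quotient --- which works uniformly for \typec{k1A} and \typec{k2A} and avoids, e.g., the extra hypothesis ``$b_2(X_s)=1$'' in \cite{HTU}; your citation-based route needs more care in the flipping \typec{k2A} case, where $Z$ is not $\QQ$-Gorenstein and \cite[Th.~4.5]{Mori:ss} concerns the divisorial case. Finally, your heuristic that the non-cyclic quotients ``reflect the \type{cD}- or \type{cE}-type of $P$'' is inaccurate: in the \typec{IC} flipping cases $P$ is a terminal cyclic quotient point, yet $(H_Z,o)$ is a dihedral-type quotient; this does not affect the argument, but the correct statement is simply that in every exceptional flipping case the graph $\Delta(H_Z,o)$ produced by the classification is star-shaped, hence not a chain, hence not a cyclic quotient.
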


The case where $(X,C)$ is semistable follows from Lemma \xref{lemma:lc}. 
If $(X,C)$ has only one non-Gorenstein point (i.e. of type \typec{k1A}), we can use 
explicit classification \xref{index2flipping}, \xref{imprimitiveIA},
\xref{IC-main}, \xref{theorem-main-birational}, \xref{cD/3:thm)}, \xref{IIA:thm}.
For the remaining \typec{kAD} case we refer to \cite[\S 9]{KM92}.

\begin{theorem}
Let $f:(X,C)\to (Z,o)$ be a divisorial extremal curve germ
with irreducible central fiber $C$
and let $H_Z\in |\OOO_Z|$ be a general hyperplane section containing $o$.
Then $(H_Z,o)$ is either a Du Val point,
a rational log canonical point of type \type{\tilde D} \textup(in the case \xref{imprimitiveII}\textup),
or a cyclic quotient singularity of class \type{T}. 
Moreover, the last two possibilities occurs only if $(X,C)$ has a locally imprimitive point
or $(X,C)$ is semistable and has two non-Gorenstein points whose indices are not coprime.
\end{theorem}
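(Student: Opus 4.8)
The plan is to reduce the statement to the explicit classification results already invoked in the paper, treating the divisorial germ $f\colon(X,C)\to(Z,o)$ according to the local structure of $(X,C)$ along $C$. First I would dispose of the Gorenstein case: if $X$ has no non-Gorenstein point on $C$, then by the same reasoning used for the semistable statement (Lemma~\xref{lemma:lc}) a general member $H_Z\in|\OOO_Z|$ through $o$ pulls back to a normal surface with Du Val singularities, and its image $(H_Z,o)$ is then Du Val as well; this gives the first alternative. So from now on $(X,C)$ has at least one non-Gorenstein point, and by the irreducibility of $C$ together with the classification of local germ types (Propositions~\xref{prop:local-primitive}, \xref{prop-imp-types}) the possibilities are: type~\typec{k1A} (one non-Gorenstein point, possibly locally imprimitive), type~\typec{k2A} or \typec{kAD} (two non-Gorenstein points), or the exceptional types with one non-Gorenstein point.

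Next I would run through these cases using the explicit descriptions of $(X,C)$ and of $f$ that the cited theorems provide. For the semistable types \typec{k2A}, the divisorial case is described by \cite[Theorem~4.5]{Mori:ss}, and one reads off directly whether $(H_Z,o)$ is Du Val or a cyclic quotient singularity; the latter happens precisely in the subcase where the two non-Gorenstein points have indices that are not coprime, which is exactly the condition in the statement — here $H_Z$ is the quotient of a smooth (or Du Val) surface by the cyclic group whose order is the non-coprime part, and a class~\type{T} computation (the standard criterion that a cyclic quotient singularity admitting a $\QQ$-Gorenstein smoothing is of class~\type{T}) identifies the class. For the \typec{k1A} types \typec{IA^\vee} and \typec{IA}, including the locally imprimitive point, I would invoke \xref{index2flipping}, \xref{imprimitiveIA}, \xref{imprimitiveII}, \xref{IA} and \xref{theorem-main-birational}: in the imprimitive divisorial situation \xref{imprimitiveII} is precisely where the rational log canonical point of type~\type{\tilde D} appears, and otherwise one gets again either a Du Val point or a class~\type{T} cyclic quotient. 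For the exceptional one-point types I would cite \xref{IC-main}, \xref{cD/3:thm)}, \xref{IIA:thm} and check that in each the general hyperplane section $(H_Z,o)$ is Du Val — these are the cases where no quotient phenomenon occurs on the base.

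The last thing to verify is the ``moreover'' clause: that the type~\type{\tilde D} point and the non-Du-Val cyclic quotient occur \emph{only} in the two listed situations. This is a matter of collecting, across all the classification theorems just used, exactly which configurations produce a base singularity that is not Du Val, and observing that the union of those configurations is {locally imprimitive point} $\cup$ {semistable with two non-coprime-index non-Gorenstein points}. Concretely: the \type{\tilde D} case is confined to \xref{imprimitiveII} (hence a locally imprimitive point), and the class~\type{T} cyclic quotient case arises either from \xref{imprimitiveIA} (imprimitive again) or from the non-coprime subcase of \cite[Theorem~4.5]{Mori:ss}; in every primitive \typec{IA}, exceptional, or coprime-\typec{k2A} divisorial germ one checks the hyperplane section is Du Val.

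I expect the main obstacle to be bookkeeping rather than any single hard argument: one must correctly match each abstract divisorial germ to the right entry in a scattered collection of classification theorems, track the indices of the non-Gorenstein points through the contraction $f$, and in the quotient cases verify the class~\type{T} condition — which ultimately rests on the fact that $(X,C)$, being a terminal threefold, induces a $\QQ$-Gorenstein one-parameter smoothing of $(H_Z,o)$ via the pencil $|\OOO_Z|$, forcing the smoothable cyclic quotient to be of class~\type{T}. The delicate point in that last step is ensuring the general member $H_Z$ really is \emph{general} enough that its preimage $f^{-1}(H_Z)$ meets the singular locus of $X$ transversally in the expected way, so that the surface singularity one computes is the generic one predicted by the classification and not a degeneration.
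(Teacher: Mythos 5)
Your plan is right in spirit (Lemma \xref{lemma:lc} for the semistable germs, the classifications \xref{imprimitiveII} and \xref{imprimitiveIA} as the only sources of the \type{\tilde D} and non-Du-Val class-\type{T} points, and a class-\type{T} criterion via a $\QQ$-Gorenstein smoothing), but it omits the argument that actually carries the statement, and in particular the ``moreover'' clause. The paper first applies Theorem \xref{thm:div:Q-Cartier}: since $C$ is irreducible and $f$ is divisorial, $(Z,o)$ is terminal, i.e.\ $K_Z$ is $\QQ$-Cartier --- this is \emph{not} automatic (see the remark after Lemma \xref{lemma:base}), and your class-\type{T} step, which you base on ``$X$ being terminal'', secretly needs it, because the smoothing of $(H_Z,o)$ provided by the pencil $|\OOO_Z|$ has total space $Z$, not $X$. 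Second, whenever $(X,C)$ is locally primitive with pairwise coprime indices (so in every exceptional case and in every primitive \typec{k1A} or coprime two-point case), $\Clsc(X)$ is torsion free, hence $(X,C)$, and therefore $(Z,o)$ via \xref{base-change}, admits no cover \'etale in codimension one; so the terminal point $(Z,o)$ has index one and $H_Z$ is Du Val. Your proposal replaces this by ``reading off'' the classification, but that fails exactly where it is needed: Theorem \xref{theorem-main-birational}, your reference for primitive \typec{k1A} germs, records $\Delta(H,C)$ (or $\Delta(H^{\n},C^{\n})$) but not $\Delta(H_Z,o)$, and in that setting Du-Val-ness of $(H_Z,o)$ is precisely the \emph{criterion} for divisoriality (\xref{IA:check-divisoriality}), itself proved from \xref{thm:div:Q-Cartier} and primitivity --- so the check you propose there is circular.

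Two further points of bookkeeping. The germs with two non-Gorenstein points not both of type \type{cA/m} (the \typec{kAD}-type case) appear in your enumeration but are never treated: \cite[Theorem 4.5]{Mori:ss} only covers \typec{k2A}, and the one-point theorems do not apply; the covering argument above disposes of this case when the indices are coprime, and otherwise one is in the semistable alternative of the statement. In the Gorenstein case Lemma \xref{lemma:lc} is not the relevant tool (there is no type-\type{A} elephant hypothesis to quote); one should instead use Proposition \xref{prop:Gor}\xref{prop:Gor-bir} (the base is smooth), or again the index-one argument. With Theorem \xref{thm:div:Q-Cartier} and the torsion-free-cover argument inserted, the remaining ingredients of your proposal do match the paper's proof.
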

Moreover, by Theorem \xref{thm:div:Q-Cartier} the singularity
$(Z,o)$ is terminal. If $(X,C)$ has no covers \'etale in codimension one, then 
$(Z,o)$ is of index one and $H_Z$ is Du Val. In the semistable case 
the assertion as above follows from Lemma \xref{lemma:lc}. 
It remains to consider locally imprimitive cases \typec{IA^\vee} and \typec{II^\vee}
(see \xref{imprimitiveII} and \xref{imprimitiveIA}). Note that for a divisorial curve germ 
the surface $H:=f^{-1}(H_Z)$ can be non-normal (see e.g. Example~\ref{ex:IIA-n-normal}).

For the $\QQ$-conic bundle $f:(X,C)\to (Z,o)$ 
we can show that the base is Du Val of type \type{A}
(Corollary \xref{base}).
The proof uses the existence of a Du Val member $D\in |-K_X|$, see 
\cite{MP:cb1}, \cite{MP:cb3}, and Theorem \xref{thm:ge}.

The paper was written during the second author's stay at RIMS, Kyoto University. 
The author is very grateful to the institute for 
the invitation, hospitality and good working environment. 

\section{Preliminaries} 
\subsection{Threefold terminal singularities}
\label{3terminal}
Recall that a three-dimensional terminal singularity of index $m$ is a 
quotient of an isolated hypersurface singularity by a cyclic group $\mumu_m$ of 
order $m$.
More precisely, let $(X, P)$ be an analytic germ of a three-dimensional
terminal singularity of index $m$. Then there exists a terminal singularity 
$(X^\sharp, P^\sharp)$ of index $1$ and a cyclic $\mumu_m$-cover 
\begin{equation*}
(X^\sharp, P^\sharp) \longrightarrow (X, P)
\end{equation*}
which is \'etale outside $P$ \cite{Reid:Pagoda}. 
Moreover, the singularity $(X^\sharp, P^\sharp)$ can be embedded to $(\CC^4, 
0)$ 
so that its general hyperplane section is a surface Du Val singularity
(thus $(X^\sharp, P^\sharp)$ is so-called \type{cDV} singularity).
A detailed classification of all possibilities for equations of 
$X^\sharp\subset \CC^4$ and the action of $\mumu_m$ was obtained in
\cite{Mori:sing} (see also \cite{Reid:YPG}, \cite{KSh88}). 

Assume that $m > 1$. Then the $\mumu_m$-action on 
$(X^\sharp,P^\sharp)$ will be analyzed. We 
fix a character $\chi$ generating $\Hom(\mumu_m, \CC^*) =\ZZ/m\ZZ$.
For a $\mumu_m$-semi-invariant $z$, we 
write  
\[
\wt(z)\equiv a \mod m
\]
if $g(z) = \chi(g)^a z$ for all $g\in 
\mumu_m$. 

\begin{stheorem}[\cite{Mori:sing}]
\label{clasifiction-terminal}
In the above notation the singularity $(X^\sharp, P^\sharp)$ is
$\mumu_m$-isomorphic to a hypersurface $\phi = 0$ in $(\CC^4_{x_1,\dots,x_4}, 
0)$ such that
for some $a, b\in \ZZ$ prime to $m$ one of the following holds:
\begin{enumerate}
\item \label{classification-singularities-m}
$\wt(x, \phi) \equiv (a, b, - a, 0, 0) \mod m$;
\item \label{classification-singularities-cAx/4}
$m = 4$, and $\wt(x,\phi) \equiv (a, b, - a, 2, 2) \mod m$.
\end{enumerate}
In the case \xref{classification-singularities-cAx/4} 
we say that $(X,P)$ is a point of type \type{cAx/4}.
\end{stheorem}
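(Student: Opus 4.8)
The plan is to follow Mori's argument in three movements: first realise $(X^\sharp,P^\sharp)$ as an invariant hypersurface carrying a \emph{linear} $\mumu_m$-action, then translate the terminality of $(X,P)=(X^\sharp,P^\sharp)/\mumu_m$ into a numerical inequality on the weights, and finally solve the resulting Diophantine problem. For the first movement I would invoke the structure theory recalled above: the index-one cover $(X^\sharp,P^\sharp)$ is an isolated \type{cDV} singularity, hence a hypersurface germ, so one can choose a $\mumu_m$-equivariant closed embedding $(X^\sharp,P^\sharp)\hookrightarrow(\CC^4,0)$ (split the $4$-dimensional representation $\mathfrak m/\mathfrak m^2$ equivariantly). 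Since $\mumu_m$ is finite the action on $(\CC^4,0)$ linearises after averaging a coordinate system, so we may take $x_1,\dots,x_4$ to be eigencoordinates with $a_i:=\wt(x_i)$, and the defining equation $\phi$ a semi-invariant with $d:=\wt(\phi)$ and $X^\sharp=\{\phi=0\}$. Two constraints then come for free. The Poincar\'e residue of $\mathrm{d} x_1\wedge\cdots\wedge\mathrm{d} x_4/\phi$ generates $\omega_{X^\sharp}$, so it has weight $a_1+a_2+a_3+a_4-d=:b$, and the fact that $X^\sharp\to X$ is the canonical cover (i.e.\ $(X,P)$ has index exactly $m$) forces $\gcd(b,m)=1$. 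Moreover, since $X^\sharp\to X$ is \'etale outside $P$, for every $g\in\mumu_m\setminus\{1\}$ the fixed subspace $\mathrm{Fix}(g)\subset\CC^4$ (spanned by the $x_i$ on which $g$ acts trivially) meets $X^\sharp$ only at the origin; equivalently, $\phi$ has an isolated zero on each such coordinate subspace.

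Next I would record the terminality criterion. Resolving $\CC^4$ $\mumu_m$-equivariantly (weighted blow-ups suffice) and using that $X^\sharp$ is $\mumu_m$-invariant, one gets the Reid--Tai inequality: $(X,P)$ is terminal if and only if for every $g\in\mumu_m$ of order $r>1$ one has $b_1+b_2+b_3+b_4-b_d>r$, where $b_i,b_d\in\{0,\dots,r-1\}$ are the exponents of the eigenvalues of $g$ on $x_i$ and on $\phi$ respectively (replace $>$ by $\geq$ for the canonical condition). The isolated-zero conditions from the first movement are exactly what keeps this system satisfiable: they bound how many of the $b_i$ can vanish, and when two of them do vanish they force $\phi$ to restrict to a Du Val equation on the corresponding coordinate plane.

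The third movement is the combinatorial heart of the matter --- in essence Reid's \emph{terminal lemma} specialised to hypersurface quotients --- and I expect it to be the main obstacle. One must show that the only residue vectors $(a_1,a_2,a_3,a_4;d)$ modulo $m$ satisfying simultaneously $\gcd(b,m)=1$ with $b=a_1+a_2+a_3+a_4-d$, the Reid--Tai inequalities for all $g\neq1$, the isolated-zero conditions on $\phi$, and smoothness of $\{\phi=0\}$ away from the origin are, up to permuting the $x_i$ and replacing the chosen generator of $\mumu_m$, either $(a,b,-a,0;0)$ with $\gcd(ab,m)=1$ or $m=4$ with $(a,b,-a,2;2)$ and $a$, $b$ odd. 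I would argue as follows: applying the inequality to each prime $p\mid m$ shows that not all $a_i$ are $\equiv0\pmod p$; then, tracking the fixed loci of all cyclic subgroups of $\mumu_m$ together with the isolated-singularity hypothesis, one shows that one coordinate, say $x_4$, is essentially inert ($a_4\equiv0\pmod m$ in the generic case), while two of the remaining weights must cancel and $d$ must match, so that $\phi$ retains a \type{cDV} normal form transverse to $x_4$; the relation $b=a_1+a_2+a_3+a_4-d$ then forces $\gcd(a,m)=1$ as well. The single point where this forcing breaks down is the $2$-primary part when $m=4$, where $a_4\equiv2$ is also possible; following that branch produces precisely the sporadic \type{cAx/4} weights. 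What makes this step genuinely hard is that no crude estimate closes the system --- the very existence of the \type{cAx/4} exception shows that one must examine the fixed data of every cyclic subgroup simultaneously and use the isolated-singularity constraint on $\phi$ to eliminate the remaining near-solutions. Once the weight vector is pinned down, rescaling the eigencoordinates and re-choosing the generator put the action into the stated normal form, whereas the first two movements are comparatively formal.
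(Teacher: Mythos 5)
The paper does not prove this statement: it is quoted from \cite{Mori:sing} (with pointers to \cite{Reid:YPG}, \cite{KSh88}), so your proposal has to be measured against the literature proof rather than anything in the text. Your first two movements are the standard reductions and are essentially sound: the index-one cover is an isolated \type{cDV} hypersurface germ, the $\mumu_m$-action linearises on an equivariant embedding into $(\CC^4,0)$, the Poincar\'e residue generating $\omega_{X^\sharp}$ has weight $a_1+a_2+a_3+a_4-\wt(\phi)$, which is prime to $m$ exactly because the cover is the canonical one, and \'etaleness outside $P$ gives $\mathrm{Fix}(g)\cap X^\sharp=\{P^\sharp\}$ for all $g\neq 1$. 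Two corrections, though. The Reid--Tai-type inequality should only be claimed as a \emph{necessary} condition (it comes from the monomial valuations of weighted blow-ups, where the weighted order of $\phi$ is at least $\overline{k\wt(\phi)}$); the ``if and only if'' for hypersurface quotients is not available off the shelf, and the converse half of the classification (that the listed weights really are terminal) is settled separately in \cite{KSh88}, \cite{Reid:YPG} --- fortunately only necessity is needed here. Also $\gcd(a,m)=1$ does not follow from the relation $b=a_1+a_2+a_3+a_4-\wt(\phi)$ (that relation yields the coprimality of $b$); it follows from freeness: if a prime $p$ divided $\gcd(a,m)$, the element of order $p$ would fix the hyperplane $\{x_2=0\}$, which meets $X^\sharp$ in a surface through $P^\sharp$.

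The genuine gap is your third movement, which \emph{is} the theorem. Saying that tracking the fixed data of all cyclic subgroups together with the isolated-singularity constraint ``shows that one coordinate is inert, two weights cancel, $\wt(\phi)$ matches, and the only leak is the $m=4$ branch giving \type{cAx/4}'' restates the conclusion; no mechanism is offered that rules out weight vectors with no opposite pair modulo a prime power dividing $m$, and nothing in the sketch produces the bound $m=4$ (rather than $8$, $9$, \dots) for the exceptional branch. In the literature this is exactly where all the work sits: one either proves a terminal-lemma type arithmetic statement (Danilov, Morrison--Stevens, White--Frankl, extended to the hypersurface setting) about weight systems satisfying $\sum_i\overline{ka_i}-\overline{k\wt(\phi)}>m$ for all $k$, whose proof is a genuine lattice-point/character-sum argument, or one follows Mori, who does not argue at this numerical level of generality but runs through the \type{cA}, \type{cD}, \type{cE} normal forms of the equation $\phi$ and classifies, by explicit equivariant coordinate changes, the semi-invariant structures compatible with freeness outside the origin and terminality of the quotient. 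Your proposal gestures at both routes but carries out neither, so as written it reduces the theorem to an unproved combinatorial classification that is essentially equivalent to it.
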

Thus the locus $\Upsilon \subset\CC^4$ of the points at which $\mumu_m$-action is not free
is a coordinate axis which is not contained in $X^\sharp$.
The following number 
\begin{equation*}
\aw(X,P):=\mult_0(\phi|_{\Upsilon})
\end{equation*} 
is well defined 
and called the \emph{axial multiplicity} of $(X,P)$.

\subsection{}
Recall that a \emph{contraction} is a proper surjective morphism $f:X\to Z$ of 
normal varieties such that $f_*\OOO_X=\OOO_Z$. 

\begin{sdefinition}
Let $(X,C)$ be the analytic germ 
of a threefold with terminal singularities along a reduced complete curve. We 
say that $(X,C)$ is an \emph{extremal curve germ} if there is a contraction 
\[
f: (X,C)\to (Z,o) 
\]
such that $C=f^{-1}(o)_{\red}$ and $-K_X$ is $f$-ample. 
Furthermore, $f$ is called \emph{flipping} if its exceptional locus coincides 
with $C$ and \emph{divisorial} if its exceptional locus is two-dimensional. If 
$f$ is not birational, then $Z$ is a surface and $(X,C)$ is said to be a 
\emph{$\QQ$-conic bundle germ} \cite{MP:cb1}.
\end{sdefinition}

In general, we do not assume that $X$ is $\QQ$-factorial.
This is because the $\QQ$-factoriality is not a local condition 
in the analytic category (see \cite[\S 1]{Kaw:Crep}).

For future references we need the following easy example.
\begin{example}
\label{ex-toric}
Consider the following action of $\mumu_m$ on $\PP^1_x\times
\CC^2_{u,v}$:
\begin{equation*}
(x;u,v) \longmapsto(\varepsilon^a x; \varepsilon u,
\varepsilon^{-1} v),
\end{equation*} 
where $\varepsilon$ is a primitive $m$-th root of unity and $\gcd
(m,a)=1$. Let $X:=\PP^1\times\CC^2/\mumu_m$,
$Z:=\CC^2/\mumu_m$ and let $f: X\to Z$ be the natural
projection. Since $\mumu_m$ acts freely in codimension one, $-K_X$ is
$f$-ample. The images of two fixed points on $\PP^1\times \CC^2$ are terminal cyclic
quotient singularities of types $\frac1m(\pm a,1,-1)$ on $X$. Hence, $f$ is a $\QQ$-conic bundle. 
A $\QQ$-conic bundle germ 
biholomorphic to $f$ as above is called
\emph{toroidal}.
\end{example}

The following key fact is an immediate consequence of the Kawamata-Viehweg
vanishing theorem.
\begin{theorem}
\label{th-vanish}
Let $f: (X,C)\to (Z,o)$ be an extremal curve germ. Then
$R^if_*\OOO_X=0$ for $i>0$.
\end{theorem}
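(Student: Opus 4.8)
The plan is to deduce the vanishing $R^if_*\OOO_X=0$ for $i>0$ from the relative Kawamata--Viehweg vanishing theorem applied to $\OOO_X=\OOO_X(K_X+(-K_X))$. First I would note that since $f$ is birational or a $\QQ$-conic bundle, the fibers of $f$ have dimension at most one, so automatically $R^if_*(\lin)=0$ for $i\geq 2$, and only the case $i=1$ requires argument. Next I would write $\OOO_X \cong \OOO_X(K_X+L)$ with $L=-K_X$. The divisor $L$ is $f$-ample by the defining property of an extremal curve germ, and it is Cartier away from the finitely many terminal points; more to the point, it is $\QQ$-Cartier (since $X$ has terminal, hence $\QQ$-Gorenstein, singularities) and $f$-ample, and $X$ has at worst terminal — in particular klt — singularities. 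Thus the pair $(X,0)$ is klt and $L$ is an $f$-ample $\QQ$-Cartier $\QQ$-divisor, which is exactly the input for relative Kawamata--Viehweg vanishing.

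Applying that theorem gives $R^if_*\OOO_X(K_X+L)=R^if_*\OOO_X=0$ for all $i>0$. Because the statement is local over $(Z,o)$ and the objects are analytic germs, I would remark that the algebraic Kawamata--Viehweg vanishing transfers to the analytic category here either by GAGA-type comparison on a suitable projective completion or by citing the analytic version of the vanishing theorem directly (as is standard in this circle of ideas, cf. the setup in \cite{Mori:flip} and \cite{KM92}); since $f$ is proper with at most one-dimensional fibers this causes no difficulty.

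The only point requiring a little care — and the step I would flag as the main obstacle — is the verification that $-K_X$ genuinely satisfies the hypotheses of the relative vanishing theorem, namely that it is $\QQ$-Cartier and $f$-ample rather than merely $f$-ample as an $\RR$-divisor or $f$-nef. This is immediate from the definition of extremal curve germ together with $\QQ$-Gorensteinness of terminal singularities, so in the end the proof is essentially a one-line invocation of Kawamata--Viehweg vanishing once the hypotheses are matched up; I would present it as such, noting the reduction to $i=1$ via the fiber-dimension bound and then quoting the vanishing theorem.
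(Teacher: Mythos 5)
Your proposal is correct and follows exactly the route the paper intends: the paper presents Theorem \xref{th-vanish} as an immediate consequence of the (relative) Kawamata--Viehweg vanishing theorem applied to $\OOO_X=\OOO_X(K_X+(-K_X))$ with $-K_X$ an $f$-ample $\QQ$-Cartier divisor on a terminal (hence klt) threefold. Your additional remarks on the reduction to $i=1$ via the fiber-dimension bound and on working in the analytic category are sound and consistent with the standard treatment in \cite{Mori:flip} and \cite{KM92}.
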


\begin{scorollary}[cf. {\cite[Remark 1.2.1, Cor. 1.3]{Mori:flip}}]
\label{cor-C-pa=0}
\begin{enumerate}
\item\label{cor-C-pa=0a}
If $\JJJ$ is an ideal such that $\Supp(\OOO_X/\JJJ)\subset C$, then
$H^1(\OOO_X/\JJJ)=0$.
\item\label{cor-C-pa=0b}
$\p(C)=0$ and $C$ is a union of smooth rational curves.
\item\label{cor-C-pa=0c}
$\Pic X\simeq H^2(C,\ZZ)\simeq \ZZ^\uprho$, where $\uprho$ is the number
of irreducible components of $C$.
\end{enumerate}
\end{scorollary}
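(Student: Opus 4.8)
The plan is to derive all three parts from Theorem~\ref{th-vanish}, via two standard facts: since every fiber of $f$ has dimension at most one, $R^if_*\FFF=0$ for $i\ge2$ and every coherent $\FFF$ on $X$; and since $Z$ is a Stein germ, $H^p(Z,\mathscr G)=0$ for $p>0$ for every coherent $\mathscr G$ on $Z$ that is either supported at $o$ or equal to $\OOO_Z$. It follows that for $\FFF$ coherent on $X$ with $\Supp\FFF\subset C$ (and also for $\FFF=\OOO_X$, using $f_*\OOO_X=\OOO_Z$) the Leray spectral sequence collapses to $H^n(X,\FFF)\cong\Gamma(Z,R^nf_*\FFF)$; in particular $H^n(X,\OOO_X)=0$ for $n>0$ by Theorem~\ref{th-vanish}.

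For~(i) I would apply $f_*$ to $0\to\JJJ\to\OOO_X\to\OOO_X/\JJJ\to0$: in the long exact sequence of higher direct images, $R^1f_*(\OOO_X/\JJJ)$ is trapped between $R^1f_*\OOO_X=0$ (Theorem~\ref{th-vanish}) and $R^2f_*\JJJ=0$ (one-dimensional fibers), so it vanishes, and the collapse above gives $H^1(X,\OOO_X/\JJJ)\cong\Gamma(Z,R^1f_*(\OOO_X/\JJJ))=0$.

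For~(ii) take $\JJJ=\III_C$, the reduced ideal of $C$, so $\OOO_X/\JJJ=\OOO_C$ and (i) yields $H^1(\OOO_C)=0$, i.e. $\p(C)=0$; moreover $C$ is connected since $f_*\OOO_X=\OOO_Z$. Then I would run the usual normalization count: with $\nu\colon\widetilde C\to C$ the normalization and $\delta:=\len\bigl(\nu_*\OOO_{\widetilde C}/\OOO_C\bigr)$, one has $1-\p(C)=\chi(\OOO_C)=\sum_i\bigl(1-g(\widetilde C_i)\bigr)-\delta$. The surjection $H^1(\OOO_C)\twoheadrightarrow H^1(\nu_*\OOO_{\widetilde C})$ forces every $\widetilde C_i$ to be rational, while connectedness of $C$ and the bound $\delta\ge\uprho-1$ (each singular point of $C$ contributes at least one less than its number of branches to $\delta$) give $\p(C)=\delta-\uprho+1\ge0$, with equality only when $C$ has no unibranch singular points and a tree for dual graph; hence each $C_i\cong\PP^1$.

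For~(iii) feed $H^1(X,\OOO_X)=H^2(X,\OOO_X)=0$ into the exponential sequence $0\to\ZZ\to\OOO_X\to\OOO_X^{\ast}\to0$ to get $\Pic X=H^1(X,\OOO_X^{\ast})\cong H^2(X,\ZZ)$; since the analytic germ $X$ deformation retracts onto the compact curve $C$, $H^2(X,\ZZ)\cong H^2(C,\ZZ)$, and by~(ii) $C$ is a tree of $\uprho$ copies of $\PP^1$, hence homotopy equivalent to a bouquet of $\uprho$ two-spheres, so $H^2(C,\ZZ)\cong\ZZ^{\uprho}$. The points needing the most care are the two geometric bookkeeping steps: the bound $\delta\ge\uprho-1$ with its equality analysis in~(ii), and the deformation retraction of the germ $X$ onto $C$ in~(iii) (which identifies their singular cohomology).
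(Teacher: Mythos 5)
Your proposal is correct and follows exactly the route the paper intends: the statement is quoted as an immediate consequence of Theorem~\xref{th-vanish} (following the cited \cite[Remark 1.2.1, Cor.~1.3]{Mori:flip}), namely the long exact sequence of higher direct images together with $R^qf_*=0$ for $q\ge 2$ and Leray over the Stein base for (i), the arithmetic-genus count for (ii), and the exponential sequence plus the retraction of the germ onto $C$ for (iii). No gaps beyond the two standard bookkeeping points you already flag.
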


\begin{sremark}
\label{rem-prel-extr-nbd}
If $C$ is reducible, then for every proper curve
$C'\subsetneq C$, the germ $(X,C')$ is also an extremal curve germ.
\end{sremark}

\begin{lemma}\label{lemma:base}
Let $f: (X,C)\to (Z,o)$ be an extremal curve germ.
\begin{enumerate}
\item \label{lemma:base-1}
If $f$ is birational, then on $Z$ there exists an effective $\QQ$-divisor 
$B$ such that the pair $(Z,B)$ has only canonical singularity at $o$.
If moreover $f$ is flipping, then the singularity of $(Z,B)$ at $o$ is terminal. 
\item \label{lemma:base-2}
If $f$ is a $\QQ$-conic bundle, then $Z$ has a log terminal singularity at $o$. 
\end{enumerate} 
\end{lemma}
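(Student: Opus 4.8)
The plan is to deduce everything from the standard adjunction/base-change machinery for contractions with $-K_X$ relatively ample, together with the vanishing from Theorem~\ref{th-vanish}.

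First I would treat the birational case. The key object is a general member $D\in|-K_X|$ near $C$; by the general-elephant type results (and in the cases considered, by the explicit classification referenced in the statement) one knows a great deal about $D$, but more robustly one can argue as follows. Since $-K_X$ is $f$-ample and $f$ is birational, $f$ is crepant-free in the sense that $K_X = f^*K_Z + (\text{effective})$; writing $K_X + \{\text{correction}\}$ one produces a boundary. Concretely, pick $D\in|-K_X|$ general through $C$; then $K_X+D \sim 0$, so $K_X+D = f^*(K_Z + B)$ where $B := f_*D$. Pushing forward, $B\qq -K_Z$, hence $B$ is effective and $-(K_Z+B)$ is trivial; the pair $(X,D)$ is the crepant pullback of $(Z,B)$. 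Now discrepancies: for any exceptional divisor $E$ over $o$, $a(E,Z,B) = a(E,X,D)$. Since $(X,C)$ has terminal singularities and $D$ is a general member of a base-point-free-away-from-$C$ (indeed $\OOO_X$-trivial) system, the only way $(X,D)$ can fail to be canonical is along $C$; one checks using terminality of $X$ and the fact that $D$ does not contain any component of $C$ with multiplicity that the pair $(X,D)$ is canonical near $C$, whence $(Z,B)$ is canonical at $o$. This gives part~\ref{lemma:base-1}, first sentence.

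For the flipping refinement, suppose $f$ is flipping, so $\Exc(f)=C$ has dimension one and $K_X$ is $f$-ample. If $(Z,B)$ were canonical but not terminal, there would be an exceptional divisor $E$ over $o$ with $a(E,Z,B)=0$. Pull this divisor back to $X$: since the pair $(X,D)$ is crepant over $(Z,B)$, we'd get $a(E,X,D)=0$, and the center of $E$ on $X$ lies in $\Exc(f)=C$. But $K_X$ is $f$-ample while $K_X+D\sim_f 0$, so $D$ is $f$-negative, i.e. $D\cdot C<0$ on every component; this forces $D$ to contain $C$ (set-theoretically), and then an easy discrepancy estimate — blowing up $C\subset D$ inside the terminal threefold $X$ — shows $a(E,X,D)>0$ strictly for the relevant valuations, contradicting $a(E,X,D)=0$. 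Hence $(Z,B)$ is terminal at $o$. (Alternatively, one can quote that a flip of a terminal variety is terminal and that $Z$ here is obtained by the flip construction, but the discrepancy argument is self-contained.)

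For part~\ref{lemma:base-2}, $f:(X,C)\to(Z,o)$ is a $\QQ$-conic bundle, so $Z$ is a surface and $\dim C = 1 = \dim f^{-1}(o)$. Again take $D\in|-K_X|$ general; by relative Kawamata–Viehweg (Theorem~\ref{th-vanish}) and the fact that $-K_X$ is $f$-ample, such $D$ exists and one may take it irreducible and not containing any fibre component. Then $K_X+D\sim_f 0$ with $-K_X$ $f$-ample means $D$ is $f$-ample; in particular $D$ dominates $Z$, so $D\to Z$ is a finite (degree-$1$, since $f_*\OOO_X=\OOO_Z$ and $D$ is a section-like divisor) — more carefully, $D\to Z$ is birational, and $(X,D)$ is purely log terminal in a neighbourhood of $D$ (adjunction: $D$ is normal and $(D,\Diff_D(0))$ is klt). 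Then running the adjunction $K_Z+B_Z = (K_X+D)|_{D}$ pushed to $Z$ via the birational $D\to Z$, we get that $(Z,B_Z)$ is log terminal, hence $Z$ has a log terminal singularity at $o$.

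The main obstacle I expect is the flipping case: one must rule out a non-terminal but canonical singularity on $Z$, and the clean way is the discrepancy computation showing that every exceptional valuation centered over $o$ has $a(E,X,D)>0$ once one knows $C\subset\Supp D$; establishing $C\subset\Supp D$ from $D\cdot C<0$ is immediate, but one should be slightly careful that $D$ may be singular or reducible along $C$, so the blow-up estimate must be done valuation-by-valuation rather than in one stroke. In the cases this survey actually needs, all of this is subsumed by (and can instead be read off from) the explicit classifications cited after the statement, which is presumably how the authors will phrase the proof; but the argument above shows the Lemma holds in full generality for extremal curve germs.
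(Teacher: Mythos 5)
There is a genuine gap, and it comes from your choice of boundary. You work with a general member $D\in|-K_X|$ and assert that $(X,D)$ is canonical near $C$ ``by a check''; but $|-K_X|$ is not base point free (every member meets $C$, and near the non-Gorenstein point its behaviour is exactly the general elephant problem), so Bertini gives you nothing, and the statement that a general $D\in|-K_X|$ has mild enough singularities is precisely Theorem \xref{thm:ge} --- a deep result proved later in the paper via the whole classification, not an ingredient available for this preliminary lemma. Worse, a boundary with coefficient $1$ can never yield the terminal refinement: if $B=f_*D$ is a reduced surface through $o$ (and it is, since $-K_X\cdot C>0$ forces $D\cap C\neq\emptyset$), then $(Z,B)$ has exceptional divisors over $o$ of discrepancy $0$ whenever $B$ is singular at $o$, which does happen; so no argument can make your pair $(Z,f_*D)$ terminal in the flipping case. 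Your attempted argument for that case also rests on a sign error: for an extremal curve germ $-K_X$ is $f$-ample, so $D\sim -K_X$ has $D\cdot C>0$, and the conclusions ``$D\cdot C<0$'', ``$C\subset\Supp D$'' are false (typically $D\cap C$ is a single point). Similarly in (ii), $D\to Z$ is finite of degree $\ge 2$ in general (a general fibre $\ell$ has $-K_X\cdot\ell=2$), not birational, so the adjunction-via-birational-morphism step does not apply as stated.

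The paper's proof avoids all of this with one small trick you are missing: take $n\gg 0$ with $nK_X$ Cartier and $|-nK_X|$ relatively base point free, let $H\in|-nK_X|$ be general (hence a smooth surface meeting $C$ transversally, missing the singular points of $X$), and use the boundary $D:=\frac1n H$ with small coefficient. Then $(X,\frac1n H)$ is genuinely terminal, $K_X+\frac1n H\equiv_f 0$, and $f$ contracts no component of $H$, so by \cite[Lemma~3.38]{KM:book} all discrepancies of $(Z,B)$ with $B=f_*\frac1n H$ are computed on $(X,\frac1n H)$: they are $\ge 0$ always (the only discrepancy-$0$ divisor being the $f$-exceptional divisor in the divisorial case, which general $H$ avoids), and $>0$ in the flipping case since no divisor is contracted --- giving canonical, respectively terminal, at once. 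For (ii) one only needs that $f|_H:H\to Z$ is a finite morphism from a smooth surface and \cite[Prop.~5.20]{KM:book}, with no appeal to anticanonical members at all. If you want to salvage your route, you must either restrict to the cases where Theorem \xref{thm:ge} is already available (which would invert the logical order of the paper) or switch to the pluri-anticanonical boundary as above.
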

\begin{proof}
Take $n\gg 0$ so that the divisor $nK_X$ is Cartier and the linear 
system $|-nK_X|$ is base point free. Let $H\in |-nK_X|$ be a general member.
Then $H$ is a smooth surface meeting the components of $C$ transversally.
For \xref{lemma:base-1}, put $D:=\frac 1n H$ and $B:=f_*D$.
Then the singularities of the pair $(X,D)$ are terminal.
Since $f$ is crepant with respect to $K_X+D$ and does not 
contract components of $D$, we see that
the singularities of $(Z,B)$ are canonical \cite[Lemma 3.38]{KM:book}. 
To show \xref{lemma:base-2} we note that 
the restriction $f_H: H\to Z$ is a finite morphism.
Thus $(Z, o)$ is a log terminal singularity \cite[Prop.~5.20]{KM:book}. 
\end{proof}
Note however that in \xref{lemma:base-1} we do not assert that the 
point $(Z,o)$ is $\QQ$-Gorenstein, even in the divisorial case, 
see Theorem~\xref{thm:div:Q-Cartier}.
The result of \xref{lemma:base-2} is significantly improved in \xref{corollary:cyclic}
and \cite[1.2.7]{MP:cb1}.

\subsection{General member of $|\OOO_X|$}
\label{H}
Let $f: (X,C)\to (Z,o)$ be an extremal curve germ. If $f$ is a $\QQ$-conic bundle, then we assume that $(Z,o)$ is smooth. We denote by $|\OOO_Z|$ the linear system of Cartier divisors (hyperplane sections) passing through $o$ and $|\OOO_X|:=f^*|\OOO_Z|$. Let $H$ be a general member of $|\OOO_X|$ and let $H_Z=f(H)$. Let $H^{\n}\to H$ be the normalization (we put $H^{\n}= H$ if $H$ is normal). By \cite[5.25]{KM:book} and Lemma \xref{lemma:base} both $H_Z$ and $H$ are Cohen-Macaulay. Hence by Bertini's theorem $H_Z$ is normal. Then the composition map $H^{\n}\to H_Z$ has connected fibers. Moreover, it is a rational curve fibration if $\dim Z=2$; a birational contraction to a point $(H_Z, o)$ if $f$ is birational. Thus in the $\QQ$-conic bundle case $H^{\n}$ has only rational singularities. The same is true in the birational case if the singularity $(H_Z,o)$ is rational.

\subsection{Notation on dual graphs}
Let $S$ be a normal surface and let $C\subset S$ be a curve. Suppose that on the minimal resolution of $S$ the exceptional divisors and the proper transform of $C$ form a normal crossing divisor, say $R$. We use the usual notation of dual graphs $\Delta (S,C)$ of $R$: each $\diamond$ corresponds to an irreducible component of $C$ and each $\circ$ corresponds to an exceptional divisor, and we may use $\bullet$ instead of $\diamond$ if we want to emphasize that it is a complete $(-1)$-curve. A number attached to a vertex denotes the minus self-intersection number. For short, we may omit $2$ if the self-intersection equals $-2$.

\begin{sproposition}[{\cite{Cutkosky-1988}}]
\label{prop:Gor}
\begin{enumerate}
\item \label{prop:Gor-cb}
Let $f: X\to Z$ be a $\QQ$-conic bundle. If $X$ is Gorenstein
\textup(and terminal\textup), then $Z$ is smooth and there is a
vector bundle $\EEE$ of rank $3$ on $Z$ and an embedding
$X\hookrightarrow \PP(\EEE)$ such that every scheme fiber $X_z$,
$z\in Z$ is a conic in $\PP(\EEE)_z$.

\item \textup(see also \cite[4.7.2]{KM92}\textup)
\label{prop:Gor-bir}
Let $f: (X,C)\to (Z,o)$ be a birational curve germ such that $X$ is Gorenstein.
Then $f$ is divisorial, $(Z,o)$ is smooth, $C$ is irreducible, and 
$f$ is the blowup of a curve $B\subset Z$ 
having only planar singularities. Moreover, $X$ has exactly one singular point 
which is of type~\type{cA} and for a general member $H\in |\OOO_X|$ 
the graph $\Delta(H,C)$ has the form
\begin{equation*} 
\bullet\lin\underbrace{\circ\lin\cdots\lin\circ}_m
\end{equation*}
\end{enumerate}
\end{sproposition}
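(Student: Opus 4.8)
The plan is to exploit that, $X$ being Gorenstein terminal, the anticanonical class $-K_X$ is a line bundle which is $f$-ample, together with the vanishing $R^if_*\OOO_X=0$ for $i>0$ (Theorem~\ref{th-vanish}). Recall also that a Gorenstein terminal threefold singularity is an isolated compound Du Val point (Theorem~\ref{clasifiction-terminal} with $m=1$), so $X$ has only finitely many singular points, all lying on $C$.

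\emph{Part \ref{prop:Gor-cb}.} On a general fibre $\ell\cong\PP^1$ one has $-K_X\cdot\ell=2$; since $-2K_X$ is $f$-ample, relative Kawamata--Viehweg vanishing gives $R^if_*\OOO_X(-K_X)=0$ for $i>0$, so by cohomology and base change $\EEE:=f_*\OOO_X(-K_X)$ is locally free of rank $3$ with formation commuting with base change, and the relative conic-bundle formalism makes the evaluation map $f^*\EEE\to\OOO_X(-K_X)$ surjective. This defines a $Z$-morphism $\iota\colon X\to\PP(\EEE)$ which is a closed embedding on each fibre and is finite, hence a closed immersion, under which every scheme fibre $X_z\subset\PP(\EEE)_z\cong\PP^2$ is a conic. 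For the smoothness of $Z$: by Lemma~\ref{lemma:base}(ii), $(Z,o)\cong\CC^2/G$ for a finite $G\subset\mathrm{GL}_2(\CC)$ acting freely off the origin; pull $f$ back along $\CC^2\to\CC^2/G$ to obtain a $G$-equivariant conic bundle $\tilde X\to\CC^2$ with $\tilde X/G=X$ and $\tilde X\to X$ \'etale in codimension one, so $\tilde X$ is again terminal. If $G\ne1$, a nontrivial $g\in G$ fixes a point of the conic $\tilde X_o\subset\PP^2$, and there $g$ acts nontrivially on the tangent space (already on the $\CC^2$-directions); its image in $X$ is then a non-smooth quotient singularity over $o$ --- impossible, since by the classification of terminal singularities a Gorenstein terminal threefold point that is a cyclic quotient is smooth. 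Hence $G=1$, $Z$ is smooth, $\PP(\EEE)$ is smooth, and the description follows.

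\emph{Part \ref{prop:Gor-bir}.} First exclude the flipping case. If $\Exc(f)=C$, take a general $H\in|\OOO_X|$; by \ref{H} we have $C\subset H$, the surface $H_Z=f(H)$ is normal, and $g:=f|_H\colon H\to H_Z$ is a birational contraction of $C$ to $o$. Since $H$ is a Cartier divisor in the Gorenstein threefold $X$ it is Gorenstein with $K_H=K_X|_H$, so $-K_H$ is $g$-ample; a proper birational morphism from a Gorenstein surface to a normal surface germ contracting a non-empty curve on which $-K_H$ is relatively ample is, by the two-dimensional cone theorem, the contraction of a single $(-1)$-curve to a smooth point, and this holds for the general $H$. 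Letting $H_Z$ vary, the contracted points sweep out a curve $B\subset Z$, so $\Exc(f)$ is a divisor --- contradicting that $f$ is flipping. Hence $f$ is divisorial, and the same local picture on the general $H$ shows that $(Z,o)$ is smooth, that $C$ is irreducible (the unique $(-1)$-curve of $H$, consistently with \ref{cor-C-pa=0}), that $f$ is the blowup of $B$, and that $B$ has only planar singularities, since its ideal becomes invertible after the blowup and so $B$ locally lies on a smooth surface. Resolving $X$ along $C$ is then accomplished by resolving the surface $H$; reading off $\Delta(H,C)$ one finds $X$ has exactly one singular point, a cDV point with general hyperplane section of type $A_m$, i.e.\ of type \type{cA}, and the dual graph is $\bullet\lin\underbrace{\circ\lin\cdots\lin\circ}_m$.

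The main obstacle is the smoothness of $Z$ in Part~\ref{prop:Gor-cb} --- precisely, the fixed-point and stabilizer bookkeeping on the cover $\tilde X$ showing that a relative conic over a nontrivial quotient surface singularity never has Gorenstein terminal total space --- and, in Part~\ref{prop:Gor-bir}, controlling the singularities of the general surface $H$ sharply enough to apply the two-dimensional cone theorem to $g$ (in particular that $\rho(H/H_Z)=1$ for general $H$). The rest is the standard relative conic-bundle formalism and a direct reading of dual graphs.
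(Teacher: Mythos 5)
Your proposal addresses a statement the paper itself does not prove (it is quoted from \cite{Cutkosky-1988} and \cite[4.7]{KM92}), so it must stand on its own, and the decisive gap is in part \xref{prop:Gor-bir}. The exclusion of the flipping case does not work as written: every member of $|\OOO_X|=f^*|\OOO_Z|$ contains the whole fibre $C$ and contracts it to the \emph{same} point $o\in H_Z$, so ``letting $H_Z$ vary, the contracted points sweep out a curve $B$'' is a non sequitur --- no curve is swept out and no divisor is produced. Moreover the ``two-dimensional cone theorem'' you invoke is false in the generality you use it: a birational contraction of an irreducible curve with $-K_H$ relatively ample from a Gorenstein surface need not be the contraction of a $(-1)$-curve lying in the smooth locus (take $H$ with a single \type{A_1}-point on $C$, the proper transform of $C$ a $(-1)$-curve meeting the $(-2)$-curve once: then $K_H\cdot C=-1$ and $C$ is contracted to a smooth point although $H$ is singular along $C$, and a priori $H$ could even be non-normal or non-Du Val along $C$). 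Controlling the singularities of the general $H$ along $C$ is precisely the content to be proved, and it is also what is missing from the rest of your part \xref{prop:Gor-bir}: smoothness of $(Z,o)$, irreducibility of $C$ (in the paper's framework this comes from Lemma~\xref{lemma-int-non-Gor}/Corollary~\xref{cor:int-non-Gor}), the identification of $f$ with the blowup of $B$, the planarity of $B$, and the single \type{cA} point with chain graph are asserted (``the same local picture shows'') rather than derived; this is exactly where the work of \cite{Cutkosky-1988} and \cite[\S 4]{KM92} lies. A correct and economical way to rule out flipping is the route the paper itself uses (Corollary~\xref{corollary:flipGorenstein}, or: once one knows $\deg N_{C/X}=-1$, the curve $C$ moves, and its deformations must be contracted because the Stein germ $Z$ contains no compact curves).

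Part \xref{prop:Gor-cb} is essentially on the right track (relative anticanonical embedding over a smooth cover of the base, then descent), but the fixed-point step has a gap: the fixed point $P\in\widetilde X_o$ of $g$ may itself be a singular (cDV) point of $\widetilde X$, so its image in $X$ is not a cyclic quotient of a smooth point and the classification you quote does not apply; and even for smooth $P$ you must exclude that the stabilizer acts by a quasi-reflection, in which case the quotient would be smooth. Both are repairable at once: any positive-dimensional component of the fixed locus would have to map to the fixed locus $\{o\}$ of the base, so a quasi-reflection (fixed surface) is impossible and the fixed locus has codimension $\ge 2$; hence $(\widetilde X,P)\to (X,\bar P)$ is a nontrivial finite cover, \'etale outside a set of codimension $\ge 2$, of a Gorenstein terminal germ, whose punctured smooth locus is simply connected (Proposition~\xref{Clsc} with $m=1$; removing a curve from the smooth locus does not change $\uppi_1$) --- a contradiction. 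With this repair, and the standard flatness/base-change bookkeeping you allude to, part \xref{prop:Gor-cb} is fine.
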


The following fact is a particular case of \cite[Theorem~4.9]{KM92}.
\begin{theorem}
Let $f:(X,C)\to (Z,o)$ be a divisorial extremal curve germ.
Let $E$ be its exceptional locus \textup(with reduced structure\textup) and let $B:=f(E)_{\red}$.
Assume that $K_Z$ is $\QQ$-Cartier \textup(this automatically 
holds if $C$ is irreducible, see \xref{thm:div:Q-Cartier}\textup).
Then the following holds.
\begin{enumerate}
\item 
The set $E$ of $f$ is purely two-dimensional and 
is a $\QQ$-Cartier divisor, and the singularity $(Z,o)$ is terminal.
\item 
The variety $X$ is the symbolic blowup of $B$, that is, 
\[
X=\Proj_Z \bigoplus_{m=0}^{\infty} \I_B^{(m)}, 
\]
where $\I_B$ is the ideal sheaf of $B$, 
and $\I_B^{(m)}$ denotes its symbolic power. In particular, $X$ is uniquely determined by
$B\subset Z$.
\end{enumerate}
\end{theorem}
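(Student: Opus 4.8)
The plan is to reduce the statement to two more fundamental results quoted earlier and then handle the geometry of the blowup. First I would invoke \xref{thm:div:Q-Cartier}: since $C$ is irreducible, $K_Z$ is automatically $\QQ$-Cartier, so the hypothesis of \cite[Theorem~4.9]{KM92} is satisfied. That theorem already gives assertion~(i): the exceptional set $E$ is purely two-dimensional, $\QQ$-Cartier, and $(Z,o)$ is terminal. So the only real content to establish is assertion~(ii), the description of $X$ as the symbolic blowup of $B\subset Z$.

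For assertion~(ii), the key point is that $f\colon X\to Z$ is a birational projective morphism with $f_*\OOO_X=\OOO_Z$ whose exceptional divisor $E$ (now known to be $\QQ$-Cartier and irreducible, since $C$ is irreducible) is $f$-anti-ample — indeed $-K_X$ is $f$-ample and by \cite[Theorem~4.9]{KM92} $K_X = f^*K_Z + \tfrac1r E$ for some positive rational coefficient, so $-E$ is $f$-ample. The standard Proj-construction then applies: for any $\QQ$-Cartier $f$-ample (up to sign) divisor one has $X \cong \Proj_Z \bigoplus_{m\ge 0} f_*\OOO_X(-mE)$, and one identifies the graded pieces $f_*\OOO_X(-mE)$ with the symbolic powers $\I_B^{(m)}$. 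This identification is the heart of the matter: a section of $\OOO_Z$ vanishing to order $\ge m$ along $B$ along the generic point pulls back to a section vanishing along $E$ with the right multiplicity, and conversely; since $\I_B^{(m)}$ is by definition the $\I_B$-symbolic power (sections vanishing at the generic point of $B$ to order $m$, saturated), and $f$ is an isomorphism over $Z\setminus\{o\}$ away from codimension two, the two sheaves agree outside a codimension-two set and both are reflexive, hence equal. The uniqueness of $X$ then follows formally, since the Rees-type algebra $\bigoplus_m \I_B^{(m)}$ depends only on $B\subset Z$.

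The main obstacle I anticipate is the careful bookkeeping in the identification $f_*\OOO_X(-mE) = \I_B^{(m)}$: one must be sure that the multiplicity with which $f^*$ of a generic-order-$m$ function along $B$ vanishes along $E$ is exactly $m$ (not $rm$ or $m/r$ for the index $r$ of $E$), which requires knowing the precise discrepancy coefficient from \cite[Theorem~4.9]{KM92}, and one must check reflexivity/saturation so that the equality of sheaves in codimension one upgrades to equality everywhere. A secondary subtlety is that $X$ need not be $\QQ$-factorial (as emphasized in the text after the definition of extremal curve germ), so one cannot take $-E$ itself as the polarizing class on the nose; one works with the $\QQ$-Cartier divisor $-E$ and clears denominators, which is harmless for the Proj construction. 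Everything else — normality of $X$, connectedness of fibers, $f_*\OOO_X=\OOO_Z$ — is part of the standing hypotheses on extremal curve germs or follows from Corollary~\xref{cor-C-pa=0}.
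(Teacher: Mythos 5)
The paper itself offers no proof of this statement: it is quoted as a particular case of \cite[Theorem~4.9]{KM92}, which contains both assertions (i) and (ii). So your use of that theorem for (i) is exactly what the paper does, and you could equally have cited it for (ii); the substance of your proposal is the independent sketch of (ii), and there is a genuine gap in it. The step ``the two sheaves agree outside a codimension-two set and both are reflexive, hence equal'' does not work: $f_*\OOO_X(-mE)$ and $\I_B^{(m)}$ are ideal sheaves cosupported on $B$, which has codimension two in $Z$, so they are not reflexive (both double duals are $\OOO_Z$), and agreement in codimension one carries no information --- $\I_B$ and $\OOO_Z$ also agree outside $B$. The saturation defining the symbolic power is a condition at the generic point $\eta_B$ of $B$, not a codimension-one condition on $Z$, so no reflexivity argument can produce the identification. (A smaller point: the theorem does not assume $C$ irreducible, only that $K_Z$ is $\QQ$-Cartier, and irreducibility of $E$ is not immediate from irreducibility of $C$; it should come out of the argument, not be assumed.)

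What actually has to be proved is precisely the point you defer as ``bookkeeping'': that the divisorial valuation $\ord_E$, restricted to $\OOO_{Z,\eta_B}$, is the order of vanishing at $\eta_B$, so that $f_*\OOO_X(-mE)=\{g\in\OOO_Z:\ord_E(g)\ge m\}$ coincides with $\I_B^{(m)}$. Knowing the discrepancy of $E$ over $(Z,o)$ does not give this; the natural route is to localize at $\eta_B$: since $X$ and $Z$ have isolated (terminal) singularities, both are regular along the generic fiber of $E\to B$, hence $X\times_Z\Spec\OOO_{Z,\eta_B}\to\Spec\OOO_{Z,\eta_B}$ is a proper birational morphism of two-dimensional regular schemes with $-K$ relatively ample; factoring it into point blowups, the proper transform of the earliest exceptional curve containing a later center would have nonnegative intersection with $K$, contradicting ampleness, so the morphism is a single blowup of the reduced closed point. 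This shows that over the generic point of $B$ the map $f$ is the ordinary blowup (and, as a byproduct, that $E$ is irreducible with $a(E,Z)=1$), after which $\ord_E$ is the $\mathfrak{m}_{\eta_B}$-adic order, $f_*\OOO_X(-mE)=\I_B^{(m)}$, and the relative $\Proj$ description together with the uniqueness of $X$ follow as you indicate. Without this localization step the proposal does not close; with it, it becomes a correct reconstruction of the kind of argument behind \cite[Theorem~4.9]{KM92}, which this survey does not reproduce.
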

It is possible to study divisorial curve germs algebraically,
by scrupulous analysis of the curve $B$ and its embedding $B\subset X$
(see \cite{Tzi:03}, \cite{Tzi:05D}, \cite{Kaw:div}, 
\cite{Tzi:10}, \cite[\S~6.1]{Prokhorov-Reid}, \cite{Ducat:16}). This method 
is completely different from our approach.

\section{Basic techniques} 
\subsection{}
Let $\I_C\subset \OOO_X$ be the
ideal sheaf of $C$ and let $\I_C^{(n)}$ be its symbolic $n$th power, that is, the saturation of $\I_C^n$ in $\OOO_X$. Put
\begin{equation*}
\gr_C^n\OOO:=\I_C^{(n)}/\I_C^{(n+1)}. 
\end{equation*} 
Further, let $F^n\upomega_X$ be
the saturation of $\I_C^n\upomega_X$ in $\upomega_X$ and let
\begin{equation*}
\gr_C^n\upomega:=F^n\upomega_X/F^{n+1}\upomega_X. 
\end{equation*} 
Let $m$ be the index
of $K_X$. We have natural homomorphisms
\begin{equation*}
\begin{array}{lllll}
\alpha_1 &:& \bigwedge^2 \gr_C^1\OOO &\longrightarrow&
\HHom_{\OOO_C}(\Omega_C^1,\gr_C^0\upomega),
\\[7pt]
\beta_0&:& (\gr_C^0\upomega)^{\otimes m} &\longrightarrow&
(\upomega_X^{\otimes m})^{\vee\vee}\otimes \OOO_C.
\end{array}
\end{equation*} 
Denote
\begin{equation}
\label{equation:iP-wP}
i_P(1):=\len_P\Coker (\alpha_1),\qquad w_P(0):=\len_P\Coker (\beta_0)/m.
\end{equation}
To study extremal germs more carefully, 
Mori \cite{Mori:flip} introduced also series of local invariants $i_P(n)$, 
$w_P(n)$, $w^*_P(n)$ similar to $i_P(1)$ and $w_P(0)$. 
We do not define them here.

Assume that $C\simeq \PP^1$. Then we have by \cite[2.3.1]{Mori:flip}
\begin{eqnarray}
\label{eq-grw-w}
&&
-\deg \gr_C^0\upomega=-K_X\cdot C+\sum_P w_P(0),
\\
\label{eq-grO-iP1}
&&
2+\deg \gr_C^0\upomega-\deg \gr_C^1\OOO=\sum_P i_P(1).
\end{eqnarray}
Since $\rk \gr_C^1\OOO=2$, taking \xref{th-vanish} into account we obtain 
\begin{equation}
\label{eq-grO-iP1-1}
\deg \gr_C^1\OOO\ge -2,
\end{equation}
\begin{equation}
\label{eq-grO-iP1-2}
4\ge -\deg \gr_C^0\upomega+\sum_P i_P(1)= -K_X\cdot C+\sum_P
w_P(0)+\sum_P i_P(1).
\end{equation}

\begin{sremark}
\label{remark:grw}
In the case where $f$ is birational, by the Grauert-Riemenshneider
vanishing, one has $\gr_C^0\upomega=\OOO_{C}(-1)$ (see
\cite[2.3]{Mori:flip}). This is no longer true for $\QQ$-conic
bundles: in the toroidal example \xref{ex-toric} easy computations 
show 
$\deg \gr_C^0\upomega= -2$ (see \eqref {eq-grw-w}). Similarly, in the case
\xref{item-main-th-impr-barm=1} we also have $\deg \gr_C^0\upomega=
-2$.
We will show below that these two examples are the only
exceptions (see Corollaries \xref{cor-prop-grw=2-2-points-prim} and
\xref{corollary:gr-w}).
\end{sremark}

\subsection{}
Let $(X,P)$ be a germ of threefold terminal singularity.
Throughout this paper $(X^\sharp, P^\sharp)\to (X,P)$
denotes the index-one cover. For any object $V$ on $X$
we denote by $V^\sharp$ the pull-back of $V$ on $X^\sharp$.

\begin{lemma}[{\cite[2.16]{Mori:flip}}]
\label{equation-iP}

In the above notation, assume that $C^\sharp$ is smooth. Denote
\begin{equation*}
\ell(P):=\len_P \I_C^{\sharp (2)}/\I_C^{\sharp 2},
\end{equation*}
where $\I_C^\sharp$ is the ideal of $C^\sharp$ in $X^\sharp$.
Then 
\begin{equation}\label{equation-iP-lP}
i_P(1)=
\begin{cases}
\ell(P)&\text{if $m=1$},
\\
\lfloor(\ell(P)+6)/4\rfloor&\text{if $(X,P)$ is of type~\type{cAx/4}},
\\
\lfloor\ell(P)/m\rfloor+1&\text{if $(X,P)$ is not as above}.
\end{cases}
\end{equation}
\end{lemma}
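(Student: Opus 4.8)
The plan is to compute $i_P(1)$ directly from its definition \eqref{equation:iP-wP} by passing to the index-one cover $(X^\sharp,P^\sharp)$ and tracking how the cokernel of $\alpha_1$ transforms under the $\mumu_m$-quotient. First I would recall that on $X^\sharp$ the curve germ $C^\sharp$ is smooth (by hypothesis), so $\Omega^1_{C^\sharp}$ is locally free of rank one; the sheaf $\gr_C^1\OOO$ pulls back to $\I_C^\sharp/(\I_C^\sharp)^2$ up to saturation, and the key object $\I_C^{\sharp(2)}/(\I_C^\sharp)^2$ whose length is $\ell(P)$ measures exactly the failure of $\I_C^\sharp$ to be generated by a regular sequence, i.e. the discrepancy between the symbolic square and the ordinary square. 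The homomorphism $\alpha_1$ on $X^\sharp$ becomes, after choosing coordinates adapted to the embedding $C^\sharp\subset X^\sharp\subset\CC^4$, the natural map $\bigwedge^2(\I_C^\sharp/(\I_C^\sharp)^2)\to \HHom(\Omega^1_{C^\sharp},\gr^0\upomega^\sharp)$, and on the index-one cover $\gr^0\upomega^\sharp=\OOO_{C^\sharp}$ trivially, so the cokernel of $\alpha_1^\sharp$ has length precisely $\ell(P)$. This gives the case $m=1$ immediately.

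For $m>1$ the main work is to descend. I would use the classification in Theorem \xref{clasifiction-terminal}: write $X^\sharp=\{\phi=0\}\subset\CC^4$ with $\wt(x,\phi)\equiv(a,b,-a,0,0)$ or, in the \type{cAx/4} case, $(a,b,-a,2,2)\bmod m$, and note that $C^\sharp$ (being smooth and $\mumu_m$-invariant with $C=C^\sharp/\mumu_m$) is a coordinate-type line on which $\mumu_m$ acts with some weight. The sheaves $\I_C^{(n)}/\I_C^{(n+1)}$ and $F^n\upomega_X/F^{n+1}\upomega_X$ on $X$ are the $\mumu_m$-invariant parts of the corresponding graded pieces upstairs, each twisted by the character recording the weight of the relevant coordinate. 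Concretely, $\alpha_1$ on $X$ is obtained from $\alpha_1^\sharp$ by taking $\mumu_m$-invariants after tensoring with suitable characters, and $\Coker(\alpha_1)$ is (the invariant part of) $\Coker(\alpha_1^\sharp)$ modified by the weight shift. The length of the invariant part of a $\mumu_m$-module supported at the origin, with a prescribed linear $\mumu_m$-action on $\OOO_{C^\sharp}\cong\CC\{t\}$ (where $t$ has some weight $c$ with $\gcd(c,m)=1$), is computed by counting lattice points: if the module upstairs has length $\ell$, the invariant part has length roughly $\ell/m$, with a correction term depending on the residue class and on the twist, which is exactly where the $\lfloor\ \rfloor$ and the $+1$ come from.

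The hardest part will be pinning down the exact correction term, i.e. justifying the $+1$ in the generic case and the shift to $\lfloor(\ell(P)+6)/4\rfloor$ in the \type{cAx/4} case. For the generic case, after identifying $\Coker(\alpha_1^\sharp)\cong \OOO_{C^\sharp}/(t^{\ell(P)})$ as a $\mumu_m$-module with a specific weight, one checks that the invariant submodule of $\OOO_{C^\sharp}/(t^{\ell(P)})\otimes\chi^{j}$ has length $\lfloor \ell(P)/m\rfloor$ or $\lfloor \ell(P)/m\rfloor+1$ according to the value of $j\bmod m$; the point is that the relevant twist $j$ (coming from $\gr^0\upomega$, which on $X$ has a nontrivial $\mumu_m$-character because $\upomega_{X^\sharp}$ is $\mumu_m$-linearized with a nonzero weight) always lands in the range forcing the $+1$. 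For \type{cAx/4} one must additionally account for the weights $(2,2)$ on $x_4,x_5$ — here $\phi$ itself is semi-invariant of nonzero weight, so the identification of $F^\bullet\upomega_X$ upstairs picks up an extra shift, and the arithmetic with $m=4$ and these specific weights produces the $(\ell(P)+6)/4$ formula. Throughout I would lean on the explicit normal forms of \cite{Mori:sing} to reduce everything to a bookkeeping of weights, and cite \cite[2.3.2]{Mori:flip} or the surrounding material for the precise relation between $\alpha_1$, the symbolic powers, and $\ell(P)$; the computation is delicate but entirely mechanical once the weights are fixed.
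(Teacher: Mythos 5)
The survey itself gives no proof of this lemma (it is quoted verbatim from \cite[2.16]{Mori:flip}), so I can only judge your sketch on its own merits, and it has a genuine gap at the descent step, which is precisely the step that was supposed to produce the correction terms. You reduce everything to the claim that $\alpha_1$ on $X$ is obtained from $\alpha_1^\sharp$ by twisting by a character and taking $\mumu_m$-invariants, so that $\Coker(\alpha_1)\simeq\bigl(\Coker(\alpha_1^\sharp)\otimes\chi^{j}\bigr)^{\mumu_m}$, and you then extract the floor and the $+1$ by counting invariant monomials in $\OOO_{C^\sharp}/(t^{\ell(P)})\otimes\chi^{j}$. This identification is false. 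Taking $\mumu_m$-invariants does commute with $\I_C^{(n)}$, hence with $\gr_C^n\OOO$ and $\gr_C^n\upomega$, but it does not commute with $\bigwedge^2_{\OOO_C}$ nor with $\HHom_{\OOO_C}(\Omega^1_C,\,\cdot\,)$ across the base change from $\OOO_{C^\sharp}$ to $\OOO_C$: the wedge of the minimal-order invariant generators of the two eigensummands of $\gr^1_{C^\sharp}\OOO$ need not be a minimal-order invariant section of the wedge upstairs, and $\Omega^1_C$ is generated by $d(t^{m})=m\,t^{m-1}dt$ rather than by $dt$. These discrepancies in the source and the target of $\alpha_1$, not a lattice-point count inside $\Coker(\alpha_1^\sharp)$, are where the $+1$ and the $6$ come from.

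A concrete counterexample to your mechanism: let $(X,P)$ be a terminal cyclic quotient $\frac1m(1,-1,b)$ with $m>1$ and $C$ the image of a coordinate axis (such points occur, e.g., at the two singular points of the toroidal germs \xref{ex-toric}). Then $C^\sharp\subset X^\sharp=\CC^3$ is a smooth complete intersection, so $\I_C^{\sharp(2)}=\I_C^{\sharp 2}$, $\ell(P)=0$ and $\Coker(\alpha_1^\sharp)=0$; the invariant part of any character twist of the zero module is zero, whereas the formula gives $i_P(1)=1$, as already forced by Lemma \xref{lemma:iP-wP} and confirmed by a direct evaluation of $\alpha_1$ downstairs. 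More generally, your recipe bounds $i_P(1)$ by $\lceil\ell(P)/m\rceil$, which is strictly smaller than the stated value whenever $m$ divides $\ell(P)$, and in the \type{cAx/4} case whenever $\ell(P)\equiv 0,3\pmod 4$ (for instance \typec{IIA} points with $\ell(P)=3$ have $i_P(1)=2$, cf. Theorem \xref{IIA:thm}, while $\lceil 3/4\rceil=1$). The argument behind \cite[2.16]{Mori:flip} does not descend the cokernel at all: using normalized $\ell$-coordinates it exhibits explicit $\OOO_{C,P}$-generators of $\gr^1_C\OOO$ and $\gr^0_C\upomega$ together with their orders and weights, evaluates $\alpha_1$ on these generators, and reads off the colength of the image; the corrections are sums of orders of these generators (the factor $t^{m-1}$ coming from $d(t^m)$, the orders of the chosen semi-invariants, and, in the \type{cAx/4} case, the even weights of $x_4$ and $\phi$), which is a different bookkeeping from the one you propose. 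Note finally that your $m=1$ step, asserting that $\Coker(\alpha_1^\sharp)$ has length exactly $\ell(P)$, is itself the first case of the lemma and requires the same kind of explicit computation; it is not a formal consequence of the definitions.
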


\begin{lemma}[{\cite[2.10, 2.15]{Mori:flip}}]
\label{lemma:iP-wP}
If $(X,P)$ is singular, then $i_P(1)\ge 1$.
If $(X,P)$ is not Gorenstein, then $w_P(0)>0$.
\end{lemma}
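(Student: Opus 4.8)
The plan is to reduce both inequalities to local computations at $P$ carried out on the index-one cover $\pi\colon(X^\sharp,P^\sharp)\to(X,P)$ together with its $\mumu_m$-equivariant structure. Throughout, $m$ denotes the index of $(X,P)$ and $\eta$ a $\mumu_m$-semi-invariant local generator of the invertible sheaf $\upomega_{X^\sharp}$ near $P^\sharp$, so that $\wt(\eta)\equiv a\bmod m$ with $a\not\equiv 0$ as soon as $m>1$. I will treat the case where $C$ is smooth at $P$ (the one relevant for irreducible central fibers, see \ref{cor-C-pa=0}); the locally imprimitive case, where $\pi^{-1}(C)$ splits into several branches, is handled by running the same arguments on one branch of $C^\sharp$ together with its stabilizer in $\mumu_m$.

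For $i_P(1)\ge 1$ the idea is to invoke Lemma~\ref{equation-iP}, which reduces everything to controlling $\ell(P)=\len_P\I_C^{\sharp(2)}/\I_C^{\sharp 2}\ge 0$. If $m>1$ this already finishes the argument: for a point of type \type{cAx/4} one gets $i_P(1)=\lfloor(\ell(P)+6)/4\rfloor\ge 1$, and otherwise $i_P(1)=\lfloor\ell(P)/m\rfloor+1\ge 1$. So one may assume $m=1$ (hence $C^\sharp=C$) and that $(X,P)$ is singular, and show $\ell(P)=i_P(1)\ge 1$, i.e. that $\I_C^2\subset\OOO_X$ is not saturated at $P$. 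Writing $X=\{\phi=0\}\subset(\CC^4,0)$ with $C=\{x_2=x_3=x_4=0\}$, one has $\phi\in(x_2,x_3,x_4)$ because $C\subset X$, and $\phi$ has no linear term because $P\in\Sing X$; reducing modulo $(x_2,x_3,x_4)^2$ write $\phi\equiv x_2a_2(x_1)+x_3a_3(x_1)+x_4a_4(x_1)$ with all $a_i\in(x_1)$, and note $(a_2,a_3,a_4)\ne 0$ since the singularity is isolated (otherwise $C\subset\Sing X$). A short computation with the cyclic $\OOO_C$-module $\OOO_X/\I_C^2=\OOO_{\CC^4}\big/\big((x_2,x_3,x_4)^2,\ \textstyle\sum x_ia_i\big)$ then exhibits its torsion submodule as $\OOO_C/(x_1^{e})$, where $e=\ord_{x_1}\gcd(a_2,a_3,a_4)\ge 1$; hence $\ell(P)=e\ge 1$.

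For $w_P(0)>0$ one may assume $m>1$. Since $\upomega_{X^\sharp}$ is invertible near $P^\sharp$ and $C^\sharp$ is a reduced curve, $\gr_{C^\sharp}^0\upomega^\sharp$ is identified near $P^\sharp$ with $\upomega_{X^\sharp}\otimes\OOO_{C^\sharp}\cong\OOO_{C^\sharp}\!\cdot\eta$, and the upstairs map $\beta_0^\sharp$ is the obvious isomorphism onto $\upomega_{X^\sharp}^{\otimes m}\otimes\OOO_{C^\sharp}\cong\OOO_{C^\sharp}\!\cdot\eta^{\otimes m}$. Choose a coordinate $t$ on $C^\sharp$ at $P^\sharp$ on which $\mumu_m$ acts faithfully and put $s=t^m$, a coordinate on $C$ at $P$. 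Taking $\mumu_m$-invariants, the generator of $\gr_C^0\upomega$ becomes $t^{j_0}\eta$, where $j_0\in\{0,1,\dots,m-1\}$ is forced by $\wt(t^{j_0}\eta)\equiv 0\bmod m$; crucially $j_0\ne 0$, precisely because $\wt(\eta)=a\not\equiv 0$. Consequently $\beta_0$ sends the generator $(t^{j_0}\eta)^{\otimes m}$ of $(\gr_C^0\upomega)^{\otimes m}$ to $t^{mj_0}\eta^{\otimes m}=s^{j_0}\eta^{\otimes m}$, i.e. to $s^{j_0}$ times the generator of $\upomega_X^{[m]}\otimes\OOO_C$; hence $\Coker(\beta_0)\cong\OOO_C/(s^{j_0})$ has length $j_0\ge 1$ and $w_P(0)=j_0/m>0$.

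The genuinely delicate points are (i) the compatibility of the saturation constructions defining $\gr_C\OOO$ and $\gr_C\upomega$ with the index-one cover, which is part of the formalism of \cite{Mori:flip} and is what legitimizes the weight bookkeeping used above; and (ii) the locally imprimitive case, where $\pi^{-1}(C)$ is no longer irreducible (resp.\ smooth) and one must replace $C^\sharp$ by one of its branches and $\mumu_m$ by the corresponding stabilizer — the two computations above then go through with only notational changes and yield the same conclusions $i_P(1)\ge 1$ and $w_P(0)>0$.
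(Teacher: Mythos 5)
Your argument for the case $m=1$ is fine (the torsion of $\OOO_X/\I_C^2$ is indeed $\OOO_C/(x_1^e)$ with $e=\ord_{x_1}\gcd(a_2,a_3,a_4)\ge 1$, since terminal singularities are isolated), and the mechanism you use for $w_P(0)$ --- the local generator $\eta$ of $\upomega_{X^\sharp}$ has weight prime to $m$, so invariant sections of $\upomega_X$ restrict to sections vanishing at $P^\sharp$ --- is exactly the right one. The genuine gap is in your treatment of $i_P(1)\ge 1$ for $m>1$: you invoke Lemma \xref{equation-iP}, whose hypothesis is that $C^\sharp$, the preimage of $C$ in the index-one cover, is smooth at $P^\sharp$, not that $C$ is smooth at $P$. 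You conflate the two. Smoothness of $C$ is automatic, but $C^\sharp$ is smooth only when $1\in\ord(C^\sharp)$; at points of type \typec{IC} or \typec{IIB} the semigroup $\ord(C^\sharp)$ is generated by $a_1,a_2\ge 2$ (see \xref{prop:local-primitive}), and at imprimitive points $C^\sharp$ is even reducible (see \xref{prop-imp-types}), so the formula \eqref{equation-iP-lP} is simply not available there and your argument proves nothing for those points. This is not a removable technicality: the general statement $i_P(1)\ge 1$ at singular points is exactly what Mori's (2.10) establishes by computing $\Coker(\alpha_1)$ directly on the index-one cover with the $\ow$-bookkeeping, without passing through $\ell(P)$; you would need such a computation (or a separate argument for the types with singular $C^\sharp$) to close the gap. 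Note also that this survey gives no proof of the lemma --- it only cites \cite[2.10, 2.15]{Mori:flip} --- so the comparison here is with Mori's original argument.

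A second, smaller problem is your dismissal of the imprimitive case for $w_P(0)$ as ``notational changes, replacing $\mumu_m$ by the stabilizer of a branch.'' At a point of type \typec{ID^\vee} the subindex is $\bar m=1$, so the stabilizer of a branch is trivial and your branch-wise weight argument would allow $j_0=0$, giving nothing, even though $w_P(0)>0$ must still hold there. The correct argument does not use the branch stabilizer at all: since $\wt(\eta)\not\equiv 0\pmod m$, every invariant section $h\eta$ of $\upomega_X$ has $h(P^\sharp)=0$, hence its restriction to each branch of $C^\sharp$ has positive order at $P^\sharp$; feeding this into $\beta_0$ (whose target is generated by the invariant $\eta^{\otimes m}$) gives $\len_P\Coker(\beta_0)\ge 1$ and hence $w_P(0)>0$ in all cases, primitive or not. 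Likewise, in the primitive case you do not need $C^\sharp$ smooth for this half of the lemma: the order of the generator of $\gr_C^0\upomega$ may exceed the minimal residue $j_0$ when $C^\sharp$ is singular, but it is still $\ge 1$, which is all the inequality requires. With these corrections the second statement is sound; the first statement for $m>1$ with singular $C^\sharp$ remains unproved in your write-up.
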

Then from \eqref{eq-grO-iP1-2} we obtain
\begin{scorollary}
An extremal curve germ $(X,C\simeq\PP^1)$ has at most three
singular points.
\end{scorollary}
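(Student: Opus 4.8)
The plan is to read the bound off directly from inequality \eqref{eq-grO-iP1-2}, which under our hypothesis $C\simeq\PP^1$ asserts
\[
4\ \ge\ -K_X\cdot C+\sum_P w_P(0)+\sum_P i_P(1),
\]
the sums running over points $P\in C$ and having only finitely many nonzero terms since $C$ is compact and $X$ has isolated singularities.

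First I would dispose of the two terms $-K_X\cdot C$ and $\sum_P w_P(0)$. Since $-K_X$ is $f$-ample and $C$ is a complete curve contracted by $f$, we have $-K_X\cdot C>0$; and each $w_P(0)=\len_P\Coker(\beta_0)/m$ is a non-negative rational number. Therefore $\sum_P i_P(1)<4$. Now $i_P(1)=\len_P\Coker(\alpha_1)$ is the length of a module, hence a non-negative integer, so $\sum_P i_P(1)$ is a non-negative integer strictly less than $4$; that is, $\sum_P i_P(1)\le 3$.

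Finally I would invoke Lemma \xref{lemma:iP-wP}: every singular point $P$ of $X$ lying on $C$ contributes $i_P(1)\ge 1$. Hence the number of such points is at most $\sum_P i_P(1)\le 3$, which is the assertion.

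There is essentially no obstacle here once \eqref{eq-grO-iP1-2} and Lemma \xref{lemma:iP-wP} are available — the proof is a one-line counting argument, so the real content lies entirely in those preliminary results. The only point worth flagging is that the strict inequality $-K_X\cdot C>0$ is genuinely needed, so that the integer $\sum_P i_P(1)$ cannot attain the value $4$; if instead $X$ were Gorenstein along $C$ one could argue even more crudely, since then $-K_X\cdot C\in\ZZ_{>0}$ and all $w_P(0)=0$, but the argument above handles every case uniformly.
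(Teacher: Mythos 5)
Your proof is correct and is exactly the argument the paper intends: the corollary is stated immediately after Lemma \xref{lemma:iP-wP} with the remark ``Then from \eqref{eq-grO-iP1-2} we obtain...'', i.e.\ one bounds $\sum_P i_P(1)<4$ using $-K_X\cdot C>0$ and $w_P(0)\ge 0$, and then counts singular points via $i_P(1)\ge 1$. Nothing further is needed.
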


\subsection{}
Let $(X,C)$ be an extremal curve germ.
By Lemma \xref{cor-C-pa=0}\xref{cor-C-pa=0a} we have
$H^1 (\gr^1_ C\OOO) = 0$.
From the 
standard exact sequence 
\begin{equation*}
0\xrightarrow{\hspace*{20pt}} \I_C^{(n+1)} \xrightarrow{\hspace*{20pt}} \I_C^{(n)} \xrightarrow{\hspace*{20pt}} \gr_C^n\OOO\xrightarrow{\hspace*{20pt}} 0.
\end{equation*}
we obtain the following easy but useful fact.
\begin{slemma}\label{lemma-grC}
The following assertions hold.
\begin{enumerate}
\item \label{lemma-grC-1}
If $H^1\bigl(\gr_C^n\OOO\bigr)=0$ and the 
map $H^0\bigl(\I_C^{(n)}\bigr)\to H^0\bigl(\gr_C^n\OOO\bigr)$ is surjective, then 
$H^1\bigl(\I_C^{(n+1)}\bigr)\simeq H^1\bigl(\I_C^{(n)}\bigr)$. In particular, $H^1(I)=0$ from the case $n=0$.

\item \label{lemma-grC-2}
If for all $i<n$ one has $H^1(\gr_C^i\OOO)=0$ and the 
map $H^0(\I_C^{(i)})\to H^0(\gr_C^i\OOO)$ is surjective, then 
$H^1(\I_C^{(n)})\simeq H^1(\gr_C^n\OOO)=0$.
\item \label{lemma-grC-3}
If 
$H^0(\gr_C^1\OOO)=0$, then
$H^1(\I_C^{(2)})= H^1(\gr_C^2\OOO)=0$.
\end{enumerate}
In particular, if a general member $H\in |\OOO_X|$ is normal, then 
$H^0(\gr_C^1\OOO)\neq0$. 
\end{slemma}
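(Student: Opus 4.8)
The plan is to read all three assertions off the long exact cohomology sequence attached to the displayed short exact sequence
\[
0\longrightarrow \I_C^{(n+1)}\longrightarrow \I_C^{(n)}\longrightarrow \gr_C^n\OOO\longrightarrow 0,
\]
using only two vanishing inputs. The first is $H^1(\OOO_X)=0$, which is the case $i=1$ of Theorem~\ref{th-vanish}. The second is $H^2(X,\FFF)=0$ for every coherent sheaf $\FFF$, which is not Theorem~\ref{th-vanish} but follows from the fibers of $f$ having dimension at most one, so that $R^if_*\FFF=0$ for $i\ge 2$. The relevant segment of the cohomology sequence is
\[
H^0\bigl(\I_C^{(n)}\bigr)\xrightarrow{\ \rho\ } H^0\bigl(\gr_C^n\OOO\bigr)\longrightarrow H^1\bigl(\I_C^{(n+1)}\bigr)\xrightarrow{\ \tau\ } H^1\bigl(\I_C^{(n)}\bigr)\xrightarrow{\ \sigma\ } H^1\bigl(\gr_C^n\OOO\bigr)\longrightarrow H^2\bigl(\I_C^{(n+1)}\bigr).
\]

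For assertion~\ref{lemma-grC-1} I would argue that surjectivity of $\rho$ makes $\tau$ injective, while $H^1(\gr_C^n\OOO)=0$ makes $\tau$ surjective, so $\tau$ is the desired isomorphism $H^1(\I_C^{(n+1)})\simeq H^1(\I_C^{(n)})$. Taking $n=0$, one has $\gr_C^0\OOO=\OOO_C$, with $H^1(\OOO_C)=0$ because $\p(C)=0$ by Corollary~\ref{cor-C-pa=0}, and $\rho$ surjective because the constants in $H^0(\OOO_X)$ already surject onto $H^0(\OOO_C)=\CC$; hence $H^1(\I_C^{(1)})\simeq H^1(\OOO_X)=0$.

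Assertion~\ref{lemma-grC-2} then follows by induction: applying~\ref{lemma-grC-1} for $i=0,1,\dots,n-1$ with the two hypotheses supplied at each level yields $H^1(\I_C^{(n)})\simeq\cdots\simeq H^1(\OOO_X)=0$. For the comparison with $\gr_C^n\OOO$, the tail $H^1(\I_C^{(n)})\xrightarrow{\sigma}H^1(\gr_C^n\OOO)\to H^2(\I_C^{(n+1)})=0$ shows $\sigma$ is surjective, so $H^1(\gr_C^n\OOO)$ is a quotient of $H^1(\I_C^{(n)})=0$ and both vanish. Assertion~\ref{lemma-grC-3} is the case $n=2$ of~\ref{lemma-grC-2}: the hypotheses for $i=0$ are those verified above, and for $i=1$ one has $H^1(\gr_C^1\OOO)=0$ (recorded immediately before the lemma) while $\rho\colon H^0(\I_C)\to H^0(\gr_C^1\OOO)=0$ is automatically surjective because the target vanishes by assumption.

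Finally, for the concluding remark I would argue by contraposition. If $H^0(\gr_C^1\OOO)=0$, then the case $n=1$ of the short exact sequence gives $H^0(\I_C^{(2)})=H^0(\I_C)$. A general $H\in|\OOO_X|=f^*|\OOO_Z|$ is cut out by a section $s\in H^0(\I_C)$, namely the pullback of a germ vanishing at $o$; hence $s\in\I_C^{(2)}$. At a general point of $C$, where $X$ and $C$ are smooth and $\I_C^{(2)}=\I_C^2$, this forces $s$ to vanish to order at least two along $C$, so $H=\{s=0\}$ is singular along $C$, fails Serre's condition $R_1$, and is therefore not normal; thus normality of the general member forces $H^0(\gr_C^1\OOO)\neq 0$. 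The main obstacle in making this rigorous is the two vanishing inputs feeding the sequence: the $H^2$-vanishing must be justified from the one-dimensionality of the fibers rather than from Theorem~\ref{th-vanish} directly, and the normality step requires the identification $\I_C^{(2)}=\I_C^2$ at the generic point of $C$ to convert membership in the symbolic square into failure of regularity in codimension one.
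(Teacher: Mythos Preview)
Your proof is correct and follows exactly the approach the paper intends: the paper gives no detailed argument but simply derives the lemma from the displayed short exact sequence, and you correctly unfold the long exact cohomology sequence using $H^1(\OOO_X)=0$ from Theorem~\ref{th-vanish}, the vanishing of all $H^2$ from one-dimensionality of the fibers of $f$, and $H^1(\gr_C^1\OOO)=0$ recorded just before the statement. The contrapositive argument for the concluding remark, reducing to $H^0(\I_C)=H^0(\I_C^{(2)})$ and hence singularity of $H$ along $C$, is likewise the standard one.
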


Note however that this is necessary but not sufficient condition 
for normality of $H$ \cite{MP:IA}.

\subsection{Sheaves $\gr_C^n\upomega$}
\begin{slemma}
\label{lemma-omega-main}
Let $f: (X,C) \to (Z,o)$ be an extremal curve germ. 
\begin{enumerate}
\item \textup(\cite[1.2]{Mori:flip}\textup)\label{lemma-omega-main-1}
If $f$ is birational, then $R^if_*\upomega_X=0$ for $i>0$. 
\item \textup(\cite[Lemma 4.1]{MP:cb1}\textup)\label{lemma-omega-main-2}
If $f$ is a $\QQ$-conic bundle 
and $Z$ is smooth, then there is a canonical 
isomorphism
$R^1f_*\upomega_X\simeq \upomega_Z$. 
\end{enumerate}
\end{slemma}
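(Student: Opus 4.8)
The plan is to prove \xref{lemma-omega-main-1} by Grauert--Riemenschneider vanishing on a resolution and \xref{lemma-omega-main-2} by Grothendieck relative duality for $f$ itself; the only features of the germ that will be used are that $X$ is Cohen--Macaulay (being terminal), that $f$ is a proper contraction, and --- in \xref{lemma-omega-main-2} --- that $Z$ is smooth. Since every fiber of $f$ has dimension at most one, $R^if_*\FFF=0$ for every coherent sheaf $\FFF$ and every $i\ge 2$; hence for \xref{lemma-omega-main-1} it remains to prove $R^1f_*\upomega_X=0$, and for \xref{lemma-omega-main-2} it remains to compute $R^1f_*\upomega_X$.

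For \xref{lemma-omega-main-1}, I would fix a resolution $g\colon Y\to X$. Because $X$ has terminal, hence rational, singularities and is Cohen--Macaulay, Grothendieck duality applied to $g$ gives $Rg_*\upomega_Y=\upomega_X$; in particular $g_*\upomega_Y=\upomega_X$ and $R^jg_*\upomega_Y=0$ for $j>0$. Now $h:=f\circ g\colon Y\to Z$ is a proper birational morphism from a smooth threefold to a threefold, so Grauert--Riemenschneider applies once more and yields $R^ih_*\upomega_Y=0$ for $i>0$. In the Leray spectral sequence $E_2^{p,q}=R^pf_*R^qg_*\upomega_Y\Rightarrow R^{p+q}h_*\upomega_Y$ the terms with $q>0$ vanish, so $R^pf_*\upomega_X=R^pf_*g_*\upomega_Y=E_2^{p,0}=R^ph_*\upomega_Y=0$ for $p>0$, which is the assertion.

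For \xref{lemma-omega-main-2}, I would invoke relative duality for $f$. A three-dimensional terminal singularity is a quotient of an isolated hypersurface singularity, hence Cohen--Macaulay, so $X$ is Cohen--Macaulay and $\upomega_X$ is its dualizing sheaf. Every fiber of $f$ is one-dimensional: it has dimension at most one by hypothesis, and at least $\dim X-\dim Z=1$ because $f$ is a surjective morphism of irreducible varieties. Thus $f$ is equidimensional over the regular surface $Z$, so by the local criterion for flatness (``miracle flatness'') $f$ is flat, i.e. a proper Cohen--Macaulay morphism of pure relative dimension one. Set $\upomega_{X/Z}:=\upomega_X\otimes f^*\upomega_Z^{-1}$; as $Z$ is smooth, $\upomega_Z$ is invertible and $\upomega_{X/Z}$ is the relative dualizing sheaf of $f$. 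Since $f$ is a contraction its fibers are geometrically connected, so the trace map of Grothendieck--Serre relative duality is a canonical isomorphism $R^1f_*\upomega_{X/Z}\xrightarrow{\;\sim\;}\OOO_Z$. The projection formula, valid because $\upomega_Z$ is locally free, then gives the canonical isomorphisms
\[
R^1f_*\upomega_X=R^1f_*\bigl(\upomega_{X/Z}\otimes f^*\upomega_Z\bigr)\simeq\bigl(R^1f_*\upomega_{X/Z}\bigr)\otimes\upomega_Z\simeq\upomega_Z .
\]

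The step I expect to be the main obstacle is the precise form of relative duality used in \xref{lemma-omega-main-2}: one must apply Grothendieck duality to the flat, Cohen--Macaulay, but in general non-smooth morphism $f$, and invoke the fact that for such a morphism with geometrically connected one-dimensional fibers the trace $R^1f_*\upomega_{X/Z}\to\OOO_Z$ is an isomorphism. This is where the Cohen--Macaulayness of terminal singularities and the equidimensionality of $f$ are genuinely used, and it is cleaner to cite the relative-duality literature than to reconstruct the trace by hand. By contrast \xref{lemma-omega-main-1} is routine once Grauert--Riemenschneider is applied, in its relative form, to both $g$ and $h$.
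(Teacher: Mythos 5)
Your part \xref{lemma-omega-main-1} is essentially the paper's argument: the paper simply invokes Grauert--Riemenschneider (via \cite[1.2]{Mori:flip}), and your resolution-plus-Leray spectral sequence is the standard way of spelling that out, using $g_*\upomega_Y=\upomega_X$, $R^{i}g_*\upomega_Y=0$ and GR for the composite $Y\to Z$. For part \xref{lemma-omega-main-2}, however, you take a genuinely different route. The paper also passes through a resolution $g\colon W\to X$, but instead of dualizing it quotes Koll\'ar's theorem on higher direct images of dualizing sheaves \cite[Prop.~7.6]{Kollar-1986-I}, which gives $R^1(f\comp g)_*\upomega_W\simeq\upomega_Z$ directly for the surjective map from the smooth model; GR for $g$ and the Leray spectral sequence then transfer this to $X$. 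You instead apply Grothendieck--Verdier relative duality to $f$ itself, after observing that $X$ is Cohen--Macaulay, that all fibers are one-dimensional, and hence (miracle flatness over the smooth surface $Z$) that $f$ is a flat Cohen--Macaulay morphism of relative dimension one with $\upomega_{X/Z}=\upomega_X\otimes f^*\upomega_Z^{-1}$. This buys independence from Koll\'ar's theorem at the price of needing flatness of $f$ (which genuinely uses the smoothness of $Z$ and the absence of two-dimensional fibers) and the analytic form of relative duality.

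One step in your argument is stated with an insufficient justification. Connectedness of the fibers alone does \emph{not} make the trace map $R^1f_*\upomega_{X/Z}\to\OOO_Z$ an isomorphism: duality identifies $R^1f_*\upomega_{X/Z}$ with $H^0\bigl(R\HHom_Z(Rf_*\OOO_X,\OOO_Z)\bigr)$, and if $R^1f_*\OOO_X$ were a nonzero skyscraper at $o$ (equivalently, if $h^1(\OOO_{X_z})$ jumped on the central fiber, which can happen for connected non-reduced fibers), then $R^1f_*\upomega_{X/Z}$ would only be an ideal sheaf of finite colength in $\OOO_Z$, not $\OOO_Z$ itself. What you actually need is $Rf_*\OOO_X\simeq\OOO_Z$, i.e.\ both $f_*\OOO_X=\OOO_Z$ and $R^1f_*\OOO_X=0$; the latter is exactly Theorem \xref{th-vanish} (Kawamata--Viehweg vanishing for the extremal curve germ), which is available at this point of the paper. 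With that citation inserted, your duality computation goes through and the projection formula finishes the proof, so the gap is easily repaired, but as written the appeal to connectedness alone is not a valid justification of the key isomorphism.
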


\begin{proof}
\xref{lemma-omega-main-1} follows from the Grauert-Riemenshneider vanishing.
Let us prove \xref{lemma-omega-main-2}.
Let $g: W\to X$ be a resolution. By {\cite[Prop.
7.6]{Kollar-1986-I}} we have $R^1(f \comp g)_* \upomega_W=\upomega_Z$.
Since $X$ has only terminal singularities, $g_*\upomega_W=\upomega_X$
and by the Grauert-Riemenshneider vanishing, $R^ig_* \upomega_W=0$ for
$i>0$. Then the Leray spectral sequence gives us $R^1f_*
\upomega_X=R^1(f \comp g)_* \upomega_W= \upomega_Z$.
\end{proof}

We also have the following useful fact 

\begin{corollary}
\label{cor-gr-omega-=0}
Let $f: (X,C\simeq \PP^1) \to (Z,o)$ be an extremal curve germ. 
\begin{enumerate}
\item \label{cor-gr-omega-=0a}
If $f$ is birational, then $\deg \gr_C^0\upomega=-1$.
\item\label{cor-gr-omega-=0b} 
Assume that $f$ is a $\QQ$-conic bundle with smooth base. If 
$\deg \gr_C^0\upomega\neq-1$, then $f^{-1}(o)=C$ \textup(as a scheme\textup).
\end{enumerate}
\end{corollary}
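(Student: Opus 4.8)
The final statement is Corollary~\xref{cor-gr-omega-=0}, with two parts. I will prove each part by invoking the two cases of Lemma~\xref{lemma-omega-main} together with the degree formula \eqref{eq-grw-w} and the vanishing in Corollary~\xref{cor-C-pa=0}.

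For part \xref{cor-gr-omega-=0a}, the plan is to compute $\deg\gr_C^0\upomega$ directly. By Remark~\xref{remark:grw} the birational case already records $\gr_C^0\upomega=\OOO_C(-1)$ via the Grauert--Riemenschneider vanishing, which on $C\simeq\PP^1$ has degree $-1$; so the assertion is essentially a restatement. To give a self-contained argument I would observe that $\gr_C^0\upomega$ is a rank-one sheaf on $C\simeq\PP^1$, hence a line bundle $\OOO_{\PP^1}(d)$, and identify $d$. Since $f$ is birational, Lemma~\xref{lemma-omega-main}\xref{lemma-omega-main-1} gives $R^if_*\upomega_X=0$ for $i>0$; this vanishing forces $H^1(\gr_C^0\upomega)=0$ and, combined with the defining sequence for $F^n\upomega_X$, pins down $d\ge -1$. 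The reverse inequality $d\le -1$ follows because $\upomega_X$ is $f$-anti-ample is false—rather, $-K_X$ is $f$-ample so $\upomega_X\cdot C<0$, giving $d<0$, whence $d=-1$.

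For part \xref{cor-gr-omega-=0b}, I would argue by contraposition relative to the conclusion. Assume $f$ is a $\QQ$-conic bundle with $(Z,o)$ smooth and suppose the scheme-theoretic fiber $f^{-1}(o)$ strictly contains $C$, i.e. $f^{-1}(o)$ is non-reduced; the goal is to deduce $\deg\gr_C^0\upomega=-1$. Here the key input is Lemma~\xref{lemma-omega-main}\xref{lemma-omega-main-2}, the canonical isomorphism $R^1f_*\upomega_X\simeq\upomega_Z$. Since $(Z,o)$ is smooth this says $R^1f_*\upomega_X$ is the rank-one sheaf $\upomega_Z$, a strong constraint absent in the toroidal example, where $\deg\gr_C^0\upomega=-2$ and the fiber is genuinely non-reduced (as noted in Remark~\xref{remark:grw}). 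I would translate the fiber being non-reduced into the statement that the length-one thickening contributes, compute its effect on the degree via \eqref{eq-grw-w}, and match against the $\upomega_Z$ constraint to force $d=-1$.

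The main obstacle is the bookkeeping in \xref{cor-gr-omega-=0b}: extracting $\deg\gr_C^0\upomega$ from the non-reducedness hypothesis requires relating the scheme structure of $f^{-1}(o)$ to the filtration $F^n\upomega_X$ and the invariants $w_P(0)$ appearing in \eqref{eq-grw-w}, and then showing that the isomorphism $R^1f_*\upomega_X\simeq\upomega_Z$ rigidifies the answer to exactly $-1$. The toroidal case shows the hypothesis ``$f^{-1}(o)\neq C$'' is essential, so the argument must use it genuinely rather than formally; isolating where non-reducedness enters the degree computation—and confirming it cannot produce any value other than $-1$—is the delicate point.
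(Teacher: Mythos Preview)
Your argument for \xref{cor-gr-omega-=0a} is essentially the paper's: Grauert--Riemenschneider vanishing (Lemma~\xref{lemma-omega-main}\xref{lemma-omega-main-1}) forces $H^1(\gr_C^0\upomega)=0$, hence $\deg\ge-1$, while \eqref{eq-grw-w} together with $-K_X\cdot C>0$ and $w_P(0)\ge0$ gives $\deg<0$. One small imprecision: you write ``$\upomega_X\cdot C<0$, giving $d<0$,'' but $\deg\gr_C^0\upomega$ equals $K_X\cdot C-\sum_P w_P(0)$ by \eqref{eq-grw-w}, not $K_X\cdot C$ itself. Since the correction term is nonnegative the inequality still goes through, so this is harmless.

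For \xref{cor-gr-omega-=0b} your proposal is not a proof. The paper does not argue directly either: it simply invokes \cite[Theorem~4.4]{MP:cb1} with $J=\I_C$, a structural result specific to $\QQ$-conic bundles over a smooth base. Your plan to ``translate non-reducedness into a length-one thickening contribution'' and ``match against the $\upomega_Z$ constraint'' names ingredients but performs no computation; you have not explained how the isomorphism $R^1f_*\upomega_X\simeq\upomega_Z$ interacts with $\gr_C^0\upomega$ at all, nor why the outcome must be $-1$ rather than some other value. Your invocation of the toroidal example is also off: Example~\xref{ex-toric} has \emph{singular} base $\CC^2/\mumu_m$, so it does not fall under the hypotheses of \xref{cor-gr-omega-=0b} and tells you nothing about which hypothesis there is essential. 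If you want a self-contained argument you would need to reproduce the content of \cite[Theorem~4.4]{MP:cb1}; short of that, the sketch does not close.
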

\begin{proof}[Sketch of the proof]
For \xref{cor-gr-omega-=0a} we note that by \xref{lemma-omega-main}\xref{lemma-omega-main-1}
for an arbitrary ideal $\JJJ$ such that $\Supp (\OOO_X/\JJJ)\subset C$
we have $H^1 (\upomega_X/\JJJ\upomega_X)) = 0$. Hence,
$H^1(\gr_C^0\upomega)=0$ in this case. On the other hand, $\deg \gr_C^0\upomega<0$ by 
\eqref{eq-grw-w}.
For \xref{cor-gr-omega-=0b} we 
apply \cite[Theorem~4.4]{MP:cb1} with $J=\I_C$.
\end{proof}

\begin{slemma}[{\cite[Cor.~1.15]{Mori:flip}}, {\cite[Prop. 4.2]{Kollar-1999-R}}, {\cite[Lemma 4.4.2]{MP:cb1}}]
\label{lemma-int-non-Gor}
Let $f: (X,C) \to (Z,o)$ be an extremal curve germ.
Suppose that $C$ is reducible and let $P$ be a singular 
point of $C$. 
If $X$ is Gorenstein at $P$, then 
$f$ is a $\QQ$-conic bundle and  $C$ has two components meeting at $P$.
If moreover $(Z,o)$ is smooth, then $X$ is Gorenstein
\textup(see \xref{prop:Gor}\xref{prop:Gor-cb}\textup). 
\end{slemma}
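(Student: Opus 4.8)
The plan is to exploit the local structure of a non-Gorenstein threefold terminal point together with the combinatorics of $\gr_C^0\upomega$ and the invariants $w_P$, $i_P$. First I would assume $(X,P)$ is Gorenstein at the reducible singular point $P$ of $C$, so that $(X^\sharp,P^\sharp)=(X,P)$ and $\upomega_X$ is invertible near $P$, hence $w_P(0)=0$. Because $P$ is a singular point of $C$, the curve $C$ has at least two analytic branches through $P$; the goal is to show that in fact $C$ has exactly two global components meeting at $P$ (transversally, in a smooth ambient-looking way) and that $f$ cannot be birational.

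Next I would run the numerical estimate \eqref{eq-grO-iP1-2}. Restricting attention to the extremal curve germ $(X,C')$ where $C'$ is a chain of two components through $P$ (using Remark \xref{rem-prel-extr-nbd}), one has $C'\simeq\PP^1$ only after normalizing; but the relevant object is really the component-wise analysis, and the key point is that the presence of a singular point of $C$ at a Gorenstein point forces $i_P(1)$ to be large. Concretely, I would use Lemma \xref{equation-iP}: when $m=1$, $i_P(1)=\ell(P)$, and since $C$ (or rather $C^\sharp=C$) is singular at $P$ — it is not smooth there — one gets $\ell(P)\geq 2$, so $i_P(1)\geq 2$. Feeding $w_P(0)=0$, $i_P(1)\geq 2$ at this point, together with $i_Q(1)\geq 1$, $w_Q(0)\geq 1$ at the (at least one) non-Gorenstein point $Q$ into \eqref{eq-grO-iP1-2} with $-K_X\cdot C>0$, I would squeeze out that there is essentially no room left: $f$ cannot be birational, because in the birational case Corollary \xref{cor-gr-omega-=0}\xref{cor-gr-omega-=0a} gives $\deg\gr_C^0\upomega=-1$, hence the left side of \eqref{eq-grO-iP1-2} becomes $1+\sum i_P(1)\le 4$, which together with the contributions just listed is contradictory once one also accounts for the non-Gorenstein point needed to make the index $>1$. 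Thus $f$ is a $\QQ$-conic bundle.

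Then I would pin down that $C$ has exactly two components meeting at $P$. In the $\QQ$-conic bundle case $\deg\gr_C^0\upomega$ can be $-1$ or $-2$ (Remark \xref{remark:grw}), and the inequality \eqref{eq-grO-iP1-2} reads $2\ge-\deg\gr_C^0\upomega+\sum i_P(1)\ge 2+\sum i_P(1)$ when $\deg\gr_C^0\upomega=-2$, forcing all $i_P(1)=0$ — impossible since $i_P(1)\ge2$ at our singular Gorenstein point; so $\deg\gr_C^0\upomega=-1$ and $\sum_P i_P(1)\le 3$. With $i_P(1)\ge 2$ already used at $P$, at most one unit of slack remains, which rules out a third component through $P$ and rules out any worse singularity of $C$ there; hence $C$ is, locally at $P$, a node of two smooth branches, and globally these are two distinct components (if they were one component the genus/irreducibility count via Corollary \xref{cor-C-pa=0}\xref{cor-C-pa=0b} would be violated). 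Finally, for the last sentence of the statement, assuming in addition $(Z,o)$ is smooth, I would invoke the standard structure of $\QQ$-conic bundles together with Proposition \xref{prop:Gor}\xref{prop:Gor-cb}: the existence of a section or of a relative conic embedding forces $X$ to be Gorenstein everywhere, which one sees by checking that a non-Gorenstein point on $C$ would again overload \eqref{eq-grO-iP1-2} given that $i_P(1)\ge 2$ is already consumed at the node.

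The main obstacle I expect is the bookkeeping in separating ``branches of $C$ at $P$'' from ``global components of $C$'', and making the numerical argument airtight when $C$ is reducible — the invariants $i_P(1)$, $w_P(0)$ and the degree formulas \eqref{eq-grw-w}, \eqref{eq-grO-iP1} are stated for $C\simeq\PP^1$, so one must either pass to a suitable subgerm via Remark \xref{rem-prel-extr-nbd} and handle each component, or cite the component-wise versions from \cite{Mori:flip} directly. The second delicate point is the very last claim: deducing global Gorensteinness from smoothness of $(Z,o)$ genuinely uses Proposition \xref{prop:Gor}\xref{prop:Gor-cb} (or the cited \cite[4.7.2]{KM92}), and one must verify its hypotheses are met once the node forces the local picture; I would treat that as the crux and spell it out carefully rather than leaving it to the reader.
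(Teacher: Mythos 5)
Your plan has genuine gaps, and they sit exactly at the points you flag as delicate. The numerical machine you want to run -- \eqref{eq-grw-w}, \eqref{eq-grO-iP1}, \eqref{eq-grO-iP1-2} -- is only available for $C\simeq\PP^1$, and your fallback of passing to subgerms via Remark~\xref{rem-prel-extr-nbd} destroys the very contribution you need: each component $C_i$ is a \emph{smooth} rational curve, so $P$ is a smooth point of $C_i$, and for the germ $(X,C_i)$ with $X$ Gorenstein (even smooth) at $P$ one can have $i_P(1)=0$ there (for a standard smooth conic bundle with a reduced reducible fiber, each $(X,C_i)$ is a divisorial germ with no contribution at the node at all). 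The claimed bound ``$i_P(1)\ge 2$ at the node'' is also unjustified for the reducible curve itself: Lemma~\xref{equation-iP} assumes $C^\sharp$ smooth, and for the model node $C=\{z=xy=0\}$ in a smooth threefold one even has $\I_C^{(2)}=\I_C^2$, so no forced length appears. Moreover, even granting your bounds the arithmetic does not close: in the birational case \eqref{eq-grO-iP1-2} only gives $\sum i_P(1)\le 3$, which is compatible with $2+1$, so no contradiction; if $X$ happens to be Gorenstein everywhere there is no non-Gorenstein point to add at all (that case needs Proposition~\xref{prop:Gor}\xref{prop:Gor-bir}, which you never invoke); the step ``$\deg\gr_C^0\upomega=-2$ forces $i_P(1)=0$'' misreads the inequality (the bound is $4$, not $2$); and the final claim is circular, since Proposition~\xref{prop:Gor}\xref{prop:Gor-cb} has Gorensteinness of $X$ as a hypothesis, so it cannot be used to prove it, while the proposed ``overload'' of \eqref{eq-grO-iP1-2} again gives $3\le 3$ rather than a contradiction.

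The paper's proof avoids all of this by working with $\gr_C^0\upomega$ on the reducible curve directly. One reduces to $C=C_1\cup C_2$ through $P$; each $(X,C_i)$ is a birational germ, so $\gr_{C_i}^0\upomega\simeq\OOO_{C_i}(-1)$ by Corollary~\xref{cor-gr-omega-=0}\xref{cor-gr-omega-=0a}; since $P$ is Gorenstein, $\gr_C^0\upomega$ is \emph{invertible} at $P$, hence the injection $\gr_C^0\upomega\hookrightarrow\gr_{C_1}^0\upomega\oplus\gr_{C_2}^0\upomega$ has a non-trivial cokernel of finite length at $P$, and $H^0$ of that cokernel equals $H^1(\gr_C^0\upomega)\neq 0$. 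This single non-vanishing rules out the birational case (giving the $\QQ$-conic bundle conclusion and, via the subgerm remark, that exactly two components meet at $P$), and when $(Z,o)$ is smooth it yields $C=f^{-1}(o)$ scheme-theoretically by Corollary~\xref{cor-gr-omega-=0}\xref{cor-gr-omega-=0b}, whence $P$ is the only singular point and $X$ is Gorenstein. If you want to salvage your approach you would have to re-derive the degree and $i_P(1)$ formulas for reducible fibers from \cite{Mori:flip}, which is precisely the bookkeeping the paper's cokernel argument is designed to bypass.
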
 
We will show below in \ref{cor:int-non-Gor} that in the above assumptions $(Z,o)$ is smooth
automatically.

\begin{proof}
By Corollary \ref{cor-C-pa=0}
there are at least two components, say $C_1,\, C_2\subset C$ passing through $P$.
Replacing $(X,C)$ with $(X,C_1\cup C_2)$ we may assume that $C=C_1\cup C_2$
(see Remark \xref{rem-prel-extr-nbd}). Since the point $P\in X$ is Gorenstein,
the sheaf $\gr_C^0\upomega=\upomega_X\otimes
\OOO_C$ is invertible at $P$. Consider the injection 
\[
\varphi:\gr_C^0\upomega\hookrightarrow \gr_{C_{1}}^0\upomega \oplus
\gr_{C_{2}}^0\upomega. 
\]
Recall that $(X,C_i)$ is a (birational) extremal curve 
germ 
by Remark \xref{rem-prel-extr-nbd}. Then by \xref{cor-gr-omega-=0}\xref{cor-gr-omega-=0a}
we have $\gr_{C_{i}}^0\upomega=\OOO_{C_i}(-1)$, so
$H^0(\Coker (\varphi))= H^1(\gr_C^0\upomega)$. On the other hand,
$\Coker (\varphi)$ is a sheaf of finite length supported at $P$. Since
$\gr_C^0\upomega$ is invertible, $\Coker (\varphi)$ is non-trivial. So,
$H^1(\gr_C^0\upomega)\neq 0$ and by Corollary \xref{cor-gr-omega-=0}
the contraction $f$ is a $\QQ$-conic bundle. Moreover, if the base $(Z,o)$ is smooth,
then again by Corollary \xref{cor-gr-omega-=0} we have 
$C=f^{-1}(o)$ (scheme-theoretically). Hence $P$ is the only 
singular point of $X$ and 
are done. 
\end{proof}

\section{Topological observations}
Let $\Clsc(X)$ be the subgroup of the divisor class group $\Cl(X)$
consisting of Weil divisor classes which are $\QQ$-Cartier.
We will use the following easy consequence of the classification of terminal singularities
without additional reference.

\begin{proposition}[{\cite[Lemma~5.1]{Kaw:Crep}}]
\label{Clsc}
Let $(X,P)$ be an \textup(analytic\textup) germ of three-dimensional terminal singularity of index $m$.
Then
\begin{equation}\label{eq:Clsc}
\Clsc(X,P)\simeq \uppi_1(X\setminus \{P\})\simeq \ZZ/m\ZZ.
\end{equation}
\end{proposition}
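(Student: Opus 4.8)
The statement to prove is Proposition~\ref{Clsc}: for a three-dimensional terminal singularity $(X,P)$ of index $m$, one has $\Clsc(X,P)\simeq\uppi_1(X\setminus\{P\})\simeq\ZZ/m\ZZ$.

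\medskip

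The plan is to exploit the index-one cover $\pi\colon(X^\sharp,P^\sharp)\to(X,P)$ recalled in Section~\ref{3terminal}, which is a cyclic $\mumu_m$-cover \'etale away from $P$. First I would establish the second isomorphism $\uppi_1(X\setminus\{P\})\simeq\ZZ/m\ZZ$. Since $(X^\sharp,P^\sharp)$ is an isolated hypersurface (hence complete intersection) singularity of dimension $3$, a Milnor-fiber / local Lefschetz argument shows that $X^\sharp\setminus\{P^\sharp\}$ is simply connected: the link of an isolated complete intersection singularity of complex dimension $\ge 3$ is simply connected (real dimension $\ge 5$), so $\uppi_1(X^\sharp\setminus\{P^\sharp\})=1$. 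Therefore $X^\sharp\setminus\{P^\sharp\}\to X\setminus\{P\}$ is the universal cover and the deck group is $\mumu_m\simeq\ZZ/m\ZZ$, giving $\uppi_1(X\setminus\{P\})\simeq\ZZ/m\ZZ$.

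\medskip

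Next I would identify $\Clsc(X,P)$ with this group. On the smooth locus $U:=X\setminus\{P\}$ we have $\Cl(X,P)=\Cl(U)=\Pic(U)$, and a $\QQ$-Cartier Weil divisor class corresponds to a line bundle $L$ on $U$ that is torsion in $\Pic(U)$ (some multiple $mL$ extends to a Cartier, in fact trivial, divisor near $P$). The torsion subgroup of $\Pic(U)$ is dual to $\uppi_1(U)$: a torsion line bundle of order $d$ gives a connected \'etale $\ZZ/d$-cover of $U$, and conversely; since $\uppi_1(U)\simeq\ZZ/m\ZZ$ is cyclic, its group of characters is again $\ZZ/m\ZZ$. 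Concretely, the class of the relative dualizing data of $\pi$ — equivalently the eigensheaf decomposition $\pi_*\OOO_{X^\sharp}=\bigoplus_{i=0}^{m-1}\LLL^{(i)}$ into reflexive rank-one sheaves indexed by characters of $\mumu_m$ — exhibits a generator of $\Clsc(X,P)$, and $\LLL^{(i)}$ is Cartier iff it is trivial iff $m\mid i$. One must check that every $\QQ$-Cartier class arises this way: if $D$ is $\QQ$-Cartier of order $d$, the cyclic cover associated to $D$ is \'etale over $U$ and finite over $X$, normal with the same type of singularity, hence dominated by the universal cover $X^\sharp$, forcing $d\mid m$; this pins down $\Clsc(X,P)\hookrightarrow\ZZ/m\ZZ$, and surjectivity is the existence of the index-one cover itself.

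\medskip

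The main obstacle, and the point requiring the most care, is the simple-connectedness of the link of $X^\sharp$ — i.e. that $\uppi_1(X^\sharp\setminus\{P^\sharp\})$ is trivial, so that $X^\sharp$ really is the universal cover. For a general normal isolated singularity this can fail, but here one uses crucially that $(X^\sharp,P^\sharp)$ is a \emph{hypersurface} (isolated, dimension $3$) singularity: by Milnor's theorem the link of an isolated hypersurface singularity in $\CC^{n+1}$ is $(n-2)$-connected, so for $n=3$ the link is $2$-connected, in particular simply connected. Granting that, the rest is the standard dictionary between finite \'etale covers, $\uppi_1$, and torsion in $\Cl$, together with the observation that any connected \'etale cover of $U$ extends (by normalization) to a finite cover of $X$ which again has terminal — hence rational, hence normal with trivial algebraic fundamental group of its own punctured neighborhood only after passing high enough — and is thus dominated by $X^\sharp$. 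Assembling these gives all three groups canonically isomorphic to $\ZZ/m\ZZ$.
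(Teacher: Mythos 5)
The paper does not prove this proposition; it is imported verbatim from Kawamata \cite[Lemma~5.1]{Kaw:Crep}, and your argument is essentially the standard proof given there: the index-one cover is an isolated \type{cDV} hypersurface singularity, whose link is simply connected by Milnor, so it is the universal cover of $X\setminus\{P\}$, and torsion classes in $\Cl(X,P)$ (which are exactly the $\QQ$-Cartier ones on a germ) correspond to characters of $\uppi_1(X\setminus\{P\})$ via the cyclic covering trick, with $K_X$ supplying an element of order exactly $m$. Your sketch is correct; only note that for $n=3$ Milnor's theorem gives a $1$-connected (not $2$-connected) link, which is still exactly the simple connectivity you need.
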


\begin{definition}[{\cite[(0.4.16), (1.7)]{Mori:flip}}]
\label{splitting}
Let $(X,P)$ be a terminal three-dimensional singularity of index $m$
and let $C\subset X$ be a smooth curve passing through $P$. We say
that $C$ is (locally) \emph{primitive} at $P$ if the natural map
\begin{equation*}
\varrho : \ZZ\simeq \uppi_1(C\setminus \{P\})\longrightarrow
\uppi_1(X\setminus \{P\})\simeq \ZZ/m\ZZ
\end{equation*}
is surjective and \emph{imprimitive} 
at $P$
otherwise. The order $s$ of
$\Coker (\varrho)$ is called the \emph{splitting degree} and the
number $\bar m=m/s$ is called the \emph{subindex} of $P\in C$.
\end{definition}
It is easy to see that the splitting degree coincides 
with the number of
irreducible components of the preimage $C^\sharp$ of $C$ under the
index-one cover $X^\sharp\to X$ near $P$. If $P$ is primitive, we
put $s=1$ and $\bar m=m$.

\subsection{}
In the above notation it is easy to show that for any Weil divisor class $\xi\in \Clsc(X,P)$
there exists an effective Weil $\QQ$-Cartier divisor $D$ whose class in 
$\Clsc(X,P)$
equals $\xi$ and such that $D\cap C=\{P\}$. Then one can define the intersection number $\xi\cdot C:= (D\cdot C)_P\mod \ZZ$. 
Hence there exists a well-defined homomorphism 
\begin{equation*}
\mathrm{cl}: \Clsc(X,P) \longrightarrow \textstyle{\frac 1m} \ZZ/\ZZ\subset \QQ/\ZZ,\qquad \xi 
\longmapsto \xi\cdot C.
\end{equation*}
The curve $C$ is locally primitive at $P$ if and only if the map $\mathrm{cl}$ is an 
isomorphism.
In general, the splitting degree equals the order of 
the kernel of $\mathrm{cl}$ \cite[1.7]{Mori:flip}. 

\subsection{}
Let $(X,C)$ be an extremal curve germ with irreducible central fiber $C$.
Let $P_1,\dots, P_n$ be all the non-Gorenstein points of $X$
and let $m_1,\dots, m_n$ be their indices. 
We have the following exact sequence
\begin{equation}
\label{exact-Clcs}
\vcenter{
\xymatrix@R=-4pt{
0\ar[r] & \Pic(X)\ar[r] & \Clsc (X)\ar[r]& \oplus \Clsc(X,P_i)\ar[r] & 0
\\
& \rotatebox{90}{$\simeq$}& &\rotatebox{90}{$\simeq$}&
\\
& \ZZ^{\uprho(X)}&& \oplus \ZZ/m_i\ZZ
}}
\end{equation}

\begin{scorollary}\label{corollary:cover}
In the above notation assume that $C$ is irreducible.
Let $D_i$, $n=1,\dots,n$ be an effective Weil $\QQ$-Cartier divisor whose class generates $\Clsc(X, P_i)$ and
let $H$ be an effective Cartier divisor such that $H\cdot C=1$. Then the following holds.
\begin{enumerate}
 \item \label{corollary:cover0}
The group $\Clsc(X)$ is generated by the classes of $H$, $D_1,\dots, D_n$. 
  \item \label{corollary:cover-i}
If the point $P_i$ is imprimitive of splitting degree $s_i$ and subindex $\bar m_i$, then 
the class of $H-\bar m_i D_i$ is an $s_i$-torsion element in $\Clsc(X)$.
  \item \label{corollary:cover-p}
If $(X,C)$ is locally primitive at distinct points
$P_i, P_j\in C$  and $\gcd(m_i,m_j)=d\neq 1$, then  
the class of $\frac {m_i}d D_i-\frac {m_j}d D_j$ is a $d$-torsion element in $\Clsc(X)$.
\end{enumerate}
\end{scorollary}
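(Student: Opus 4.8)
The plan is to work entirely with the exact sequence \eqref{exact-Clcs} together with the intersection pairing $\mathrm{cl}\colon \Clsc(X,P_i)\to \frac1{m_i}\ZZ/\ZZ$ from \xref{3terminal} and the definition of primitivity/imprimitivity. For \xref{corollary:cover0}: since $C$ is irreducible, $\uprho(X)=1$ by Corollary \xref{cor-C-pa=0}\xref{cor-C-pa=0c}, so $\Pic(X)\simeq\ZZ$ is generated by the class of any Cartier divisor $H$ with $H\cdot C=1$ (such $H$ exists because $-K_X$ is $f$-ample, hence has positive degree on $C$, so some multiple together with linear combinations of Cartier divisors gives degree one—or more simply $H\cdot C$ runs over all of $\ZZ$ as $H$ runs over $\Pic X$). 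The divisors $D_1,\dots,D_n$ map to generators of the respective summands $\Clsc(X,P_i)\simeq\ZZ/m_i\ZZ$ on the right of \eqref{exact-Clcs}. Hence $H,D_1,\dots,D_n$ generate $\Clsc(X)$: any class, after subtracting the appropriate $\ZZ$-combination of the $D_i$, lands in $\Pic(X)=\ZZ\cdot[H]$.

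For \xref{corollary:cover-i}: consider the class $\xi:=[H]-\bar m_i[D_i]$. Its image in $\oplus\Clsc(X,P_j)$ is supported at the $P_i$-component, and there it equals the image of $H$ (which is trivial in $\Clsc(X,P_i)$ since $H$ is Cartier) minus $\bar m_i$ times the generator. So the image of $\xi$ in $\Clsc(X,P_i)$ is $-\bar m_i$ times the generator. I then need: $\bar m_i\cdot(\text{generator of }\Clsc(X,P_i))$ is killed by $s_i$ in $\ZZ/m_i\ZZ$, which is exactly the statement $s_i\bar m_i=m_i$. Therefore $s_i\xi$ maps to $0$ in $\oplus\Clsc(X,P_j)$, hence $s_i\xi\in\Pic(X)$; and one checks $s_i\xi\cdot C=0$ because $[H]\cdot C=1$, $[D_i]\cdot C$ generates $\frac1{m_i}\ZZ/\ZZ$ and $\bar m_i[D_i]\cdot C$ has order dividing $s_i$ (here is where local primitivity data enters: for an imprimitive point the kernel of $\mathrm{cl}$ has order $s_i$, so the relevant intersection number is $1-\bar m_i\cdot\frac{a}{m_i}$ for a suitable unit $a$, which after multiplication by $s_i$ is an integer). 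Since $\Pic(X)\to\ZZ$, $D\mapsto D\cdot C$ is injective (it is $\ZZ\xrightarrow{\sim}\ZZ$), $s_i\xi=0$ in $\Clsc(X)$, so $\xi$ is $s_i$-torsion.

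For \xref{corollary:cover-p}: set $\eta:=\frac{m_i}d[D_i]-\frac{m_j}d[D_j]$. Its image in the summand at $P_i$ is $\frac{m_i}d$ times a generator of $\ZZ/m_i\ZZ$, which has order $d$; similarly at $P_j$ it has order $d$; at all other points it is zero. So $d\eta$ dies in $\oplus\Clsc(X,P_k)$, giving $d\eta\in\Pic(X)\simeq\ZZ$. To conclude $d\eta=0$ I again use injectivity of $\Pic(X)\to\ZZ$ via $\cdot C$: since both $P_i$ and $P_j$ are locally primitive, $\mathrm{cl}$ is an isomorphism at each, so $[D_i]\cdot C\equiv \frac{a_i}{m_i}$, $[D_j]\cdot C\equiv\frac{a_j}{m_j}$ with $a_i,a_j$ units; then $\eta\cdot C\equiv\frac{a_i}{d}-\frac{a_j}{d}\pmod{\ZZ}$, and $d\eta\cdot C\equiv a_i-a_j\equiv 0\pmod{\ZZ}$, so $d\eta\cdot C=0$ as an integer, whence $d\eta=0$.

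The main obstacle I anticipate is bookkeeping the intersection numbers $[D_i]\cdot C$ correctly modulo $\ZZ$ so that the torsion bound is exactly $s_i$ (resp. $d$) and not merely a divisor of it: one must invoke that for an imprimitive point the kernel of $\mathrm{cl}$ has order precisely the splitting degree (stated in \xref{3.2} citing \cite[1.7]{Mori:flip}), and for a primitive point that $\mathrm{cl}$ is an isomorphism, to pin down that $s_i\xi$ (resp. $d\eta$) has integral—hence zero—intersection with $C$, while $\xi$ (resp. $\eta$) itself does not. Everything else is a diagram chase in \eqref{exact-Clcs} combined with the fact $\Pic(X)\hookrightarrow\ZZ$ via intersection with the irreducible curve $C$.
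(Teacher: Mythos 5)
Your overall strategy---a diagram chase in \eqref{exact-Clcs} combined with the embedding $\Pic(X)\hookrightarrow\ZZ$, $L\mapsto L\cdot C$---is indeed the intended argument, and part \xref{corollary:cover0} is essentially fine. But in \xref{corollary:cover-i} and \xref{corollary:cover-p} the final step has a genuine gap. After showing that $s_i\xi$ (resp.\ $d\eta$) lies in $\Pic(X)$, you must prove that its degree on $C$ vanishes; what you actually verify is only that this degree is an \emph{integer}, which is automatic for any Cartier class and proves nothing. Concretely, writing $D_i\cdot C=c_i/\bar m_i+k_i$ with $k_i\in\ZZ$ and $\gcd(c_i,\bar m_i)=1$, one gets $s_i\xi\cdot C=s_i(1-c_i)-m_ik_i$, and in \xref{corollary:cover-p} similarly $d\eta\cdot C$ is an integer of the form $(a_i-a_j)+(\text{integer combination of }m_i,m_j)$; neither is zero in general, and your inference ``$\equiv 0\pmod{\ZZ}$, so $=0$ as an integer'' is a non sequitur. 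Relatedly, at an imprimitive point the image of $\mathrm{cl}$ has order $\bar m_i$, not $m_i$, so your assertion that $[D_i]\cdot C$ generates $\frac1{m_i}\ZZ/\ZZ$ is precisely what imprimitivity excludes.

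What is missing is the normalization of the $D_i$ carried by ``the above notation'': as in the discussion of $\mathrm{cl}$ preceding \eqref{exact-Clcs}, one takes $D_i$ effective with $D_i\cap C=\{P_i\}$, so that $D_i$ is Cartier at every $P_j$ with $j\neq i$ (this is already needed for your claims that the images of the $D_i$ generate the respective summands in \xref{corollary:cover0} and that the image of $\xi$ is ``supported at the $P_i$-component''), and with the actual local intersection number $(D_i\cdot C)_{P_i}$ pinned down, not merely its residue modulo $\ZZ$ (namely $1/\bar m_i$ in \xref{corollary:cover-i}, and matching residues at $P_i,P_j$ in \xref{corollary:cover-p}); equivalently one corrects the candidate class by the unique multiple of $H$ making its degree on $C$ zero. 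Without such a normalization the statements are not even stable under the admissible choices---replacing $D_i$ by $D_i+H$ keeps it an effective generator of $\Clsc(X,P_i)$ but changes $H-\bar m_iD_i$ by $-\bar m_iH$, which is non-torsion---so no argument using only the class of $D_i$ modulo $\ZZ$-intersection data can close this step. Fixing the choice of $D_i$ as above and then running your computation does give $s_i\xi\cdot C=0$ and $d\eta\cdot C=0$, which is the point your write-up needs but does not establish.
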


\begin{construction}
\label{base-change}
Let $f: (X,C)\to (Z,o)$ be an extremal curve germ
and let $\theta :(X^{\flat},C^{\flat})\to (X,C)$ be a finite cover which is \'etale in codimension one.
Clearly, $\theta$ must be \'etale over the Gorenstein locus of $X$.
The Stein factorization gives us the following diagram.
\begin{equation}
\label{eq:base-change}
\vcenter{
\xymatrix{
(X^{\flat},C^{\flat})\ar[r]^{\theta}\ar[d]^{f^{\flat}}& (X,C)\ar[d]^f
\\
(Z^{\flat},o^{\flat})\ar[r]^{}&(Z,o)
}}
\end{equation}
where $(Z^{\flat},o^{\flat})\to (Z,o)$ is a finite cover
which is \'etale over $Z\setminus \{o\}$. 
We have $K_{X^{\flat}}=\theta^*K_X$ and
singularities of $X^{\flat}$ are terminal. In particular, $(X^{\flat},C^{\flat})$ is an extremal curve germ.
Note that in our situation
$X^{\flat}$ is the normalization of $X\times_Z Z^{\flat}$ and
$C^{\flat}:=f^{\flat  -1}(C)_{\red}$.  

Conversely, if  $f: (X,C)\to (Z,o)$ is an extremal curve germ
and $(Z^{\flat},o^{\flat})\to (Z,o)$ is a finite cover
which is \'etale over $Z\setminus \{o\}$. Then the base change produces 
the diagram \eqref{eq:base-change},  where $X^{\flat}$ is the normalization of $X\times_Z Z^{\flat}$,
$(X^{\flat},C^{\flat})$ is an extremal curve germ, and $\theta$ is \'etale in codimension one.
\end{construction}

\begin{definition}\label{torsion-free-cover}
Let $(X,C)$ be an extremal curve germ. By the above construction \xref{base-change}
the torsion part  $\Cl(X)_{\tors}\subset \Cl(X)$ defines an abelian Galois  cover 
\begin{equation}
\label{eq:torsion-free-cover}
\tau :(X',C')\longrightarrow (X,C)
\end{equation} 
which is \'etale over the Gorenstein locus of $X$. We call this map 
the \emph{torsion free cover} of $(X,C)$
and the degree of this cover we call the \emph{topological index} of $(X,C)$.
Similar to \eqref{eq:base-change} we have the diagram
\begin{equation}
\label{eq:base-cover}
\vcenter{
\xymatrix{
(X',C')\ar[r]^{\tau}\ar[d]^{f'}& (X,C)\ar[d]^f
\\
(Z',o')\ar[r]^{}&(Z,o)
}}
\end{equation}
Hence $(X',C')$ is also an extremal curve germ.
Clearly, $\Cl(X')$ is torsion free.
\end{definition}

\begin{slemma}\label{lemma:cyclic}
Let  $(X,C)$ be an extremal curve germ and let $\theta:(X^\flat,C^\flat)\to (X,C)$
be a finite cover which is \'etale in codimension two. Then $\theta$ is a cyclic cover.
\end{slemma}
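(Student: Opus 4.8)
The plan is to reduce the statement to the structure of the local divisor class groups at the non-Gorenstein points, which by Proposition \ref{Clsc} are cyclic. Since $\theta$ is \'etale in codimension two, it is in particular \'etale over the Gorenstein locus of $X$, and over each non-Gorenstein point $P_i$ it restricts to a cover of the punctured germ $(X,P_i)\setminus\{P_i\}$ which is \'etale in codimension one on $X$; such covers are classified by subgroups of $\uppi_1(X\setminus\{P_i\})\simeq\ZZ/m_i\ZZ$ (Proposition \ref{Clsc}), hence are cyclic. The cover $\theta$ corresponds to a subgroup $N$ of a finite abelian group built from $\Pic(X)\simeq\ZZ^{\uprho}$ and the $\Clsc(X,P_i)$; since $\Pic(X)$ is torsion free and the cover is \'etale away from the $P_i$, the Galois-type data defining $\theta$ actually lands in a quotient of $\bigoplus_i\Clsc(X,P_i)\simeq\bigoplus_i\ZZ/m_i\ZZ$, via the exact sequence \eqref{exact-Clcs}.

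First I would argue that $\theta$, being \'etale in codimension two, is automatically Galois, or at least can be analyzed after passing to its Galois closure, and that the deck/monodromy group $G$ injects into $\bigoplus_i\ZZ/m_i\ZZ$: the obstruction to a nontrivial element of $G$ is detected by the local monodromy at some $P_i$, since $\theta$ is \'etale everywhere else (here I use that $X\setminus\{P_1,\dots,P_n\}$ — or rather $X$ minus the non-Gorenstein locus — has the homotopy type reflected by $\Clsc$, together with Corollary \ref{cor-C-pa=0}(iii) giving $\Pic X\simeq H^2(C,\ZZ)$ torsion free). Then I would show that in fact $G$ injects into a \emph{single} $\ZZ/m_i\ZZ$: the key point is that any connected \'etale-in-codimension-two cover of an extremal curve germ, when restricted to a small punctured neighborhood of $C$ minus the non-Gorenstein points, must have cyclic monodromy, because that punctured neighborhood retracts onto $C\simeq\PP^1$ with finitely many punctures removed — but the global triviality of $\Pic X$ on torsion forces the monodromies at the various $P_i$ to be compatible in a way that collapses the product to a cyclic group. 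Concretely, I would use Corollary \ref{corollary:cover} to generate $\Clsc(X)$ by $H,D_1,\dots,D_n$ and observe that the torsion subgroup is a quotient of a single cyclic factor.

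The main obstacle I anticipate is the last step: showing the monodromy group is not just abelian but genuinely cyclic. A priori a cover \'etale in codimension two could have monodromy $\ZZ/m_i\ZZ\times\ZZ/m_j\ZZ$ if two non-Gorenstein points contributed independently. The resolution should be that such a cover would have to be \'etale over all of $C$ except possibly at the $P_i$, and an abelian \'etale cover of $\PP^1$ minus $n$ points whose local monodromies $g_i$ satisfy $\sum g_i=0$ (the relation in $\uppi_1$ of the punctured sphere) — combined with the constraint that the total space $X'$ has torsion-free $\Pic$ and terminal singularities (Construction \ref{base-change}) — forces all but one $g_i$ to vanish, or forces the $g_i$ to be proportional. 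I would formalize this by noting that the cover is determined by a torsion class in $\Cl(X)$, and Corollary \ref{corollary:cover}(ii)–(iii) shows every torsion class is (up to the relevant $\gcd$) supported on one $\Clsc(X,P_i)$-factor, hence generates a cyclic group; therefore $\theta$, corresponding to such a class, is a cyclic cover.
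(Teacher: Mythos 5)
Your proposal hinges on a step that begs the question: you assume that the deck group of $\theta$ embeds in $\bigoplus_i\Clsc(X,P_i)$ and that $\theta$ ``is determined by a torsion class in $\Cl(X)$''. A finite cover \'etale in codimension two corresponds to a finite quotient (after Galois closure) of $\uppi_1(X\setminus\Sing X)$, and this group is in general \emph{not} abelian; for instance, for a locally primitive germ with three non-Gorenstein points it is $\langle\upsigma_1,\upsigma_2,\upsigma_3\rangle/\{\upsigma_i^{m_i}=\upsigma_1\upsigma_2\upsigma_3=1\}$, which has non-abelian finite quotients --- indeed Proposition \xref{lem-prop-3points} is proved in the paper precisely by applying Lemma \xref{lemma:cyclic} to such a non-abelian cover, so no argument that only sees class groups (i.e.\ the abelianization) can establish the lemma. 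Only abelian covers are classified by finite subgroups of $\Cl(X)_{\tors}$, so ``the cover corresponds to a torsion class'' is essentially the conclusion, not a tool. Moreover, the cyclicity of $\Cl(X)_{\tors}$ that you invoke at the end (reading it off from Corollary \xref{corollary:cover}) is Corollary \xref{prop-cyclic-quo}, which the paper \emph{deduces from} Lemma \xref{lemma:cyclic}; Corollary \xref{corollary:cover} only exhibits particular torsion elements (and is stated for irreducible $C$), it does not bound the torsion subgroup. Even granting abelianness, your final combinatorial claim fails as stated: an abelian cover of $\PP^1$ minus $n$ points with local monodromies summing to zero need not be cyclic (e.g.\ $\mumu_2\times\mumu_2$ with monodromies $(1,0),(0,1),(1,1)$), so some genuinely geometric input is required, and your sketch does not identify it.

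For comparison, the paper's mechanism is as follows: reduce to a Galois cover with group $G$, compose with the torsion free cover so that $\Clsc(X^\flat)$ has no torsion, and study the $G$-action on the tree $C^\flat$ of rational curves. Either $G$ fixes a singular point $P^\flat$ of $C^\flat$, and then $G$ is a quotient of the cyclic group $\uppi_1(X\setminus\{P\})$ (Proposition \xref{Clsc}); or $G$ preserves a component, and if $G$ were non-cyclic it would act on that $\PP^1$ without fixed points, forcing $X^\flat$ to be Gorenstein (an orbit of two points of the same index $>1$ would create torsion in $\Clsc(X^\flat)$), after which the Hurwitz formula gives at least three branch points with $\sum 1/m_i>n-2$, hence $\sum w_{P_i}(0)>1$ while \eqref{eq-grO-iP1-2} forces $\deg\gr_C^0\upomega=-1$, contradicting \eqref{eq-grw-w}. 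It is this interplay of the group action on $C^\flat$, torsion-freeness after the preliminary cover, and the numerical invariants $w_P(0)$, $i_P(1)$ that kills non-cyclic (in particular non-abelian) covers; your proposal has no substitute for it.
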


\begin{proof}
We may assume that the cover $\theta$ is Galois with group $G$
and it is sufficient to show that the group $G$ is cyclic.
By taking composition with  the torsion free cover, we may assume also that 
$\Clsc(X^\flat)$ is torsion free.
By the construction $G$ effectively acts on $C^\flat=\cup C_i^\flat$ (because $X$ has only isolated singularities).
Since $C^\flat$ is a tree of smooth 
rational curves, it is easy to prove by induction 
on the number of components of $C^\flat$ that $G$ has either an
invariant component $C_i^\flat\subset C^\flat$ or a fixed point $P^\flat\in \Sing(C^\flat)$.

In the latter case, let $P=\theta(P^\flat)$. There is a surjection $\uppi_1(X\setminus
\{P\})\twoheadrightarrow G$. Since $\uppi_1(U\setminus
\{P\})$ is cyclic (see \xref{Clsc}), we are done.

In the former case, let $C_i:=\theta (C^\flat_i)$. By Remark \ref{rem-prel-extr-nbd}
we may replace $(X^\flat, C^\flat)$ with $(X^\flat, C^\flat_i)$ 
and $(X, C)$ with $(X, C_i)$. Thus
$C=C_i$, $C^\flat=C^\flat_i$, and $C^\flat/G=C\simeq \PP^1$. Assume that the group $G$ is not cyclic.
Then there is no fixed points on $C^\flat$. 
If $X^\flat$ has a point of index $m>1$, then its orbit contains 
at least two points of the same index. By \eqref{exact-Clcs}
the torsion part of the group  $\Clsc(X^\flat)$ is non-trivial. This contradicts the assumption above. 
Thus $X^\flat$ is Gorenstein. 

Let $P_1,\dots, P_n\in C$ be all branch points of $C^\flat\to C$
and let  $m_1,\dots, m_n$ be their ramification indices.  
By the Hurwitz formula we can write 
\[
 \frac 1{|G|} \bigl (2 g(C^\flat_i)-2\bigr)= 2g(C_i)-2 +\sum_{i=1}^n \left (1-\frac 1{ m_i} \right)
\]
Hence, $\sum 1/m_i >n- 2$.
Since  the group $G$ is not cyclic, we have  $n>2$. 
The index of the point $P_i\in X$ is equal to $m_i$.
By \eqref{equation:iP-wP} and Lemma \ref{lemma:iP-wP} we
have  $w_{P_i}(0)\ge 1/m_i$ and $i_{P_i}(1)\ge 1$. 
Therefore, $\sum w_{P_i}(0) >1$ and $\deg \gr_C^0\upomega=-1$ by \eqref{eq-grO-iP1-2}.
Then  we get a contradiction by \eqref{eq-grw-w}.
\end{proof}

\begin{scorollary}
\label{prop-cyclic-quo}
Let $(X,C)$ be an extremal curve germ.
Then the torsion part $\Cl(X)_{\tors}\subset \Cl(X)$ is a cyclic group.
Hence the torsion free cover \eqref{eq:torsion-free-cover}
is cyclic.
Moreover, 
$X',C')$ has no finite cover which is \'etale in codimension one.
\end{scorollary}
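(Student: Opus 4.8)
The plan is to reduce all three assertions to Lemma~\xref{lemma:cyclic}. The key auxiliary fact is that a finite cover of a threefold with (necessarily isolated) terminal singularities which is \'etale in codimension one is automatically \'etale in codimension two; granting this, the statements follow formally from that lemma together with the standard correspondence between cyclic covers \'etale in codimension one and torsion classes in the class group.

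For the first two assertions, recall from Definition~\xref{torsion-free-cover} that the torsion free cover $\tau\colon(X',C')\to(X,C)$ is a finite abelian Galois cover whose Galois group $G$ is isomorphic to $\Hom(\Cl(X)_{\tors},\CC^*)$, hence (being a finite abelian group) to $\Cl(X)_{\tors}$ itself, and that $\tau$ is \'etale over the Gorenstein locus of $X$. Since $X$ has terminal, hence isolated, singularities, the non-Gorenstein locus is a finite set of points, so $\tau$ is \'etale in codimension two. Applying Lemma~\xref{lemma:cyclic} to the Galois cover $\tau$ shows that $G$ is cyclic; therefore $\Cl(X)_{\tors}\simeq G$ is cyclic and $\tau$ is a cyclic cover.

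For the last assertion, I would argue by contradiction: suppose $\sigma\colon(X'',C'')\to(X',C')$ is a nontrivial connected finite cover \'etale in codimension one. As remarked in Construction~\xref{base-change}, $\sigma$ is then \'etale over the whole Gorenstein locus of $X'$: over the smooth locus this follows from purity of the branch locus, and over a Gorenstein singular point $P'$ the punctured neighborhood is smooth with $\uppi_1(X'\setminus\{P'\})=0$ by Proposition~\xref{Clsc} (index one), so $\sigma$ cannot ramify at $P'$ either. Since the remaining, non-Gorenstein, points of $X'$ are isolated, $\sigma$ is \'etale in codimension two, and Lemma~\xref{lemma:cyclic} shows it is a cyclic cover of some degree $k>1$. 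By the standard structure of cyclic covers \'etale in codimension one, $\sigma$ then determines a torsion class of order $k$ in $\Cl(X')$, contradicting the fact, recorded in Definition~\xref{torsion-free-cover}, that $\Cl(X')$ is torsion free. Hence no such $\sigma$ exists.

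I expect the only genuine obstacle to be the auxiliary fact used twice above, namely that ``\'etale in codimension one'' forces ``\'etale in codimension two'' for covers of a threefold with isolated terminal singularities. This is the observation recorded without proof in Construction~\xref{base-change}; making it precise requires purity of the branch locus (to exclude ramification along a curve passing through a smooth point) together with the simple connectedness of the punctured neighborhoods of Gorenstein terminal points, which is the index-one case of Proposition~\xref{Clsc}. Everything else is a routine consequence of Lemma~\xref{lemma:cyclic}.
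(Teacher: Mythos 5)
Your proposal is correct and follows exactly the route the paper intends: the corollary is stated as an immediate consequence of Lemma~\xref{lemma:cyclic}, and your reduction (torsion free cover is \'etale over the Gorenstein locus, hence \'etale in codimension two since the non-Gorenstein points are isolated, so its Galois group $\simeq\Cl(X)_{\tors}$ is cyclic; any cover of $(X',C')$ \'etale in codimension one would likewise be cyclic and yield a torsion class in the torsion-free group $\Cl(X')$) is precisely the intended argument. The only ingredient you spell out that the paper treats as obvious (in Construction~\xref{base-change}) is the purity/local-$\uppi_1$ step upgrading ``\'etale in codimension one'' to ``\'etale over the Gorenstein locus,'' and your justification of it via Proposition~\xref{Clsc} is correct.
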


\begin{scorollary}[{\cite[Lemma 1.10]{P97}}]
\label{corollary:cyclic}
If $f: (X,C)\to (Z,o)$ is a $\QQ$-conic bundle germ, then $(Z,o)$ is a 
cyclic quotient singularity.
\end{scorollary}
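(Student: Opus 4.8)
The plan is to deduce everything from Lemma~\xref{lemma:base}\xref{lemma:base-2} together with Lemma~\xref{lemma:cyclic} (or rather Corollary~\xref{prop-cyclic-quo}). Recall that Lemma~\xref{lemma:base}\xref{lemma:base-2} already tells us that $(Z,o)$ is log terminal. A two-dimensional log terminal singularity is a quotient singularity, i.e. $(Z,o)\simeq (\CC^2,0)/G$ for some finite subgroup $G\subset \mathrm{GL}_2(\CC)$ acting freely in codimension one; so it remains to show that $G$ is cyclic. The key point is that the minimal resolution of $\CC^2/G$ has a star-shaped dual graph and $G$ is cyclic precisely when that graph is a chain, equivalently when $G$ can be conjugated into the diagonal torus.

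First I would produce a finite cover of $(Z,o)$ étale in codimension one that realizes the universal cover $(\CC^2,0)\to (\CC^2,0)/G$. Pulling back $f$ along this cover and applying Construction~\xref{base-change}, I get an extremal curve germ $f^\flat\colon (X^\flat,C^\flat)\to (Z^\flat,o^\flat)$ with $(Z^\flat,o^\flat)$ smooth, together with a finite morphism $\theta\colon (X^\flat,C^\flat)\to (X,C)$ which is étale in codimension one — in fact étale in codimension two, since $X$ is smooth in codimension two (it has only isolated singularities) and the ramification of $\theta$ lies over $\Sing X$ which maps into $\{o\}$... actually one must be slightly careful here: $\theta$ is étale over the Gorenstein locus of $X$, and $X\setminus\{\text{finite set}\}$ is covered by the Gorenstein locus together with the isolated non-Gorenstein points, so $\theta$ is indeed étale in codimension two. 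Hence by Lemma~\xref{lemma:cyclic}, the Galois group of $\theta$ — which is $G$ — is cyclic.

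Alternatively, and perhaps more cleanly, I would not construct the full universal cover at once but argue directly: write $(Z,o)\simeq (\CC^2,0)/G$, let $G$ act on $\CC^2$, and consider the abelianization or, if $G$ is nonabelian, a noncyclic quotient; in any case it suffices to show $G$ has no noncyclic quotient acting freely in codimension one, and the cover $\CC^2/[G,G]\to \CC^2/G$ (or the full $\CC^2\to\CC^2/G$) is étale in codimension one. Base-changing $f$ along it and invoking Lemma~\xref{lemma:cyclic} forces the corresponding deck group to be cyclic, whence $G$ itself is cyclic (a group all of whose quotients-that-occur are cyclic, together with $G$ acting freely in codimension one on $\CC^2$ so that $G$ is a genuine quotient-type group, must be cyclic). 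Since a $\QQ$-conic bundle germ always satisfies the hypotheses of Lemma~\xref{lemma:base}\xref{lemma:base-2}, this completes the argument.

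The main obstacle I anticipate is the bookkeeping around étaleness in codimension one versus codimension two, and making sure the base change in Construction~\xref{base-change} really yields an $X^\flat$ to which Lemma~\xref{lemma:cyclic} applies. One has to check: (i) the cover of $(Z,o)$ is étale over $Z\setminus\{o\}$, which is automatic for the universal cover of a quotient singularity; (ii) the induced $\theta\colon X^\flat\to X$ is étale in codimension two, which uses that $X$ has only isolated singularities so the non-étale locus of $\theta$ is contained in the isolated non-Gorenstein points plus possibly $\Sing X$, all of codimension three; and (iii) $(X^\flat,C^\flat)$ is again an extremal curve germ, which is exactly the content of Construction~\xref{base-change}. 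Once these are in place, Lemma~\xref{lemma:cyclic} does the rest and the cyclicity of $G$ follows.
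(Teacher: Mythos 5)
Your proposal is correct and follows essentially the paper's own route: the paper's one-line proof (``follows from Lemma~\xref{lemma:cyclic} and \xref{base-change}'') is precisely your first argument, namely that $(Z,o)$ is a quotient singularity by Lemma~\xref{lemma:base}\xref{lemma:base-2}, one base-changes $f$ along the smooth cover of $(Z,o)$ \'etale over $Z\setminus\{o\}$ via Construction~\xref{base-change}, checks the resulting cover of $(X,C)$ is \'etale in codimension two, and concludes cyclicity of the deck group from Lemma~\xref{lemma:cyclic}. Your bookkeeping on \'etaleness in codimension two (branch locus inside the finite singular set, by purity over the smooth locus and the construction's statement over the Gorenstein locus) is exactly the implicit step the paper leaves to the reader.
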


\begin{proof}
Follows from Lemma~\xref{lemma:cyclic} and \xref{base-change}. 
\end{proof}

\subsection{}
From now on we assume that $f: (X,C)\to (Z,o)$ is an extremal curve germ with $C\simeq\PP^1$.
Assume that the torsion part $\Cl(X)_{\tors}=\ZZ/d\ZZ$ is non-trivial
and consider the torsion free cover \eqref{eq:torsion-free-cover}. 
Thus $(X,C)=(X', C')/G$
and $(Z,o)=(Z',o')/G$, where $G=\mumu_d$ acts on $Z'\setminus \{o'\}$
and $X'\setminus \tau^{-1}\left(\Sing(X)\right)$ freely.
We distinguish two cases (cf.~\cite[(1.12)]{Mori:flip}):

\begin{scase}\label{case-prim-1}
{\bf Case: $C'$ is irreducible.}
Then $G=\mumu_d$ has exactly two fixed points $P_1'$ and $P_2'$ on $C'\simeq \PP^1$.
They give us two points $P_i:=\tau(P_i')$ on $C$ whose indices are divisible by $d$.
The germ $(X,C)$ is locally primitive along $C$. 
\end{scase}

\begin{scase}\label{top:imprim}
{\bf Case: $C'=\cup_{i=1}^s C_i'$, where $s>1$ and $C'_i\simeq \PP^1$.}
In this case, $G$ acts on $\{C'_1,\dots, C_s'\}$ transitively.
Since $\p(C')=0$, each
component $C_i'$ meets the closure of $C'\setminus C_i'$ at one point.
Therefore, in this case,
all the irreducible components
$C_i'$ pass through one point $P'$ and do not meet each other
elsewhere. In this case $(X,C)$ is imprimitive at $\tau(P')$ of splitting degree $s$
and has no other locally imprimitive points.
\end{scase}

It is worthwhile to mention in the case \xref{top:imprim} that
$\tau(P')$ is the only non-Gorenstein point of $X$ 
and $d=s$ (see \cite[Th.~6.7, 9.4]{Mori:flip} and \cite[\S~7]{MP:cb1}).

\begin{scorollary}[{\cite[(1.10)]{Mori:flip}}] 
Let $(X,C\simeq \PP^1)$ be an extremal curve germ.
Let $P_1,\dots, P_n$ be all the non-Gorenstein points of $X$. 
The following
are equivalent:
\begin{enumerate}
\item 
$D\cdot C= 1/m_1\cdots m_n$ for some $D\in\Clsc(X)$,
\item 
$\Clsc(X)\simeq \ZZ$,
\item 
$\Clsc(X)$ is torsion-free,
\item 
$(X, C)$ is locally primitive and $\gcd(m_i,m_j)=1$, $i\neq j$.
\end{enumerate}
\end{scorollary}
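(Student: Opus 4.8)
The plan is to establish the chain of equivalences by a cyclic argument $(i)\Rightarrow(ii)\Rightarrow(iii)\Rightarrow(iv)\Rightarrow(i)$, exploiting the exact sequence \eqref{exact-Clcs} together with the intersection pairing $\mathrm{cl}$ and the local structure of $\Clsc(X,P_i)\simeq\ZZ/m_i\ZZ$. The key observation throughout is that the composite homomorphism $\Clsc(X)\to\oplus\Clsc(X,P_i)\xrightarrow{\oplus\mathrm{cl}}\oplus\frac1{m_i}\ZZ/\ZZ$ records, for each divisor class $\xi$, the fractional parts of the local intersection numbers $(\xi\cdot C)_{P_i}$, and that the total intersection number $\xi\cdot C$ lives in $\frac1{\lcm(m_i)}\ZZ$ once one knows $\xi$ is $\QQ$-Cartier.

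First I would prove $(i)\Rightarrow(ii)$: if some $D\in\Clsc(X)$ has $D\cdot C=1/(m_1\cdots m_n)$, then the classes of $D$ and of the Cartier divisor $H$ (with $H\cdot C=1$) already generate a subgroup of $\Clsc(X)$ on which the degree map $\Clsc(X)\to\QQ$, $\xi\mapsto\xi\cdot C$, is injective with image $\frac1{m_1\cdots m_n}\ZZ\simeq\ZZ$; combined with the fact from Corollary \xref{corollary:cover}\xref{corollary:cover0} that $H,D_1,\dots,D_n$ generate $\Clsc(X)$ and that each $D_i\cdot C$ has denominator dividing $m_i$, one concludes $\Clsc(X)$ is itself free of rank one, i.e.\ $\simeq\ZZ$. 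The implications $(ii)\Rightarrow(iii)$ is trivial. For $(iii)\Rightarrow(iv)$ I would argue contrapositively using Corollary \xref{corollary:cover}: if some $P_i$ is imprimitive of splitting degree $s_i>1$, then part \xref{corollary:cover-i} exhibits a nonzero $s_i$-torsion class $H-\bar m_iD_i$; and if $(X,C)$ is locally primitive but $\gcd(m_i,m_j)=d>1$ for some $i\neq j$, then part \xref{corollary:cover-p} exhibits a nonzero $d$-torsion class $\frac{m_i}dD_i-\frac{m_j}dD_j$. In either situation $\Clsc(X)$ has nontrivial torsion, contradicting $(iii)$.

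For $(iv)\Rightarrow(i)$, assuming local primitivity and pairwise coprimality of the indices, I would take $D_i$ as in Corollary \xref{corollary:cover} generating $\Clsc(X,P_i)$, normalized so that $(D_i\cdot C)_{P_i}\equiv 1/m_i\bmod\ZZ$ (possible since $\mathrm{cl}$ is an isomorphism in the primitive case) and $D_i\cap C=\{P_i\}$. Then set $D:=\sum_i c_iD_i+c_0H$ where the integers $c_i$ are chosen by the Chinese Remainder Theorem so that $c_i\prod_{j\neq i}m_j\equiv 1\pmod{m_i}$ for each $i$; this makes $D\cdot C\equiv\sum_i c_i/m_i\equiv 1/(m_1\cdots m_n)\bmod\ZZ$, and after subtracting an appropriate multiple of $H$ (which is Cartier, so changes $D\cdot C$ by an integer) we obtain $D\cdot C=1/(m_1\cdots m_n)$ exactly. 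I expect the main obstacle to be the bookkeeping in $(iv)\Rightarrow(i)$: one must be careful that the chosen $D$ is genuinely $\QQ$-Cartier (which holds because it is an integer combination of the $\QQ$-Cartier generators $D_i$ and $H$) and that no hidden torsion obstructs realizing the prescribed total intersection number — but pairwise coprimality of the $m_i$ is precisely what guarantees, via the CRT and the splitting of $\oplus\ZZ/m_i\ZZ$, that the local-to-global intersection data assembles without obstruction.
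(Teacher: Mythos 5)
Your cycle (i)$\Rightarrow$(ii)$\Rightarrow$(iii)$\Rightarrow$(iv)$\Rightarrow$(i) uses the same ingredients the paper points to (the sequence \eqref{exact-Clcs}, the map $\mathrm{cl}$, Corollary \xref{corollary:cover}), and three of the four arrows are sound: (ii)$\Rightarrow$(iii) is trivial; (iii)$\Rightarrow$(iv) works because the classes produced by Corollary \xref{corollary:cover}\xref{corollary:cover-i} and \xref{corollary:cover-p} are indeed nonzero (their images in $\Clsc(X,P_i)\simeq\ZZ/m_i\ZZ$ are $\bar m_i$ times a generator, resp.\ $(m_i/d)$ times a generator --- a check you assert but should record); and your CRT construction for (iv)$\Rightarrow$(i) is correct.

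The genuine gap is in (i)$\Rightarrow$(ii). The opening claim, that $\deg(\xi)=\xi\cdot C$ is injective on the subgroup generated by $D$ and $H$, begs the question: since $H\cdot C=(m_1\cdots m_n)\,D\cdot C$, injectivity there is equivalent to the vanishing of the degree-zero (hence torsion) class $H-(m_1\cdots m_n)D$, which is essentially the torsion-freeness you are trying to prove. Moreover the concluding inference does not follow from the facts you cite: from ``$H,D_1,\dots,D_n$ generate $\Clsc(X)$, $H\cdot C=1$, $D_i\cdot C\in\frac1{m_i}\ZZ$, and some class has degree $1/(m_1\cdots m_n)$'' one cannot conclude freeness --- the abstract group $\ZZ\oplus\ZZ/2\ZZ$ with $\deg(x,y)=x/2$, $H=(2,\bar 1)$, $D_1=D=(1,\bar 0)$ (so $n=1$, $m_1=2$) satisfies all of these and has torsion. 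What excludes this, and what your argument never uses, is the order information in \eqref{exact-Clcs}: $\Clsc(X)/\ZZ H\simeq\oplus_i\ZZ/m_i\ZZ$ has order $m_1\cdots m_n$ (here $\Pic(X)=\ZZ H$ because $\Pic(X)\simeq H^2(C,\ZZ)$ is detected by degree on $C$). A correct finish along your lines: the kernel of $\deg\colon\Clsc(X)\to\QQ$ is exactly the torsion subgroup $T$ (a degree-zero class lands in $\Pic(X)$ after multiplication by $\lcm(m_i)$ and must then vanish), so if the image of $\deg$ equals $\frac1N\ZZ$, then $N\cdot|T|=[\Clsc(X):\ZZ H]=m_1\cdots m_n$; hypothesis (i) gives $m_1\cdots m_n\mid N$, hence $|T|=1$ and $\Clsc(X)\simeq\ZZ$. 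Alternatively, prove (i)$\Rightarrow$(iv) directly: by the fact recorded after Definition \xref{splitting} that the splitting degree is the order of the kernel of $\mathrm{cl}$, the local contribution of any class at $P_i$ lies in $\frac1{\bar m_i}\ZZ/\ZZ$, so the image of $\deg$ lies in $\frac1{\lcm(\bar m_i)}\ZZ$, and (i) then forces $\bar m_i=m_i$ for every $i$ together with pairwise coprimality.
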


\begin{proof}
Follows from Lemma \xref{lemma:cyclic} and \eqref{exact-Clcs}.
\end{proof}

\begin{scorollary}[cf. {\cite[Lemma~2.8]{MP:cb1}}]
\label{lemma:KC}
Let $(X,C\simeq \PP^1)$ be an extremal curve germ. Let
$d$ be the topological index of $(X,C)$ and let $m_1,\dots,m_r$ be
indices of all the non-Gorenstein points.
Assume that $(X,C)$ is either  divisorial or a $\QQ$-conic bundle which is not 
toroidal \xref{ex-toric}. Then
\begin{equation}
\label{eq:KC}
-K_X\cdot C=d/m_1\cdots m_r.
\end{equation}
\end{scorollary}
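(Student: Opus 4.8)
The plan is to compute $-K_X\cdot C$ by passing to the torsion free cover $\tau\colon(X',C')\to(X,C)$ of Definition~\xref{torsion-free-cover}, where everything is controlled by the exact sequence \eqref{exact-Clcs} and the degree formulas \eqref{eq-grw-w}, \eqref{eq-grO-iP1}. First I would dispose of the case where $(X,C)$ is locally primitive with pairwise coprime indices: then by the corollary just above, $\Clsc(X)\simeq\ZZ$, so there is a Weil $\QQ$-Cartier divisor $D$ with $D\cdot C=1/(m_1\cdots m_r)$ generating $\Clsc(X)\otimes\QQ$, and $-K_X\cdot C$ is a positive integer multiple of this. One must then show that multiple is exactly $d=1$; here the inequality \eqref{eq-grO-iP1-2}, together with $w_{P_i}(0)\ge 1/m_i$ and $i_{P_i}(1)\ge 1$ from Lemma~\xref{lemma:iP-wP} and Lemma~\xref{lemma:iP-wP}, forces $-K_X\cdot C$ to be small; ruling out $-K_X\cdot C\ge 2/(m_1\cdots m_r)$ when some $m_i>1$ uses that such a value makes $\sum w_{P_i}(0)$ too large unless $r$ is very small, and the genuinely small cases (e.g.\ one non-Gorenstein point, $-K_X\cdot C=2/m$) are exactly the excluded toroidal $\QQ$-conic bundle.

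Second, for the general case I would use the torsion free cover. By Corollary~\xref{prop-cyclic-quo} the cover $\tau$ is cyclic of degree $d=|{\Cl(X)_{\tors}}|$, $K_{X'}=\tau^*K_X$, and $(X',C')$ has torsion free class group, hence falls under the primitive-coprime case already handled: $-K_{X'}\cdot C'=1/(m_1'\cdots m_{r'}')$ where $m_j'$ are the indices of the non-Gorenstein points of $X'$ (again provided $(X',C')$ is not toroidal, which I would check is inherited from the hypothesis on $(X,C)$). It then remains to relate $-K_{X'}\cdot C'$ to $-K_X\cdot C$ and to relate the $m_j'$ to the $m_i$. In Case~\xref{case-prim-1} ($C'$ irreducible), $C'\to C$ has degree $d$, so $-K_{X'}\cdot C'=d\,(-K_X\cdot C)$; moreover $\tau$ is étale in codimension one and the two fixed points $P_1',P_2'$ lie over points of $C$ of index divisible by $d$, and a local analysis of the cyclic quotient shows the multiset of primed indices is obtained from $\{m_i\}$ by replacing the relevant indices appropriately so that $\prod m_j'=\tfrac{1}{d^2}\prod m_i$ — wait, more carefully: $m'_{j}=m_i/d$ at the two split points and $m'_j=m_i$ elsewhere, giving $\prod m_j'=\prod m_i/d^2$ only if both split points have index divisible by $d$; combining, $d\,(-K_X\cdot C)=1/\prod m_j'=d^2/\prod m_i$, i.e.\ $-K_X\cdot C=d/\prod m_i$ as claimed.

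Third, in Case~\xref{top:imprim} ($C'=\cup_{i=1}^s C_i'$ with $d=s$), each $C_i'\to C$ is an isomorphism, so $-K_{X'}\cdot C_i'=-K_X\cdot C$ for each $i$; but by the remark following Case~\xref{top:imprim}, $\tau(P')$ is the only non-Gorenstein point of $X$, with index $m_1$, and over it $X'$ has a single non-Gorenstein point of index $m_1/s=m_1/d$ (the subindex), while $X'$ is Gorenstein elsewhere on $C'$. Hence $-K_X\cdot C=-K_{X'}\cdot C_i'=1/(m_1/d)=d/m_1$, which is $d/(m_1\cdots m_r)$ since $r=1$ here. Assembling the three cases gives \eqref{eq:KC}.

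The main obstacle I expect is the bookkeeping in the primitive base case: showing that the positive integer $k$ with $-K_X\cdot C=k/(m_1\cdots m_r)$ equals $1$ precisely under the stated hypotheses. This is where one must invoke the classification-type inequalities \eqref{eq-grO-iP1-1}--\eqref{eq-grO-iP1-2} carefully, keep track of whether $f$ is birational (where $\deg\gr_C^0\upomega=-1$ by Corollary~\xref{cor-gr-omega-=0}\xref{cor-gr-omega-=0a}) versus a $\QQ$-conic bundle (where $\deg\gr_C^0\upomega$ can be $-2$, and exactly the toroidal case must be excised), and combine with the lower bounds on $w_{P_i}(0)$ and $i_{P_i}(1)$. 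The index-theoretic identifications under the cyclic cover (that the primed indices are $m_i$ or $m_i/d$ as described) are routine given the local classification of terminal singularities and the behaviour of $\Clsc$ under étale-in-codimension-one covers, so I would state them and refer to \eqref{exact-Clcs} and \xref{Clsc} rather than belabour them.
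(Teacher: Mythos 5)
Your reduction to the torsion free cover is not how the paper argues, but the real problem is that your base case --- locally primitive with pairwise coprime indices --- is not actually proved, and the mechanism you propose for it cannot work. You want to force the integer $a$ in $-K_X\equiv aD$ (where $D\cdot C=1/(m_1\cdots m_r)$) to equal $1$ using \eqref{eq-grO-iP1-2} together with $w_{P_i}(0)\ge 1/m_i$ and $i_{P_i}(1)\ge 1$. But these constraints never use the hypothesis that $f$ is divisorial or a non-toroidal $\QQ$-conic bundle: they hold verbatim for flipping germs, and for flipping germs \eqref{eq:KC} is false. The paper's own Example \xref{ex:IAdual}\,b) is a locally primitive \emph{flipping} germ with a single non-Gorenstein point of index $m$ and $-K_X\cdot C=2/m$ (so $w_P(0)=(m-2)/m$, $\deg\gr_C^0\upomega=-1$, and \eqref{eq-grO-iP1-1}--\eqref{eq-grO-iP1-2} are all satisfied); in particular your claim that ``one non-Gorenstein point, $-K_X\cdot C=2/m$'' occurs only for the toroidal conic bundle is wrong, and no bookkeeping of these local invariants alone can pin down $a=1$. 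The paper's proof uses the divisorial/conic-bundle hypothesis in an essential and much shorter way: by \eqref{exact-Clcs} the ample generator $D$ of $\Clsc(X)/{\equiv}$ already satisfies $D\cdot C=d/(m_1\cdots m_r)$ (so no separate covering argument is needed to produce the factor $d$), and writing $-K_X\equiv aD$ one intersects with a \emph{general one-dimensional fiber} $L\neq C$ --- which exists precisely because $f$ is divisorial or a conic bundle --- to conclude $a=1$. That evaluation on a nearby fiber is the missing idea in your proposal.

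Two secondary gaps in the covering part. In Case \xref{top:imprim} you apply the base-case formula to the germ $(X',C_i')$ along a single component; by Remark \xref{rem-prel-extr-nbd} this is only known to be an extremal curve germ, and it can perfectly well be flipping (the splitting cover of a $\QQ$-conic bundle has reducible central fiber, and the germ along one component is birational), so even granting the base case you cannot invoke it there without further argument. In Case \xref{case-prim-1} your index bookkeeping ($\prod m_j'=\prod m_i/d^{2}$, with the appropriate modification when a fixed point becomes Gorenstein upstairs) tacitly assumes that every non-Gorenstein point of $X$ on $C$ is one of the two $\mumu_d$-fixed points; that requires Proposition \xref{lem-prop-3points} (at most two non-Gorenstein points) together with the fact that the two fixed points have index divisible by $d$, and should be said explicitly.
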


\begin{proof}
It follows from \eqref{exact-Clcs} that for the ample generator $D$ of 
the group $\Clsc(X)/{\equiv}$
one has $D\cdot C=d/m_1\cdots m_r$. Write $-K_X\equiv a D$ for some $a\in \ZZ$.
Intersecting $D$ and $K_X$ with a general one-dimensional fiber $L$,
we obtain $-K_X\cdot L=D\cdot L$ and $a=1$.
\end{proof}

Now we can strengthen the assertion of Lemma  \xref{lemma-int-non-Gor}.

\begin{scorollary}
\label{cor:int-non-Gor}
 Let $f: (X,C) \to (Z,o)$ be an extremal curve germ.
Suppose that $C$ is reducible and let $P$ be a singular 
point of $C$. 
If $X$ is Gorenstein at $P$, then $(Z,o)$ is smooth and $f$ is a  Gorenstein conic bundle.
\end{scorollary}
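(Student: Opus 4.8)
The plan is to deduce this from Lemma~\xref{lemma-int-non-Gor} together with the computation of $-K_X\cdot C$ in Corollary~\xref{lemma:KC}. By Lemma~\xref{lemma-int-non-Gor}, the hypotheses already force $f$ to be a $\QQ$-conic bundle and $C=C_1\cup C_2$ with $C_1,C_2$ meeting precisely at $P$; moreover if we knew $(Z,o)$ smooth, then the same lemma gives that $X$ is Gorenstein, hence by Proposition~\xref{prop:Gor}\xref{prop:Gor-cb} we get a Gorenstein conic bundle. So the only thing to prove is that $(Z,o)$ is smooth.

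First I would recall from Corollary~\xref{corollary:cyclic} that $(Z,o)$ is a cyclic quotient singularity, say of order $N$, so it suffices to show $N=1$. The strategy is to apply the base-change Construction~\xref{base-change} to the index-one cover of $(Z,o)$: if $N>1$, we get a finite cover $\theta^\flat:(X^\flat,C^\flat)\to (X,C)$, \'etale in codimension one (indeed over $Z\setminus\{o\}$, hence over the Gorenstein locus of $X$ since $X\to Z$ is a conic bundle and $X$ is Gorenstein near the whole fiber once we show the non-Gorenstein locus is empty — but more carefully, $\theta^\flat$ is \'etale in codimension two as in Lemma~\xref{lemma:cyclic}), and by Lemma~\xref{lemma:cyclic} this cover is cyclic of order $N$. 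Since $C$ is reducible the preimage $C^\flat$ is again a reduced tree of rational curves on which $\mumu_N$ acts, and because $X$ is Gorenstein at $P$ the point $P$ is a smooth point of $X$; the cover $X^\flat\to X$ is then \'etale near $P$, so $C^\flat$ has at least $N$ nodes lying over $P$. On the other hand $(X^\flat,C^\flat)$ is an extremal curve germ with $p_a(C^\flat)=0$ by Corollary~\xref{cor-C-pa=0}, so $C^\flat$ is a tree; an action of $\mumu_N$ on a tree of $\PP^1$'s whose quotient has a node forces, by the argument in Lemma~\xref{lemma:cyclic}, either a fixed component or a fixed node upstairs, and chasing this through the exact sequence~\eqref{exact-Clcs} and the estimate~\eqref{eq-grO-iP1-2} as in that proof yields a contradiction unless $N=1$.

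Alternatively — and this is cleaner — I would argue numerically. By Corollary~\xref{lemma:KC}, if $f$ is a non-toroidal $\QQ$-conic bundle then $-K_X\cdot C = d/(m_1\cdots m_r)$ where $d$ is the topological index and the $m_i$ are the non-Gorenstein indices; since $X$ is Gorenstein at the node $P$ and $C$ has two components through $P$, the toroidal case~\xref{ex-toric} is excluded (its central fiber is irreducible), so this formula applies. A reducible central fiber with $X$ Gorenstein at the singular point of $C$ forces, via the splitting of $-K_X\cdot C$ over the two components and the inequality~\eqref{eq-grO-iP1-2} applied to each $(X,C_i)$, that there are no non-Gorenstein points at all on $C$; then $X$ is Gorenstein, and Proposition~\xref{prop:Gor}\xref{prop:Gor-cb} gives that $Z$ is smooth and $f$ is a Gorenstein conic bundle.

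The main obstacle is the verification that $(Z,o)$ is smooth, i.e. ruling out a cyclic quotient base: one must show that a nontrivial cyclic cover induced from the base would create a fixed point or fixed component of the Galois action on $C^\flat$ in a way incompatible with the inequalities for $w_P(0)$ and $i_P(1)$ — essentially re-running the final paragraph of the proof of Lemma~\xref{lemma:cyclic} in the reducible setting. Once that is in hand, the promotion from ``$Z$ smooth'' to ``Gorenstein conic bundle'' is immediate from Lemma~\xref{lemma-int-non-Gor} and Proposition~\xref{prop:Gor}\xref{prop:Gor-cb}, with no further work.
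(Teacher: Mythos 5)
Your reduction in the first paragraph and the base-change setup in the second are the right strategy and coincide with the paper's: assume $(Z,o)$ singular, pass to the finite cover $(Z^{\flat},o^{\flat})\to(Z,o)$ \'etale over $Z\setminus\{o\}$ with $Z^{\flat}$ smooth, base change as in \xref{base-change}, and use that the cover splits over $P$ (note, though, that the reason is \emph{not} that ``$X$ Gorenstein at $P$ implies $P$ is smooth'' --- a \type{cDV} point is Gorenstein but may be singular; the correct reason is \eqref{eq:Clsc} with $m=1$, i.e. the punctured neighborhood of a Gorenstein point is simply connected). The genuine gap is the closing contradiction. The paper finishes as follows: $X^{\flat}$ is Gorenstein at each of the nodes of $C^{\flat}$ lying over $P$ and the base $Z^{\flat}$ is smooth, so Lemma \xref{lemma-int-non-Gor} applies \emph{upstairs} and, together with Proposition \xref{prop:Gor}\xref{prop:Gor-cb}, makes $f^{\flat}$ a Gorenstein conic bundle; hence $C^{\flat}$ is a plane conic with a unique singular point, which therefore is the whole fiber $\theta^{-1}(P)$ and is fixed by $G$, and then \eqref{eq:Clsc} at the Gorenstein point $P$ forces $G$ to be trivial. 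You instead appeal to ``the argument in Lemma \xref{lemma:cyclic}'' via \eqref{exact-Clcs} and \eqref{eq-grO-iP1-2}; that argument is built to exclude a \emph{non-cyclic} Galois group and gets its numerical contradiction from three or more non-Gorenstein branch points, which is not the situation here (your group is cyclic and the relevant point $P$ is Gorenstein). Moreover, since the $N$ points of $C^{\flat}$ over $P$ are permuted freely, the ``fixed component or fixed node'' your dichotomy produces need not lie over $P$, so no contradiction is actually reached as written.

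The ``cleaner'' numerical alternative does not work either. Corollary \xref{lemma:KC} is stated (and proved, via the ample generator of $\Clsc(X)/{\equiv}$ in \eqref{exact-Clcs}) only for irreducible $C\simeq\PP^1$, so it cannot be invoked for your reducible fiber; and applying \eqref{eq-grO-iP1-2} to each germ $(X,C_i)$ only gives the weak bound $4\ge -K_X\cdot C_i+\sum w_Q(0)+\sum i_Q(1)$, which by itself does not exclude non-Gorenstein points on $C_1$ or $C_2$ away from $P$. Ruling those out is precisely the content of the corollary, and in the paper it is obtained only after the covering argument identifies $f^{\flat}$ as a Gorenstein conic bundle; there is no purely numerical shortcut of the kind you sketch. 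So: keep your first approach, but replace the vague final step by the application of Lemma \xref{lemma-int-non-Gor} and Proposition \xref{prop:Gor}\xref{prop:Gor-cb} to $(X^{\flat},C^{\flat})$ over the smooth base $Z^{\flat}$, followed by the fixed-point/\eqref{eq:Clsc} argument, and drop the second argument.
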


\begin{proof}
By Lemma  \xref{lemma-int-non-Gor} \ $f$ is a $\QQ$-conic bundle and 
$(Z,o)$ is singular. Recall  (see Lemma \ref{lemma:base}) that $(Z,o)$ is a quotient singularity.
Thus there is a finite Galois \'etale over $Z\setminus \{o\}$ cover $(Z^{\flat},o^{\flat})\to(Z,o)$
where $(Z^{\flat},o^{\flat})$ is smooth. Then we can consider the base change (see
\eqref{eq:base-change}). Thus $X=X^{\flat}/G$, where $G$ is a finite group acting on $X^{\flat}$ 
freely outside finite number of points.
Since $X$ is Gorenstein  at
$P$, so is $X^{\flat}$ at all the points $P_i^{\flat}\in \theta^{-1}(P)$. Moreover, $\theta$
is \'etale over $P$ by \eqref{eq:Clsc}. Hence, the central curve $C^{\flat}$ is singular at
$P_i^{\flat}$. By Lemma \ref{lemma-int-non-Gor} the variety $X^{\flat}$ is Gorenstein and by Corollary
\xref{prop:Gor} the contraction $f^{\flat}: X^{\flat}\to Z^{\flat}$ is a standard Gorenstein conic
bundle. In particular, $C^{\flat}$ is a plane conic. 
Thus $C^{\flat}$ has two components meeting at one point $\theta^{-1}(P)$
which must be fixed by $G$. Again by  \eqref{eq:Clsc}
the group $G$ is trivial, a contradiction.
\end{proof}

\begin{scorollary}
\label{zam-imp-Gor-fac}
Let $f: (X,C\simeq \PP^1) \to (Z,o)$ be an extremal curve germ.
Assume that $(X,C)$ is locally imprimitive at $P$.
If the subindex of $P$ equals $1$, then $f$ is a $\QQ$-conic bundle and
in the diagram \eqref{eq:base-cover} the contraction $f'$ is a Gorenstein conic bundle.
\end{scorollary}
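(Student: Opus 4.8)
The plan is to combine the structure theory of the torsion free cover (Case \xref{top:imprim}) with Corollary \xref{cor:int-non-Gor}. First I would invoke the remark following Case \xref{top:imprim}: since $(X,C)$ is locally imprimitive at $P$ with splitting degree $s$, the point $P$ is the only non-Gorenstein point of $X$ and the topological index equals $s$. The torsion free cover $\tau\colon(X',C')\to(X,C)$ is then the $\mumu_s$-quotient with $C'=\cup_{i=1}^sC_i'$, all $C_i'$ passing through a single point $P'$ over $P$ and meeting nowhere else. Crucially, because the subindex $\bar m=m/s$ is assumed to be $1$, we have $m=s$, so the preimage of $P$ under the \emph{index-one cover} $X^\sharp\to X$ near $P$ already has $s$ branches (the splitting degree equals the number of branches of $C^\sharp$); since the index-one cover factors through the étale-in-codimension-one torsion free cover, this forces $X'$ to be \emph{Gorenstein} at $P'$ (the index-one cover of $(X,P)$ is exactly $(X',P')$ locally). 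Hence $X'$ is Gorenstein everywhere, as $P$ was its only non-Gorenstein point.

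Next I would argue that $f'$ is a conic bundle rather than birational. Since $C'$ is reducible with at least two components meeting at the Gorenstein point $P'$ of $X'$, Corollary \xref{cor:int-non-Gor} (applied to the extremal curve germ $(X',C')$, which is legitimate by Construction \xref{base-change}) tells us that $(Z',o')$ is smooth and $f'\colon X'\to Z'$ is a Gorenstein conic bundle. In particular $f'$ is not birational, so $f$ is not birational either, hence $f$ is a $\QQ$-conic bundle. This gives both assertions: $f$ is a $\QQ$-conic bundle and $f'$ in the diagram \eqref{eq:base-cover} is a Gorenstein conic bundle.

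The main obstacle is the step identifying the torsion free cover with the index-one cover at $P$ when $\bar m=1$ — i.e.\ verifying that ``subindex $1$'' really does make $X'$ Gorenstein at $P'$. The cleanest way is: locally at $P$ the torsion free cover $\tau$ is an abelian cover étale in codimension one of degree dividing $s$ (by Corollary \xref{corollary:cover}\xref{corollary:cover-i}, the class $H-\bar m D=H-D$ has order $s$ in $\Clsc(X)$, so a cyclic subcover of degree $s$ exists and is dominated by $\tau$); since $\Clsc(X,P)\simeq\ZZ/m\ZZ=\ZZ/s\ZZ$ by Proposition \xref{Clsc}, this degree-$s$ cover restricted near $P$ must be the full index-one cover $X^\sharp\to X$, which is étale in codimension one and makes the cover Gorenstein at the point above $P$. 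One then checks that the splitting degree of $C^\sharp\to C$ being $s=m$ is consistent (indeed automatic here since $\bar m=1$ forces $C^\sharp$ to have $s$ branches at $P^\sharp$), so the remaining branches are smooth and $X'$ is Gorenstein there. Alternatively one can bypass the index-one cover and argue directly via the exact sequence \eqref{exact-Clcs}: after passing to $X'$, all of $\Clsc(X',P')$ must inject into the torsion-free group $\Clsc(X')$, but the contribution of a point of index $>1$ to the torsion of $\Clsc$ (together with the fact that $P'$ is the unique candidate non-Gorenstein point and its image generates the torsion that has just been killed) forces $X'$ to be Gorenstein at $P'$; I would use whichever of the two arguments meshes most smoothly with the conventions already fixed in the paper.
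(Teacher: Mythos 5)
Your argument is correct and follows exactly the route the paper intends (the corollary is stated there without a separate proof, as an immediate consequence of the imprimitive-cover structure \xref{top:imprim} and Corollary \xref{cor:int-non-Gor}): subindex $1$ makes $X'$ Gorenstein at the point $P'$ through which all components of $C'$ pass, and applying \xref{cor:int-non-Gor} to the extremal curve germ $(X',C')$ yields that $f'$ is a Gorenstein conic bundle, whence $f$ is a $\QQ$-conic bundle. Your careful verification that the torsion free cover agrees with the index-one cover near $P$ when $\bar m=1$ is sound, though it is essentially the identification already built into Notation \xref{notation:terminal} (where $\mumu_{\bar m}$ acts on $(C',P')$), so it could simply be cited.
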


$\QQ$-conic bundles which are quotients of Gorenstein conic bundles by a finite group were
described in \cite[\S~2]{P97}. It turns out that such a $\QQ$-conic bundle
is locally imprimitive if and only if it is of type~\xref{item-main-th-impr-barm=1}.

\begin{scorollary}[cf. {\cite[Prop. 1.14]{Mori:flip}}]
\label{cor-prop-grw=2-2-points-prim}
Let $f: (X,C)\to (Z,o)$ be an extremal curve germ.
Assume that $C$ is irreducible. If $\gr_C^0\upomega \not\simeq
\OOO_C(-1)$, then $f$ is a $\QQ$-conic bundle and 
in notation of \eqref{eq:base-cover} we
have $f'^{-1}(o')=C'$. If furthermore $(X,C)$ is locally primitive,
then it is toroidal \textup(see \xref{ex-toric}\textup).
\end{scorollary}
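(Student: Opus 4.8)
The plan is to deduce everything from the sheaf-level results already assembled. First I would invoke Corollary~\xref{cor-prop-grw=2-2-points-prim}\xref{cor-gr-omega-=0b} in reverse: since $\gr_C^0\upomega\not\simeq\OOO_C(-1)$ and (by \xref{cor-gr-omega-=0}\xref{cor-gr-omega-=0a}) a birational germ always has $\deg\gr_C^0\upomega=-1$, the contraction $f$ cannot be birational; hence $f$ is a $\QQ$-conic bundle. By Corollary~\xref{corollary:cyclic} the base $(Z,o)$ is a cyclic quotient singularity, so the torsion-free cover diagram \eqref{eq:base-cover} is available with $G=\mumu_d$ acting on $Z'$ freely away from $o'$ and on $X'$ freely away from the preimages of $\Sing X$. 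Passing to $(X',C')$ the invariant $\gr_{C'}^0\upomega$ pulls back from $\gr_C^0\upomega$ (via $\tau^*$, which is \'etale in codimension one), so we still have $\gr_{C'}^0\upomega\not\simeq\OOO_{C'}(-1)$ on each component; in particular $\deg\gr_{C'}^0\upomega\neq-1$ along a chosen component.

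Next, I would analyze the two topological cases \xref{case-prim-1} and \xref{top:imprim}. In Case~\xref{top:imprim} the germ $(X,C)$ is locally imprimitive at $\tau(P')$, so under the hypothesis ``$(X,C)$ is locally primitive'' this case is excluded. Thus we are in Case~\xref{case-prim-1}: $C'\simeq\PP^1$ is irreducible and $\mumu_d$ has exactly two fixed points on it. Now apply Corollary~\xref{cor-gr-omega-=0}\xref{cor-gr-omega-=0b} to the $\QQ$-conic bundle $f':(X',C')\to(Z',o')$ with smooth base: since $\deg\gr_{C'}^0\upomega\neq-1$, we conclude $f'^{-1}(o')=C'$ scheme-theoretically. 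This already gives the middle assertion of the statement.

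Since $f'^{-1}(o')=C'$ is reduced, $X'$ has singularities only along $C'$ at the finitely many points forced by the scheme structure; combined with Theorem~\xref{th-vanish} and equation~\eqref{eq-grw-w}, the condition $\deg\gr_{C'}^0\upomega=-2$ pins down $-K_{X'}\cdot C'$ and the weights $w_{P}(0)$. The remaining task is to identify $(X',C')$, and hence $(X,C)$, with the toroidal model of Example~\xref{ex-toric}. For this I would argue that $X'$ has at most two singular points, each a cyclic quotient terminal singularity of type $\frac1{m}(\pm a,1,-1)$, lying over the two $\mumu_d$-fixed points on $C'$, with $-K_{X'}\cdot C'$ matching the value $d/(m_1m_2)$ forced by Corollary~\xref{lemma:KC} in the non-toroidal case — which would be contradicted, so $f'$ must be toroidal — and then descend the toroidal structure through the free $\mumu_d$-quotient to conclude $(X,C)$ is toroidal as well (the action in \xref{ex-toric} is exactly such a quotient). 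The main obstacle is this last identification step: showing that a $\QQ$-conic bundle with $\deg\gr_C^0\upomega=-2$, reduced central fiber, and locally primitive structure is forced to be the specific toroidal quotient $\PP^1\times\CC^2/\mumu_m$ requires pinning down the local equations at the (at most two) non-Gorenstein points and checking their indices are coprime, so that \eqref{exact-Clcs} forces $\Clsc(X)$ to have the right rank; this is where one genuinely uses the explicit classification of terminal singularities and the structure of $|\OOO_X|$ rather than purely cohomological input.
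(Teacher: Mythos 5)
Your overall architecture --- deduce the $\QQ$-conic bundle property from Corollary \xref{cor-gr-omega-=0}\xref{cor-gr-omega-=0a}, pass to the torsion free cover \eqref{eq:base-cover}, apply Corollary \xref{cor-gr-omega-=0}\xref{cor-gr-omega-=0b} upstairs, and then identify the toroidal model --- is exactly the paper's, but the crucial transfer step is not justified as you state it. You claim that $\gr_{C'}^0\upomega$ is the pullback $\tau^*\gr_C^0\upomega$ because $\tau$ is \'etale in codimension one, and from this that $\deg\gr_{C'}^0\upomega\neq-1$. The comparison map $\tau^*\gr_C^0\upomega\to\gr_{C'}^0\upomega$ fails to be an isomorphism precisely at the $\mumu_d$-fixed points on $C'$, where $\tau$ ramifies, and the degrees really do differ: in the toroidal example \xref{ex-toric} one has $\deg\gr_C^0\upomega=-2$ and $\gr_{C'}^0\upomega\simeq\OOO_{\PP^1}(-2)$, while $\tau|_{C'}\colon C'\to C$ has degree $d$, so $\tau^*\gr_C^0\upomega$ has degree $-2d$. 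Since the correction terms sit on the wrong side of the inequality, $\deg\gr_C^0\upomega\le-2$ does not by itself bound $\deg\gr_{C'}^0\upomega$ from above, and a priori the latter could equal $-1$. The paper's proof avoids this by using invariants of the pushforward, $H^1(\gr_C^0\upomega)=H^1(\gr_{C'}^0\upomega)^{\mumu_d}$, so that $\gr_C^0\upomega\not\simeq\OOO_C(-1)$ (hence $H^1\neq0$) forces $H^1(\gr_{C'}^0\upomega)\neq0$ and then Corollary \xref{cor-gr-omega-=0} gives $C'=f'^{-1}(o')$. Note also that this argument proves the middle assertion without any primitivity hypothesis, whereas you establish it only after discarding the imprimitive case \xref{top:imprim}; as the statement asserts $f'^{-1}(o')=C'$ in general (and in the imprimitive case $C'$ is reducible, so \xref{cor-gr-omega-=0}\xref{cor-gr-omega-=0b} as stated for $C\simeq\PP^1$ does not apply directly), that part of the claim is not covered by your argument.

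The final identification is also left essentially open. Once one knows $C'=f'^{-1}(o')$ is a reduced irreducible scheme fiber over the smooth germ $(Z',o')$ (case \xref{case-prim-1}), the paper concludes that $X'$ is smooth and that $X'\simeq Z'\times\PP^1$ $\mumu_d$-equivariantly, so $(X,C)$ is toroidal by definition; smoothness can be seen, for instance, because then $\gr_{C'}^1\OOO\simeq\OOO\oplus\OOO$ and $\deg\gr_{C'}^0\upomega=-2$, so \eqref{eq-grO-iP1} gives $\sum i_P(1)=0$ and Lemma \xref{lemma:iP-wP} excludes singular points. Your plan instead allows two cyclic quotient singularities on $X'$ (an unjustified restriction on their type) and aims at a contradiction via Corollary \xref{lemma:KC}; besides being admittedly not carried out, that corollary explicitly excludes the toroidal case in its hypothesis, so the intended contradiction would need a separate argument before it could force $f'$ to be a $\PP^1$-bundle and descend the product structure through the quotient.
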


\begin{proof}
By Remark \ref{remark:grw} the contraction $f$ is a $\QQ$-conic bundle.
Apply the construction \eqref{eq:base-cover}.
Since 
\[
H^1(\gr_{C}^0\upomega)=H^1(\gr_{C'}^0\upomega)^{\mumu_d}, 
\]
we have $H^1(\gr_{C'}^0\upomega)\neq 0$. 
By Corollary~\xref{cor-gr-omega-=0} $C'=f'^{-1}(o')$. If $f$ is locally
primitive, $C'$ is irreducible (see \xref{case-prim-1}). So $C'\simeq
\PP^1$ and $X'$ is smooth. Up to analytic isomorphism we may assume
that there exists a $\mumu_d$-equivariant 
decomposition $X'\simeq Z'\times \PP^1$. So, $f$ is toroidal \xref{ex-toric},
\cite[\S~2]{P97}.
\end{proof}

\begin{proposition}[{\cite[Lemma 9.2.3]{MP:cb1}}, {\cite[0.4.13.3]{Mori:flip}}]
\label{lem-prop-3points}
An extremal curve germ $(X,C\simeq\PP^1)$ has at most two
non-Gorenstein points.
\end{proposition}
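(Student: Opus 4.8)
The plan is to bound the number of non-Gorenstein points using the key inequality \eqref{eq-grO-iP1-2}, namely
\[
4\ge -K_X\cdot C+\sum_P w_P(0)+\sum_P i_P(1),
\]
combined with Lemma~\xref{lemma:iP-wP}, which tells us that at a non-Gorenstein point $P$ one has $w_P(0)>0$ and, since such a point is singular, $i_P(1)\ge 1$. Since the corollary after Lemma~\xref{lemma:iP-wP} already gives the crude bound of at most three singular points, the whole difficulty is in ruling out the case of exactly three non-Gorenstein points $P_1,P_2,P_3$ with all the invariants as small as possible. In that borderline configuration we would be forced to have $i_{P_j}(1)=1$ and $w_{P_j}(0)$ close to $0$ for each $j$, with $-K_X\cdot C$ also forced to be small; the job is to show this cannot actually happen.

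First I would reduce to the tight numerical situation: assuming $n=3$, the inequality forces $\sum_{j=1}^3 i_{P_j}(1)\le 4-\sum w_{P_j}(0)-(-K_X\cdot C)<4$, hence (as each term is a positive integer) $i_{P_j}(1)=1$ for all $j$, and moreover $-K_X\cdot C+\sum w_{P_j}(0)\le 1$. Since each $w_{P_j}(0)\ge 1/m_j$ where $m_j\ge 2$ is the index of $P_j$, and $-K_X\cdot C>0$, this is already extremely restrictive: it gives $\sum 1/m_j<1$, and $-K_X\cdot C$ must be a small positive rational. Next I would invoke \eqref{eq-grO-iP1}, i.e. $2+\deg\gr_C^0\upomega-\deg\gr_C^1\OOO=\sum i_P(1)=3$ (using that Gorenstein singular points would only increase $\sum i_P(1)$, so in the extremal case $C$ has no other singular points of $X$), together with \eqref{eq-grO-iP1-1}, $\deg\gr_C^1\OOO\ge-2$, to get $\deg\gr_C^0\upomega\ge -1$. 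On the other hand $\deg\gr_C^0\upomega<0$ by \eqref{eq-grw-w}, so $\deg\gr_C^0\upomega=-1$ and $\deg\gr_C^1\OOO=-2$. By Corollary~\xref{cor-prop-grw=2-2-points-prim} (or rather its contrapositive) $\gr_C^0\upomega=\OOO_C(-1)$ does not by itself force $f$ birational, so I would instead feed $\deg\gr_C^0\upomega=-1$ back into \eqref{eq-grw-w}: $1=-K_X\cdot C+\sum w_{P_j}(0)$, i.e. $\sum_{j=1}^3 w_{P_j}(0)=1+K_X\cdot C<1$.

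Now the endgame: I would use the finer structure of $w_P(0)$ and the topological constraints from Section~3. The point is that the $w_{P_j}(0)$ and the indices $m_j$ are tied together, and small-index terminal points (indices $2$) contribute $w_P(0)\ge 1/2$, which with three points already overshoots unless nearly all indices are large — but large index forces $i_P(1)$ to grow beyond $1$ through Lemma~\xref{equation-iP}, where for index $m$ and a point not of type \type{cAx/4} one has $i_P(1)=\lfloor\ell(P)/m\rfloor+1\ge 1$ with the inequality $\ell(P)\ge m-1$ or similar tightening when the curve is primitive. The cleanest route is probably to combine \eqref{eq:KC} (or rather \eqref{exact-Clcs}) with the above: $-K_X\cdot C$ equals (topological index)$/\prod m_j$ in the non-toroidal birational and $\QQ$-conic cases, and matching this against $1-\sum w_{P_j}(0)$ pins down the $m_j$ to a finite explicit list, each of which is then eliminated by a direct check that $\sum i_{P_j}(1)>1$ for at least one $j$ (using Lemma~\xref{equation-iP}), contradicting $i_{P_j}(1)=1$. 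I would organize this as a short case analysis on the multiset $\{m_1,m_2,m_3\}$ subject to $\sum 1/m_j<1$.

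The main obstacle is the last step: controlling $w_P(0)$ from below sharply enough, as a function of the index $m_P$ and of whether $P$ is primitive or imprimitive, so that three non-Gorenstein points genuinely force $\sum w_P(0)\ge 1$. The crude bound $w_P(0)\ge 1/m_P$ is not enough on its own once some $m_j$ are large; one needs either the refined local inequalities for $w_P(0)$ from \cite{Mori:flip} (the analogues of Lemma~\xref{equation-iP} for $w$ rather than $i$), or the observation that a large-index point with $i_P(1)=1$ is so constrained by Lemma~\xref{equation-iP} that $\ell(P)$ is minimal, which in turn bounds $w_P(0)$ from below. I expect the proof to proceed by first getting $i_{P_j}(1)=1$ for all $j$ as above, then using the classification of terminal singularities (\xref{clasifiction-terminal}) to see that an index-$m$ point with $i_P(1)=1$ and $C$ smooth through it is cyclic of type $\frac1m(a,-a,1)$ or similar, for which $w_P(0)$ is computed to be at least $(m-1)/m\ge 1/2$, and then $\sum w_{P_j}(0)\ge 3/2>1$ is the desired contradiction.
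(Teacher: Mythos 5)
Your reduction is correct as far as it goes: with three non-Gorenstein points, \eqref{eq-grO-iP1-2} and Lemma \xref{lemma:iP-wP} force $i_{P_j}(1)=1$ for all $j$, $\deg\gr_C^0\upomega=-1$, and $-K_X\cdot C+\sum w_{P_j}(0)=1$. The gap is in the endgame, and it is not just a missing computation: the two facts you lean on are false. First, $i_P(1)=1$ does not force $(X,P)$ to be a cyclic quotient --- by Lemma \xref{equation-iP} any \type{cA/m} point with $\ell(P)<m$ has $i_P(1)=1$. Second, even for a cyclic quotient $\frac1m(1,-1,b)$ with $C$ the image of the weight-one axis one has $i_P(1)=1$ but $w_P(0)=(m-b)/m$, which is as small as $1/m$ (the value $(m-1)/m$ occurs only for special positions of $C$, e.g.\ in the toroidal example \xref{ex-toric}); so no bound $\sum w_{P_j}(0)\ge 1$ is available. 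Worse, the strategy cannot be repaired within the invariants you use: the configuration of three type \typec{IA} cyclic quotient points of indices $2,3,7$ with $w_{P_j}(0)=1/m_j$, $i_{P_j}(1)=1$ and $-K_X\cdot C=1/42$ satisfies \eqref{eq-grw-w}, \eqref{eq-grO-iP1}, \eqref{eq-grO-iP1-2} and even \eqref{eq:KC} with $d=1$, so it passes every numerical test in your argument; note also that \eqref{eq:KC} is only stated for divisorial and non-toroidal $\QQ$-conic bundle germs, so the flipping case would escape that route in any event.

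The obstruction the paper actually uses is topological, not numerical. One first reduces to the locally primitive case (if some point is imprimitive, the torsion free cover has the form \xref{top:imprim} and Corollary \xref{zam-imp-Gor-fac} shows the splitting point upstairs is non-Gorenstein, so a component $(X',C_i')$ again has three non-Gorenstein points). Then Van Kampen together with \eqref{eq:Clsc} gives
\begin{equation*}
\uppi_1(X\setminus\{P_1,P_2,P_3\})=\langle\sigma_1,\sigma_2,\sigma_3\rangle/\{\sigma_1^{m_1}=\sigma_2^{m_2}=\sigma_3^{m_3}=\sigma_1\sigma_2\sigma_3=1\},
\end{equation*}
a triangle group, which admits a finite quotient in which the $\sigma_j$ keep their exact orders $m_j$ (Feuer). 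This produces a finite non-cyclic Galois cover of $(X,C)$ \'etale in codimension one, contradicting Lemma \xref{lemma:cyclic}; the numerical inequalities enter only inside the proof of that lemma, via the Hurwitz formula on the Gorenstein cover. This covering-space argument is precisely what eliminates configurations such as the $(2,3,7)$ one above, which your numerical scheme cannot see.
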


\begin{proof}
Assume that $P_1$, $P_2$, $P_3\in X$ are singular points of indices $m_1$,
$m_2$, $m_3 >1$. 
If $(X,C)$ is locally imprimitive at some point, then 
the torsion free cover $\tau: (X',C')\to(X,C)$ has the form \ref{top:imprim},
i.e. $C'$ is a union of $s$ components $C_1',\dots, C_s'$ passing through one point, say $P'$,
and $\tau$ is \'etale over $X'\setminus \{P'\}$. By Corollary \xref{zam-imp-Gor-fac}
the point $P'\in X'$ is not Gorenstein.
Thus for any component $C_i'$ the germ $(X,C_i')$ has at least three non-Gorenstein 
points. 

Replacing $(X,C)$ with $(X',C_i')$ 
we may assume that  $(X,C)$ is
locally primitive, i.e. the maps $\uppi_1(C\setminus \{P_i\})\to \uppi_1(U_i\setminus \{P_i\})$
are surjective, where $U_i\subset X$ is a small neighborhood of $P_i$. 
Then using Van Kampen's theorem and \eqref{eq:Clsc} it is easy to compute
the fundamental group of $X\setminus\{P_1,\, P_2,\, P_3\}$:
\begin{equation*}
\uppi_1(X\setminus\{P_1,\, P_2,\, P_3\})= \langle
\upsigma_1,\upsigma_2,\upsigma_3\rangle/ \{\upsigma_1^{m_1}=\upsigma_2^{m_2}=
\upsigma_3^{m_3}=\upsigma_1\upsigma_2\upsigma_3=1\}.
\end{equation*}
This group has a finite quotient group $G$ in which the images of
$\upsigma_1$, $\upsigma_2$, $\upsigma_3$ are exactly of order $m_1$, $m_2$ and
$m_3$, respectively (see, e.g., \cite{Feuer-1971}). The above quotient
defines a finite Galois cover $\tau: (X',C')\to (X,C)$ with non-abelian Galois group $G$.
This contradicts Lemma~\xref{lemma:cyclic}.
\end{proof}

\section{Local description}
\subsection{Notation}\label{notation:terminal} 
Let $(X,P)$ be a threefold terminal singularity of index $m$ and
let $C \subset (X, P)$ be a smooth
curve such that $P$ has subindex $\bar m$ and splitting degree $s$
(see \xref{splitting}). 
We use the notation of \xref{3terminal}. Put $C^\sharp:=\pi^{-1}(C)$. Then $C^\sharp$ has $s$ 
irreducible components.

Let $(C^\dag, P^\dag)$ be the normalization 
of an irreducible component $C^\sharp(i)\subset C^\sharp$, $1\le i\le s$
and let $\tau: (X',C')\to (X,C)$ be the torsion free cover  (see \eqref{eq:base-cover}). 
Then $\mumu_m$ naturally acts on $(X^\sharp, P^\sharp)$ and $(C^\sharp, 
P^\sharp)$, and 
so does $\mumu_{\bar m}$ on $(C',P')$. Let 
\begin{equation*}
\eta: \OOO_{X^\sharp, P^\sharp} \longrightarrow \OOO_{X^\dag, P^\dag}
\end{equation*}
be the natural map. Since $(X, P)$ and $(C, P)$ are normal, one has 
\begin{equation*}
\OOO_{X,P}= \left(\OOO_{X^\sharp, P^\sharp}\right)^{\mumu_m}\quad\text{and}
\quad
\OOO_{C,P}= \left(\OOO_{C^\dag, P^\dag}\right)^{\mumu_{\bar m}}.
\end{equation*}
Since $\mumu_m$ acts freely on $X^\sharp\setminus \{P^\sharp\}$, so it does 
on $C^\sharp\setminus \{P^\sharp\}$ and hence $\mumu_{\bar m}$ on 
$C^\dag\setminus \{P^\dag\}$.
Hence $\OOO_{C^\dag, P^\dag}$ has a uniformizing parameter, say $t$,
such that $t$ is a $\mumu_{\bar m}$-semi-invariant. Let $\chi$ be a generator 
of 
$\Hom(\mumu_m, \CC^*) = \ZZ/m\ZZ$ whose restriction $\bar \chi$ to 
$\mumu_{\bar m}$ is the character associated to $t$. Then 
$\OOO_{C,P}=\CC\{t\}^{\mumu_{\bar m}}$.

For a semi-invariant $z\neq 0$, let $C^\sharp\text{-}\wt(z)$(or simply $\wt(z)$ 
if there is no confusion) 
be $n\in \ZZ/m\ZZ$ such that $n\chi$ is the
character associated to $z$. 
For a $\mumu_{m}$-semi-invariant $z\in \OOO_{X^\sharp,P^\sharp}$, let 
\begin{equation*}
C^\sharp\text{-}\ord(z):= \sup\left\{n\in \ZZ_{\ge 0} \mid \eta(z)\in 
t^n\CC\{t\}\right\}. 
\end{equation*}
We also write $\ord(z)$, if it does not 
cause confusion. Let 
\begin{equation*}
\ow(z):=(\ord(z), \wt(z)).
\end{equation*} 
We define semigroups
\begin{eqnarray*}
\ord(C^\sharp) &:= &\left\{\ord(z) \mid z\in \OOO_{C^\sharp, P^\sharp},\ z\neq 
0\right\}\subset \ZZ_{>0},
\\
\ow(C^\sharp) &:= &\left\{\left(\ord(z), \wt(z)\right) \mid z\in 
\OOO_{C^\sharp, P^\sharp},\ z\neq 0\right\}
\subset \ZZ_{\ge 0}\times \ZZ/m\ZZ.
\end{eqnarray*}
One can show that in some coordinates $C^\sharp$ can be given 
by a monomial parametrization (see \cite[Lemma~2.7]{Mori:flip} for the precise 
statement).

\subsection{Notation}
\label{not-nazalo-loc}
Let $f: (X,C\simeq\PP^1)\to (Z,o)$ be an
extremal curve germ and let $P\in C$ be a point of index $m\ge 1$. 
Let $s$ and $\bar
m$ be the splitting degree and subindex, respectively. 
Consider the index-one $\mumu_m$-cover $\pi:
(X^\sharp,P^\sharp)\to (X,P)$ and let $C^\sharp:=\pi^{-1}(C)$. Take
normalized $\ell$-coordinates $(x_1,\dots,x_4)$ and let
$\phi$ be an $\ell$-equation of $X\supset C\ni P$ (see
\cite[2.6]{Mori:flip}). Put $a_i:=\ord(x_i)$.
Note that $a_i<\infty$ and $\wt(x_i)\equiv a_i\mod \bar m$. 

The following is the key fact in the local classification of 
possible singularities of extremal curve germs.

\begin{lemma}[{\cite[3.8, 4.2]{Mori:flip}}, {\cite[\S~5]{MP:cb1}}]
\label{lemma:planar}
In the above notation, assume that $P$ is not of type~\typec{IE^\vee} below.
Then $\ow(C^\sharp)$ is generated by $\ow(x_1)$ and $\ow(x_2)$.
In particular, $C^\sharp$ is a planar curve. 
\end{lemma}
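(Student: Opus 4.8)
The plan is to reduce the statement to the index-one cover $(X^\sharp,P^\sharp)\supset C^\sharp$, where everything is governed by the weights and orders of the four coordinates $x_1,\dots,x_4$. First I would recall from \xref{clasifiction-terminal} that $X^\sharp\subset\CC^4$ is a $\mumu_m$-semi-invariant hypersurface $\phi=0$ with $\wt(x,\phi)\equiv(a,b,-a,0,0)$ in case \xref{classification-singularities-m} and $\equiv(a,b,-a,2,2)$ in the \type{cAx/4} case. The curve $C^\sharp$ passes through $P^\sharp$ and is contained in $X^\sharp$; after choosing normalized $\ell$-coordinates, each $x_i|_{C^\sharp}$ has a finite order $a_i=\ord(x_i)$ with $\wt(x_i)\equiv a_i\bmod\bar m$. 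The semigroup $\ow(C^\sharp)\subset\ZZ_{\ge 0}\times\ZZ/m\ZZ$ is generated by the $\ow(x_i)$ for those $i$ with $x_i\not\equiv 0$ on $C^\sharp$, so the real content is to show that two of these four generators already generate the semigroup, i.e. that the other two coordinates are redundant (their orders/weights lie in the subsemigroup generated by the first two), and that consequently $C^\sharp$ lies in a smooth surface (a coordinate hyperplane, or a hypersurface cut out by eliminating the two redundant variables).

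The key steps, in order: (1) observe that since $C^\sharp$ is a reduced irreducible curve through the origin in $\CC^4$, at least one $x_i$ is a uniformizing parameter after relabeling, say $\ord(x_1)$ is minimal; (2) analyze the constraint coming from $\phi|_{C^\sharp}=0$: the lowest-order terms of $\phi$ must cancel on $C^\sharp$, which — combined with the explicit weight pattern — forces a relation among the $a_i$ and pins down which monomial of $\phi$ is the leading one; (3) use this to show that two coordinates, typically $x_1$ and one of $x_2,x_4$ depending on the type, suffice to express the orders of the remaining coordinates, invoking \cite[Lemma~2.7]{Mori:flip} (monomial parametrization of $C^\sharp$) to make the "redundancy" precise; (4) deduce planarity: eliminating the two redundant coordinates exhibits $C^\sharp$ inside a smooth analytic surface, hence $\ow(C^\sharp)$ is a numerical semigroup on two generators $\ow(x_1),\ow(x_j)$. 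The \type{cAx/4} case and the cases with $s>1$ (imprimitive) need to be run separately because the weight of $\phi$ and the structure of $C^\sharp=\bigcup C^\sharp(i)$ change, but the mechanism is the same; one restricts to a single component $C^\sharp(i)$ with its $\mumu_{\bar m}$-action.

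I expect the main obstacle to be step (2)–(3): ruling out the degenerate configurations of $(a_1,a_2,a_3,a_4)$ in which \emph{no} two coordinates generate — precisely the situations that get excluded as type \typec{IE^\vee}. The argument must show that such configurations either contradict terminality of $(X^\sharp,P^\sharp)$ (isolated hypersurface singularity of \type{cDV} type) or contradict the extremal-germ constraints \eqref{eq-grO-iP1-2} bounding $\sum i_P(1)+\sum w_P(0)$, via the relation \eqref{equation-iP-lP} between $i_P(1)$ and $\ell(P)=\len_P\I_C^{\sharp(2)}/\I_C^{\sharp 2}$; a non-planar $C^\sharp$ would make $\ell(P)$, hence $i_P(1)$, too large. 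So the proof is really a finite case-check over the terminal-singularity classification of \xref{clasifiction-terminal}, in each case comparing the orders $a_i$ forced by $\phi|_{C^\sharp}=0$ against the global numerical bound, and the content of the lemma is that the \typec{IE^\vee} locus is exactly where this comparison fails. I would cite \cite[3.8, 4.2]{Mori:flip} and \cite[\S~5]{MP:cb1} for the detailed case analysis rather than reproduce it.
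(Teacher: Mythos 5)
A preliminary remark on the comparison: the survey does not actually prove Lemma \xref{lemma:planar} --- it quotes it from \cite[3.8, 4.2]{Mori:flip} and \cite[\S~5]{MP:cb1} and explicitly declines to reproduce the ``very careful local computations'' --- so your decision to cite those sources for the decisive case analysis mirrors the paper's own treatment rather than contradicting it. Your outline also names the right machinery: normalized $\ell$-coordinates and the monomial parametrization of $C^\sharp$ \textup(\cite[2.7]{Mori:flip}\textup), the constraint coming from $\phi|_{C^\sharp}=0$ together with terminality of $(X^\sharp,P^\sharp)$, and the elimination of bad configurations through $\ell(P)$, \eqref{equation-iP-lP} and the global bound \eqref{eq-grO-iP1-2}, with \typec{IE^\vee} as the genuine exception; the deduction of planarity from the two-generator statement is also correct in spirit.

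Nevertheless, as a proof the proposal has a concrete gap, and it is not merely that the finite case-check is outsourced. Your opening reduction --- that $\ow(C^\sharp)$ is generated by the $\ow(x_i)$ for the coordinates not vanishing on $C^\sharp$, ``so the real content is to show that two of these four generators already generate'' --- is presented as an observation, but it is a substantive claim. For a semi-invariant $z\in\OOO_{C^\sharp,P^\sharp}$ the pair $(\ord(z),\wt(z))$ need not lie in the semigroup generated by the $\ow(x_i)$, because cancellation among equal-weight monomials can create larger orders: already for a plane branch such as $t\mapsto(t^4,\,t^6+t^7)$ the value semigroup needs a third generator ($13$) that is not the order of any coordinate, and no coordinate change of the plane removes it. Arranging coordinates for which the coordinate $\ow$'s do generate $\ow(C^\sharp)$ is precisely part of the normalization procedure of \cite[2.6--2.8]{Mori:flip}, and it is intertwined with, not prior to, the numerical case analysis you defer; indeed the lemma is stated for the normalized $\ell$-coordinates fixed in \xref{not-nazalo-loc}, and its content is exactly that (outside \typec{IE^\vee}) two of them suffice. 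So the proposal mis-locates the difficulty: the step you treat as free is part of what the cited results prove, and since everything past that point is delegated to \cite{Mori:flip} and \cite{MP:cb1}, the text is a faithful description of the strategy of those papers but not an argument that establishes the statement.
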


This lemma allows to obtain a local classification of 
possible singularities. We reproduce this classification below.
We start with the primitive case.

\begin{proposition}[{\cite[Prop. 4.2]{Mori:flip}}, {\cite[Prop. 5.2.1]{MP:cb1}}]
\label{prop:local-primitive}
Let $f: (X,C\simeq\PP^1)\to (Z,o)$ be an
extremal curve germ and let $P\in C$ be a primitive point of index $m\ge 1$. 
Then modulo permutations of $x_i$'s, the
semigroup $\ord(C^\sharp)$ is generated by $a_1$ and $a_2$. Moreover,
exactly one of the following holds:
\begin{enumerate}
\item [\typec{IA}]
$a_1+a_3\equiv 0\mod m$, $a_4=m$, $m\in \ZZ_{>0} a_1+\ZZ_{>0} a_2$,
where we may still permute $x_1$ and $x_3$ if $a_2=1$,
\item [\typec{IB}]
$a_1+a_3\equiv 0\mod m$,\ $a_2=m$,\ $a_1\ge 2$,
\item [\typec{IC}]
$a_1+a_2=a_3=m$,\ $a_4\not\equiv a_1,\, a_2 \mod m$,\ $2\le a_1<a_2$, $m\ge 5$,
\item [\typec{IIA}]
$m=4$, $P$ is of type~\type{cAx/4}, and $\ord(x)=(1,1,3,2)$,
\item [\typec{IIB}]
$m=4$, $P$ is of type~\type{cAx/4}, and $\ord(x)=(3,2,5,5)$,
\item [\typec{III}]
$m=1$, $X=X^\sharp$, $C=C^\sharp$, and $P\in X$ is a \type{cDV} point.
\end{enumerate}
\end{proposition}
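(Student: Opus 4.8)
The plan is to lift the situation to the index-one cover, where Theorem~\xref{clasifiction-terminal} supplies a normal form for the ambient hypersurface, and then to extract the list by combining that normal form with the planarity of $C^\sharp$ provided by Lemma~\xref{lemma:planar}. First I would reduce to the cover $\pi\colon(X^\sharp,P^\sharp)\to(X,P)$. Since $P$ is primitive, the splitting degree is $s=1$, so $\bar m=m$ and $C^\sharp=\pi^{-1}(C)$ is irreducible; its normalization $(C^\dag,P^\dag)$ carries a $\mumu_m$-semi-invariant uniformizing parameter $t$, which gives the map $\eta\colon\OOO_{X^\sharp,P^\sharp}\to\CC\{t\}$ and the invariants $a_i=\ord(x_i)$ subject to $\wt(x_i)\equiv a_i\pmod m$, as in~\xref{notation:terminal} and~\xref{not-nazalo-loc}. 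Since $P$ is primitive it is in particular not of the (imprimitive) type~\typec{IE^\vee}, so Lemma~\xref{lemma:planar} applies with no extra hypothesis: after a permutation of the coordinates (allowed in the choice of normalized $\ell$-coordinates), the semigroup $\ow(C^\sharp)$, and hence $\ord(C^\sharp)$, is generated by $\ow(x_1)$ and $\ow(x_2)$, and $C^\sharp$ is planar. Thus one may take $\eta(x_1)=t^{a_1}$, $\eta(x_2)=t^{a_2}$, while $\eta(x_3),\eta(x_4)$ are power series of orders $a_3,a_4$ lying in the numerical semigroup $\langle a_1,a_2\rangle$.

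Next I would feed in the normal form of Theorem~\xref{clasifiction-terminal}: $X^\sharp=\{\phi=0\}\subset\CC^4$ with $\wt(x,\phi)\equiv(a,b,-a,0,0)$, or, if $m=4$, with $\wt(x,\phi)\equiv(a,b,-a,2,2)$, where $\gcd(a,m)=\gcd(b,m)=1$. The congruence $\wt(x_1)+\wt(x_3)\equiv0$ gives at once $a_1+a_3\equiv0\pmod m$, while $\wt(x_4)$ gives $m\mid a_4$ (first case) or $a_4\equiv2\pmod4$ (type \type{cAx/4}); and since $\gcd(a,m)=\gcd(b,m)=1$, for $m\ge2$ at most one coordinate has weight $\equiv0$. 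The decisive extra input is that $C^\sharp\subset X^\sharp$, so $\eta(\phi)=0$ in $\CC\{t\}$: $\eta$ sends a monomial $x_1^{d_1}x_2^{d_2}x_3^{d_3}x_4^{d_4}$ to a series of order exactly $d_1a_1+d_2a_2+d_3a_3+d_4a_4$, so the monomials of $\phi$ of least such value are forced to cancel. Running this through the short list of weight-admissible monomials of $\phi$ — essentially the cross term in the two $\mumu_m$-conjugate coordinates against the pure powers of the weight-$0$ (or weight-$2$) coordinate — and imposing that $\{\phi=0\}$ have an isolated, hence terminal, singularity and that the $\ell$-equation be in normalized form, yields the remaining relations in each type: the membership $m\in\ZZ_{>0}a_1+\ZZ_{>0}a_2$ of~\typec{IA}, the inequality $a_1\ge2$ of~\typec{IB}, the relations $a_1+a_2=a_3=m$ with $2\le a_1<a_2$ (whence $m\ge5$) of~\typec{IC}, and the explicit order vectors $(1,1,3,2)$ and $(3,2,5,5)$ in~\typec{IIA} and~\typec{IIB}.

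It then remains to organize the case analysis and check that the cases are disjoint and exhaustive. If $m=1$ the cover is trivial, $X=X^\sharp$, $C=C^\sharp$, and $P$ is a \type{cDV} point by Theorem~\xref{clasifiction-terminal}: this is~\typec{III}. If $m\ge2$ and $P$ is not of type \type{cAx/4}, then exactly one coordinate has weight $\equiv0$ and one unordered pair carries the conjugate weights $\pm a$; the three cases~\typec{IA},~\typec{IB},~\typec{IC} correspond precisely to which coordinates become the two semigroup generators $x_1,x_2$ after renumbering — the conjugate pair being $\{x_1,x_3\}$ with $x_2$ of weight $b$ in~\typec{IA}, $\{x_1,x_3\}$ with $x_2$ of weight $0$ in~\typec{IB}, and $\{x_1,x_2\}$ with $x_3$ of weight $0$ in~\typec{IC} — while the only other combinatorial possibility, the weight-$0$ and weight-$b$ coordinates becoming the generators, is ruled out because then the conjugate-pair cross term in $\phi$ could not cancel at minimal $\eta$-order. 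The case $m=4$, $P$ of type \type{cAx/4}, is treated on its own: only the residues $1$ and $3$ are available for $a,b$, and the weight-$2$ constraint on $\phi$, together with planarity and isolatedness, leaves exactly the two order vectors of~\typec{IIA},~\typec{IIB}.

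The skeleton above is short, and the real work — the main obstacle — is the finite but intricate monomial bookkeeping compressed into the second and third paragraphs: one must verify that no unexpected weight-admissible monomial of $\phi$ can attain the minimal $\eta$-order (so that $\eta(\phi)=0$ genuinely forces the stated semigroup relations), that the normalization of $\ell$-coordinates eliminates all spurious cases, and that the resulting cases are pairwise disjoint. This is precisely the computation of \cite[Prop.~4.2]{Mori:flip}, re-examined for the $\QQ$-conic bundle situation in \cite[Prop.~5.2.1]{MP:cb1}, which I would follow, leaning on Lemma~\xref{lemma:planar} throughout to keep everything planar.
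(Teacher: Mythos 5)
The paper gives no proof of this proposition at all: it imports the classification verbatim from \cite[Prop.~4.2]{Mori:flip} and \cite[Prop.~5.2.1]{MP:cb1}, stating explicitly that the ``very careful local computations'' are not reproduced (only Example~\xref{ex:ICdual} is offered as a sample), so there is no in-paper argument to measure yours against beyond that citation. Your outline --- index-one cover, the normal form of Theorem~\xref{clasifiction-terminal}, planarity of $C^\sharp$ via Lemma~\xref{lemma:planar}, then a weight/order case analysis over which coordinates generate $\ow(C^\sharp)$ --- is exactly the strategy of those references and matches how the paper frames the result, but since you, like the paper, defer the decisive bookkeeping (the exact equalities $a_4=m$, $a_2=m$, $a_1+a_2=a_3=m$, the membership $m\in \ZZ_{>0}a_1+\ZZ_{>0}a_2$, the exclusion of the fourth generator configuration, and the exhaustiveness of the two \type{cAx/4} order vectors) to \cite[Prop.~4.2]{Mori:flip}, what you have is a faithful road map of the cited proof rather than an independent one.
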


Now consider the locally imprimitive case.

\begin{proposition}[{\cite[Prop. 4.2]{Mori:flip}}, {\cite[Prop. 5.3.1]{MP:cb1}}]
\label{prop-imp-types}
Let $f: (X,C\simeq\PP^1)\to (Z,o)$ be an
extremal curve germ and let $P\in C$ be an imprimitive point of index $m$. 

Modulo permutations of
$x_i$'s and changes of $\ell$-characters, the semigroup $\ow(C^\sharp)$ is generated by 
$\ow(x_1)$ and $\ow(x_2)$ except for the
case \typec{IE^{\vee}} below. Moreover, exactly one of the following
holds:
\begin{enumerate}
\item[\typec{IA^{\vee}}]
$\bar m>1$, $\wt(x_1)+\wt(x_3)\equiv 0\mod m$, $\ow x_4=(\bar m,0)$,
$\ow(C^\sharp)$ is generated by $\ow(x_1)$ and $\ow(x_2)$, and
$w_P(0)\ge 1/2$.

\item[\typec{IC^{\vee}}]
$s=2$, $\bar m$ is an even integer $\ge 4$, and
\begin{equation*}
\begin{array}{cccccc}
&x_1&x_2&x_3&x_4&
\\
\wt&1&-1&0&\bar m+1&\mod m
\\
\ord&1&\bar m-1&\bar m&\bar m+1&
\end{array}
\end{equation*}

\item[\typec{II^{\vee}}]
$\bar m=s=2$, $P$ is of type~\type{cAx/4}, and
\begin{equation*}
\begin{array}{cccccc}
&x_1&x_2&x_3&x_4&
\\
\wt&1&3&3&2&\mod 4
\\
\ord&1&1&1&2&
\end{array}
\end{equation*}

\item[\typec{ID^{\vee}}]
$\bar m=1$, $s=2$, $P$ is of type~\type{cA/2} or \type{cAx/2}, and
\begin{equation*}
\begin{array}{cccccc}
&x_1&x_2&x_3&x_4&
\\
\wt&1&1&1&0&\mod 2
\\
\ord&1&1&1&1&
\end{array}
\end{equation*}

\item[\typec{IE^{\vee}}]
$\bar m=2$, $s=4$, $P$ is of type~\type{cA/8}, and
\begin{equation*}
\begin{array}{cccccc}
&x_1&x_2&x_3&x_4&
\\
\wt&5&1&3&0&\mod 8
\\
\ord&1&1&1&2&
\end{array}
\end{equation*}
\end{enumerate}
Moreover, cases \typec{ID^{\vee}} and \typec{IE^{\vee}} occurs if 
and only if $f$ is 
a $\QQ$-conic bundle and $C'=f'^{-1}(o')$. In these cases, $P$ is the only 
non-Gorenstein point.
\end{proposition}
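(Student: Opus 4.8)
The plan is to carry out the classification in two stages, following \cite[Prop. 4.2]{Mori:flip} and \cite[Prop. 5.3.1]{MP:cb1}: first the local analysis on the index-one cover, then the identification of the two exceptional patterns by means of the sheaves $\gr_C^\bullet\upomega$. For the first stage one works with $\pi\colon(X^\sharp,P^\sharp)\to(X,P)$ in the normal form of Theorem~\xref{clasifiction-terminal} together with its $\mumu_m$-action. Since $P$ is imprimitive, $C^\sharp=\pi^{-1}(C)$ is a union of $s=m/\bar m>1$ smooth branches, cyclically permuted by $\mumu_m/\mumu_{\bar m}\simeq\mumu_s$ and each $\mumu_{\bar m}$-invariant; on the normalization $(C^\dag,P^\dag)$ of one branch one fixes the $\mumu_{\bar m}$-semi-invariant uniformizer $t$, so that $a_i=\ord(x_i)$ and $\wt(x_i)\equiv a_i\bmod\bar m$. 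Classifying $P$ then amounts to listing all admissible data --- a terminal singularity class for $(X^\sharp,P^\sharp)$, the residues $\wt(x_i)\bmod m$, and the orders $a_i$ --- for which $\phi$ vanishes identically along $C^\sharp$, the induced parametrization describes exactly $s$ reduced smooth branches, and $\bar m<m$, at every step keeping track of the remaining freedom to permute the $x_i$ and to rescale the $\ell$-character $\chi$.

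First I would treat the planar case. By Lemma~\xref{lemma:planar}, outside the pattern \typec{IE^{\vee}} the semigroup $\ow(C^\sharp)$ is already generated by $\ow(x_1)$ and $\ow(x_2)$, so $x_1,x_2$ parametrize $C^\sharp$ and $x_3,x_4$ are determined by them modulo $\phi$. Running through the terminal classes of Theorem~\xref{clasifiction-terminal} --- the main series (types \type{cA/m}, \type{cAx/2}, \type{cD/m}, \type{cE/m}, \dots) and the exceptional \type{cAx/4} --- and imposing that the projection to the $(x_1,x_2)$-plane be birational onto a single plane branch compatibly with the $\mumu_m$-weights and with $\bar m<m$, one checks that only the tabulated data of \typec{IA^{\vee}}, \typec{IC^{\vee}}, \typec{II^{\vee}} and \typec{ID^{\vee}} survive. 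The non-planar possibility is then handled directly: the failure of two-generation of $\ow(C^\sharp)$ forces $\bar m=2$, $s=4$, a point of type \type{cA/8} and precisely the data \typec{IE^{\vee}}. Since the five patterns differ already in $\bar m$, $s$ and the singularity type, exactly one of them holds.

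For the last two assertions I would use the invariant $\deg\gr_C^0\upomega$. An imprimitive point puts $(X,C)$ into Case~\xref{top:imprim}, so $P$ is the only non-Gorenstein point of $X$ and the topological index equals $s$; this already yields the final sentence, and it shows $(X,C)$ is not toroidal, so $-K_X\cdot C=s/m=1/\bar m$ and, by \eqref{eq-grw-w}, $\deg\gr_C^0\upomega=-1/\bar m-w_P(0)$. Computing $w_P(0)$ from the monomial parametrization (using the formulas of \cite{Mori:flip}) one obtains $\deg\gr_C^0\upomega=-1$ in cases \typec{IA^{\vee}}, \typec{IC^{\vee}}, \typec{II^{\vee}} and $\deg\gr_C^0\upomega=-2$ in cases \typec{ID^{\vee}}, \typec{IE^{\vee}}; for \typec{ID^{\vee}} the value $-2$ also follows at once from Corollary~\xref{zam-imp-Gor-fac}, which (the subindex being $1$) makes $f$ a $\QQ$-conic bundle whose base change $f'$ is a Gorenstein conic bundle with central fibre the reduced pair of lines $C'$. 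Finally, for irreducible $C$ the invariant $\deg\gr_C^0\upomega$ takes only the values $-1$ and $-2$, and the value $-2$ occurs --- apart from the toroidal case, which is primitive --- exactly when $f$ is a $\QQ$-conic bundle and $C'=f'^{-1}(o')$ (Remark~\xref{remark:grw} and Corollary~\xref{cor-prop-grw=2-2-points-prim}, together with Corollary~\xref{cor-gr-omega-=0} which handles the birational case). Combining this with the two computations gives the stated equivalence.

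The main obstacle is the bookkeeping in the second stage: for each terminal singularity class one must solve the resulting system of congruences and inequalities in the $a_i$, the $\wt(x_i)$ and $\bar m$ --- coming from $\wt(x_i)\equiv a_i\bmod\bar m$, from $\phi|_{C^\sharp}\equiv0$ with exactly $s$ smooth branches, and from $\bar m<m$ --- while exhausting correctly the residual coordinate permutations and rescalings of $\chi$, and one must then verify that the non-planar case yields nothing beyond \type{cA/8}. Once this list is in hand, the computations of $w_P(0)$ (and of $i_P(1)$ via Lemma~\xref{equation-iP}) used in the third stage are routine.
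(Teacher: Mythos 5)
You should first note that the survey itself does not prove this proposition: it is quoted from \cite[Prop.~4.2]{Mori:flip} and \cite[Prop.~5.3.1]{MP:cb1}, and the text right after it says explicitly that the ``very careful local computations'' are not reproduced (only Example~\xref{ex:ICdual} is offered as a sample). Measured against those references, your first two stages are a reasonable outline of the strategy (index-one cover in the normal form of Theorem~\xref{clasifiction-terminal}, the planar case via Lemma~\xref{lemma:planar}, then elimination over the terminal classes), but the decisive step is only asserted: ``one checks that only the tabulated data survive'' and ``the non-planar possibility forces \type{cA/8} with the data \typec{IE^{\vee}}'' are precisely the content of the cited propositions. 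As you concede, that bookkeeping is the whole difficulty, so stages one and two constitute a plan rather than a proof.

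The genuine error is in your third stage. You claim $\deg\gr_C^0\upomega=-2$ in both cases \typec{ID^{\vee}} and \typec{IE^{\vee}} and propose to detect ``$f$ is a $\QQ$-conic bundle with $C'=f'^{-1}(o')$'' by this value computed downstairs on $X$. But in case \typec{IE^{\vee}} one has $-K_X\cdot C=s/m=4/8=1/2$ and $w_P(0)=1/2$, so \eqref{eq-grw-w} gives $\deg\gr_C^0\upomega=-1$; this is exactly the computation carried out in the proof of Corollary~\xref{corollary:gr-w}. Hence \typec{IE^{\vee}} is a case with $C'=f'^{-1}(o')$ and nevertheless $\gr_C^0\upomega\simeq\OOO_C(-1)$, so the ``exact'' characterization you invoke is false: Corollary~\xref{cor-prop-grw=2-2-points-prim} only gives the implication starting from $\gr_C^0\upomega\not\simeq\OOO_C(-1)$, not its converse. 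The equivalence in the last part of the proposition has to be established on the splitting cover, e.g.\ by applying Corollary~\xref{cor-gr-omega-=0} to $f'$ and to $H^1(\gr_{C'}^0\upomega)$, which can be nonzero while its $\mumu_s$-invariant part $H^1(\gr_{C}^0\upomega)$ vanishes, as in \cite[\S\S~7--8]{MP:cb1}. Likewise, ``$P$ is the only non-Gorenstein point'' does not follow from Case~\xref{top:imprim} alone, as your sentence suggests; the survey cites \cite[Th.~6.7, 9.4]{Mori:flip} and \cite[\S~7]{MP:cb1} for precisely that statement (for \typec{ID^{\vee}} one can extract it from Corollary~\xref{zam-imp-Gor-fac}, since $\bar m=1$ makes $X'$ Gorenstein, but no such shortcut is available for \typec{IE^{\vee}}).
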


Proofs are based on very careful local computations. We do not present them here. See 
Example~\xref{ex:ICdual}
for sample of computations.

\begin{scorollary}
\label{corollary:gr-w}
Let $(X,C\simeq\PP^1)$ be an extremal curve germ. Assume that $\gr_C^0\upomega\not\simeq \OOO(-1)$.
Then $(X,C)$ is a $\QQ$-conic bundle germ which is either toroidal or the only 
non-Gorenstein point of $(X,C)$ is of type~\typec{ID^\vee}.
\end{scorollary}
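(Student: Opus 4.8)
The plan is to derive the statement from Corollary~\xref{cor-prop-grw=2-2-points-prim}, the local classification in Proposition~\xref{prop-imp-types}, and one short degree computation; the only real work lies in excluding the \typec{IE^{\vee}} case. First I would invoke Corollary~\xref{cor-prop-grw=2-2-points-prim}, which applies since $C\simeq\PP^1$ is irreducible: the hypothesis $\gr_C^0\upomega\not\simeq\OOO(-1)$ already forces $f$ to be a $\QQ$-conic bundle and, in the notation of \eqref{eq:base-cover}, $f'^{-1}(o')=C'$ scheme-theoretically, and it shows that $(X,C)$ is toroidal whenever it is locally primitive. This settles the locally primitive case, so from now on I may assume that $(X,C)$ is locally imprimitive, say at a point $P$ of index $m$ with splitting degree $s>1$ and subindex $\bar m=m/s$.

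Next I would place $(X,C)$ into the dichotomy of Cases~\xref{case-prim-1} and \xref{top:imprim}. By Corollary~\xref{corollary:cover}\xref{corollary:cover-i} the class of $H-\bar m D_P$ is an $s$-torsion element of $\Clsc(X)$, and it is non-zero because its image in $\Clsc(X,P)\simeq\ZZ/m\ZZ$ equals $-\bar m\not\equiv 0\pmod m$ (here $1\le\bar m<m$ since $s>1$). Hence $\Cl(X)_{\tors}\neq0$, so the topological index of $(X,C)$ is $>1$; since $(X,C)$ is not locally primitive, we must be in Case~\xref{top:imprim}. There $C'=\bigcup_{i=1}^{s}C_i'$ is a union of copies of $\PP^1$ meeting pairwise only at one point $P'$, the cover $\tau$ is \'etale over $X\setminus\{P\}$, the point $P=\tau(P')$ is the unique non-Gorenstein point of $X$, and the topological index equals $s$. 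Combining $f'^{-1}(o')=C'$ with the last assertion of Proposition~\xref{prop-imp-types}, I conclude that $P$ is of type \typec{ID^{\vee}} or of type \typec{IE^{\vee}} (the remaining imprimitive types \typec{IA^{\vee}}, \typec{IC^{\vee}}, \typec{II^{\vee}} are ruled out by that same assertion).

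It remains to exclude \typec{IE^{\vee}}, which is the step I expect to be the main obstacle. Suppose $P$ is of type \typec{IE^{\vee}}; then $\bar m=2$ and $s=4$, hence $m=\bar m\,s=8$ and the topological index is $d=s=4$. Since $(X,C)$ is imprimitive it is not toroidal, so Corollary~\xref{lemma:KC} gives $-K_X\cdot C=d/m=1/2$. On the other hand, a direct local computation of the invariant $w_P(0)$ at the \typec{IE^{\vee}} point, carried out from the $\ow$-data tabulated in Proposition~\xref{prop-imp-types} in the style of Example~\xref{ex:ICdual}, yields $w_P(0)=1/2$; since all other points of $X$ are Gorenstein and contribute $0$, formula \eqref{eq-grw-w} then forces $\deg\gr_C^0\upomega=-1$, i.e. $\gr_C^0\upomega\simeq\OOO(-1)$, contradicting the hypothesis. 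Therefore $P$ is of type \typec{ID^{\vee}}, so its subindex is $1$, and by Corollary~\xref{zam-imp-Gor-fac} together with the remark following it, $(X,C)$ is the imprimitive $\QQ$-conic bundle germ of type~\xref{item-main-th-impr-barm=1}, whose unique non-Gorenstein point is of type \typec{ID^{\vee}}. If one prefers to avoid the $w_P(0)$ computation, one may instead exclude \typec{IE^{\vee}} by observing that the scheme-theoretic central fibre $f'^{-1}(o')=C'$ of a $\QQ$-conic bundle cannot consist of $s=4$ components, but the degree computation seems the cleaner route within the setting of this paper.
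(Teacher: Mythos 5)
Your argument is correct and is essentially the paper's own proof: reduce via Corollary~\xref{cor-prop-grw=2-2-points-prim} to the locally imprimitive $\QQ$-conic bundle case with $C'=f'^{-1}(o')$, apply Proposition~\xref{prop-imp-types} to get type \typec{ID^\vee} or \typec{IE^\vee}, and kill \typec{IE^\vee} by $-K_X\cdot C=1/2$ (Corollary~\xref{lemma:KC}), $w_P(0)=1/2$ and \eqref{eq-grw-w}; your extra torsion computation placing the germ in Case~\xref{top:imprim} is sound but redundant, since uniqueness of the non-Gorenstein point and $d=s$ are already recorded there and in \xref{prop-imp-types}. One caution: your proposed shortcut for excluding \typec{IE^\vee} --- that the scheme fibre of a $\QQ$-conic bundle cannot have four components --- is false: in the \typec{IE^\vee} germs of \xref{item-main-th-impr-barm=2-s=4} (cf.\ case \xref{cla-index-2-4} of Theorem~\xref{th-index=2}) the fibre $C'=f'^{-1}(o')$ does consist of four components through the index-two point, so the degree computation is not just cleaner but necessary.
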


\begin{proof}
By Corollary \xref{cor-prop-grw=2-2-points-prim}\ 
$(X,C)$ is $\QQ$-conic bundle and $C'=f'^{-1}(o')$. 
Assume that it is not a toroidal. 
Then again by \xref{cor-prop-grw=2-2-points-prim} it is locally imprimitive 
and by the proposition above $(X,C)$ has a unique non-Gorenstein point, say $P$,
which is of type~\typec{ID^\vee} or \typec{IE^\vee}. In the case \typec{IE^\vee}
we have $-K_X\cdot C=1/2$ (see \xref{lemma:KC}). Easy computations show that $w_P(0)=1/2$
and so $\deg \gr_C^0\upomega=-1$ by \eqref{eq-grw-w}.
\end{proof}

\subsection{}
By Lemma \xref{lemma:planar} there exist monomials
$\lambda_3$ and $\lambda_4$ in $x_1$, $x_2$ such that 
$x_3= \lambda_3(x_1, x_2)$ and $x_4= \lambda_4(x_1, x_2)$ on $C^\sharp$.
Then
\begin{equation*}
x_1^{sa_2}-x_2^{sa_1},\qquad x_3-\lambda_3,\qquad x_4-\lambda_4
\end{equation*} 
generate the defining ideal $I^\sharp$ of $C^\sharp\subset \CC^4$.
Then the equation of $X^\sharp$ can be written as follows
\begin{equation*}
\phi= (x_1^{sa_2}-x_2^{sa_1})\phi_2+(x_3-\lambda_3)\phi_3+(x_4-\lambda_4)\phi_4
\end{equation*} 
for some semi-invariant $\phi_i\in \CC\{x_1,\dots,x_4\}$ with suitable weights.

\begin{lemma}
\label{lemma:local-eq}
Under the notation of \xref{prop:local-primitive} and \xref{prop-imp-types}, one has
\begin{enumerate}
\item \label{lemma:local-eq1}
if $P$ is of type~\typec{IC} or \typec{IC^\vee}, then 
$(X^\sharp, P^\sharp)$ is smooth and $I^\sharp = (x_1^{sa_2}-x_2^{sa_1},\, x_4-\lambda_4,\phi)$;
\item\label{lemma:local-eq2}
if $P$ is of type~\typec{IIB} or \typec{II^\vee}, then 
$I^\sharp = (x_3-\lambda_3, x_4-\lambda_4, \phi)$.
\end{enumerate}
\end{lemma}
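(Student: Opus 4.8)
The plan is to use the explicit local coordinates and weight data from Propositions~\xref{prop:local-primitive} and \xref{prop-imp-types}, together with the fact (discussed just before the statement) that $x_1^{sa_2}-x_2^{sa_1}$, $x_3-\lambda_3$, $x_4-\lambda_4$ and $\phi$ all vanish on $C^\sharp$, and in fact the first three generate the ideal $I^\sharp_{C^\sharp}$ of the curve $C^\sharp\subset\CC^4$. Since $C^\sharp\subset X^\sharp\subset\CC^4$, we have $I^\sharp=I^\sharp_{X^\sharp}+(\text{something})$; concretely $I^\sharp_{X^\sharp}=(\phi)$ and the ideal of $C^\sharp$ in $\OOO_{X^\sharp}$ is generated by the images of $x_1^{sa_2}-x_2^{sa_1}$, $x_3-\lambda_3$, $x_4-\lambda_4$. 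The task is to show that in the listed cases one of the three generators is redundant modulo $\phi$ (and in \xref{lemma:local-eq1} that $X^\sharp$ is moreover smooth).

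For part~\xref{lemma:local-eq1}, type~\typec{IC} or \typec{IC^\vee}: here the weights are $\wt(x_1,x_2,x_3,x_4)\equiv(1,-1,0,\bar m+1)$ (with $s=1$, $\bar m=m$ in the \typec{IC} case) and the orders are $(1,\bar m-1,\bar m,\bar m+1)$. First I would observe that, because $\ord(x_3)=\bar m=a_1+a_2=\ord(x_1 x_2)$ and $\wt(x_3)=0=\wt(x_1 x_2)$, the monomial $\lambda_3$ can be taken to be $x_1 x_2$ (up to unit and lower-order corrections), so that $x_3-\lambda_3 = x_3 - x_1 x_2 - (\text{h.o.t.})$. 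Now I claim this element lies in $I^\sharp_{X^\sharp}+(x_1^{sa_2}-x_2^{sa_1},\,x_4-\lambda_4)$, i.e. that $\phi_3$ is a unit: inspecting the weight of $\phi$ and the weights/orders of $x_1^{sa_2}-x_2^{sa_1}$, $x_3-\lambda_3$, $x_4-\lambda_4$, one checks that the term $(x_3-\lambda_3)\phi_3$ must carry a $\phi_3$ with nonzero constant term for $\phi$ to define a surface with the correct (smooth, resp.\ with the required properties) singularity type — this is where one uses that $(X^\sharp,P^\sharp)$ has index one and that $C^\sharp$ is planar with the stated generators, forcing $\phi$ to involve $x_3$ linearly. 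Having $\phi_3$ a unit, one can solve $x_3 = \lambda_3 + (\text{terms})$ from $\phi=0$, eliminate $x_3$, and conclude both that $X^\sharp$ is smooth (the remaining equation, after eliminating $x_3$, is the graph of a function in $x_1,x_2,x_4$) and that $I^\sharp=(x_1^{sa_2}-x_2^{sa_1},\,x_4-\lambda_4,\phi)$. Part~\xref{lemma:local-eq2}, types~\typec{IIB} and \typec{II^\vee}: now $P$ is of type~\type{cAx/4}, so $X^\sharp$ is genuinely singular and we cannot hope for smoothness; instead I would show $\phi_2$ is a unit, i.e. that the relation $x_1^{sa_2}-x_2^{sa_1}$ is redundant. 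In the \typec{II^\vee} case, $\ow(x_1)=(1,1)$, $\ow(x_2)=(1,3)$, $s=2$, so $x_1^{sa_2}-x_2^{sa_1}=x_1^2-x_2^2$ has $\ow=(2,2)$; one reads off from the \type{cAx/4} equation $\phi\equiv$ (something of $\ow$-weight $(2,2)$) $+\cdots$ that the coefficient of $x_1^2-x_2^2$ in $\phi$ is a unit. Similarly for \typec{IIB} with $\ord(x)=(3,2,5,5)$. Once $\phi_2$ is a unit one uses $\phi=0$ to express $x_1^{sa_2}-x_2^{sa_1}$ in terms of $x_3-\lambda_3$ and $x_4-\lambda_4$, giving $I^\sharp=(x_3-\lambda_3,\,x_4-\lambda_4,\,\phi)$.

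The main obstacle I anticipate is pinning down precisely why the relevant $\phi_i$ is a unit rather than merely nonzero: this requires a careful bookkeeping of $\ow$-weights of all monomials that can appear in $\phi$ (an $\ell$-equation in normalized $\ell$-coordinates), using the classification of \type{cDV}, resp.\ \type{cAx/4}, equations from \xref{clasifiction-terminal} and the constraint that $\phi$ must actually contain $C^\sharp$ with the stated monomial parametrization. In each of the four listed types this is a short finite check — there are only a handful of monomials of the minimal weight — but it is the step where one genuinely uses the normalized-coordinate hypothesis and the specific numerics of Propositions~\xref{prop:local-primitive} and \xref{prop-imp-types}, so I would organize the proof as: (1) write $\phi=(x_1^{sa_2}-x_2^{sa_1})\phi_2+(x_3-\lambda_3)\phi_3+(x_4-\lambda_4)\phi_4$; (2) compute the $\ow$-weight forced on the relevant $\phi_i$ and verify it must be $(0,0)$, i.e.\ a unit; (3) eliminate the corresponding variable (for \xref{lemma:local-eq1}) or the corresponding binomial relation (for \xref{lemma:local-eq2}) and read off the generators of $I^\sharp$, noting smoothness of $X^\sharp$ in case \xref{lemma:local-eq1} since the eliminated equation becomes a graph.
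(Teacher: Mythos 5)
Your proposal is correct and follows essentially the same route as the paper's proof: the same decomposition $\phi=(x_1^{sa_2}-x_2^{sa_1})\phi_2+(x_3-\lambda_3)\phi_3+(x_4-\lambda_4)\phi_4$, with weight/order bookkeeping plus the classification of terminal equations from Theorem \xref{clasifiction-terminal} forcing the relevant $\phi_i$ to be a unit (for \typec{IC}/\typec{IC^\vee}: $\lambda_3=x_1x_2$, $\phi_2\in(x)$, $\phi_4,\lambda_4\in(x)^2$, while index $\ge 5$ forces $x_1x_2$ or $x_3$ to occur in $\phi$, so $\phi_3$ is a unit and $X^\sharp$ is smooth; for the \type{cAx/4} cases the normal form supplies the square that makes $\phi_2$ a unit), followed by elimination. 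The paper writes out only the \typec{IC} case as a sample, and the finite check of low-degree semi-invariant monomials that you flag as the remaining work is exactly the content of that argument.
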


\begin{proof}
Let us consider for example the case \typec{IC}. Then $\lambda_3$ must be $x_1x_2$, 
and so
\begin{equation*}
\phi= (x_1^{sa_2}-x_2^{sa_1})\phi_2+(x_3-x_1x_2)\phi_3+(x_4-\lambda_4)\phi_4.
\end{equation*} 
Since $P$ is 
of type~\typec{IC}, one sees that $m = a_1 + a_2 > 4$, $\phi_2\in (x)$, and that 
$\phi_4,\, \lambda_4\in (x)^2$ because 
$\wt(x_4)\not\equiv 0, \pm \wt(x_1), \pm \wt(x_2) \mod m$.
Since $m\ge 5$, by the classification of terminal singularities
either $x_1 x_2$ or $x_3$ must appear in 
the power series expansion. Since $a_1,\, a_2 \ge 2$, this is only possible if $\phi_3$ 
is a unit. 
\end{proof}

\begin{example}
\label{ex:ICdual}
According to Lemma \xref{lemma:local-eq}\xref{lemma:local-eq1}
a point $P\in (X,C)$ of type \typec{IC^\vee} can be written as follows:
\begin{equation*}
(X,C,P)=\bigl(\CC^3_{x_1,x_2,x_4}, \{x_4=x_2^2-x_1^{2\bar{m}-2}=0\}, 0\bigr)/\mumu_{2\bar m}(1,-1,\bar{m}+1)
\end{equation*} 
We have $C\simeq \{x_4=x_2-x_1^{\bar{m}-1}=0\}/\mumu_{\bar{m}}$ and a local uniformizing
parameter on $C$ is $x_1^{\bar{m}}$. Hence, $\OOO_{C,P}=\CC\{x_1^{\bar{m}}\}$.
Furthermore, 
\begin{eqnarray*}
\OOO_C(mK_X)&=&\OOO_C(d x_1\wedge d x_2\wedge d x_4)^m,
\\
\gr_C^0\upomega &=& \OOO_C(x_1^{\bar{m}-1}d x_1\wedge d x_2\wedge d x_4),
\\
\gr_C^1\OOO&=&\OOO_C(x_1^{\bar{m}-1}x_4)\oplus\OOO_C(x_1^2(x_2^2-x_1^{2\bar{m}-2})),
\\
w_P(0)&=&(\bar{m}-1)/\bar{m},\quad i_p(1)=2.
\end{eqnarray*}
\end{example}

\section{Deformations}
In this section we discuss deformations of extremal curve germs.
It is known that a small deformation of a terminal singularity is again terminal
(see e.g. \cite[Theorem~9.1.14]{Ishii:book}). Moreover, any three-dimensional
terminal singularity admits a $\QQ$-smoothing, i.e. a deformation to
a collection of cyclic quotient singularities \cite[6.4A]{Reid:YPG}.

To study extremal curve germs it is very convenient to deform 
an original germ to more general one. For example sometimes
this procedure increases the number of singular points and 
it can be used to derive a contradiction.

\begin{definition}
Let $X$ be a threefold
with at worst terminal singularities.
We say $X$ is \emph{ordinary} at $P$ (or $P$ is an ordinary point) if $(X, P)$
is either a cyclic quotient singularity or an ordinary double point.
\end{definition}

First, we note that for extremal curve germs deformations are unobstructed:

\begin{proposition}[{\cite[1b.8.2]{Mori:flip}}, {\cite[11.4.2]{KM92}}, 
{\cite[6.1]{MP:cb1}}]
\label{prop-cor-def-f}
Let $f : (X, C) \to (Z,o)$ be an extremal curve germ
and let $P\in C$. Then
every deformation of germs $(X,P)\supset (C,P)$ can be extended to a
deformation of $(X, C)$ so that the deformation is trivial outside
some small neighborhood of $P$.
\end{proposition}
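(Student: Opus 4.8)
The statement asserts that for an extremal curve germ $f:(X,C)\to(Z,o)$ and a point $P\in C$, any deformation of the pair of germs $(X,P)\supset(C,P)$ extends to a deformation of $(X,C)$ which is trivial away from a neighborhood of $P$. The plan is to set this up as an obstruction-theory argument: work in the deformation functor of the pair $(X,C)$ (or of the morphism $f$, as in \cite{Mori:flip}), restrict to deformations supported near $P$, and show the relevant obstruction space vanishes. First I would reduce to the affine/local situation: replace $Z$ by a small Stein neighborhood of $o$ so that $X$ is a Stein threefold with terminal singularities and $C=f^{-1}(o)_{\mathrm{red}}$ is proper. The deformations of $(X,P)\supset(C,P)$ we must extend are given by elements of $T^1$ of the local germ $(X,P)\supset(C,P)$; since $(X,P)$ is an isolated hypersurface singularity mod $\mumu_m$ and $(C,P)$ is a curve on it, this local $T^1$ is finite-dimensional and the pushforward/extension question becomes the surjectivity of the natural restriction map from global deformations of $(X,C)$ to this local space.

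The key step is a cohomological vanishing. One forms the sheaf $T^1_{X,C}$ (or the relevant tangent-obstruction sheaves of the pair, resp. of the morphism $f$) and the local-to-global spectral sequence
\begin{equation*}
E_2^{p,q}=H^p\bigl(X,\, \mathcal{T}^q\bigr)\ \Longrightarrow\ \mathbb{T}^{p+q},
\end{equation*}
where $\mathcal{T}^q$ denotes the appropriate tangent/obstruction sheaves. The obstruction to extending a local deformation lives in a global group that is built from $H^1$ and $H^2$ of coherent sheaves whose support meets only a neighborhood of the singular locus of $X$ along $C$. The heart of the matter is that these $H^1$ and $H^2$ groups vanish: this is exactly the type of statement controlled by Theorem \xref{th-vanish} and Corollary \xref{cor-C-pa=0}, namely $R^if_*\OOO_X=0$ for $i>0$ together with $H^1(\OOO_X/\JJJ)=0$ for ideals $\JJJ$ cosupported on $C$, plus the fact (Corollary \xref{cor-C-pa=0}\xref{cor-C-pa=0c}) that $C$ is a tree of rational curves so $H^2(C,\OOO)$-type contributions vanish. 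Since the obstruction sheaves are supported at finitely many points (the singular points of $X$ on $C$, among them $P$), their higher cohomology on the one-dimensional $C$ vanishes for trivial dimension reasons, which kills the global obstruction and also forces the restriction map on $\mathbb{T}^1$ to be surjective. Finally, to arrange triviality outside a small neighborhood of $P$, I would take a cutoff: choose a Stein neighborhood $U\ni P$ with $U\cap\Sing X=\{P\}$, extend the deformation over $U$ first, then glue to the trivial deformation over $X\setminus\{P\}$ using that the deformation functor of the smooth (or Gorenstein canonical) part is unobstructed and that $H^1$ of the gluing sheaf over $X$ vanishes by the same vanishing theorem.

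The main obstacle is making the obstruction space precise enough to apply the vanishing. One must identify correctly which cohomology groups carry the obstruction to extending a deformation of the \emph{pair} (or of the contraction morphism $f$, since that is what \cite{Mori:flip} and \cite{KM92} actually deform) — this requires the normal-sheaf/cotangent-complex bookkeeping for $C\subset X$ with $X$ singular, and one has to check that the relevant sheaves are indeed cosupported on $C$ (and in fact on $\Sing X\cap C$) so that the $R^if_*=0$ vanishing of Theorem \xref{th-vanish} applies verbatim. A secondary technical point is the passage through the index-one cover $(X^\sharp,P^\sharp)\to(X,P)$: one deforms $\mumu_m$-equivariantly upstairs, where $(X^\sharp,P^\sharp)$ is a hypersurface and hence has an especially simple $T^1$ and trivially vanishing $T^{\ge 2}$, and then descends. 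Once these identifications are in place the argument is, as the cited references indicate, essentially formal — the substance is entirely in Theorem \xref{th-vanish} and Corollary \xref{cor-C-pa=0}.
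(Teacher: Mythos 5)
Your overall skeleton — reduce to the surjectivity (in fact smoothness) of the restriction map from global deformations to the local deformation spaces at the singular points, and kill the obstruction by a cohomology vanishing — is the same as the paper's, which considers $\Psi:\Def(X)\to\prod\Def(X,P_i)$ and shows it is smooth. But there is a genuine flaw in how you justify the key vanishing. You declare that "the heart of the matter" is Theorem \xref{th-vanish} and Corollary \xref{cor-C-pa=0}, i.e. $R^if_*\OOO_X=0$ and $H^1(\OOO_X/\JJJ)=0$ plus rationality of $C$. Those statements are about the structure sheaf and simply do not bear on the groups that actually carry the obstruction here: the relevant term is (an $H^2$ of) the tangent sheaf $T_X$ (resp. the tangent sheaf of the pair), and a Kawamata--Viehweg-type vanishing cannot be applied to $T_X$, nor can the vanishing for $\OOO_X/\JJJ$ be transferred to the $T^1$-sheaves. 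As written, the central step of your argument has no valid justification.

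The correct — and much more elementary — reason, which is exactly what the paper uses, is dimensional: the obstruction to globalizing an element of $\prod\Def(X,P_i)$ lies in $R^2f_*T_X$, and $R^2f_*\FFF=0$ for \emph{any} coherent sheaf $\FFF$ because the fibers of $f$ are at most one-dimensional; since $(Z,o)$ may be taken Stein, $H^2(X,\FFF)=H^0(Z,R^2f_*\FFF)=0$ as well. This also gives the surjectivity onto $H^0$ of the local $T^1$-sheaf in your local-to-global spectral sequence (the sheaves $\mathcal T^{\ge1}$ are punctual, as you note, and the $H^2(\mathcal T^0)$ term dies for the dimension reason just stated). So the fix is immediate, but it replaces your appeal to the deep vanishing theorems by a triviality; no use of $R^if_*\OOO_X=0$ or of $C$ being a tree of rational curves is needed. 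Your remaining embellishments (cutoff and gluing with the trivial deformation away from $P$, equivariant deformation on the index-one cover) are harmless and are indeed closer to the more explicit argument of \cite[1b.8.2]{Mori:flip}, but they are not where the substance lies.
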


\begin{proof}
Let $P_i\in X$ be singular points. Consider the natural morphism
\begin{equation*}
\Psi: \Def (X) \longrightarrow \prod\Def (X,P_i).
\end{equation*} 
It is sufficient to show that $\Psi$ is smooth (in particular,
surjective). The obstruction to globalizing a deformation in
$\prod\Def (X,P_i)$ lies in $R^2 f_*T_X$. Since $f$ has only
one-dimensional fibers, $R^2 f_*T_X=0$.
Alternate, more explicit proof can be found in {\cite[1b.8.2]{Mori:flip}}.
\end{proof}

The following theorem was proved in {\cite[Th. 3.2]{MP:IA}} for $f$ divisorial;
in {\cite[(11.4)]{KM92}} for $f$ flipping; or
in {\cite[(6.2)]{MP:cb1}} for $f$ a $\QQ$-conic bundle.

\begin{theorem}[{\cite[(6.2)]{MP:cb1}}]
\label{theorem-main-def}
Let $f: (X,C) \to (Z,o)$ be a divisorial 
\textup{(}resp. flipping, $\QQ$-conic bundle\textup{)} curve germ,
where $C$ is not necessarily irreducible.
Let $ {\pi}: \Xf \to (\CC_\lambda^1,0)$ be a flat deformation of 
$X= \Xf_0:= {\pi}^{-1}(0)$
over a germ $(\CC_\lambda^1,0)$ with a flat closed subspace 
$\Cf\subset\Xf$ such that $C= \Cf_0$. Then there exists a
flat deformation $\Zf \to (\CC_\lambda^1,0)$ and
a proper $\CC_\lambda^1$-morphism $\fb: \Xf \to \Zf$
such that $f= \fb_0$ and 
\[
\fb_\lambda: (\Xf_\lambda, 
\fb_\lambda^{-1}(o_\lambda)_{\red})
\to (\Zf_\lambda,o_\lambda)
\]
is a divisorial
\textup{(}resp. flipping, $\QQ$-conic bundle\textup{)} extremal curve germ
for every small $\lambda$, where 
$o_\lambda:=\fb_\lambda(\Cf_\lambda)$.
\end{theorem}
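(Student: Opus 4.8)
The plan is to extend the given deformation $\pi\colon\Xf\to(\CC^1_\lambda,0)$ of $X$ to a deformation of the contraction $f$ by constructing $\Zf$ as a relative $\Proj$ and invoking the openness of the relevant properties (terminality, fiber dimension, ampleness of $-K$) under small deformation. Concretely, first I would produce the relative contraction: after shrinking, use Theorem~\xref{th-vanish} together with relative vanishing to see that a sufficiently divisible multiple of $-K_{\Xf/\CC^1_\lambda}$ is $\pi$-generated in a neighborhood of $\Cf$, hence defines, via $\fb:=\Proj_{\CC^1_\lambda}\bigoplus_{k\ge 0}\pi_*\OOO_{\Xf}(-kmK_{\Xf/\CC^1_\lambda})$, a morphism $\fb\colon\Xf\to\Zf$ over $(\CC^1_\lambda,0)$ whose restriction to $\lambda=0$ recovers $f$. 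The key point here is base change: since $R^if_*\OOO_X=0$ for $i>0$, the formation of $\pi_*\OOO_{\Xf}(-kmK_{\Xf/\CC^1_\lambda})$ commutes with restriction to $\Xf_\lambda$, so $\fb_0=f$ and $\Zf$ is flat over $(\CC^1_\lambda,0)$ with $\Zf_0=(Z,o)$.

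Next I would verify that for small $\lambda$ the fiber $\fb_\lambda$ is again an extremal curve germ of the same type. Terminality of $\Xf_\lambda$ is automatic (small deformations of terminal singularities are terminal, as recalled in the introduction to Section on deformations); $-K_{\Xf_\lambda}$ is $\fb_\lambda$-ample because ampleness is an open condition in the relative setting; and the fibers of $\fb_\lambda$ have dimension $\le 1$ by semicontinuity of fiber dimension, using that $\Cf$ is $\CC^1_\lambda$-flat so $\Cf_\lambda=\fb_\lambda^{-1}(o_\lambda)_{\red}$ stays one-dimensional. The trichotomy divisorial/flipping/$\QQ$-conic bundle is then preserved: whether $\fb_\lambda$ is birational is detected by $\dim\Zf_\lambda$, which is constant in a flat family of reduced (indeed normal, by Lemma~\xref{lemma:base} and Serre's criterion applied fiberwise) germs; and within the birational case the dimension of the exceptional locus is lower-semicontinuous, so a flipping (exceptional locus $=\Cf$) or divisorial (exceptional locus two-dimensional) germ stays flipping or divisorial respectively for $\lambda$ near $0$, the boundary case being excluded because the generic relative Picard number cannot drop.

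The main obstacle I expect is the base-change/coherence bookkeeping needed to make $\fb$ a genuine morphism of analytic germs rather than just a formal or fiberwise statement: one must check that $\bigoplus_k\pi_*\OOO_{\Xf}(-kmK_{\Xf/\CC^1_\lambda})$ is a finitely generated sheaf of $\OOO_{\CC^1_\lambda}$-algebras after shrinking, so that $\Proj$ is well-defined and proper over $\CC^1_\lambda$, and that $\Zf\to\CC^1_\lambda$ is flat with the expected central fiber. This is handled by the cited references (\cite[(11.4)]{KM92} for the flipping case via the flip existence machinery, \cite[(6.2)]{MP:cb1} for the $\QQ$-conic bundle case, and \cite[Th.~3.2]{MP:IA} for the divisorial case), so in the survey it suffices to assemble these three inputs; the role of Proposition~\xref{prop-cor-def-f} is to guarantee that one may first realize \emph{any} prescribed local deformation $(X,P)\supset(C,P)$ by a global $(\Xf,\Cf)$, which is the hypothesis of the theorem and not part of what must be reproved here.
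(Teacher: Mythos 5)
There is no proof of Theorem \xref{theorem-main-def} in the paper itself: it is quoted with the references \cite[(11.4)]{KM92} (flipping), \cite[Th.~3.2]{MP:IA} (divisorial) and \cite[(6.2)]{MP:cb1} ($\QQ$-conic bundle), so your closing remark that in the survey it suffices to assemble these three inputs matches what the authors actually do. Judged as an argument sketch, however, your construction of $\Zf$ fails at the first step. Since $-K_X$ is $f$-\emph{ample}, the algebra $\bigoplus_{k\ge0}\pi_*\OOO_{\Xf}(-kmK_{\Xf/\CC^1_\lambda})$ is the full relative anticanonical algebra, and its $\Proj$ over $\CC^1_\lambda$ gives back $\Xf$, not $\Zf$: on the central fiber $\Proj\bigoplus_k H^0\bigl(X,\OOO_X(-kmK_X)\bigr)=\Proj_Z\bigoplus_k f_*\OOO_X(-kmK_X)=X$, because the curves contracted by $f$ are precisely curves on which $-K_X$ is positive, so no positive multiple of $-K$ can define $f$. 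Thus your $\fb_0$ would be (essentially) the identity rather than $f$. What the cited proofs use instead is an $f$-\emph{trivial} polarization: take $H=f^*H_Z$ for a hyperplane section $H_Z\ni o$, extend $\OOO_X(H)$ to a line bundle on $\Xf$ --- this is where the vanishing $H^1(X,\OOO_X)=H^2(X,\OOO_X)=0$ (Theorem \xref{th-vanish}, Corollary \xref{cor-C-pa=0}) really enters, via the exponential sequence, not via base change for the anticanonical sheaf --- show it is semiample near $\Cf$ after shrinking, and let $\Zf$ be the $\Proj$ of its section algebra (equivalently, the relative Remmert reduction built from $\pi_*\OOO_{\Xf}$, with Grauert base change giving $\Zf_0=(Z,o)$).

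The second genuine gap is the preservation of the trichotomy, which is the substantive content of the theorem: ``the generic relative Picard number cannot drop'' is not an argument and by itself excludes neither a flipping germ deforming to a divisorial one nor the converse. One has to argue geometrically, e.g.: if nearby fibers $\fb_\lambda$ had two-dimensional exceptional loci, these surfaces would form a divisorial component of $\Exc(\fb)$ whose closure meets the central fiber in a divisor contracted by $f$, which is impossible when $f$ is small; conversely, for a divisorial germ the exceptional divisor is $\QQ$-Cartier (Theorem \xref{thm:div:Q-Cartier}) and deforms with $\Xf$, so the nearby exceptional loci stay two-dimensional; the fiber-type case is indeed detected by $\dim\Zf_\lambda$ via flatness of $\Zf\to\CC^1_\lambda$. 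Your openness observations (terminality, relative ampleness, fiber dimension $\le1$) are fine but only show that $\fb_\lambda$ is an extremal curve germ, not that it has the same type as $f$.
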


Note however that the deformations do not preserve 
irreducibility of the central fiber: one can easily construct 
an example of
an extremal curve germ $(X,C\simeq \PP^1)$ whose deformation 
$(\Xf_\lambda, \fb_\lambda^{-1}(o_\lambda)_{\red})$ 
has reducible central fiber.
In practice, we often pick up a suitable irreducible component
of $\fb_\lambda^{-1}(o_\lambda)_{\red}$ and obtain 
an extremal curve germ whose central fiber is irreducible (see Remark~\xref{rem-prel-extr-nbd}).

\begin{scase}
Let $f: (X,C) \to (Z,o)$ be an extremal curve germ with a singular point $P\in 
C$ of of index 
$m$, and let $P_1,\dots, P_r$ be all the other singular points of $X$ on $C$. 
Let $(X^\sharp, P^\sharp)\to (X,P)$ be the index one cover
and let $(X^\sharp, P^\sharp)\subset (\CC^4_{x_1,\dots,x_4},0)$ be an equivariant embedding as in \xref{clasifiction-terminal}.
Let $\phi=0$ be an equation of $X^\sharp$.
We will choose semi-invariant $\psi\in \CC\{x_1,\dots,x_4\}$ 
with $\wt(\psi) \equiv \wt(\phi) \mod m$
such that 
\begin{equation*}
X_{\lambda,\epsilon}:=\{(x_1,\dots,x_4)\mid \phi+\lambda\psi=0,\ |x_i|<\epsilon\}/\mumu_m
\subset \CC^4/\mumu_m
\end{equation*} 
has only terminal singularities
for $|\lambda|\ll \epsilon\ll 1$.
\end{scase}

\begin{sproposition}[{\cite[4.7]{Mori:flip}}]
\label{prop:localDeformations}
For suitable choice of $\psi$, each nearby extremal
curve germ $X_\lambda^o\supset C_\lambda\simeq \PP^1$ contains $P,P_1, \dots 
P_r$ 
so that $(X_{\lambda}^o, P_i) \supset (C_{\lambda}, P_i)$ is naturally 
isomorphic to $(X, P_i) \supset(C, P_i)$ for all $i$ and 
$X_{\lambda,\epsilon} \supset C_{\lambda,\epsilon}$ 
contains all 
the singularities ($\in C_\lambda$) of $X_\lambda\supset C_\lambda$ other than 
$P_1, \dots P_r$. All the singularities of $X_{\lambda,\epsilon} \supset C_{\lambda,\epsilon}$
are ordinary.

If $P$ is a primitive \textup(resp. an imprimitive\textup) point, then 
$X_{\lambda,\epsilon} \supset C_{\lambda,\epsilon}$ is locally primitive 
\textup(resp. $P$ is an imprimitive point of $X_{\lambda,\epsilon} \supset 
C_{\lambda,\epsilon}$ 
with the same subindex and splitting degree as $X \supset C \ni P$\textup).
Depending on the type 
of $X \supset C \ni P$, one has
\par\medskip\noindent
{\rm
\begin{tabularx}{1\textwidth}{l|l|l|l|X}
\multirow{2}{*}{type}&\multicolumn{4}{c}{$X_{\lambda,\epsilon} \supset 
C_{\lambda,\epsilon}$}
\\\cline{2-5}
& & type & index & $w_{P}(0)$, $w_{P_i}(0)$ on $X_\lambda$
\\\hline
\typec{IA}&$P$ & \typec{IA} & $m$ & the same as for $X$
\\
\typec{IA^\vee}& $P$ & \typec{IA^\vee} & $m$ & the same as for $X$
\\
\typec{IIA} &
$P$ & \typec{IA} & $m$ &the same as for $X$ 
\\
\typec{II^\vee} & $P$ &\typec{IA^\vee}& $m$ &the same as for $X$ 
\\
\typec{IB} &
$a_1$ points & \typec{IA}& $m$ & the same as for $X$ 
\\
\typec{IIB}&$P$ and $Q$ & \typec{IA}&$4$ and $2$& 
\\
\typec{III}&$i_P(1)$ points& \typec{III} & $1$ &$0$
\end{tabularx}}
\par\medskip\noindent
In the case \typec{III}, one can also make $X_{\lambda,\epsilon}$ smooth by 
choosing 
some other suitable $\psi$. 
\end{sproposition}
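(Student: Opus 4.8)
The plan is to split the argument into a local computation on the index-one cover and a globalization via Proposition~\ref{prop-cor-def-f} and Theorem~\ref{theorem-main-def}. For the local part one first puts $\phi$ into the normal form dictated by the type of $P$ in Propositions~\ref{prop:local-primitive}--\ref{prop-imp-types} (the excluded type \typec{IE^{\vee}} of Lemma~\ref{lemma:planar} does not occur in the table). By Lemma~\ref{lemma:planar} the curve $C^\sharp$ is planar, its ideal $I^\sharp\subset\CC\{x_1,\dots,x_4\}$ is generated by $x_1^{sa_2}-x_2^{sa_1}$ and $x_3-\lambda_3$, $x_4-\lambda_4$ for monomials $\lambda_3,\lambda_4$ in $x_1,x_2$, and, as recalled before Lemma~\ref{lemma:local-eq}, $\phi=(x_1^{sa_2}-x_2^{sa_1})\phi_2+(x_3-\lambda_3)\phi_3+(x_4-\lambda_4)\phi_4\in I^\sharp$. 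One then chooses $\psi$ to be a suitable semi-invariant element of $I^\sharp$ with $\wt(\psi)\equiv\wt(\phi)\bmod m$, generic among monomials (or short monomial sums) satisfying these constraints. The requirement $\psi\in I^\sharp$ forces $C^\sharp\subset X^\sharp_{\lambda,\epsilon}$ for every $\lambda$, so that $C_{\lambda,\epsilon}$ is defined and still reduced, while $\wt(\psi)\equiv\wt(\phi)$ makes $\phi+\lambda\psi$ semi-invariant of the same weight, so $X_{\lambda,\epsilon}$ carries the same $\mumu_m$-action and the same fixed locus $\Upsilon$.

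The heart of the proof is then the case-by-case verification that, for such $\psi$, the total hypersurface $\{\phi+\lambda\psi=0\}\subset\CC^4\times\CC_\lambda$ is smooth away from finitely many arcs meeting $C^\sharp$, so that for $0<|\lambda|\ll\epsilon\ll1$ the singular points of $X_{\lambda,\epsilon}$ on $C_{\lambda,\epsilon}$ are isolated and ordinary, with the analytic types, indices and invariants listed. Going through the list: in type \typec{IA} (resp.\ \typec{IA^{\vee}}) the point $P$ persists as an \typec{IA} (resp.\ \typec{IA^{\vee}}) point of index $m$; in type \typec{IIA} (resp.\ \typec{II^{\vee}}) the \type{cAx/4} point becomes an \typec{IA} (resp.\ \typec{IA^{\vee}}) point of index $4$; in type \typec{IB} one gets $a_1$ points of type \typec{IA} and index $m$; in type \typec{IIB} one \typec{IA}-point of index $4$ and one of index $2$; in type \typec{III} one gets $i_P(1)$ ordinary points (and, for a second choice of $\psi$, none, i.e.\ $X_{\lambda,\epsilon}$ smooth). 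The preservation of $w_P(0)$ and $w_{P_i}(0)$ follows by choosing $\psi$ of sufficiently high order along $C^\sharp$ that the jets of $\phi$ computing $\gr_C^0\upomega$ and $\gr_C^1\OOO$ at the persisting points are undisturbed. Finally, since $\psi\in I^\sharp$ the cover $C^\sharp\to C$ near $P$ is unchanged, so whenever $P$ persists its splitting degree and subindex are preserved; at each new point $C_{\lambda,\epsilon}$ lifts (by genericity of $\psi$) to an irreducible smooth curve on the index-one cover, hence has splitting degree $1$ and is primitive, so in the primitive case $X_{\lambda,\epsilon}\supset C_{\lambda,\epsilon}$ is locally primitive.

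For the globalization, Proposition~\ref{prop-cor-def-f} extends the local deformation of $(X,P)\supset(C,P)$ to a flat deformation $\pi:\Xf\to(\CC_\lambda,0)$ of $(X,C)$, with a flat subspace $\Cf$ deforming $C$, that is trivial outside a small neighborhood of $P$; hence $(X_\lambda,P_i)\supset(C_\lambda,P_i)$ is naturally isomorphic to $(X,P_i)\supset(C,P_i)$ for all $i$ and all small $\lambda$. Theorem~\ref{theorem-main-def} then yields a simultaneous deformation $\fb:\Xf\to\Zf$ of $f$ for which every $\fb_\lambda$ is again a divisorial (resp.\ flipping, $\QQ$-conic bundle) extremal curve germ. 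Let $C_\lambda$ be the component of $\fb_\lambda^{-1}(o_\lambda)_{\red}$ carrying the (trivially deformed) copies of $C$ near the $P_i$ and of $C^\sharp$ near $P$; it is a smooth rational curve by Corollary~\ref{cor-C-pa=0}, and by Remark~\ref{rem-prel-extr-nbd} we may replace the germ by $(X_\lambda,C_\lambda)=:(X_\lambda^o,C_\lambda)$. Its singular points lying on $C_\lambda$ are then precisely $P_1,\dots,P_r$, unchanged, together with the points inside the local model $X_{\lambda,\epsilon}\supset C_{\lambda,\epsilon}$, which by the local step are all ordinary with the stated types, indices and values of $w(0)$.

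The main obstacle is the case-by-case local computation of the second step: each type in Propositions~\ref{prop:local-primitive}--\ref{prop-imp-types} requires its own normal form for $\phi$, its own choice of $\psi$ inside $I^\sharp$ of the correct weight, and an explicit determination of the deformed singularity at $P$ and of the new points --- in particular of their indices and of the invariants $w(0)$ --- which is exactly the delicate local analysis of \cite[\S~4]{Mori:flip} that this survey otherwise suppresses.
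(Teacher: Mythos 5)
The survey itself gives no proof of this proposition: it is quoted from \cite[4.7]{Mori:flip}, with only the surrounding set-up (the choice of a semi-invariant $\psi$ with $\wt(\psi)\equiv\wt(\phi)$, plus \xref{prop-cor-def-f} and \xref{theorem-main-def} for globalization) indicated. Your globalization skeleton agrees with that framework, but the heart of the statement --- the case-by-case determination of how many singular points appear on the new central curve, of their types, indices and of the invariance of $w(0)$ --- is not proved in your text; it is only restated, and you yourself defer it to \cite[\S 4]{Mori:flip}. That deferred computation is the actual content of the proposition, so as it stands the proposal is an outline rather than a proof.

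More seriously, the one concrete choice you do make is incompatible with the conclusion. You insist that $\psi\in I^\sharp$, so that $C^\sharp\subset X^\sharp_{\lambda}$ and the curve is held fixed throughout the deformation. But every non-Gorenstein point of $X_{\lambda,\epsilon}=\{\phi+\lambda\psi=0\}/\mumu_m$ is the image of a point with non-trivial stabilizer, i.e.\ lies on the image of the fixed locus $\Upsilon$ (the coordinate axis of weight $0$ in the main series, the $\mumu_2$-fixed axis for \type{cAx/4}), and the fixed curve $C$ meets this image only at $P$, since $C^\sharp\cap\Upsilon=\{P^\sharp\}$. Hence with your $\psi$ at most one non-Gorenstein point can ever lie on the central curve, which makes the \typec{IB} row ($a_1\ge 2$ points of type \typec{IA} and index $m$ on $C_{\lambda,\epsilon}$) and the \typec{IIB} row (two points, of indices $4$ and $2$) unattainable. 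In Mori's argument one deforms the \emph{pair} $(X,P)\supset(C,P)$ (this is exactly what \xref{prop-cor-def-f} extends), and the curve must move so as to pass through the new points of the axis; holding $C$ fixed is only adequate in the cases where a single point survives (e.g.\ \typec{IA}, \typec{III}). Your argument for preserving $w_P(0)$ is also inconsistent with the rest: taking $\psi$ ``of sufficiently high order along $C^\sharp$'' would prevent $P$ from becoming an ordinary point, whereas ordinariness requires $X^\sharp_\lambda$ to become smooth at $P^\sharp$; the invariance of $w_P(0)$ is again a local computation, not a consequence of leaving jets of $\phi$ undisturbed.
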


\begin{scorollary}
Arbitrary extremal curve germ $(X,C)$ can be deformed to an extremal curve germ
$(X^{o},C^{o})$ with only ordinary points.
\end{scorollary}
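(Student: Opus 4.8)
The plan is to patch together, at each singular point of $X$ on $C$, the local deformations supplied by Proposition~\ref{prop:localDeformations}, to globalise them to a one-parameter deformation of $(X,C)$ using the unobstructedness result Proposition~\ref{prop-cor-def-f}, and finally to transport this to a deformation of the contraction $f$ by Theorem~\ref{theorem-main-def}.

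First I would list the finitely many singular points $P_1,\dots,P_N$ of $X$ lying on $C$ and, for each non-ordinary $P_j$, invoke Proposition~\ref{prop:localDeformations} with $P=P_j$: for a suitable semi-invariant $\psi_j$ it provides a one-parameter deformation $\gamma_j$ of the germ $(X,P_j)\supset(C,P_j)$ whose nearby fibres carry, along the deformed curve, only ordinary singularities and create no new ones elsewhere. In the Gorenstein case \typec{III} I would choose the variant $\psi_j$ from the last sentence of that proposition, which actually smooths the point rather than splitting it into several \type{cDV} points, so that after this step every non-ordinary point has been resolved by a purely local deformation.

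Next I would assemble the $\gamma_j$ into a single deformation of $(X,C)$. By Proposition~\ref{prop-cor-def-f} the natural morphism $\Psi\colon\Def(X)\to\prod_j\Def(X,P_j)$ is smooth, so the one-parameter family $\lambda\mapsto(\gamma_1(\lambda),\dots,\gamma_N(\lambda))$ in $\prod_j\Def(X,P_j)$ lifts to a flat deformation $\pi\colon\Xf\to(\CC^1_\lambda,0)$ with $\Xf_0=X$; since it is trivial off disjoint neighbourhoods of the $P_j$, the locally deformed curves glue with the undeformed remainder of $C$ to a flat closed subspace $\Cf\subset\Xf$ with $\Cf_0=C$. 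Then Theorem~\ref{theorem-main-def} applies to $\pi$ and $\Cf$ and yields a deformation $\fb\colon\Xf\to\Zf$ of $f$ such that each $\fb_\lambda\colon(\Xf_\lambda,\fb_\lambda^{-1}(o_\lambda)_{\red})\to(\Zf_\lambda,o_\lambda)$ is again an extremal curve germ of the same flavour (divisorial, flipping, or $\QQ$-conic bundle). For a general small $\lambda$, Proposition~\ref{prop:localDeformations} guarantees that near each $P_j$ the singularities of $\Xf_\lambda$ are ordinary and that $\Xf_\lambda$ has no further singularities along $\Cf_\lambda$, so $(X^o,C^o):=(\Xf_\lambda,\fb_\lambda^{-1}(o_\lambda)_{\red})$ has only ordinary points. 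The same reasoning needs no irreducibility assumption on $C$; if an irreducible $C^o$ is wanted one may replace it by one of its components, as remarked after Theorem~\ref{theorem-main-def} and in Remark~\ref{rem-prel-extr-nbd}.

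All the substantive ingredients — the existence of the $\psi_j$ producing only terminal, ordinary nearby singularities, the unobstructedness of $\Def(X)$, and the deformation of the contraction — are already established in the cited statements, so the part that actually requires care is purely organisational: that the $\gamma_j$ can be carried out simultaneously (this is exactly the smoothness of $\Psi$), that the local deformations of $C$ at the different $P_j$ patch to a genuinely flat family $\Cf$, and that a single general value of $\lambda$ works for all $P_j$ at once, which is a finite intersection of nonempty open conditions on the parameter. I expect this bookkeeping, rather than any new idea, to be the only obstacle.
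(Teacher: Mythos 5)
Your argument is correct and is essentially the paper's (implicit) one: the corollary is treated there as an immediate consequence of Proposition~\xref{prop:localDeformations}, applied repeatedly at each non-ordinary point, with the globalization and the deformation of the contraction already built in via Proposition~\xref{prop-cor-def-f} and Theorem~\xref{theorem-main-def}. Your only deviation — deforming at all points simultaneously by lifting through the smooth map $\Psi\colon\Def(X)\to\prod\Def(X,P_i)$ rather than iterating the proposition point by point — is a purely organisational variant of the same mechanism (and the special smoothing choice at type \typec{III} points is unnecessary, since the proposition already asserts that all nearby singularities are ordinary).
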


\begin{scorollary}\label{corollary:flipGorenstein}
A flipping extremal curve germ $(X,C)$ has at least one non-Gorenstein point.
\end{scorollary}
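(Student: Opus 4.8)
The statement is a short consequence of the structure of Gorenstein birational germs, so the plan is to argue by contradiction. First I would record the (immediate) observation that a flipping extremal curve germ $f\colon(X,C)\to(Z,o)$ is birational: by definition its exceptional locus is the one-dimensional set $C$, so $f$ is a small contraction, and in particular $f$ is \emph{not} divisorial. Next, suppose toward a contradiction that $X$ has no non-Gorenstein point on $C$. Since the germ is taken along $C$ and $X$ has only terminal singularities, this forces $X$ to be Gorenstein: every point of the germ is either smooth or an isolated \type{cDV} point, and $K_X$ is Cartier.

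Now the whole point is to invoke Proposition \xref{prop:Gor}\xref{prop:Gor-bir}: for a birational curve germ $f\colon(X,C)\to(Z,o)$ with $X$ Gorenstein, the contraction $f$ is automatically divisorial — indeed $(Z,o)$ is smooth and $f$ is the blowup of a curve $B\subset Z$ with planar singularities. This contradicts the flipping hypothesis, and the corollary follows. One could alternatively route through the preceding corollary, deforming $(X,C)$ to a germ $(X^{o},C^{o})$ with only ordinary points — still flipping by Theorem \xref{theorem-main-def} — and noting that an ordinary Gorenstein point is an ordinary double point, so that $X^{o}$ is Gorenstein as well; but this reduction does not shorten the argument, since one still appeals to Proposition \xref{prop:Gor}\xref{prop:Gor-bir} in the end, so the direct route is preferable.

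There is essentially no hard step here: the content is entirely carried by the cited structural result on Gorenstein birational germs. The only thing requiring care is the bookkeeping in the first paragraph — that ``flipping'' genuinely forces $f$ to be birational and simultaneously excludes the divisorial case — which is immediate from the definition of flipping and divisorial extremal curve germs. Note also that no irreducibility assumption on $C$ is needed, since Proposition \xref{prop:Gor}\xref{prop:Gor-bir} in fact concludes that $C$ is irreducible when $X$ is Gorenstein.
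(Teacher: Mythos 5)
Your proof is correct, but it follows a genuinely different route from the paper's. You reduce to the Gorenstein case and then quote Proposition \xref{prop:Gor}\xref{prop:Gor-bir} (Cutkosky's structure theorem for Gorenstein birational germs), which says such a germ is automatically divisorial, contradicting the flipping hypothesis; this is legitimate within the paper, since that proposition is stated there with no irreducibility assumption on $C$, and your bookkeeping that flipping forces a small birational contraction is right. The paper instead stays inside its deformation machinery: it repeatedly applies the smoothings of Proposition \xref{prop:localDeformations} at the type \typec{III} points to obtain a flipping germ $(X^{o},C^{o})$ with $X^{o}$ \emph{smooth} (note the deformed fiber may be reducible, so one works with a component), and then computes $\deg \NNN_{C^{o}/X^{o}}=-\deg\gr^1_{C^{o}}\OOO=-1$ from \eqref{eq-grO-iP1} together with Corollary \xref{cor-prop-grw=2-2-points-prim}, so that $C^{o}$ moves in $X^{o}$ --- contradicting that the exceptional locus is exactly the central curve. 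Your alternative sketch in the second paragraph (deform to ordinary points, then still cite \xref{prop:Gor}) is therefore not the paper's argument either: the point of the paper's deformation is precisely to avoid Cutkosky's theorem and conclude by the elementary normal-bundle count. What each approach buys: yours is shorter and conceptually a one-liner given the cited classification; the paper's is self-contained in the invariants $i_P(1)$, $w_P(0)$ and Theorem \xref{theorem-main-def}, and illustrates the smoothing technique that is used throughout the survey.
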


\begin{proof}
Assume that $(X,C)$ has only type~\typec{III}
singular points. Applying smoothings 
as in \xref{prop:localDeformations} repeatedly at type~\typec{III} points, 
one obtains a flipping extremal curve germ $(X^{o},C^{o})$ such that
$X^{o}$ is smooth.
Thus by \eqref{eq-grO-iP1} and \xref{cor-prop-grw=2-2-points-prim} 
for the normal bundle of $C^{o}$ one has 
\begin{equation*}
\deg \NNN_{C^{o}/X^{o}} = -\deg\gr^1_{C^{o}} \OOO = -1.
\end{equation*} 
Hence the space of deformation of $C^{o}$ in $X^{o}$ has dimension $\ge 1$. This means
that $C^{o}$ moves inside $X^{o}$. 
This contradicts our assumption that $(X,C)$ is flipping.
\end{proof}

If $f: X\to Z$ is a $K$-negative extremal divisorial contraction from a 
variety $X$ with terminal $\QQ$-factorial singularities, then 
the target variety $Z$ is also terminal.
This is no longer true for divisorial extremal curve germs.
The problem is that the exceptional locus of $f$ is not necessarily 
a divisor in this case (because the $\QQ$-factoriality is not assumed).
Nevertheless we have the following.

\begin{theorem}[{\cite[Th.~3.1]{MP:IA}}]
\label{thm:div:Q-Cartier}
Let $f:(X,C)\to (Z,o)$ be a three-dimensional divisorial
extremal curve germ, 
where $C$ is not necessarily irreducible, 
and let $E$ be its exceptional locus. 
Then the divisorial part of $E$ is a $\QQ$-Cartier divisor.
If furthermore $C$ is irreducible, then
$E$ is $\QQ$-Cartier and
$(Z,o)$ is a terminal singularity. 
\end{theorem}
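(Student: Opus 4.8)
The plan is to reduce the statement to a \emph{divisorial} situation where the exceptional locus is genuinely a divisor, and then invoke the classical theory of divisorial contractions between $\QQ$-factorial terminal threefolds.

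\textbf{Step 1: the divisorial part is $\QQ$-Cartier.}
Let $E$ be the exceptional locus of $f$ and write $E=E_{\mathrm{div}}\cup E_0$, where $E_{\mathrm{div}}$ is the union of two-dimensional components and $E_0$ is one-dimensional (possibly $E_0\subset E_{\mathrm{div}}$, but the point is the reduced two-dimensional part). I would first show $E_{\mathrm{div}}$ is $\QQ$-Cartier. The natural idea is to use the contraction structure together with the vanishing $R^if_*\OOO_X=0$ from Theorem~\xref{th-vanish}: pick $n\gg0$ so that $nK_X$ is Cartier, let $H\in|-nK_X|$ be general (smooth, transverse to $C$, as in the proof of Lemma~\xref{lemma:base}), and consider the relatively ample line bundle $\OOO_X(-nK_X)$. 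Using the negativity lemma / the fact that $f$ contracts exactly $E$, one produces an effective relation pinning down $E_{\mathrm{div}}$: more precisely, for a general hyperplane section $H_Z\in|\OOO_Z|$ the pullback $f^*H_Z$ is a Weil divisor whose support is $f^{-1}(o)_{\mathrm{red}}\cup(\text{divisorial part})$, and comparing $f^*H_Z$ with $H_Z$ pulled back as a Cartier divisor on the Gorenstein locus gives a $\QQ$-linear equivalence exhibiting $E_{\mathrm{div}}$ as $\QQ$-Cartier. (Equivalently: restrict to the complement of finitely many points, where everything is smooth, and push the Cartier relation forward using that $X$ is normal and the bad locus has codimension $\ge 3$, so reflexive sheaves extend.)

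\textbf{Step 2: when $C$ is irreducible, $E$ is a divisor.}
By Corollary~\xref{corollary:flipGorenstein} a flipping germ has a non-Gorenstein point; but here $f$ is divisorial, so I want to argue that $E$ has no isolated one-dimensional components beyond the divisor. Suppose $E_0\not\subset E_{\mathrm{div}}$, i.e.\ there is a curve $C'\subset C$ with $\dim f(C')=0$ but $C'$ not in a contracted surface. Since $C$ is irreducible, $C'=C$, and then $f$ contracts $C$ to a point while also contracting a surface $E_{\mathrm{div}}$; but $f$ is a contraction onto a \emph{three-}fold germ $(Z,o)$ whose exceptional locus is two-dimensional by the definition of ``divisorial''. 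If the one-dimensional $C$ were not inside $E_{\mathrm{div}}$, then near a general point of $C$ the map $f$ would be an isomorphism, contradicting $C=f^{-1}(o)_{\mathrm{red}}$. Hence $C\subset E_{\mathrm{div}}$ and $E=E_{\mathrm{div}}$ is $\QQ$-Cartier by Step~1.

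\textbf{Step 3: $(Z,o)$ is terminal.}
Now $f:(X,C)\to(Z,o)$ has exceptional divisor $E$, which is $\QQ$-Cartier. Take the $\QQ$-divisor $D=\tfrac1n H$ and $B=f_*D$ as in Lemma~\xref{lemma:base}\xref{lemma:base-1}: then $K_X+D\equiv 0$ over $Z$ (it is $f$-trivial since $D\in|-\tfrac1n\cdot nK_X|$), so $f$ is crepant for $(X,D)$, $(X,D)$ is terminal, and $(Z,B)$ is canonical. This alone gives $(Z,o)$ canonical, not terminal. To upgrade, I would run the standard argument for divisorial contractions: write $K_X = f^*K_Z + aE$ with $a\in\QQ_{>0}$ (positivity of the discrepancy because $-K_X$ is $f$-ample and $E$ is the unique exceptional divisor — intersect with a curve in a fibre of $E\to f(E)$ to get $a>0$; this is where $E$ being $\QQ$-Cartier, hence $K_Z$ being $\QQ$-Cartier by the relation $K_Z=f_*K_X$ reflexively, is essential). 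Then for any exceptional divisor $F$ over $Z$, realized on a common resolution $W\to X\to Z$, the discrepancy $a(F,Z)=a(F,X)+(\text{coefficient})\cdot a>a(F,X)>0$ because $X$ is terminal and the coefficient of $E$-pullback contributions is nonnegative; the one divisor that could give trouble, $E$ itself, has discrepancy $a(E,Z)=a>0$. Hence every discrepancy over $Z$ is positive and $(Z,o)$ is terminal.

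\textbf{Main obstacle.}
The delicate point is Step~1 combined with the discrepancy positivity in Step~3: without $\QQ$-factoriality of $X$ one cannot take $\QQ$-Cartier closures of divisors for free, and one must genuinely use the relative vanishing and the structure of the contraction (one-dimensional fibres, $-K_X$ relatively ample) to manufacture the $\QQ$-Cartier relation for $E$. Once $E$ is known to be $\QQ$-Cartier and $C\subset E$ when $C$ is irreducible, the terminality of $(Z,o)$ follows the well-trodden path for extremal divisorial contractions; but the irreducibility hypothesis is doing real work, since it forbids the ``extra'' one-dimensional component of $E$ that would otherwise obstruct making $E$ Cartier in codimension two.
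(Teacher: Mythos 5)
There is a genuine gap, and it sits exactly where the theorem's difficulty lies: your Step~1 never actually proves that the divisorial part of $E$ is $\QQ$-Cartier. The devices you invoke do not produce $\QQ$-Cartierness. The pullback $f^*H_Z$ of a general hyperplane section is already Cartier, and a ($\QQ$-)linear equivalence between $E_{\mathrm{div}}$ and some combination of such divisors would only help if the other side of the equivalence were itself known to be $\QQ$-Cartier and to isolate $E_{\mathrm{div}}$ --- neither is the case (for general $H_Z\ni o$ the divisor $f^*H_Z$ is the strict transform plus fiber components, and nothing pins down $E_{\mathrm{div}}$). The parenthetical fallback is also not a proof: since $X$ has isolated terminal singularities, \emph{every} Weil divisor extends reflexively from the smooth locus, but reflexive extension says nothing about being $\QQ$-Cartier at the finitely many singular points --- that is precisely what can fail when $X$ is not $\QQ$-factorial, and it is the whole content of the statement. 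Relative vanishing (Theorem~\xref{th-vanish}) and $f$-ampleness of $-K_X$ do not by themselves bridge this. The paper's source for this theorem (\cite[Th.~3.1]{MP:IA}) proceeds quite differently, by deformation: one deforms $(X,C)$ to nearby extremal curve germs with only ordinary singular points (Proposition~\xref{prop:localDeformations}, Theorem~\xref{theorem-main-def}), where the relevant local class groups are controlled, and then transfers the $\QQ$-Cartier property back to the original germ; your proposal uses no deformation input at all and offers no substitute mechanism.

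A secondary but real problem is in Step~3: you justify $K_Z$ being $\QQ$-Cartier ``by the relation $K_Z=f_*K_X$ reflexively,'' which is not an argument --- $f_*K_X$ is only a Weil divisor class. The correct route, once $E$ is known to be $\QQ$-Cartier and $C$ irreducible (so $\Pic X\simeq\ZZ$ by Corollary~\xref{cor-C-pa=0}\xref{cor-C-pa=0c}), is to choose $a\in\QQ$ with $(K_X-aE)\cdot C=0$ and descend the numerically $f$-trivial $\QQ$-Cartier divisor $K_X-aE$ to $Z$ via a relative base-point-free/contraction argument; then $a>0$ (intersect with a fiber of $E\to f(E)$) and the discrepancy comparison you sketch does give terminality of $(Z,o)$. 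So Steps~2 and~3 are repairable, but Step~1 --- the actual theorem --- is missing its proof.
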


\begin{corollary}
\label{theorem-main-Q-Cartier-i}
Let $f:(X,C\simeq \PP^1)\to (Z,o)$ be a three-dimensional birational
extremal curve germ. 
Then $f$ is divisorial if and only if 
$(Z,o)$ is a terminal singularity. 
\end{corollary}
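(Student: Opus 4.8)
The statement is an equivalence whose two implications I would prove by entirely different means.

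For the implication ``$f$ divisorial $\Longrightarrow (Z,o)$ terminal'', no extra argument is needed: since $C\simeq\PP^1$ is irreducible, this is precisely the last assertion of Theorem~\ref{thm:div:Q-Cartier}. Thus all the real content of this direction is already packaged in that theorem, whose point is exactly that a divisorial contraction in this setting has terminal (in particular $\QQ$-Gorenstein) target even though $X$ is not assumed $\QQ$-factorial and the exceptional divisor need not be Cartier.

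For the converse, ``$(Z,o)$ terminal $\Longrightarrow f$ divisorial'', I would argue by contradiction. Since $f$ is birational, if it is not divisorial then it is flipping (the exceptional locus of a birational extremal curve germ is either two-dimensional or equal to $C$), so $\Exc(f)=C$ and in particular $f$ is an isomorphism in codimension one. Now assume $(Z,o)$ is terminal; then $K_Z$ is $\QQ$-Cartier, so $f^{*}K_Z$ is a well-defined $\QQ$-Cartier divisor on $X$, while $K_X$ is $\QQ$-Cartier because $X$ is terminal. These two $\QQ$-Cartier divisors coincide on the open set $X\setminus C$ over which $f$ is an isomorphism; as $C$ has codimension two, the difference $K_X-f^{*}K_Z$ is a $\QQ$-Weil divisor supported in codimension $\ge 2$, hence vanishes, so $K_X=f^{*}K_Z$. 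By the projection formula, and because $f$ contracts $C$ to the point $o$, this gives $-K_X\cdot C=-(f^{*}K_Z)\cdot C=-K_Z\cdot f_{*}C=0$, contradicting the $f$-ampleness of $-K_X$, which forces $-K_X\cdot C>0$. Hence $f$ must be divisorial.

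I expect the only subtle point to be the use of the hypothesis in the right form: the converse direction really requires $K_Z$ \emph{itself} to be $\QQ$-Cartier (so that $f^{*}K_Z$ exists), and the weaker conclusion of Lemma~\ref{lemma:base}, namely that some effective $\QQ$-divisor $B$ makes $(Z,B)$ terminal at $o$, is not sufficient; this is why the argument is an intersection-number computation on $X$ rather than a comparison of log discrepancies. Beyond that, the two implications together simply reflect the standard fact that a $K$-negative small contraction cannot have $\QQ$-Gorenstein target, which is the very phenomenon flips are designed to repair.
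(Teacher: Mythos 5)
Your proposal is correct and is essentially the argument the paper intends: the divisorial direction is exactly the last assertion of Theorem~\xref{thm:div:Q-Cartier}, and the converse is the standard observation (used again by the authors in \xref{IA:check-divisoriality}) that a flipping germ is small, so a $\QQ$-Cartier $K_Z$ would give $K_X=f^*K_Z$ and $K_X\cdot C=0$, contradicting $f$-ampleness of $-K_X$. Nothing to add.
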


The proof uses deformation techniques.
\begin{scorollary}[{\cite[Th. 6.3]{Mori:flip}}, {\cite[Prop.~8.3]{MP:cb1}}]
\label{corollary:IB}
An extremal curve germ $(X,C)$ cannot have a point of type~\typec{IB}.
\end{scorollary}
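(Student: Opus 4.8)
The plan is to rule out type~\typec{IB} points by combining the numerical inequalities of §3 with the local deformation result from Proposition~\xref{prop:localDeformations}. First I would recall that a point $P$ of type~\typec{IB} has index $m$ with $a_1 \ge 2$ and $a_2 = m$; the key numerical input is the estimate for the invariant $i_P(1)$ coming from Lemma~\xref{equation-iP} together with the explicit description of $\ell(P)$ in the \typec{IB} case. I would compute (or quote from \cite{Mori:flip}) that for a \typec{IB} point one has $i_P(1) = a_1 \ge 2$ and $w_P(0) \ge 1/m$; in fact the sharper point is that $i_P(1)$ is at least $2$ while $a_1$ copies of type~\typec{IA} points appear after deformation.

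Then I would apply Proposition~\xref{prop:localDeformations}: a \typec{IB} point deforms to $a_1$ separate points of type~\typec{IA}, each of index $m$, with the sum of the $w$-invariants preserved. The strategy is to pass to this deformed germ $(X^o, C^o \simeq \PP^1)$, which by Theorem~\xref{theorem-main-def} (in the appropriate divisorial/flipping/$\QQ$-conic bundle flavor) is again an extremal curve germ, and then derive a contradiction from the global inequality \eqref{eq-grO-iP1-2}, namely
\begin{equation*}
4 \ge -K_X \cdot C + \sum_P w_P(0) + \sum_P i_P(1).
\end{equation*}
Since each of the $a_1 \ge 2$ new \typec{IA} points contributes $i_{P}(1) \ge 1$ and $w_{P}(0) \ge 1/m > 0$, and since the other singular points $P_1,\dots,P_r$ are carried along unchanged with their positive contributions, the left-hand side is forced above $4$ once $a_1$ is large enough — but one must be careful, because $a_1 = 2$ alone does not immediately overflow the bound. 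The real mechanism, I expect, is that a type~\typec{IB} point with $a_1 = 2$ forces $m$ large (or forces additional structure), so that $w_P(0)$ is small but $i_P(1) = 2$ already, and then either there is a second non-Gorenstein point (contradicting Proposition~\xref{lem-prop-3points} or overflowing the inequality) or one uses Corollary~\xref{cor-prop-grw=2-2-points-prim} to conclude $\gr_C^0\upomega \simeq \OOO_C(-1)$ and hence $-K_X\cdot C = -\deg\gr_C^0\upomega - \text{(correction)} \geq$ something that, combined with $\sum i_P(1) \geq 2$, still violates the bound — or, in the flipping case, one invokes Corollary~\xref{corollary:flipGorenstein} and the normal-bundle argument there.

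The main obstacle will be handling the borderline case $a_1 = 2$ cleanly: here the crude count $\sum i_P(1) \ge 2$ plus $-K_X\cdot C \ge$ (a small positive rational) plus $\sum w_P(0) \ge$ (small) does not by itself exceed $4$, so one needs the finer arithmetic. I would resolve this by noting that in the deformed germ the $a_1 = 2$ points of type~\typec{IA} have $\ell(P) \geq$ something forcing $i_P(1)$ or $w_P(0)$ up, or by directly using \eqref{eq-grO-iP1} and \eqref{eq-grw-w} to pin $\deg\gr_C^1\OOO$ and $\deg\gr_C^0\upomega$ and then playing the two equalities against \eqref{eq-grO-iP1-1}. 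If that still leaves a gap, the fallback is the argument structure used in \cite[Th.~6.3]{Mori:flip} and \cite[Prop.~8.3]{MP:cb1}: reduce to the primitive case, pass to the index-one cover, and observe that the local equation forced by type~\typec{IB} (via Lemma~\xref{lemma:planar}, with $C^\sharp$ planar and $a_2 = m$) is incompatible with $C$ being contractible — essentially $C^\sharp$ would deform in $X^\sharp$, contradicting the extremal property exactly as in the proof of Corollary~\xref{corollary:flipGorenstein}. I would cite those two references for the parts of the computation that are purely local and reproduce only the global numerical contradiction here.
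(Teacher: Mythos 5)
Your opening move---deforming the \typec{IB} point into $a_1$ points of type \typec{IA} of index $m$ via Proposition \xref{prop:localDeformations} and Theorem \xref{theorem-main-def}---matches the paper's first step, and combined with Proposition \xref{lem-prop-3points} it does force $a_1=2$. But the core of your argument, a numerical overflow of \eqref{eq-grO-iP1-2}, cannot be made to work in that remaining case, and you half-concede this: a germ with two \typec{IA} points of the same index $m$, $\sum i_P(1)=2$ and small $\sum w_P(0)$ is perfectly consistent with the inequality, and such configurations of two non-Gorenstein points of equal index genuinely occur on extremal curve germs (e.g.\ the toroidal $\QQ$-conic bundles of Example \xref{ex-toric}), so no purely local/numerical contradiction exists. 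Your fallback is also off target: the idea that ``$C^\sharp$ would deform in $X^\sharp$, contradicting extremality'' only has force for flipping germs, whereas the statement must be proved for divisorial and $\QQ$-conic bundle germs as well, where the central curve is allowed to move; and deferring the remaining computation to the cited references is not a proof.

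What the paper actually does after the deformation step is topological rather than numerical. Since $X^o$ has exactly two non-Gorenstein points of the same index $m$, the sequence \eqref{exact-Clcs} gives $\Clsc(X^o)\simeq\ZZ\oplus\ZZ/m\ZZ$, hence a degree-$m$ cyclic cover $(X',C')\to(X^o,C^o)$ \'etale outside the two points with $X'$ Gorenstein; Corollary \xref{corollary:flipGorenstein} then excludes the flipping case. In the divisorial case, Theorem \xref{thm:div:Q-Cartier} together with the base change construction \xref{base-change} makes $(Z^o,o^o)$ terminal of index $m$, and Kawamata's theorem providing an exceptional divisor of discrepancy exactly $1/m$ over $o^o$ contradicts the $K$-negativity of $f^o$, which forces every non-$f^o$-exceptional divisor to have strictly larger discrepancy over $Z^o$. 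In the $\QQ$-conic bundle case one uses that the base of the original germ is smooth, because $(X,C)$ has a single, locally primitive non-Gorenstein point, while the base of the deformed germ is singular; Theorem \xref{theorem-main-def} would then produce a flat family of surface germs with smooth special fiber and singular nearby fibers, which is impossible. None of these three mechanisms appears in your proposal, so the argument as written has a genuine gap exactly where the statement is hardest.
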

\begin{proof}
Assume that $(X,C)$ has a type~\typec{IB} point $P$.
We may apply deformations to $(X,C)$ and obtain
an extremal curve germ $(X^{o},C^{o}\simeq \PP^1)$ with only ordinary singular points which has at
least two points $P^{o}$ and $Q^{o}$ of type~\typec{IA} with the same index $m$ ($> 1$).
By Proposition \xref{lem-prop-3points} \ $P^{o}$ and $Q^{o}$ are the only 
non-Gorenstein points of $X^{o}$, and $P$ is the only 
non-Gorenstein point of $X$. Thus by \eqref{exact-Clcs} we have $\Clsc(X^o)\simeq \ZZ\oplus \ZZ/m\ZZ$
and so there exists 
an \'etale outside $\{P^{o},\, Q^{o}\}$ cyclic cover $(X',C')\to (X^{o},C^{o})$ of degree $m$
such that $(X',C')$ is again an extremal curve germ with $C'\simeq \PP^1$.
By Corollary \xref{corollary:flipGorenstein} the germ $(X',C')$ and $(X^{o},C^{o})$ cannot be flipping.

Assume that $(X^{o},C^{o})$ is divisorial and let $f^o: (X^{o},C^{o}) \to (Z^{o}, o^{o})$
be the corresponding contraction. By Theorem \xref{thm:div:Q-Cartier}
the point $(Z^{o}, o^{o})$ is terminal and the construction 
\eqref{base-change} shows that $(Z^{o}, o^{o})$ is of index $m$.
According to \cite{Kawamata:discr} the exists an exceptional divisor, say $E$,
with center $o^{o}$ whose discrepancy equals $a(E,Z^{o})=1/m$. On the other hand,
since $E$ is not $f^o$-exceptional and the contraction $f^o$ is $K$-negative, we have 
$a(E,Z^{o})>a(E,Z^{o})=1/m$, a contradiction. 

Finally, assume that $(X^{o},C^{o})$ is a $\QQ$-conic bundle. Since 
$(X,C)$ has exactly one non-Gorenstein point which is locally primitive, the base 
$(S,o)$ is smooth. Since $(X^{o},C^{o})$ has two points of the same index $>1$,
the base $(Z^{o}, o^{o})$ is singular. By \xref{theorem-main-def} there exists a 
deformation family whose general fiber is $(Z^{o}, o^{o})$ and the special fiber is $(S,o)$
(in this case a general fiber $\fb_\lambda^{-1}(o_\lambda)_{\red}$
must be irreducible). 
This is impossible.
\end{proof}

\subsection{}
\label{def:existence}
Deformation arguments are also used to show the existence of extremal curve germs.
Suppose we are given a normal surface germ $(H, C)$ along a curve $C\simeq \PP^1$
and a contraction $f_H: H\to H_Z$ such that $C$ is a fiber. 
Let $P_1, \dots, P_r\in H$ be singular points. Assume also that near each point $P_i$
there exists a small one-parameter deformation $H_t^{i}$ 
of $H \cap U_{P_i}$, where $U_{P_i}$ is a neighborhood of $P_i$, such that 
the total space $V^{i} = \cup H_t^{i}$ has terminal singularity at $P_i$. 
Further, by the arguments similar to that in Proposition \xref{prop-cor-def-f} we see that the 
natural morphism $\Def H \to\prod \Def(H, P_i)$ is 
smooth. 
Hence there exists a global one-parameter deformation $H_t$ of $H$
which induces a local deformation of $H_t^{i}$ near each $P_i$.
Then we construct a threefold $X$ as a total one-parameter deformation 
space $X = \cup H_t$. 
This shows the existence of $X\supset C$ with $H\in |\OOO_X |$ and such that 
$P_i\in C\cap U_{P_i}\subset U_{P_i}$ has the desired structure. 
Note however 
that $H$ may not be \emph{general} in $|\OOO_X |$.)
The contraction $f : X \to Z$ exists by arguments similar to 
\cite[11.4.1]{KM92} and Theorem~\xref{theorem-main-def}. 
The contraction is birational (resp. $\QQ$-conic bundle)
if $H_Z$ is a surface (resp. a curve).

\section{General member of $|-K_X|$}
\label{sect:elephant}
\subsection{}
Let $X$ be a threefold having terminal singularities only and let $D$
be an effective integral $\QQ$-Cartier divisor on $X$. Then $D$ is 
Cohen-Macaulay \cite[Cor. 5.25]{KM:book}. Therefore, we have 
\begin{itemize}
\item 
if $D$ has only isolated 
singularities, then $D$ is normal;
\item
if $D\sim -K_X$, then $D$ is Gorenstein.
\end{itemize}
In certain situations we can say more:

\begin{theorem}[{\cite[(6.4B)]{Reid:YPG}}]
\label{thm:elephant}
Let $(X, P)$ be a three-dimensional terminal singularity. Then a general 
member of $|- K_X|$ has at most a Du Val singularity at $P$.
\end{theorem}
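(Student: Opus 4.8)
The plan is to reduce to the classification of three-dimensional terminal singularities recalled in \ref{3terminal} and \xref{clasifiction-terminal}, and then treat the index-one and higher-index cases separately. First I would dispose of the index-one case: here $(X,P)$ is a $\mathrm{cDV}$ point, so by definition a general hyperplane section through $P$ is already a Du Val surface singularity; since $-K_X$ is Cartier and $|{-}K_X|$ is base-point-free away from a set of codimension $\ge 2$ in the germ, a general member of $|{-}K_X|$ agrees (up to analytic isomorphism, after a coordinate change) with a general hyperplane section, hence has at most a Du Val singularity at $P$. Strictly speaking one should note that on the germ $(X,P)$ the anticanonical class is trivial, so $|{-}K_X|$ restricted to a small Stein neighbourhood is the full system of functions vanishing at $P$, and Bertini (in the form used in \ref{H}) gives that a general member is normal with an isolated singularity at $P$; then the $\mathrm{cDV}$ property is exactly the assertion.

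For index $m>1$, I would pass to the index-one cover $\pi\colon(X^\sharp,P^\sharp)\to(X,P)$ of \xref{clasifiction-terminal}, which is $\mumu_m$-equivariant and \'etale outside $P$. A general member $D\in|{-}K_X|$ pulls back to a general member $D^\sharp\in|{-}K_{X^\sharp}|$ (because $-K_{X^\sharp}=\pi^*(-K_X)$ and $\pi$ is a quotient map), and by the index-one case $D^\sharp$ has at most a Du Val singularity at $P^\sharp$. The key point is then to understand the $\mumu_m$-action on the Du Val germ $(D^\sharp,P^\sharp)$: the surface $D$ is the quotient $D^\sharp/\mumu_m$, and I must show that this quotient of a Du Val (or smooth) point by $\mumu_m$ is again Du Val. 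This is where the explicit form of the action in \xref{clasifiction-terminal} enters. Using the recorded weights $\wt(x,\phi)\equiv(a,b,-a,0,0)$ (or the $\mathrm{cAx/4}$ variant) one writes $\phi=0$ and intersects with a general anticanonical member; after the coordinate change normalizing the canonical form one checks case by case — $\mathrm{cA/m}$, $\mathrm{cAx/2}$, $\mathrm{cAx/4}$, $\mathrm{cD/2}$, $\mathrm{cD/3}$, $\mathrm{cE/2}$ — that the general member $D$ acquires precisely one of the Du Val types $A_k$, $D_k$, $E_k$ at $P$. In practice this is the computation carried out in \cite{Reid:YPG}: for each family one exhibits a general anticanonical section explicitly and reads off its equation.

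The main obstacle is the last step: verifying, for every entry in the terminal classification, that a \emph{general} member of $|{-}K_X|$ — not merely some special member — specializes to a Du Val equation, i.e. that the ``bad'' members form a proper subfamily. Concretely, one must choose the anticanonical section so that the leading terms of its defining equation, after the $\mumu_m$-quotient, match one of the normal forms $x^2+y^2+z^{k+1}$, $x^2+y^2z+z^{k-1}$, etc.; genericity is used to kill obstructing monomials and to ensure the singularity is isolated. The $\mathrm{cD}$ and $\mathrm{cE}$ families (and the quotient cases $\mathrm{cD/2}$, $\mathrm{cD/3}$, $\mathrm{cE/2}$) are the delicate ones, since there the Du Val type of the general elephant can jump, and one needs the explicit equations of \cite{Mori:sing} together with a direct inspection of the invariant subring to pin it down. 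I would organize the argument as a finite check against the table in \xref{clasifiction-terminal}, citing \cite{Reid:YPG} for the case-by-case computations and emphasizing only the genericity bookkeeping.
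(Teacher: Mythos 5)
The paper does not actually prove Theorem \xref{thm:elephant}: it is quoted from \cite[(6.4B)]{Reid:YPG}, and the only material recorded here is the outcome of Reid's case-by-case verification, namely the table in \xref{local:elephant}. Your overall plan --- reduce to the classification in Theorem \xref{clasifiction-terminal} and check the finitely many families, citing \cite{Reid:YPG} and \cite{Mori:sing} for the explicit equations --- is exactly that route, so to this extent your proposal coincides with the reference the paper relies on.

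There is, however, one genuinely incorrect step in your reduction. A general member $D\in|{-}K_X|$ does \emph{not} pull back to a general member of $|{-}K_{X^\sharp}|$: sections of $\OOO_X(-K_X)$ near $P$ correspond to $\mumu_m$-semi-invariant functions on $X^\sharp$ of one fixed weight (the weight dual to that of a generator of $\OOO_{X^\sharp}(-K_{X^\sharp})$), so $D^\sharp=\pi^*D$ is general only inside that eigenspace. You therefore cannot invoke the index-one case to conclude that $D^\sharp$ is Du Val: the \type{cDV} property concerns a general hyperplane section through $P^\sharp$, whereas $D^\sharp$ is a constrained section, and both its Du Val-ness and its type (recorded in \xref{local:elephant}, e.g. $xy+z^{k}$ for \type{cA/m} or $x^2+y^2+z^{2k-1}$ for \type{cAx/4}) come only out of the same explicit computation that also shows the quotient $D=D^\sharp/\mumu_m$ is Du Val --- a quotient of a Du Val point by $\mumu_m$ is of course not Du Val for an arbitrary action. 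So the two steps you try to separate ($D^\sharp$ Du Val by generality; quotient Du Val by inspecting the action) cannot be decoupled from the case-by-case analysis, which your final paragraph in effect concedes by delegating everything to Reid. A minor additional point: for $m=1$ one has $\OOO_X(-K_X)\simeq\OOO_X$ on the germ, so a general member of $|{-}K_X|$ does not pass through $P$ at all and the statement is vacuous there; the nontrivial content of the theorem is entirely in the case $m>1$, where every member of $|{-}K_X|$ is forced through $P$.
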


\begin{scase}\label{local:elephant}
Depending on the types of terminal singularities, a general member 
$D\in |-K_X|$ and its preimage $D^\sharp$ under the index-one cover are described 
below (see {\cite[(6.4B)]{Reid:YPG}}).

\par\medskip\noindent
\begin{center}
\begin{tabularx}{1\textwidth}{c|c|c|c|c}
name & equation of $D^\sharp$&$\mumu_m$-action &cover $D^\sharp\to D$& $\aw(X,P)$
\\\hline
\type{cA/m} & $xy+z^{k}$ &$(1,-1,0)$&\type{A_{k-1}}$\overset{m:1}\longrightarrow$ \type{A_{km-1}}&$k$
\\
\type{cAx/4} & $x^2+y^2+z^{2k-1}$ &$(1,3,2)$&\type{A_{2k-2}}$\overset{4:1}\longrightarrow$ \type{D_{2k+1}} &$2k-1$
\\
\type{cD/3} & $x^2+y^3+z^3$ &$(0,1,2)$&\type{D_4}$\overset{3:1}\longrightarrow$ \type{E_6} &$2$
\\
\type{cAx/2} &$x^2+y^2+z^{2k}$&$(0,1,1)$ & \type{A_{2k-1}}$\overset{2:1}\longrightarrow$ \type{D_{k+2}} &$2$ 
\\
\type{cD/2} & $x^2+y^2z+z^k$ &$(1,1,0)$ &\type{D_{k+1}}$\overset{2:1}\longrightarrow$ \type{D_{2k}} &$k$
\\
\type{cE/2} & $x^2+y^3+z^4$&$(1,0,1)$ &\type{E_6}$\overset{2:1}\longrightarrow$ \type{E_7} &$3$
\end{tabularx}
\end{center}
\end{scase}

M. Reid conjectured that an analog of \xref{thm:elephant} holds for any $K$-negative contraction of terminal threefolds (general elephant). The conjecture is very important in birational geometry. 
The following  theorem shows that this conjecture is true for extremal curve germs. Different parts of this theorem were proved in \cite{Mori:flip}, \cite{KM92} \cite{MP:cb1}, \cite{MP:cb3}.

\begin{theorem}
\label{thm:ge}
Let $(X,C\simeq\PP^1)$ be an extremal curve germ. Then a general member of the linear system $|-K_X|$
is normal and has only Du Val singularities.
\end{theorem}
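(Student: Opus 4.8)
The plan is to reduce the statement to the local analysis of general elephants at the singular points of $X$, already available via Theorem~\xref{thm:elephant} and the table in~\xref{local:elephant}, together with a global argument along $C$, and then to run through the classification of extremal curve germs. First I would fix a general $D\in|-K_X|$. Since $D\sim -K_X$ is an effective $\QQ$-Cartier divisor on a threefold with only terminal singularities, $D$ is Cohen--Macaulay, in fact Gorenstein (see \S\xref{sect:elephant}), and $\upomega_D\simeq\OOO_D$ by adjunction; hence, once $D$ is known to be normal, ``Du Val'' is equivalent to ``rational'' (equivalently, ``canonical'') for its singularities. To get normality it suffices, by Serre's criterion, to show that $\Sing D$ is finite, a Cohen--Macaulay surface being automatically $S_2$. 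Next I would observe that $-K_X$ is $f$-ample and that $X$ is Gorenstein away from the finitely many non-Gorenstein points $P_1,\dots,P_n\in C$, from which one readily deduces $\Bs|-K_X|\subseteq C$; thus by Bertini a general $D$ is smooth off $C$, while near each $P_i$ Theorem~\xref{thm:elephant} and~\xref{local:elephant} give that $D$ has at worst an explicitly described Du Val singularity. So the whole problem comes down to controlling $D$ in a punctured neighbourhood of $C$, i.e.\ along $C\setminus\{P_1,\dots,P_n\}$, where $X$ is smooth.

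In the birational case I would use $\gr^0_C\upomega\simeq\OOO_C(-1)$ (Corollary~\xref{cor-gr-omega-=0}), so that $-K_X\cdot C=1-\sum_i w_{P_i}(0)<1$ by \eqref{eq-grw-w}. Combining this with the exact sequences of \S3 and the bounds \eqref{eq-grO-iP1-1}--\eqref{eq-grO-iP1-2}, one shows that a general $D$ is smooth along $C\setminus\{P_i\}$: either the general $D$ meets $C$ only at the $P_i$, or every member of $|-K_X|$ contains $C$, in which case the bound $\deg\gr^1_C\OOO\ge -2$ (see~\eqref{eq-grO-iP1-1}) guarantees that a general member is nevertheless smooth along $C\setminus\{P_i\}$. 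It then remains to run through the local types of Propositions~\xref{prop:local-primitive} and~\xref{prop-imp-types} (type~\typec{IB} being excluded by Corollary~\xref{corollary:IB}) and to verify that the surface $D=D^\sharp/\mumu_m$, with $D^\sharp$ as in~\xref{local:elephant}, is Du Val. This is the content of \cite[\S2]{KM92} for flipping germs and of \cite{MP:IA} for divisorial ones.

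In the $\QQ$-conic bundle case I would first dispose of the toroidal germs of Example~\xref{ex-toric} by a direct computation. For the remaining germs, Corollary~\xref{lemma:KC} gives $-K_X\cdot C=d/(m_1\cdots m_r)$, and Corollary~\xref{corollary:gr-w} shows that either $\gr^0_C\upomega\simeq\OOO_C(-1)$, in which case one argues as in the birational case and reduces to the local elephant at the at most two non-Gorenstein points, or the unique non-Gorenstein point is of type~\typec{ID^{\vee}}, a case handled directly. To settle global normality when the base $(Z,o)$ is singular, I would pass to the torsion-free cover $\tau\colon(X',C')\to(X,C)$ of Definition~\xref{torsion-free-cover}, where $X'$ is simpler (frequently Gorenstein), prove the assertion for a general $D'\in|-K_{X'}|$, and descend it using $K_{X'}=\tau^*K_X$; this is carried out in \cite{MP:cb1} and \cite{MP:cb3}.

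The hard part will be the $\QQ$-conic bundle case with a locally imprimitive non-Gorenstein point: there $(Z,o)$ is singular and the torsion-free cover has \emph{reducible} central fibre (Case~\xref{top:imprim}), so one cannot reduce to a single local model and must control $D$ in a full neighbourhood of $C$ at once. A second, more technical, obstacle---present already in the primitive case---is to rule out that a general $D$ acquires a worse-than-Du-Val singularity at a base point of $|-K_X|$ lying in the smooth locus of $X$; this is precisely where the bound $\deg\gr^1_C\OOO\ge -2$ of~\eqref{eq-grO-iP1-1}, combined with a Bertini argument for the incomplete subsystem of members of $|-K_X|$ through $C$, is used.
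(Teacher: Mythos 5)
The central step of your plan is the assertion that near each non-Gorenstein point $P_i$ a general member $D\in|-K_X|$ has at worst Du Val singularities ``by Theorem~\xref{thm:elephant} and \xref{local:elephant}''. This is a genuine gap: Reid's theorem concerns a general member of the \emph{local} anticanonical system $|-K_{(X,P_i)}|$, whereas the restriction of a general member of the global system $|-K_X|$ to a neighbourhood of $P_i$ lies in the subsystem cut out by the global conditions and need not be locally general. That this constrained subsystem still contains a Du Val member is precisely the content of the general elephant theorem, and it is where all the work lies --- not along the smooth part of $C$, which is where you locate the difficulty. For instance, in types \typec{IC} and \typec{IIB} \emph{every} member of $|-K_X|$ contains $C$ (see \xref{ge:IC}, \xref{ge:IIB}), a nontrivial local constraint at $P$, and the Du Val property there is obtained in \cite{KM92} and \cite{MP:cb3} only by a delicate argument (passing to a general $S\in|-2K_X|$ with $S\cap C=\{P\}$ and extending a good member of $|-K_X|\bigr|_S$ to $X$), not by Bertini plus Reid. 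For the remaining types the paper argues in the opposite direction to you: one \emph{constructs} a local general elephant $D=\{\psi=0\}/\mumu_m$ at $P$ with $D\cap C=\{P\}$ and then proves it is globally anticanonical, by checking $\wt(\psi)\equiv\wt(\varOmega)$ so that $K_X+D$ is Cartier, and then $K_X+D\equiv 0$ from $D\cdot C<1$ and $-K_X\cdot C<1$, whence $K_X+D\sim 0$ (Lemma~\xref{lemma-ge}); only after this does the local computation of \xref{local:elephant} apply to the global member. Your proposal contains no substitute for this step, and without it the theorem does not reduce to ``controlling $D$ in a punctured neighbourhood of $C$''.

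A secondary problem is your treatment of the $\QQ$-conic bundle case: you propose to prove the statement for a general $D'\in|-K_{X'}|$ on the torsion-free cover and ``descend it using $K_{X'}=\tau^*K_X$''. A general member of $|-K_{X'}|$ is not $\mumu_d$-invariant, and its pushforward lies in $|-dK_X|$ rather than $|-K_X|$; to descend you must take a general \emph{invariant} member, which reintroduces exactly the non-generality issue above. Moreover $X'$ is Gorenstein only when the subindex equals $1$ (type \typec{ID^\vee}); in the other imprimitive cases the actual proofs in \cite{MP:cb1} proceed through the explicit quotient descriptions of \xref{th:imprimitive}, not through a Gorenstein cover. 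Likewise, the inclusion $\Bs|-K_X|\subseteq C$ and the freeness of $|-K_X|$ at Gorenstein points of $C$ are not ``readily deduced'' from $f$-ampleness alone; they require the vanishing results of Section~3 and are false as stated if one expects $D\cap C$ to avoid $C$ in the \typec{IC} and \typec{IIB} cases.
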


Note that by the inversion of adjunction \cite[\S~3]{Sh:flips}, \cite[Ch.~17]{Utah} the Du Val property of a general member $D\in |-K_X|$
is equivalent to the plt property of the pair $(X,D)$. 

All the possibilities for general members of $|-K_X|$ have been classified,
see \cite{KM92} and \cite{MP:cb3}. 
Below we reproduce this classification in the case where $(X,C\simeq\PP^1)$ 
has only one non-Gorenstein point. 

\begin{theorem}
\label{thm:ge1}
Let $f:(X,C\simeq\PP^1)\to (Z,o)$ be an extremal curve germ. 
Assume that $(X,C)$ 
has only one non-Gorenstein point $P$.
Let $D\in |-K_{X}|$ be a general member.
If $f$ is birational, we let $D_Z:= f(D)$ which is a general member of $|-K_Z|$.
If $f$ is a $\QQ$-conic bundle, we let $D_{Z} :=\Spec_Z f_*\OOO_{D}$.
Then $D$ and $D_Z$ have only 
Du Val singularities and the morphism $f_D: D\to D_Z$ is birational and crepant. 
Moreover, only one of the following possibilities holds.

\begin{emptytheorem}[{\cite[(2.2.1), (2.2.1${}'$)]{KM92}, \cite[(1.2.3)--(1.2.6)]{MP:cb1}}]
\label{ge:simple}
We have $D\cap C=\{P\}$ and $f_D: D\to D_Z$ is an isomorphism.
In this case,
$D$ induces a general member of $|-K_{(X,P)}|$, and 
$\Delta(D)$ is described by \xref{local:elephant}.
\end{emptytheorem}

\begin{emptytheorem}[{\cite[(2.2.2)]{KM92}, {\cite[1.3.1]{MP:cb3}}}]
\label{ge:IC}
$P\in (X,C)$ is of type \typec{IC},
\begin{equation*}
\Delta (D,C):\qquad
\vcenter{
\xymatrix@R=-1pt{
&\circ \ar@{-}[d]
\\ 
{\underbrace{\circ -\cdots - \circ}_{m-3}} \ar@{-}[r]&\circ \ar@{-}[r]&\diamond, 
}}
\end{equation*}
where $m$, the index of $(X,P)$, is odd and $m\ge 5$.
\end{emptytheorem}

\begin{emptytheorem}[{\cite[(2.2.2${}'$)]{KM92}, {\cite[1.3.2]{MP:cb3}}}]
\label{ge:IIB}
$P\in (X,C)$ is of type \typec{IIB},
\begin{equation*}
\Delta (D,C):\qquad
\vcenter{
\xymatrix@R=8pt{
&&\circ \ar@{-}[d]
\\ 
\circ\ar@{-}[r]&\circ \ar@{-}[r]&\circ \ar@{-}[r]& \circ \ar@{-}[r]&\diamond.
}}
\end{equation*}
\end{emptytheorem}
\end{theorem}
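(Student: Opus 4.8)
The plan is to split the statement into the part that follows formally from the general elephant theorem and the genuinely local part.

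\emph{Structural statements.} By Theorem~\ref{thm:ge} the general member $D\in|-K_X|$ is a normal surface with only Du Val singularities. Since $D$ is Cohen--Macaulay and $K_X+D\sim 0$, adjunction gives $K_D\sim(K_X+D)|_D\sim 0$, so $D$ is Gorenstein with trivial dualizing sheaf. As $-K_X$ is not Cartier at the non-Gorenstein point $P$, every member of $|-K_X|$ passes through $P$; on the other hand near each Gorenstein point of $C$ the divisor $D$ is a general member of a Cartier linear system, so it meets $C$ transversally there. Hence $D\cap C$ is either finite, and then equal to $\{P\}$, or all of the irreducible curve $C$, i.e.\ $C\subset D$. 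In the birational case $R^1f_*\upomega_X=0$ by Lemma~\ref{lemma-omega-main}, which lets one identify $f_*\OOO_D$ with the structure sheaf of the normal surface $D_Z=f(D)\in|-K_Z|$; in the $\QQ$-conic bundle case $D_Z=\Spec_Zf_*\OOO_D$ is normal by construction, and the isomorphism $R^1f_*\upomega_X\simeq\upomega_Z$ of Lemma~\ref{lemma-omega-main} identifies it with a member of $|-K_Z|$. In all cases $f_D\colon D\to D_Z$ is an isomorphism over $Z\setminus\{o\}$ and contracts no curve of $D$ other than (a component of) $C$, together with $D\cap E$ in the divisorial case, so $f_D$ is birational; and since $K_D\sim 0\sim K_{D_Z}$, writing $K_D=f_D^*K_{D_Z}+\sum a_iE_i$ forces all $a_i=0$, so $f_D$ is crepant and $D_Z$, admitting a crepant partial resolution from the Du Val surface $D$, is itself Du Val.

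\emph{The trichotomy.} It remains to determine, according to the local type of $P$ from Propositions~\ref{prop:local-primitive} and~\ref{prop-imp-types}, whether $C\subset D$ for general $D$, and to compute the resulting dual graph. If $C\not\subset\Bs|-K_X|$, then $D\cap C=\{P\}$, $f_D$ is an isomorphism, and $D$ restricts near $P$ to a general member of $|-K_{(X,P)}|$; since $D$ is smooth away from $P$ (it avoids the other, Gorenstein, singular points of $X$), its minimal resolution is the one read off from the table in~\ref{local:elephant}, and we are in case~\ref{ge:simple}. Whether instead $C\subset D$ is a computation on the index-one cover $(X^\sharp,P^\sharp)$ in the normalized coordinates of~\ref{not-nazalo-loc}: lifting a semi-invariant generator of $\OOO_X(-K_X)$ and comparing its order along $C^\sharp$ with the descriptions of $\gr_C^0\upomega$ and $\gr_C^1\OOO$ — exactly in the style of Example~\ref{ex:ICdual} and formula~\eqref{eq-grw-w} — one finds that $C\subset D$ occurs precisely for points of type~\typec{IC} and~\typec{IIB}. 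For those two types one then uses the explicit equation of $X^\sharp$ provided by Lemma~\ref{lemma:local-eq} to write down $D^\sharp$, descends to $D=D^\sharp/\mumu_m$, and resolves the pair $(D,C)$; the two graphs obtained are those displayed in~\ref{ge:IC} and~\ref{ge:IIB}, and the restriction that $m$ be odd with $m\ge 5$ in the \typec{IC} case falls out of the same computation, since for $m$ even the candidate general member fails to be normal or Du Val, contradicting Theorem~\ref{thm:ge}.

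\emph{Main obstacle.} The one substantial step is this last local analysis: showing that $C\subset D$ happens only for types~\typec{IC} and~\typec{IIB}, and performing the explicit $\mumu_m$-equivariant resolutions of $(D,C)$ in those two cases. This is the work carried out in \cite{KM92} in the flipping and divisorial cases and in \cite{MP:cb3} (building on \cite{MP:cb1}) in the $\QQ$-conic bundle case; everything else above is formal once Theorem~\ref{thm:ge} is granted.
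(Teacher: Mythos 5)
Your structural reductions (Du Val and crepancy of $f_D$, normality of $D_Z$, the use of Lemma \xref{lemma-omega-main}) are essentially fine once Theorem \xref{thm:ge} is granted, but the step you call ``the genuinely local part'' is misdescribed, and that is where the gap lies. Whether the general member contains $C$ is the question whether $C\subset\Bs|-K_X|$, and this is \emph{not} decided by a purely local computation on the index-one cover at $P$: comparing weights and orders along $C^\sharp$ only produces a candidate divisor germ $\{\psi=0\}/\mumu_m$ at $P$ which is locally anticanonical and meets $C$ only at $P$. The substantive point --- and the only place where the hypothesis that $P$ is the \emph{unique} non-Gorenstein point enters --- is that this germ is a member of the global system $|-K_X|$. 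In the paper this is the content of Lemma \xref{lemma-ge}\xref{lemma-ge1}: since $P$ is the only non-Gorenstein point, $K_X+D$ is Cartier by \eqref{exact-Clcs}; both $D\cdot C$ and $-K_X\cdot C$ are positive and at most $1$ (cf.\ \xref{lemma:KC}), so the Cartier divisor $K_X+D$ is numerically trivial, and then $K_X+D\sim 0$ by \xref{cor-C-pa=0}\xref{cor-C-pa=0c}. Your trichotomy never invokes this global argument (you never use the one-non-Gorenstein-point hypothesis there), and without it the order comparison does not show that $C$ avoids the base locus in the types \typec{IA}, \typec{IIA}, \typec{IA^\vee}, etc. Relatedly, your justification of ``$D\cap C$ finite $\Rightarrow D\cap C=\{P\}$'' via transversality near Gorenstein points of $C$ is not an argument (near such points $D$ need not be a general member of a base-point-free system; in the \typec{IC}, \typec{IIB} cases it even contains $C$): the correct reason is that $(D\cdot C)_Q\in\ZZ_{>0}$ at any point where $-K_X$ is Cartier, while $D\cdot C=-K_X\cdot C\le 1$ and the contribution at $P$ is already positive.

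In the cases \typec{IC} and \typec{IIB} there is a second, related gap: ``write down $D^\sharp$, descend and resolve'' presumes that the general \emph{global} member restricts at $P$ to a general \emph{local} anticanonical divisor through $C$ (e.g.\ with unit coefficient of $y_4$ in the \typec{IC} normal form). Controlling this is exactly the delicate part, which the paper handles not by resolving a local candidate but by Lemma \xref{lemma-ge}\xref{lemma-ge2}: take a general $S\in|-2K_X|$ with $S\cap C=\{P\}$ and $(X,\tfrac12S)$ klt, and extend a good member of $|-K_X|\bigr|_S$ using surjectivity of $H^0(X,\OOO_X(-K_X))\to H^0(S,\OOO_S(-K_X))$; the graphs in \xref{ge:IC} and \xref{ge:IIB} are then computed in \cite{KM92} and \cite{MP:cb3}. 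Deferring this to the references is legitimate in a survey, but then the ``main obstacle'' is the existence of the good members just described, not a $\mumu_m$-equivariant resolution exercise in the style of \xref{ex:ICdual}. Finally, the oddness of $m$ in \typec{IC} does not ``fall out'' of the general-member computation: it is part of the local setup of an extremal germ at an \typec{IC} point ($i_P(1)=a_1=2$ together with $\gcd(a_1,m)=1$), established before $|-K_X|$ is discussed.
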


In some cases of \xref{ge:simple} there are additional restrictions on the general member $D\in |-K_X|$.
For example, in the case where $f$ is birational and $(X,P)$ is of type \type{cAx/2},
the general $D\in |-K_X|$ is of type \type{D_4} \cite[4.8.5.7]{KM92}.
A lot of restrictions are imposed on imprimitive $\QQ$-conic bundles 
(see \xref{th:imprimitive}).

\subsection{}
Let us outline the main ideas of the proof in the case where $(X,C)$ has 
only one non-Gorenstein point. Thus, let $(X,C)$ be an extremal curve germ 
with a unique non-Gorenstein point $P$. 

\begin{slemma}[see {\cite[Theorem~7.3]{Mori:flip}}, 
{\cite[\S~7, 8.6.1]{MP:cb1}}]
\label{lemma-ge}
In the notation of \xref{thm:ge1} 
and with the symbols in 
Propositions \xref{prop:local-primitive} for primitive points and 
\xref{prop-imp-types} for imprimitive points,
we have.
\begin{enumerate}
\item
\label{lemma-ge1}
If $P\in (X,C)$ is of type \typec{IA}, \typec{IIA}, \typec{IA^\vee}, \typec{IIA^\vee},
\typec{ID^\vee}, or \typec{IE^\vee}, then for a general member $D\in |-K_X|$ 
we have $D\cap C=\{P\}$.
\item
\label{lemma-ge2}
If $P\in (X,C)$ is of type \typec{IC} or \typec{IIB}, then for a general member $S\in |-2K_X|$ 
we have $S\cap C=\{P\}$. Moreover, the pair $(X,\frac12 S)$ is klt.
\end{enumerate}
\end{slemma}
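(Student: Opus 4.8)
The plan is to compute, for each type in the list, the position of the base locus of $|-K_X|$ (resp. $|-2K_X|$) relative to $C$ directly on the index-one cover, using the explicit local equations recorded in Propositions \xref{prop:local-primitive} and \xref{prop-imp-types} together with the description of the general elephant in \xref{local:elephant}. Since $(X,C)$ has a unique non-Gorenstein point $P$, away from $P$ the divisor $K_X$ is Cartier and $-K_X$ is $f$-ample, so $|-K_X|$ is base-point free in a punctured neighborhood of $C$ by \xref{th-vanish} and Bertini; hence $D\cap C$ can only meet $C$ at $P$, and the only issue is whether the general member through the rest of $C$ actually passes through $P$, i.e. whether $-K_X\cdot C$ is ``used up'' at $P$. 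So the real content is a numerical/local statement at $P$.

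First I would fix the setup at $P$: pull back to $(X^\sharp,P^\sharp)\subset(\CC^4,0)$ with the $\mumu_m$-action and equation $\phi=0$ as in \xref{clasifiction-terminal}, and recall from \xref{local:elephant} the equation of a general $D^\sharp\in|-K_{X^\sharp}|$ and its weight. For part \xref{lemma-ge1}, in each of the types \typec{IA}, \typec{IIA}, \typec{IA^\vee}, \typec{II^\vee}, \typec{ID^\vee}, \typec{IE^\vee} one has $a_4\equiv 0 \bmod \bar m$ (the coordinate $x_4$ is, up to units, an invariant, by the weight tables and by \typec{IA}: $a_4=m$), and the local generator of $|-K_X|$ near $P$ is represented by $x_4=0$ (or a small perturbation of it); since $C^\sharp$ is cut out using $x_4-\lambda_4(x_1,x_2)$ with $\lambda_4\in(x)$ for these types, the divisor $\{x_4=0\}$ meets $C^\sharp$ only at $P^\sharp$. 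Combined with the off-$P$ freeness above, this gives $D\cap C=\{P\}$. The crepancy and birationality of $f_D$ then follow from Theorem \xref{thm:ge} and \xref{lemma-omega-main}, exactly as in \xref{thm:ge1}; here one uses $\gr_C^0\upomega=\OOO_C(-1)$ (Corollary \xref{corollary:gr-w}, since none of these is toroidal or the excluded \typec{ID^\vee}--toroidal case, except \typec{ID^\vee} itself where a direct check of $w_P(0)$ as in the proof of \xref{corollary:gr-w} still gives the conclusion).

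For part \xref{lemma-ge2}, in types \typec{IC} and \typec{IIB} the coordinate $x_4$ has $\wt(x_4)\not\equiv 0,\pm\wt(x_1),\pm\wt(x_2)$, so $x_4$ is not an invariant and $|-K_X|$ has no member of the shape ``$x_4=\text{invariant}$''; instead one passes to $|-2K_X|$, where by the weight bookkeeping a general member is represented locally by a semi-invariant $\psi$ of weight $\equiv 2\wt(\text{elephant})$ whose restriction to $C^\sharp$, written via the monomial parametrization $x_i=\lambda_i(x_1,x_2)$ of Lemma \xref{lemma:planar}, vanishes only at $P^\sharp$ (using $2\le a_1<a_2$ and $m\ge 5$ for \typec{IC}, and the table $\ord(x)=(3,2,5,5)$ for \typec{IIB}); this is the step where an explicit low-degree computation with $\lambda_3,\lambda_4$ and the local equation from Lemma \xref{lemma:local-eq} is unavoidable. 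Finally, that $(X,\tfrac12 S)$ is klt follows because $S\in|-2K_X|$ and $S\cap C=\{P\}$ force $S$ to be, near $P$, (twice) a general elephant-type surface with a controlled singularity, so the discrepancy computation reduces to the local cyclic-quotient picture of \xref{local:elephant}; alternatively one invokes Theorem \xref{thm:ge} applied to the index-one cover of a suitable $\tfrac12$-pair and Shokurov's inversion of adjunction \cite{Sh:flips}. The main obstacle is precisely this last local klt check in the \typec{IC}/\typec{IIB} cases: one must verify that the double cover branched along the general $S$ has only canonical singularities at $P$, which is where the detailed classification of \type{cA/m} and \type{cAx/4} equations (and the values $a_i$) does the work.
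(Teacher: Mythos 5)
Your reduction to a purely local statement at $P$ has two problems, one local and one global. Locally, you identify the anticanonical system at $P$ with $\{x_4=0\}$ (``up to units an invariant''); this is the wrong sheaf. Sections of $\OOO_X(-K_X)$ near $P$ correspond to semi-invariants $\psi$ with $\wt(\psi)\equiv\wt(\Omega)$, where $\Omega=\Res(\phi^{-1}dx_1\wedge\dots\wedge dx_4)$ generates $\upomega_{X^\sharp}$, and by \xref{clasifiction-terminal} one has $\wt(\Omega)\equiv\wt(x_2)$, which is prime to $m$. Since $\wt(x_4)\equiv 0$ in the cases \typec{IA}, \typec{IA^\vee}, \typec{ID^\vee}, \typec{IE^\vee} and $\wt(x_4)\equiv 2\bmod 4$ in the cases \typec{IIA}, \typec{II^\vee}, the surface $\{x_4=0\}/\mumu_m$ is a local member of $|\OOO_X|$ (respectively, of $|-2K_X|$), never of $|-K_X|$. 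The paper's proof instead takes $D=\{x_2+\psi_\bullet=0\}/\mumu_m$ with $\psi_\bullet$ a general semi-invariant of weight $\wt(x_2)$; then $D\cap C=\{P\}$ because $\ord(x_2)=a_2<\infty$.

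Globally, the claim that $|-K_X|$ is base point free on $C\setminus\{P\}$ is not justified (\xref{th-vanish} concerns $\OOO_X$, not twists by $-K_X$), and even granted it, it would not imply that a general member misses $C\setminus\{P\}$: a free system can still meet $C$ at moving Gorenstein points. You have the right numerical instinct (``$-K_X\cdot C$ is used up at $P$'') but never execute it, and, more importantly, you never connect your local divisor at $P$ with the global system $|-K_X|$. That bridge is the heart of the paper's argument: because $\wt(\psi)\equiv\wt(\Omega)$, the Weil divisor $D$ makes $K_X+D$ Cartier at $P$, hence Cartier on the whole germ by \eqref{exact-Clcs}; then $0<D\cdot C=a_2/m<1$ together with $-K_X\cdot C<1$ (via \eqref{eq-grw-w}, $w_P(0)>0$ and \xref{corollary:gr-w}) forces the integer $(K_X+D)\cdot C$ to vanish, and Corollary \xref{cor-C-pa=0}\xref{cor-C-pa=0c} upgrades $K_X+D\equiv 0$ to $K_X+D\sim 0$, i.e.\ $D\in|-K_X|$, from which the statement for a general member follows. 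The same gap recurs in your part (ii), where the analogous construction uses a semi-invariant of weight $2\wt(\Omega)$ (for instance a coordinate of order $2$ on $C^\sharp$ in the \typec{IC} coordinates), with $2K_X+S$ Cartier and the same numerics. Finally, invoking Theorem \xref{thm:ge} to obtain the klt property of $\bigl(X,\frac12 S\bigr)$ would be circular, since Lemma \xref{lemma-ge} is a step toward that theorem; the klt check must be done directly, on the index-one cover, where $\bigl(X^\sharp,\frac12 S^\sharp\bigr)$ is seen to be klt and the property descends because the cover is \'etale in codimension one.
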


\begin{proof}[Sketch of the proof]
Consider the case where $P$ is of type \typec{IA}. 
Take $\psi:=x_2+\psi_\bullet$,
where $\psi_\bullet\in \CC\{x_1,\dots,x_4\}$ is a sufficiently general semi-invariant 
with $\wt(\psi_\bullet)\equiv \wt(x_2)$ and let $D:=\{\psi=0\}/\mumu_m$.
Then $D\cap C=\{P\}$ and $\wt(\psi)\equiv \wt(\varOmega)$, where 
$\varOmega=\Res(\phi^{-1}d x_1\wedge\dots \wedge d x_4)$.
Therefore, $\psi\varOmega^{-1} \in \OOO_X(-K_X)$ in a neighborhood of $P$.
Since $P$ is the only non-Gorenstein point,
this implies that $K_X+D$ is 
Cartier globally by \eqref{exact-Clcs}.
On the other hand, $D\cdot C=\frac 1m \ord{\psi}=\frac {a_2}m <1$
and $-K_X\cdot C<1$ (see \eqref{eq-grO-iP1} and \xref{corollary:gr-w}). Therefore, $K_X+D\equiv 0$.
Then Corollary \xref{cor-C-pa=0}\xref{cor-C-pa=0c} implies $K_X+D\sim 0$.
Other cases are treated similarly.
\end{proof}

Thus in the cases of \xref{lemma-ge}\xref{lemma-ge1} we are done.
Cases \typec{IC} and \typec{IIB} are much more delicate.
Rough idea of proof in these cases is to use surjectivity 
of the restriction map
\begin{equation*}
H^0(X,\OOO_X(-K_X)) \longrightarrow H^0(S,\OOO_S(-K_X))
\end{equation*} 
and extend a ``good'' member of $|-K_X|\bigr|_S$ to $X$. 

\subsection{}
Kawamata \cite{Kaw:Crep} had shown that 
Theorem \xref{thm:ge} in the flipping case is a sufficient condition for the existence of flips.
Indeed, 
applying a Bertini type arguments (see \cite[Corollary~2.33]{KM:book})
one can show that, for a general member $S\in |-2K_X|$, the pair $(X,\frac 12 S)$ is klt. 
Consider the double cover $(X^\flat,C^\flat)\to (X,C)$ branched over $S$.
Then $X^\flat$ has only canonical singularities (see \cite[5.20]{KM:book}, \cite[7.2]{Mori:flip})
and admits a flopping contraction of $C^\flat$.
Then the existence of flip for $(X,C)$ follows from the existence of flop for
$(X^\flat,C^\flat)$.

\subsection{}
As a corollary of Theorem \xref{thm:ge} we have the following fact which was conjectured by
V. Iskovskikh \cite{I96}.

\begin{scorollary}[{\cite{P97}}, {\cite{MP:cb1}}]
\label{base}
Let $f:(X,C)\to (Z,o)$ be a $\QQ$-conic bundle germ.
Then $(Z,o)$ is either smooth or a Du Val singularity of type \type{A}.
\end{scorollary}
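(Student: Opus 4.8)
The plan is to combine Theorem~\xref{thm:ge} with Lemma~\xref{lemma:base}\xref{lemma:base-2} and Corollary~\xref{corollary:cyclic}, reducing everything to the structure of the base as a quotient of a smooth Du Val member. Here is how I would proceed.

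\textbf{Step 1: Produce a Du Val member upstairs and push it down.}
By Theorem~\xref{thm:ge} a general member $D\in|-K_X|$ is normal with only Du Val singularities. Since $f$ is a $\QQ$-conic bundle, set $D_Z:=\Spec_Z f_*\OOO_D$, so that $f_D\colon D\to D_Z$ is a finite morphism onto a normal surface germ. One checks (as in \xref{thm:ge1}, via $R^1f_*\OOO_X=0$ from Theorem~\xref{th-vanish}) that $f_D$ has connected fibers, hence is birational, and that it is crepant because $K_D\sim 0$ and $K_X+D\sim 0$. Therefore $D_Z$ has a crepant partial resolution by the Du Val surface $D$, so $(D_Z,o)$ is itself Du Val.

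\textbf{Step 2: Identify $(Z,o)$ with a cyclic quotient of $(D_Z,o)$.}
By Corollary~\xref{corollary:cyclic} the germ $(Z,o)$ is a cyclic quotient singularity, say $(Z,o)=(U,0)/\mumu_d$ with $U$ smooth. Pulling $D_Z$ back through the \'etale-in-codimension-one cover $U\setminus\{0\}\to Z\setminus\{o\}$ and normalizing gives a Du Val germ $(D_U,0)$ with a free-in-codimension-one $\mumu_d$-action whose quotient is $D_Z$. Thus $(Z,o)$ is sandwiched: $D_Z\to Z$ is finite and $D_Z$ is Du Val, so $(Z,o)$ is log terminal of dimension two, i.e.\ a quotient singularity $\CC^2/G$; and on the other hand the pair fits into $(D_U,0)/\mumu_d = (D_Z,o)$ dominating $(Z,o)=(U,0)/\mumu_d$. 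Working $\mumu_d$-equivariantly on $U=\CC^2$: $D_U\subset U$ is a $\mumu_d$-invariant Du Val curve-germ-complement — but really $D_U$ is a Cartier divisor in the smooth $U$ cut out by a semi-invariant, and a Du Val double point lying on a smooth surface as a hypersurface forces, by the classification in \xref{local:elephant}, the $\mumu_d$-action to be of the listed diagonal shape.

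\textbf{Step 3: Rule out everything but type~\type{A}.}
The claim is that $G=\mumu_d$ acting on $U=\CC^2$ must act with weights $\frac1d(1,-1)$ (up to the obvious ambiguities), which is exactly the statement that $(Z,o)$ is Du Val of type~\type{A_{d-1}}. The point is that the general elephant $D$ upstairs is a \emph{Cartier} divisor on $X$ whose index-one behaviour is controlled by Propositions~\xref{prop:local-primitive} and~\xref{prop-imp-types}: near the unique non-Gorenstein point the local picture of $D^\sharp$ and its $\mumu_m$-action is one of the rows of \xref{local:elephant}, and tracing this through the Stein factorization $D\to D_Z\to$ (and the cyclic cover $U\to Z$) pins down the representation of $\mumu_d$ on the tangent space $T_0U$ to be $\frac1d(1,-1)$. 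Concretely, if the action had weights $\frac1d(1,q)$ with $q\not\equiv-1$, then $U/\mumu_d$ would be a cyclic quotient singularity of type $\frac1d(1,q)$ which is Du Val only when $q\equiv-1\bmod d$; but $U/\mumu_d=Z$ and any other type would contradict either the existence of the finite crepant map from the Du Val $D_Z$ or the explicit local equation of $D^\sharp$. Since $(X,C)$ is a $\QQ$-conic bundle (so $Z$ is a surface, not a point), and since $Z$ could also be smooth when $d=1$ or when $X$ itself is Gorenstein (Proposition~\xref{prop:Gor}\xref{prop:Gor-cb}), the two stated alternatives — smooth, or Du Val of type~\type{A} — exhaust the possibilities.

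\textbf{Main obstacle.}
The delicate point is Step~3: showing that no other cyclic quotient type can occur for $(Z,o)$. The cleanest route is to use the explicit local form of the general elephant $D^\sharp$ at the unique non-Gorenstein point $P$ together with Lemma~\xref{lemma-ge}, which tells us $D\cap C=\{P\}$ (or, in the \typec{IC}, \typec{IIB} cases, works with $S\in|-2K_X|$); then the finite map $f_D\colon D\to D_Z$ is, near $o$, literally the restriction of the quotient $X\to Z$, and the induced action on $(D,P)$ — which is a \emph{smooth} or \emph{Du Val} germ carrying a $\mumu_d$-action free in codimension one — must produce a Du Val quotient, forcing the type~\type{A} weights. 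One must be careful that $D$ may acquire Du Val singularities away from $P$ coming from the Gorenstein locus of $X$, but those are unramified for the cover and contribute only further $A$-type (or nothing) to $D_Z$, so they do not affect the conclusion at $o$.
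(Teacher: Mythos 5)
Your Steps~1 and the first half of Step~2 are sound and follow the route the survey itself indicates (Theorem~\xref{thm:ge} gives a Du Val $D\in|-K_X|$; the Stein factorization $D\to D_Z\to Z$ makes $D\to D_Z$ birational and crepant, so $D_Z$ is Du Val; Corollary~\xref{corollary:cyclic} makes $(Z,o)$ a cyclic quotient). But the decisive point --- why the cyclic quotient $(Z,o)$ must be of type \type{A} --- is never actually proved, and the facts you invoke cannot prove it. A finite degree-two morphism from a Du Val germ onto $(Z,o)$ does not force $(Z,o)$ to be Du Val: for example, the involution of $A_2=\CC^2_{u,v}/\mumu_3(1,2)$ induced by $(u,v)\mapsto(-u,v)$ has quotient $\CC^2/\mumu_3(1,1)$, so the Du Val germ $A_2$ maps finitely $2{:}1$ onto a cyclic quotient singularity that is not of type \type{A}. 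Hence your Step~3, which rests on the assertion that other weights ``would contradict the existence of the finite crepant map from the Du Val $D_Z$'', proves nothing (and ``crepant'' has no content for the finite map $D_Z\to Z$ unless one compares canonical classes via the Hurwitz formula $K_{D_Z}=\pi^*(K_Z+\tfrac12B)$ with $B$ the branch divisor, which you never introduce). The same example refutes the principle in your last paragraph that a $\mumu_d$-action on a smooth or Du Val germ, free in codimension one, ``must produce a Du Val quotient'' ($\CC^2/\mumu_3(1,1)$ is not Du Val). This is exactly why the references \cite{P97}, \cite{MP:cb1}, \cite{MP:cb3} that the corollary points to do not stop at the soft statements you use: the type-\type{A} conclusion requires a genuine analysis of the branch divisor of $D_Z\to Z$ (equivalently of how $D$ meets the fibers near $o$), carried out there case by case together with the classification of the possible $D$'s.

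There are also slips that would need repair even if Step~3 were fixed: $f_D\colon D\to D_Z$ is the connected-fibers (birational) part of the Stein factorization, not a finite morphism; $D_Z$ is a surface, finite of degree two over $Z$, so its pull-back $D_U$ to $U$ is again a surface finite over $U$ and not ``a Cartier divisor in the smooth $U$ cut out by a semi-invariant'', so \xref{local:elephant} (which concerns anticanonical members of three-dimensional terminal singularities) cannot be applied to it; you assume ``the unique non-Gorenstein point'', whereas a $\QQ$-conic bundle germ may have two non-Gorenstein points and reducible $C$ (note also that Theorem~\xref{thm:ge} is stated in this survey for $C\simeq\PP^1$, so for the corollary as stated you must invoke the general elephant theorem of \cite{MP:cb1}, \cite{MP:cb3} for possibly reducible central fiber).
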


The corollary has important applications in birational geometry of conic bundles
(see \cite{I96}, \cite{P17}). 

\section{Index two germs}
\label{sect:index2}
In this section we discuss extremal curve germs 
having index two points only. The methods are different from those used in other 
sections. Throughout this section we do not assume that the central 
curve of an extremal curve germ is irreducible.

\begin{proposition}[{\cite[4.6]{KM92}}]
\label{index:2}
Let $(X,C)$ be an extremal curve germ of index two. 
If $(X,C)$ is a $\QQ$-conic bundle germ, then we assume that
the base surface is smooth. Then we have the following.
\begin{enumerate}
\item \label{index:2a}
If $P$ is a point of index two, then $P$ is the only non-Gorenstein point, all the components of $C$ pass through $P$ and they do not
meet each other elsewhere.
\item \label{index:2b}
Each germ $(X,C_i)$ is of type \typec{IA} at $P$. 
\item \label{index:2c}
A general member $F\in |-K_X|$ satisfies $F\cap
C=\{P\}$ and has only Du Val singularity at $P$.
\end{enumerate}
\end{proposition}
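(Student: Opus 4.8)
The plan is to determine the local type of a point of index two, then prove by a covering argument that $X$ has a single non-Gorenstein point, and finally deduce the shape of $C$ and the statement about $|-K_X|$.

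\emph{Step 1 (local type).} Let $P\in C$ have index $m=2$ and let $C_i\ni P$ be a component of $C$, so that $(X,C_i)$ is an extremal curve germ with $C_i\simeq\PP^1$. First I would run through the local classification, Propositions~\xref{prop:local-primitive} and~\xref{prop-imp-types}, at $P$: types \typec{IC}, \typec{IIA}, \typec{IIB}, \typec{III} require $m\ge 5$, $m=4$, or $m=1$; type \typec{IB} is impossible by Corollary~\xref{corollary:IB}; and in the imprimitive case $m=\bar m s$ with $s\ge 2$ forces $\bar m=1$, so among imprimitive types only \typec{ID^{\vee}} could occur. But \typec{ID^{\vee}} forces $f$ to be a $\QQ$-conic bundle admitting a degree-two cover $(Z',o')\to(Z,o)$ \'etale over $Z\setminus\{o\}$, and this is impossible once $(Z,o)$ is smooth since then $Z\setminus\{o\}\simeq\CC^2\setminus\{0\}$ is simply connected. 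Hence $P$ is a primitive point of type \typec{IA}.

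\emph{Step 2 (uniqueness).} Suppose $X$ had two non-Gorenstein points $P_1,P_2$; by Step~1 both are of index two, type \typec{IA}, and locally primitive, so Corollary~\xref{corollary:cover}\xref{corollary:cover-p} shows that the class of $D_1-D_2$ is a nonzero $2$-torsion element of $\Cl(X)$, whence $\Cl(X)_{\tors}=\ZZ/2$ by Corollary~\xref{prop-cyclic-quo} and \eqref{exact-Clcs}. I would then pass to the torsion free cover $\tau\colon(X',C')\to(X,C)$ of~\xref{torsion-free-cover}: it has degree two, near each $P_i$ it coincides with the index-one cover of $(X,P_i)$ (so $X'$ is Gorenstein), and it preserves the type of the contraction. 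If $f$ is a $\QQ$-conic bundle, then the induced degree-two cover $(Z',o')\to(Z,o)\simeq(\CC^2,0)$, being \'etale over $Z\setminus\{o\}$, cannot exist. If $f$ is flipping, then $(X',C')$ is a Gorenstein flipping germ, contradicting Proposition~\xref{prop:Gor}\xref{prop:Gor-bir}. If $f$ is divisorial, then $(X',C')$ is Gorenstein divisorial, so by Proposition~\xref{prop:Gor}\xref{prop:Gor-bir} $C'$ (hence $C$) is irreducible and $(Z',o')$ is smooth; therefore $(Z,o)=(Z',o')/\mumu_2$ is a terminal cyclic quotient singularity of index two (Theorem~\xref{thm:div:Q-Cartier}), and the discrepancy estimate from the proof of Corollary~\xref{corollary:IB}, via \cite{Kawamata:discr}, contradicts the $f$-ampleness of $-K_X$. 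So $P$ is the unique non-Gorenstein point.

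\emph{Steps 3--4 (the curve and the elephant).} By Corollary~\xref{cor:int-non-Gor}, two components of $C$ meeting at a Gorenstein point would force $X$ to be Gorenstein, which is absurd; hence the only point where two components meet is $P$. Moreover no component can avoid $P$: such a component would, $C$ being connected (Corollary~\xref{cor-C-pa=0}), meet the rest of $C$ at a Gorenstein point, again contradicting Corollary~\xref{cor:int-non-Gor}. This proves \xref{index:2a}, and with Step~1 applied to each component it proves \xref{index:2b}. Finally, each $(X,C_i)$ has the single non-Gorenstein point $P$, of type \typec{IA}, so Lemma~\xref{lemma-ge}\xref{lemma-ge1} gives $F\cap C_i=\{P\}$ for a general $F\in|-K_X|$; hence $F\cap C=\{P\}$, and $F$ has at most a Du Val singularity at $P$ by Theorem~\xref{thm:elephant}. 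This proves \xref{index:2c}.

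I expect the real obstacle to be the divisorial subcase of Step~2: after reducing to a $K$-negative divisorial contraction onto $\frac12(1,1,1)$, one must genuinely invoke Kawamata's classification of divisorial contractions to terminal threefold quotient singularities — equivalently, the discrepancy estimate underlying Corollary~\xref{corollary:IB} — to exclude a second index-two point.
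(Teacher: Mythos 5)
Your Steps 1, 3--4 and the treatment of \xref{index:2c} follow essentially the paper's route, but Step 2 has a genuine gap. You apply Corollary \xref{corollary:cover}\xref{corollary:cover-p}, the exact sequence \eqref{exact-Clcs} and the torsion-free-cover mechanism directly to $(X,C)$, yet all of these are stated (and proved) under the standing assumption that the central fiber is \emph{irreducible}, and at that stage you know nothing about how $P_1$ and $P_2$ sit on the components of $C$. This is not merely a citation problem: the conclusion you need can actually fail for reducible $C$. If some component $C_j$ passes through exactly one of the two points, say $P_1$ (and through no other non-Gorenstein point), then, $C_j$ being locally primitive at $P_1$ by your Step 1, any Weil divisor class $\xi$ whose local class at $P_1$ generates $\Clsc(X,P_1)\simeq\ZZ/2\ZZ$ satisfies $\xi\cdot C_j\equiv\frac12\pmod{\ZZ}$, so no multiple of $\xi$ is linearly trivial. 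Hence there is no $2$-torsion class restricting to the local generators at both points, every torsion class is locally trivial at $P_1$, and the cover you want (one that is the index-one cover over both $P_1$ and $P_2$) does not exist. Since such a ``separating'' component is exactly one of the configurations that \xref{index:2a} is supposed to exclude, Step 2 as written breaks down precisely where it is needed. The repair is the paper's opening reduction: by Lemma \xref{lemma-int-non-Gor} (or Corollary \xref{cor:int-non-Gor}) components of $C$ meet only at non-Gorenstein points, so by connectedness it suffices to show that a single irreducible component $C_i$ cannot carry two index-two points, and one then argues on the irreducible germ $(X,C_i)$, where \eqref{exact-Clcs} and Corollary \xref{corollary:cover} really apply.

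Even after that reduction your route needs one more input and is heavier than necessary: the trichotomy at the end of Step 2 then concerns the contraction of the subgerm $(X,C_i)$, not the original $f$, so in the $\QQ$-conic bundle subcase the smooth-base hypothesis is not available unless you also prove $C_i=C$ (the same subgerm-versus-germ point mildly affects your exclusion of \typec{ID^\vee} in Step 1). Your flipping and divisorial subcases do work --- the divisorial one is, as you anticipated, the Kawamata discrepancy argument from the proof of Corollary \xref{corollary:IB} --- but the paper disposes of all three cases at once with a two-line computation: if $\gr_{C_i}^0\upomega\not\simeq\OOO_{C_i}(-1)$, then $(X,C_i)$ is a $\QQ$-conic bundle germ and Corollary \xref{cor-prop-grw=2-2-points-prim} together with the smooth-base hypothesis gives a contradiction; otherwise $\deg\gr_{C_i}^0\upomega=-1$ and \eqref{eq-grw-w} yields
\begin{equation*}
1=-K_X\cdot C_i+w_P(0)+w_Q(0)\ge \tfrac32,
\end{equation*}
since each summand is a positive element of $\frac12\ZZ$. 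No covers, no Gorenstein classification, no discrepancy estimates are needed for the uniqueness step.
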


\begin{proof}
By Lemma \xref{lemma-int-non-Gor} it is sufficient to show that every irreducible component of $C$ has at most one non-Gorenstein point. Assume the converse: a component $C_i\subset C$ contains two points $P$ and $Q$ of index two. If $\gr_{C_i}^0\not \simeq \OOO_C(-1)$, then $(X,C_i)$ is a $\QQ$-conic bundle germ by Corollary~\xref{cor-gr-omega-=0}\xref{cor-gr-omega-=0a}. In this case, $C=C_i$ and $(X,C_i)$ is primitive (because the base is smooth). This contradicts Corollary~\xref{cor-prop-grw=2-2-points-prim}. Thus $\gr_{C_i}^0\simeq \OOO_C(-1)$. Since the numbers $K_X\cdot C_i$, $w_P(0)$, and $w_Q(0)$ are strictly positive and contained in $\frac12 \ZZ$, we get a contradiction by \eqref{eq-grw-w}.

\xref{index:2b} follows from 
\xref{prop:local-primitive} (the case \typec{IB} is excluded
by Corollary~\xref{corollary:IB}).
\xref{index:2c} is proved as in Sect. \xref{sect:elephant}.
\end{proof}

First, we consider the birational case following \cite[\S 4]{KM92}.

\begin{theorem}[{\cite[4.7]{KM92}}]
\label{thm:index2}
Let $(X,C)$ be a birational extremal curve germ of index two. Let $P\in X$ be a non-Gorenstein point. Then a general member $H\in |\OOO_X|$ is normal and has only rational singularities. The following are the only possibilities  for the dual graph $\Delta(H,C)$, where $\overset{3}\circ \lin\underbrace{\circ\lin\cdots\lin\circ}_{n-2}\lin\overset{3}\circ$ should be replaced with $\overset{4}\circ$ if $n=1$.

The following is the only flipping case.

\begin{emptytheorem}
\label{index2flipping}
Then $C\simeq\PP^1$, the singularity $(X,P)$ is of type \type{cA/2}, $(H_Z,o)$ is of type $\frac1{2n+1}(1, 2n-1)$, 
and 

\begin{equation*}
\Delta(H,C):\qquad 
\bullet\lin\overset{3}\circ \lin\underbrace{\circ\lin\cdots\lin\circ}_{n-2} \lin\overset{3}\circ
\end{equation*}
\end{emptytheorem}

In the remaining cases $(X,C)$ is divisorial.
Then $(Z,o)$ is a \type{cDV} point and $(H_Z,o)$ is a Du Val singularity.
If we say that $(H_Z,o)$ is of type \type{A_0}, this means that it is smooth. 
\begin{longtable}{l|p{43pt}|l|c}
No. & $(X,P)$ & $(H_Z,o)$ &\multicolumn{1}{c}{$\Delta(H,C)$}
\\\hline
\endfirsthead
No. & $(X,P)$ & $(H_Z,o)$ &\multicolumn{1}{c}{$\Delta(H,C)$}
\\\pagebreak\hline\pagebreak
\endhead
\multicolumn{4}{c}{$C$ has one component}
\\\hline
\nom 
\label{KM:4.7.3.1.1} & \type{cA/2} & \type{A_1} 
&$
\circ\lin\bullet\lin\overset{3}\circ \lin\underbrace{\circ\lin\cdots\lin\circ}_{n-2}\lin\overset{3}\circ
$
\\ 
\nom 
\label{KM:4.7.3.1.2}& \type{cA/2}& \type{A_0} 
&$
\circ\lin\circ\lin\bullet\lin\overset{4}\circ
$
\\ \nom 
\label{KM:4.7.3.1.3}& \type{cA/2} & \type{A_2} 
&$
\vcenter{
\xymatrix@R=3pt@C=16pt{
\overset{3}\circ\ar@{-}[r]&\circ\ar@{-}[r]\ar@{-}[d]&\overset{3}\circ
\\
&\bullet&
}}
$
\\ 
\nom 
\label{KM:4.7.3.1.4} & \type{cA/2}& \type{A_0} 
&$
\vcenter{
\xymatrix@R=3pt@C=16pt{
\overset{3}\circ\ar@{-}[r]&\circ\ar@{-}[r]\ar@{-}[d]&\circ\ar@{-}[r]&\overset{3}\circ
\\
&\bullet&
}}
$
\\\hline
\multicolumn{4}{c}{$C$ has two components}
\\ \hline
\nom \label{KM:4.7.3.2.1}& \type{cA/2}& \type{A_m} 
&$
\bullet\lin\overset{3}\circ \lin\underbrace{\circ\lin\cdots\lin\circ}_{n-2}\lin\overset{3}\circ\lin\bullet
$
\\ \nom 
\label{KM:4.7.3.2.2}& \type{cA/2}& \type{A_0} 
&$
\circ\lin\bullet\lin\overset{3}\circ \lin\underbrace{\circ\lin\cdots\lin\circ}_{n-2}\lin\overset{3}\circ\lin\bullet
$
\\ \nom 
\label{KM:4.7.3.2.3}& \type{cA/2}& \type{A_1} 
&$
\vcenter{
\xymatrix
@R=-7pt@C=13pt
{
\bullet\ar@{-}[dr]
\\
&\overset{3}\circ\ar@{-}[r] & 
{\underbrace{\circ\lin\cdots\lin\circ}_{n-2}} &\overset{3}\circ\ar@{-}[l]
\\
\bullet\ar@{-}[ur]
}}
$
\\ \nom 
\label{KM:4.7.3.2.4}& \type{cA/2}& \type{A_0} 
&$
\vcenter{
\xymatrix@R=3pt@C=16pt{
\overset{3}\circ\ar@{-}[r]&\circ\ar@{-}[r]\ar@{-}[d]&\overset{3}\circ\ar@{-}[r]&\bullet
\\
&\bullet&
}}
$
\\\hline
\multicolumn{4}{c}{$C$ has three components}
\\ \hline
\nom \label{KM:4.7.3.3.1}& \type{cA/2}& \type{A_0} 
&$
\vcenter{
\xymatrix
@R=-7pt@C=13pt
{
\bullet\ar@{-}[dr]
\\
&\overset{3}\circ\ar@{-}[r] &{\underbrace{\circ\lin\cdots\lin\circ}_{n-2}}
&\overset{3}\circ\ar@{-}[l]&\bullet\ar@{-}[l]
\\
\bullet\ar@{-}[ur]
}}
$
\\\hline
\multicolumn{4}{c}{$C$ has one component}
\\\hline 
\nom \label{KM:4.7.4} & \type{cAx/2}, \type{cD/2} or \type{cE/2}
& \type{D_4} 
&$
\vcenter{
\xymatrix@R=10pt@C=16pt{
\circ\ar@{-}[r]&\overset{3}\circ\ar@{-}[r]&\circ\ar@{-}[r]&\bullet
\\
\circ\ar@{-}[ru]&&\circ\ar@{-}[lu]
}}
$
\\ \nom 
\label{KM:4.7.5} & \type{cD/2} or \type{cE/2}& \type{D_{n+4}} 
&$
\vcenter{
\xymatrix
@R=-7pt@C=13pt
{
\circ\ar@{-}[dr]&&&&\circ\ar@{-}[dl]
\\
&\circ\ar@{-}[r]
&{\underbrace{\circ\lin\cdots\lin\circ}_{n-1}}
&\overset 3 \circ\ar@{-}[l]
\\
\circ\ar@{-}[ur]&&&&\circ\ar@{-}[ul]&\bullet\ar@{-}[l]
}}
$
\\
&\multicolumn{3}{c}{where $n\ge 1$ and $n=1$ if $(X,P)$ is of type \type{cE/2}}

\\ \nom 
\label{KM:4.7.6} & \type{cE/2}& \type{E_6} 
&$
\vcenter{
\xymatrix@R=6pt@C=16pt{
\circ\ar@{-}[r]&\circ\ar@{-}[r]&\circ\ar@{-}[r]\ar@{-}[d]&\circ\ar@{-}[r]&\circ
\\
\bullet\ar@{-}[r]&\circ\ar@{-}[r]&\underset{3}\circ&
}}
$
\end{longtable}
\end{theorem}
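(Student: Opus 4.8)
The plan is to reduce everything to the single non-Gorenstein point $P$. By Proposition~\xref{index:2}, $P$ is the unique non-Gorenstein point and is of type \type{cA/2}, \type{cAx/2}, \type{cD/2}, or \type{cE/2}; every component $C_i$ of $C$ runs through $P$, the $C_i$ are pairwise disjoint away from $P$, each germ $(X,C_i)$ is of type~\typec{IA} at $P$, and a general $F\in|-K_X|$ meets $C$ only at $P$ with a Du Val singularity there. Write $H\in|\OOO_X|=f^*|\OOO_Z|$ for a general member and $H_Z=f(H)$; by~\xref{H} the surface $H_Z$ is normal and Cohen--Macaulay and $H^{\n}\to H_Z$ is a birational contraction sending $C$ to $o$. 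First I would establish that $H$ itself is normal with at worst rational singularities, so that $\Delta(H,C)$ is a negative-definite tree of smooth rational curves; then I would compute the part of $\Delta(H,C)$ that comes from resolving $H$ near $P$; and finally I would glue this local picture onto the tree $\bigcup C_i$, use the known intersection numbers on $H$ to pin down the self-intersections, and read off the short list, together with the flipping/divisorial alternative.

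For normality: outside $P$ the threefold $X$ is Gorenstein, so a general $H\supset C$ is smooth there except at finitely many points of $C$; at $P$ one passes to the index-one $\mumu_2$-cover $\pi\colon(X^\sharp,P^\sharp)\to(X,P)$, uses the equations of~\xref{clasifiction-terminal} together with the planarity of $C^\sharp$ from Lemma~\xref{lemma:planar} to write down the general member $H^\sharp\supset C^\sharp$, and checks that $H=H^\sharp/\mumu_2$ has an isolated singularity at $P$. Hence $H$ has isolated singularities and is normal. Then from the sequence $0\to\OOO_X(-H)\to\OOO_X\to\OOO_H\to0$, the projection formula and Theorem~\xref{th-vanish} give $R^1f_*\OOO_H=0$, so $(H_Z,o)$, and therefore $(H,C)$, have rational singularities; consequently the exceptional part of $\Delta(H,C)$ has negative-definite intersection matrix and $\Delta(H_Z,o)$ is obtained from $\Delta(H,C)$ by erasing the $\diamond$-vertices.

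The heart of the argument is the explicit resolution of $(H,P)$. On $X^\sharp$ one takes a general member of $|\OOO_{X^\sharp}|$ through the planar curve $C^\sharp$, records the $\mumu_2$-weights coming from~\xref{clasifiction-terminal}, and resolves the quotient $H=H^\sharp/\mumu_2$ by a sequence of weighted blowups. In the \type{cA/2} case this produces a chain $\overset{3}\circ\lin\circ\lin\cdots\lin\circ\lin\overset{3}\circ$ (contracting to $\overset{4}\circ$ in the shortest case), whose length is governed by the axial multiplicity and the orders $a_i=\ord(x_i)$; in the \type{cAx/2}, \type{cD/2} and \type{cE/2} cases it produces the $D$- and $E$-shaped trees appearing in rows~\xref{KM:4.7.4}--\xref{KM:4.7.6}. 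Each component $C_i$ is attached to this subgraph at the vertex dictated by which coordinate cuts out $C_i$ on $H$. One must also check that no unexpected singular points of $H$ occur elsewhere on $C$.

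It remains to assemble the global graph. Since $\bigcup C_i$ is a tree of $\PP^1$'s meeting only at $P$ and $C$ is contracted by $f_H$, adjunction on $H$ together with the intersection numbers $-K_X\cdot C_i$ controlled by~\eqref{eq:KC} and negative-definiteness force each $C_i$ to be a $(-1)$-, $(-2)$- or $(-3)$-curve on $H$, the $(-1)$-curves being exactly those marked $\bullet$; enumerating the negative-definite trees obtained by attaching such curves to the local graph at $P$ yields precisely the configurations in the tables and, in particular, shows that $C$ has more than one component only for \type{cA/2} and that $C\simeq\PP^1$ in the flipping case. Finally the flipping versus divisorial dichotomy follows from Corollary~\xref{theorem-main-Q-Cartier-i}: $f$ is divisorial if and only if $(Z,o)$ is terminal; combined with Lemma~\xref{lemma:base} and the fact that $(H_Z,o)$ is a general hyperplane section of $(Z,o)$, the terminal index-two case corresponds to $(H_Z,o)$ being the cyclic quotient $\frac1{2n+1}(1,2n-1)$ of~\xref{index2flipping}, while the canonical-but-not-terminal case forces $(Z,o)$ to be \type{cDV} and $(H_Z,o)$ Du Val. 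The main obstacle is the second step: carrying out the equivariant resolution of $H=H^\sharp/\mumu_2$ in each of the four types, showing that the general $H$ really has no extra singular points along $C$, and verifying that negative-definiteness with the fixed degrees $-K_X\cdot C_i$ leaves nothing off the list.
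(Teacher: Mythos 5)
Your plan has a genuine gap at its central step, the normality of $H$ and the determination of the germ $(H,P)$. You treat the general member $H\in|\OOO_X|$ as if, locally at $P$, it were a \emph{general} divisor through $C$, i.e.\ a general invariant member $H^\sharp\supset C^\sharp$ on the index-one cover. But $|\OOO_X|=f^*|\OOO_Z|$ is a much smaller system: the local equation of $H$ along $C$ is constrained by the image of $H^0(X,\I_C)$ in $\gr_C^1\OOO$ (cf.\ Lemma \xref{lemma-grC} and the way the local equation of $H$ is pinned down inside $\gr_C^1\OOO\simeq\OOO\oplus\OOO(-1)$ in the \typec{IIB} analysis), so the restriction of a general global member to a neighborhood of $P$ need not be locally general. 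If your local-generality argument were valid, it would equally prove normality of $H$ for every type \typec{IA} germ with a \type{cA/m} point, contradicting Theorems \xref{thm:IA:cb} and \xref{theorem-main-birational}\,\xref{thm:IA-not-normal}, where the general $H$ is singular along $C$. For the same reason the ``heart'' of your argument has no input: you cannot resolve $(H,P)$ by weighted blowups before you know its actual equation, which is exactly the global information to be established; and the concluding enumeration by negative definiteness and the degrees $-K_X\cdot C_i$ is far too weak to produce the table (it neither bounds the number of components of $C$ by three nor decides which of \type{A_0}, \type{A_1}, \type{A_2}, \type{A_m}, \type{D_4}, \type{D_{n+4}}, \type{E_6} occur as $(H_Z,o)$). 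Also $R^1(f_H)_*\OOO_H=0$ alone does not give rationality of $(H_Z,o)$ or of $H$. The paper's proof avoids all of this via Construction \xref{index2:constr1}: the double cover $X'\to X$ branched along a member of $|-2K_X|$ consisting of transversal discs reduces the problem to a \type{cDV} point $Z'$ with an explicit $\mumu_2$-equivariant hypersurface equation, from which $H$, $H_Z$, the dual graphs and the flipping/divisorial alternative are all read off; your proposal offers no substitute for this mechanism.

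The final dichotomy paragraph is also incorrect as written. You attach the flipping case \xref{index2flipping} to ``$(Z,o)$ terminal of index two'', but by the very criterion you cite (Corollary \xref{theorem-main-Q-Cartier-i}) a terminal $(Z,o)$ means $f$ is divisorial; in the flipping case $(Z,o)$ is not $\QQ$-Gorenstein at all --- it is the rational triple point of Theorem \xref{flip-index=2}, and $(H_Z,o)=\frac1{2n+1}(1,2n-1)$ is then simply not Du Val. Conversely, in the divisorial rows $(Z,o)$ is an isolated \type{cDV} point, hence terminal of index one (here one uses that the germ is primitive with a unique non-Gorenstein point, so it admits no cover \'etale in codimension one), not ``canonical-but-not-terminal''; the Du Val property of $(H_Z,o)$ comes from $(Z,o)$ being Gorenstein terminal, not from Lemma \xref{lemma:base}.
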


Note that the singularities of $H$ are log terminal 
in all the cases except for \xref{KM:4.7.4}, \xref{KM:4.7.5}, \xref{KM:4.7.6}.
In the cases \xref{KM:4.7.4} and \xref{KM:4.7.5} the singularities of $H$ are log canonical.

\begin{stheorem}[{\cite[4.2]{KM92}}]
\label{flip-index=2}
In the notation of Theorem \xref{thm:index2} assume that $f$ is flipping
\textup(see \xref{index2flipping}\textup)
and let $(X,C)\dashrightarrow (X^+,C^+)$ be the corresponding flip.
Then the following hold.
\begin{enumerate}
\item 
\label{KM(4.2.1)} In appropriate
coordinates the point $(X\ni P)$
is given by 
\begin{equation*}
\left\{x_1x_2 +p(x_3^2, x_4) = 0\right\}/\mumu_2(1, 1,1, 0)
\end{equation*}
and $C$ is the $x_1$-axis.

\item 
\label{KM(4.2.2)} 
$X^+$ has at most one singular point which is isolated \type{cDV} with equation
$x_1x_2 +p(x_3, x_4) = 0$
and $C^+$ is the $x_1$-axis. 
\item 
\label{KM(4.2.3)} 
$(Z,o)$ is a rational triple point given by the $2\times 2$-minors
of the matrix
\begin{equation*}
\begin{pmatrix}
z_1& z_2& z_3
\\
z_2& z_5 &p(z_1,z_4)
\end{pmatrix}
\end{equation*}
\end{enumerate}
\end{stheorem}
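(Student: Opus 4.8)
The three assertions are handled in order, the bulk of the work being the construction of the flip in part \xref{KM(4.2.2)}. For \xref{KM(4.2.1)}: by Proposition \xref{index:2} the point $P$ is primitive of type \typec{IA} with index $2$, so \xref{prop:local-primitive} forces $a_1=a_2=1$, $a_4=2$, $a_1+a_3\equiv 0\bmod 2$, hence the $\mumu_2$-weights of $(x_1,\dots,x_4)$ are $\tfrac12(1,1,1,0)$. By the classification of terminal singularities \xref{clasifiction-terminal}, $X^\sharp=\{\phi=0\}\subset(\CC^4,0)$ is a $\mumu_2$-invariant hypersurface, and since $(X,P)$ is of type \type{cA/2} the $2$-jet of $\phi$ contains $x_1x_2$ with a unit coefficient. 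Using this, a Weierstrass-type change of the coordinates $x_1,x_2$ (which is equivariant since $x_1$ and $x_2$ have equal weight) brings $\phi$ to the form $x_1x_2+(\text{a power series in }x_3^2,x_4)$, because every other $\mumu_2$-invariant monomial either lies in $\CC\{x_3^2,x_4\}$ or is divisible by $x_1$ or $x_2$ and can be absorbed into the $x_1x_2$ term; since $\ord(x_1)=1$ on $C^\sharp$ one arranges simultaneously that $C^\sharp$ is the $x_1$-axis. The only delicate point here is the bookkeeping of which cross-terms survive, which is routine.

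For \xref{KM(4.2.2)}: the flip $(X,C)\dashrightarrow(X^+,C^+)$ exists (Theorem \xref{thm:ge} together with Kawamata's criterion, see the remarks after \xref{base}); the task is to identify $X^+$ along $C^+$. I would do this by a two-step birational construction: first take the weighted blowup $g\colon Y\to X$ whose chart over $P$ is the $\mumu_2$-quotient of the $(1,1,1,0)$-weighted blowup of $P^\sharp\in X^\sharp$; this extracts one irreducible divisor $E$, and $Y$ has only mild (ordinary) singularities. Then $Y$ carries a second $K_Y$-negative extremal contraction, which contracts the strict transform of the ruling of $E$ transverse to $C$, and whose target is the desired $X^+$; one checks that $f^+\colon X^+\to Z$ is small with $K_{X^+}$ $f^+$-ample, hence is the flip. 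Reading off the affine chart of $X^+$ adjacent to $C^+$, the passage from $X$ to $X^+$ trades the weight-$1$ coordinate $x_3$ for the weight-$0$ invariant $x_3^2$, so near its singular point $X^+$ is isomorphic to $\{x_1x_2+p(x_3,x_4)=0\}\subset\CC^4$ with $C^+$ the $x_1$-axis; this is an isolated \type{cDV} point ($p$ has an isolated critical point because $(X,P)$ was terminal), and $X^+$ is smooth elsewhere along $C^+$, so $X^+$ is Gorenstein with at most this one singularity. Making the ``other contraction'' and the discrepancy bookkeeping precise is the main obstacle of the whole proof.

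For \xref{KM(4.2.3)}: once $X^+\supset C^+$ is known, $(Z,o)$ is recovered as $\Spec$ of the ring of functions on a neighbourhood of $C^+\simeq\PP^1$ in $X^+$, i.e. $f^+_*\OOO_{X^+}$. I would cover $X^+$ by the \type{cDV} chart above and its complement (where $C^+$ leaves that chart), write $\OOO_{X^+}$ in each and intersect; this produces five generators $z_1,\dots,z_5$, and the relations among them, after the substitutions dictated by $x_1x_2=-p(x_3,x_4)$ and by the gluing of the two charts along $C^+$, are precisely the vanishing of the $2\times 2$ minors of
\[
\begin{pmatrix}
z_1& z_2& z_3\\
z_2& z_5 &p(z_1,z_4)
\end{pmatrix}.
\]
That this determinantal ideal defines a normal Cohen--Macaulay threefold of multiplicity $3$ is the Hilbert--Burch structure of a codimension-two determinantal singularity; rationality of $(Z,o)$ follows from \xref{thm:index2} (equivalently from the rationality of $H_Z$ recorded in \xref{H}), since a general hyperplane section of $Z$ is the cyclic quotient $\tfrac1{2n+1}(1,2n-1)$ of \xref{index2flipping}, whose minimal resolution graph $\underbrace{\circ-\cdots-\circ}_{n-1}-\overset{3}{\circ}$ is obtained from $\Delta(H,C)$ of \xref{index2flipping} by contracting the $(-1)$-curve $C$. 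This last step is a calculation once \xref{KM(4.2.2)} is in place.
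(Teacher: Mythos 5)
Your proposal takes a genuinely different route from the paper, and at the decisive step it is not carried out. The paper proves all three parts simultaneously from Construction \xref{index2:constr1}: a general $E\in|-2K_X|$ gives a double cover $X'\to X$ with index-one total space, and after Stein factorization a double cover $Z'\to Z$ where $(Z',o')$ is \type{cDV} and carries a $\mumu_2$-action whose fixed locus is $D'$; in the flipping case $D'$ is cut out by the two weight-one coordinates, and the equation of $Z'$ is normalized to $y_1y_3+y_2\,p(y_2^2,y_4)=0$. Then $X$ (resp.\ $X^+$) is the $\mumu_2$-quotient of the blowup of $\{y_2=y_3=0\}$ (resp.\ $\{y_1=y_2=0\}$) in $Z'$ --- so the flip is literally the quotient of a flop --- and \xref{KM(4.2.1)}, \xref{KM(4.2.2)} follow by chart computation, while \xref{KM(4.2.3)} is the one-line invariant computation $z_1=y_2^2$, $z_2=y_1y_2$, $z_3=y_3$, $z_4=y_4$, $z_5=y_1^2$. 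In your plan the flip is instead to be produced by extracting a divisor over $P$ and running a two-ray game on $Y$, but you never verify that the second $K_Y$-negative contraction exists, that the resulting $f^+\colon X^+\to Z$ is small with $K_{X^+}$ ample, nor do you actually derive the chart equation $x_1x_2+p(x_3,x_4)=0$ of $X^+$ (``trading $x_3$ for $x_3^2$'' is an assertion, not a computation) --- you yourself flag this as ``the main obstacle of the whole proof''. Since \xref{KM(4.2.2)} is the content of the theorem, this is a genuine gap, and \xref{KM(4.2.3)} inherits it: the determinantal presentation of $(Z,o)$ is claimed to drop out of an unproven chart description of $X^+$, with the five generators and their relations never exhibited.

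Part \xref{KM(4.2.1)} is also not the routine bookkeeping you suggest, because the statement is not purely local: the flipping hypothesis is needed. In coordinates in which $C^\sharp$ is the $x_1$-axis it is \emph{not} automatic that the $2$-jet of $\phi$ contains $x_1x_2$; the rank-two part of a \type{cA/2} equation may sit entirely in directions transverse to the curve. For instance $X^\sharp=\{x_1x_2+x_3^2x_4+x_4^l=0\}$ with $C^\sharp$ the $x_3$-axis is a \type{cA/2} point containing a smooth invariant curve which, after renormalizing coordinates, satisfies the type \typec{IA} numerology of \xref{prop:local-primitive}; but here $\ell(P)=\len\,\I_C^{\sharp(2)}/\I_C^{\sharp2}=2$, whereas any germ in your target normal form (equation $x_1x_2+p(x_3^2,x_4)$, $C$ the $x_1$-axis, $p$ without linear term) has $\ell(P)=1$. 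Hence no equivariant coordinate change can bring such a pair to the asserted form, and your absorption argument, which uses only the local data, cannot distinguish the two situations. Excluding the bad configurations is exactly where the global structure of the flipping germ enters --- in the paper it comes for free, since $X$ is reconstructed as the quotient of an explicit blowup of the explicitly presented $Z'$, while a direct local approach would have to prove $\ell(P)=1$ (equivalently $i_P(1)=1$) from the flipping hypothesis first.
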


The proofs use the following standard construction.

\begin{sconstruction}
\label{index2:constr1}
Let $C_i\subset C$ be the
irreducible
components of $C$. Since $X$ has only points of index one and two,
$m_i = -2K_X\cdot C_i$ is a positive integer. Let $E_i\subset X$ be the union of $m_i$ disjoint
discs transversal
to $C_i$ and let $E = \sum E_i$. Then $E \in | - 2K_X|$. Hence we can
take the corresponding
double cover $X'\to X$ branched over $E$. Here $X'$ has only
index one terminal singularities. Let $E' \subset X'$ be the preimage of $E$. The
natural map $E'\to E$ is an isomorphism. 
The Stein factorization induces the following diagram
\begin{equation*}
\xymatrix{
E\subset X\supset C\ar[d]^{f}&E'\subset X'\supset C'\ar[l]\ar[d]^{f'}
\\
D\subset Z\ni o& D'\subset Z'\ni o'\ar[l]
} 
\end{equation*}
where $D:= f(E)$ and $D':= f(E')$. Here $Z'\to Z$ is a double
cover branched over $D$. 
By construction, 
$f'$ is crepant with respect to $K_{X'}$ and the fibers of $f'$ have dimension $\le 1$.
Therefore, $Z'$ has 
\type{cDV} points only (if $f$ is divisorial, then $Z'$ has
a double curve).
\end{sconstruction}

\begin{proof}[Sketch of the proof of \xref{thm:index2} and \xref{flip-index=2}.]
The above construction defines a $\mumu_2$-action on $X'/Z'$ and the 
quotient is $X/Z$. The fixed point set of the action on $Z'$ is precisely 
$D'$. Since $Z'\ni o'$ is a \type{cDV} point, it is a hypersurface in $\CC^4$, thus it 
can be written down explicitly. This will enable us to get equations for $X$ and 
$Z$. We have an $\mumu_2$-equivariant embedding $(Z',o')\subset (\CC^4_{y_1,\dots, y_4},0)$,
and we may assume that the coordinates are eigenvectors and $y_1,\dots,y_j$
are those of weights $1$. Thus $D'=\{y_1 =\cdots=y_j = 0\}\cap Z'$. Hence $j = 1$ or $2$.
\begin{sclaim}
\begin{enumerate}
\item
If $D'$ is Cartier, then $f$ is divisorial and $D$ is singular
along $f(E)$, where $E$ is the $f$-exceptional divisor.
\item
If $D'$ is not Cartier, then $f$ is flipping, $D$ is smooth
and $C$ is irreducible.
\end{enumerate}
\end{sclaim}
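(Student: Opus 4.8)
The plan is to read off both statements from the explicit $\mumu_2$-equivariant equation $\psi=0$ of the \type{cDV} germ $(Z',o')\subset\CC^4_{y_1,\dots,y_4}$, using that the $\mumu_2$-fixed locus on $\CC^4$ is the coordinate subspace $\{y_1=\cdots=y_j=0\}$ and that the fixed locus on $Z'$ is exactly $D'$, so that $D'=Z'\cap\{y_1=\cdots=y_j=0\}$. The first observation is that, for $j\in\{1,2\}$, this is precisely the Cartier dichotomy: if $j=1$ then $D'=\{y_1=0\}\cap Z'$ is principal, hence Cartier on $Z'$; if $j=2$ then the two-dimensionality of $D'$ forces the plane $\{y_1=y_2=0\}$ to be contained in $Z'$, so that $D'=\{y_1=y_2=0\}\cong\CC^2$ is a plane through the vertex of the \type{cDV} hypersurface $Z'$, which is not Cartier. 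Thus ``$D'$ Cartier'' is equivalent to $j=1$, and the two cases can be treated separately; in each, $Z=Z'/\mumu_2$ and $D\cong D'$ (the ramification divisor of $Z'\to Z$ maps isomorphically to the branch divisor) are obtained by invariant theory, after noting that $\psi$ may be taken of even or odd weight because $Z'$ is $\mumu_2$-stable.

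For (i), i.e. $j=1$, the action is $y_1\mapsto-y_1$. First I would rule out the odd case: if $\psi=y_1\cdot(\text{invariant})$ then $Z'$ is reducible, or equals the hyperplane $\{y_1=0\}$ on which the action is trivial, which is excluded. Hence $\psi=\psi(y_1^2,y_2,y_3,y_4)$ and $Z=\{\psi=0\}$ is a hypersurface in the smooth quotient $\CC^4/\mumu_2\cong\CC^4_{y_1^2,y_2,y_3,y_4}$; in particular $(Z,o)$ is Gorenstein, so $K_Z$ is Cartier. If $f$ were flipping it would be small, so $K_X$ and $f^*K_Z$ would coincide outside the curve $C$ and hence be equal as Weil divisors, forcing $K_X$ to be Cartier — impossible at the index-two point of $X$. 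Therefore $f$ is divisorial. For the last assertion I would combine $E\sim-2K_X$ with $K_X=f^*K_Z+a\,\Exc f$ ($a>0$) to express $f^*D$ through the strict transform of $D$ and $\Exc f$, deduce that $D=f(E)$ contains $f(\Exc f)$ with multiplicity $>1$, and conclude that $D$ is singular along $f(\Exc f)$.

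For (ii), i.e. $j=2$, we have $D\cong D'=\{y_1=y_2=0\}\cong\CC^2$, so $D$ is smooth. Writing $\psi=y_1\alpha+y_2\beta$ (possible because $\{y_1=y_2=0\}\subset Z'$), with $\alpha,\beta$ invariant since $y_1,y_2$ have weight one, and multiplying $\psi=0$ by $y_1$ and by $y_2$ and adjoining $uw=v^2$ (where $u=y_1^2$, $v=y_1y_2$, $w=y_2^2$), one obtains $(Z,o)$ as the variety of $2\times2$ minors of $\left(\begin{smallmatrix}u&v&-\beta\\ v&w&\alpha\end{smallmatrix}\right)$, i.e. the rational triple point of \xref{flip-index=2}\xref{KM(4.2.3)}, which is not $\QQ$-Gorenstein. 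Since a divisorial index-two $f$ has a \type{cDV} (hence Gorenstein) base — equivalently, the divisorial case is the one in which $Z'$ acquires a double curve, whereas here $(Z',o')$ has an isolated singularity — the contraction $f$ must be flipping. Finally, $E$ is a disjoint union of $N=\sum_i(-2K_X\cdot C_i)$ discs, each meeting $C$ and hence contributing a distinct local branch of $D=f(E)$ through $o$; smoothness of $D$ forces $N=1$, whence $C$ is irreducible.

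The step I expect to be the main obstacle is the last part of (i): checking that the branch surface $D=f(E)$ is singular exactly along $f(\Exc f)$. This does not follow from soft arguments; it requires the precise discrepancy of the divisorial contraction and a local computation of the multiplicity of $D$ along $f(\Exc f)$, together with a verification that no degenerate configuration (e.g. $Z$ smooth with $D$ already smooth) survives — which is where the detailed index-two classification of Theorem \xref{thm:index2} and Proposition \xref{index:2} come in. A secondary point, in case (ii), is confirming that $(Z',o')$ genuinely has an isolated singularity, equivalently that the determinantal presentation above is honestly non-$\QQ$-Gorenstein rather than accidentally $\QQ$-Cartier; this uses the \type{cDV} hypothesis on $Z'$.
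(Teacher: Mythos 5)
Your part (i) is essentially correct, and your route to divisoriality is genuinely different from the paper's: you observe that for $j=1$ the equation of $Z'$ can be taken even in $y_1$, so $Z=Z'/\mumu_2$ is a hypersurface and hence Gorenstein, and a small contraction onto a Gorenstein base would force $K_X$ to be Cartier, contradicting the index-two point; the paper instead argues upstairs, noting that if $f$ (hence $f'$) were small, the $f'$-ample divisor $E'$ would be the pullback of the Cartier divisor $D'$, which is absurd. Both work. For the singularity of $D$ along $B=f(\Exc f)$ your discrepancy computation ($f^*D=\tilde D+2a\,\Exc f$ with $a\ge 1$ at the generic point of $B$, hence $\mult_B D=2$) is a valid substitute for the paper's more direct one ($K_X\cdot l=-1$ for a general fiber $l$ of $\Exc f$, so the member of $|-2K_X|$ meets $l$ in two points and $D$ acquires two branches along $B$); your worry that this step needs the full index-two classification is unfounded, and note the claim only asserts singularity along $f(\Exc f)$, not ``exactly along''.

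In part (ii) there is a genuine gap in the deduction that $f$ is flipping. You reduce it to the assertion that the determinantal quotient $(Z,o)$ is not $\QQ$-Gorenstein (equivalently, that $(Z',o')$ has an isolated singularity, or that a divisorial index-two germ has a \type{cDV} base), but none of these is proved in your proposal, and none can be read off the presentation $\psi=y_1\alpha+y_2\beta$ alone: for instance $\psi=y_1y_3+y_2y_4^2$ is anti-invariant, contains the fixed plane, and defines a compound Du Val hypersurface of type \type{cA_2} at $o'$ which is singular (of type \type{cA_1}) along the whole $y_2$-axis, so isolatedness of $\Sing Z'$ is not a formal consequence of the $j=2$ shape of $\psi$; and ``divisorial $\Rightarrow$ Gorenstein base'' is, inside this proof, exactly what the $j=1$ analysis establishes, so invoking it here is circular (Theorem \xref{thm:div:Q-Cartier} only gives terminality, and only for irreducible $C$, which you have not yet established at that point). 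Your own multiplicity argument from (i) cannot be transported to exclude the divisorial case here, because it pulled back $D\sim-2K_Z$ and therefore already assumed $Z$ to be $\QQ$-Gorenstein. The paper closes the case with an argument for which you already have all the pieces: if $f$ were divisorial, then, as in (i), intersecting the member of $|-2K_X|$ with a general fiber of the exceptional divisor --- a computation taking place entirely on $X$, with no hypothesis on $Z$ --- shows $D$ would be singular along $f(\Exc f)$; but you have shown $D\simeq D'=\{y_1=y_2=0\}$ is smooth, a contradiction. With that substitution (plus the small missing step that $\psi$ must be anti-invariant: an invariant $\psi\in(y_1,y_2)$ would lie in $(y_1,y_2)^2$ and make the normal germ $Z'$ singular along a plane), your part (ii) is repaired, and the irreducibility of $C$ then follows as you say, or directly from the irreducibility of $D\simeq D'$.
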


\begin{proof}
If $j = 1$ then $D'$ is Cartier. In this case, $f$ must be divisorial. Indeed,
otherwise since $f'$ is an isomorphism
outside the origin and $E'$ is $f'$-ample, $D'$ cannot be Cartier. 
Hence $f$ contracts an exceptional divisor $E \subset X$. Then for a general fiber $l$ of 
$E$ we have $K_X\cdot l=-1$. Hence $E \cdot F = 2$. Therefore, $D$ has a double 
curve along the image of $E$ and is smooth elsewhere. If $E$ is chosen 
generically, then $D$ has an ordinary double curve along the image of $E$.

Assume that $j = 2$. Then $\{y_1 = y_2 = 0\}$ must be contained in $Z'$. 
Furthermore, $D'$ is irreducible and this implies
that $C$ is irreducible. 
Then $E \to D$ is an isomorphism outside the origin, in 
fact, it  turns out to be an isomorphism. In particular, $D$ is smooth. 
This implies that $f$ is flipping.
\end{proof}

First consider the flipping case.
Since $\{y_1 =y_2 = 0\} \subset Z'$, the equation of $Z'$ can be written in the form
$y_1\phi_1 + y_2\phi_2 = 0$. If $\wt(\phi_1) = \wt(\phi_2) = 1$,
then $y_1\phi_1 + y_2\phi_2 \in (y_1,y_2)^2$, which
implies that $Z'$ is singular along $\{y_1 =y_2 = 0\}$. This is impossible. Thus
$\wt(\phi_1) = \wt(\phi_2) = 1$. Since $Z'$ is a double point, either $\phi_1$
or $\phi_2$ must contain a
linear term. Assume that $\phi_1$ contains $y_j$. By wt reasons $j = 3$ or $4$. 
Now we can rewrite the equation in the following form
\begin{equation*}
y_1y_3+y_2p(y_2^2,y_4)=0.
\end{equation*}
With this explicit equation we can easily compute everything. The variety $X$ is obtained by blowing up $\{y_2 = y_3 = 0\}$ and taking quotient by the group action. This gives us one singular point with the required equation. The flipped variety $X^+$ is obtained by blowing up $\{y_1 = y_2 = 0\}$ and taking quotient by the group action. To get equations for $(Z,o)$, we note that the invariants of the $\mumu_2$-action on $\CC\{y_1,\dots, y_4\}$ are
\begin{equation*}
z_1=y_2^2,\quad
z_2=y_1y_2,\quad
z_3=y_3,\quad
z_4=y_4,\quad
z_5=y_1^2.
\end{equation*}
We get exactly the equations given by the minors of the matrix in the assertion of the theorem. 
A hyperplane section given by $z_4 =cz_1$. 

Now consider the divisorial case.
Then $D' \subset Z'$ is Cartier and
the $\mumu_2$-action is given by $\wt(y)= (0,0,0,1)$.
Let $D'$ be given by $y_4 = \psi(y_1,y_2,y_3) = 0$.
Thus we can
write the equation of $Z'$ in the form
\begin{equation*}
y_4^2\phi(y_1,\dots,y_4)+\psi(y_1,y_2,y_3)=0.
\end{equation*}
Since $f'$ is crepant, $Z'$ cannot be smooth, in particular, $\mult_0(\psi) \ge 2$.
The equation of $Z$ is now given by
\begin{equation}
\label{index2equationZ}
t\phi(y_1,y_2,y_3,t)+\psi(y_1,y_2,y_3)=0,\quad (t=y_4^2).
\end{equation}
In particular, this shows that $(Z,o)$ is an (isolated) \type{cDV} point
(cf. \xref{theorem-main-Q-Cartier-i}).
Now the proof proceeds by a careful analysis of the equations.
See \cite[\S~4]{KM92} for details.
\end{proof}

\subsection{}
Now we consider $\QQ$-conic bundles. 
The case of singular base surface is easy:
\begin{sproposition}[{\cite[\S~3]{P97}}, {\cite{MP:cb2}}]
A $\QQ$-conic bundle of index two over a
singular base is either of type \xref{item-main-th-impr-barm=1} or toroidal \xref{ex-toric}.
\end{sproposition}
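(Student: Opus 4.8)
The plan is to classify $\QQ$-conic bundles $f:(X,C)\to(Z,o)$ of index two over a singular base by reducing to known structural results. First I would invoke Corollary~\xref{corollary:cyclic}: since $f$ is a $\QQ$-conic bundle, $(Z,o)$ is a cyclic quotient singularity, say $(Z,o)\simeq\CC^2/\mumu_r$ for some $r>1$. Then I would apply Construction~\xref{base-change} with the smooth cover $(Z^\flat,o^\flat)=\CC^2\to (Z,o)$, producing an extremal curve germ $f^\flat:(X^\flat,C^\flat)\to(Z^\flat,o^\flat)$ together with a $\mumu_r$-action whose quotient is $f$; here $\theta$ is étale in codimension one, hence étale over the Gorenstein locus of $X$.

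Next I would pin down the non-Gorenstein points using Proposition~\xref{index:2}: since $(X,C)$ has index two, there is a unique non-Gorenstein point $P\in C$, of type \type{cA/2}, through which all components of $C$ pass, and each $(X,C_i)$ is of type~\typec{IA} at $P$. The key dichotomy is whether the germ is locally primitive or locally imprimitive at $P$. If $(X,C)$ is locally imprimitive at $P$, then by the local classification \xref{prop-imp-types}, the subindex $\bar m$ of $P$ divides $m=2$, so $\bar m=1$ or $\bar m=2$; the case $\bar m=2$ would force $P$ to be of type~\typec{IA^\vee} with index $>2$, impossible, so $\bar m=1$ and $s=2$, which by Corollary~\xref{zam-imp-Gor-fac} and the remark following it means $(X,C)$ is precisely of type~\xref{item-main-th-impr-barm=1}. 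This disposes of the imprimitive case.

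It remains to show that a locally primitive index-two $\QQ$-conic bundle over a singular base is toroidal. Here I would argue that $(X,C)$ primitive at $P$ forces the map $\mathrm{cl}:\Clsc(X,P)\to\frac12\ZZ/\ZZ$ to be an isomorphism, and with $P$ the only non-Gorenstein point, the topological index $d$ of $(X,C)$ must equal $2$ by the exact sequence~\eqref{exact-Clcs} together with $(Z,o)$ being singular (a nontrivial quotient of a smooth surface is realized by $\mumu_d$ acting on the torsion-free cover). Passing to the torsion-free cover $\tau:(X',C')\to(X,C)$ as in Case~\xref{case-prim-1}, $X'$ is Gorenstein, and $f':(X',C')\to(Z',o')$ with $Z'$ smooth is then a Gorenstein conic bundle by Lemma~\xref{lemma-int-non-Gor}/Proposition~\xref{prop:Gor}\xref{prop:Gor-cb}. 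Finally I would compute $\deg\gr_{C'}^0\upomega$: since $C=C'/\mumu_2$ with two fixed points of index $2$, one gets $w_{P_i'}(0)$ contributions that force $\deg\gr_{C'}^0\upomega=-2$, whence $C'=f'^{-1}(o')$ by Corollary~\xref{cor-gr-omega-=0}\xref{cor-gr-omega-=0b}; then Corollary~\xref{cor-prop-grw=2-2-points-prim} (or its proof, giving a $\mumu_2$-equivariant splitting $X'\simeq Z'\times\PP^1$) yields that $f$ is toroidal. The main obstacle is the bookkeeping in this last step—verifying that the primitivity hypothesis together with index two genuinely rules out any non-toroidal primitive configuration, i.e.\ that the torsion-free cover cannot be a non-split Gorenstein conic bundle; this is where one must carefully use that a general fiber of $\fb_\lambda$ in a deformation stays irreducible, or directly the explicit description of Gorenstein conic bundles over smooth surfaces with a $\mumu_2$-action.
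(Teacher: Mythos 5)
The paper does not prove this proposition (it cites \cite{P97} and \cite{MP:cb2}), so your outline has to stand on the survey's own tools, and two steps do not. First, the appeal to Proposition~\xref{index:2} is illegitimate and its conclusion is false here: that proposition explicitly assumes the base surface is \emph{smooth} in the $\QQ$-conic bundle case, which is exactly what is denied in the present statement, and indeed the toroidal germ \xref{ex-toric} with $m=2$ --- one of the two cases you must end up with --- has \emph{two} non-Gorenstein points of index two, not a unique one. This false uniqueness claim makes your primitive branch internally inconsistent: you assume $P$ is the only non-Gorenstein point and then speak of two fixed points of index $2$ on the cover. What you should prove instead is that singularity of the base forces the topological index to be $d=2$ (the base change of Construction~\xref{base-change} over the smooth cover of $(Z,o)$ is connected and \'etale in codimension one, so $\Cl(X)_{\tors}\neq 0$, while \eqref{exact-Clcs} and Corollary~\xref{prop-cyclic-quo} give $d\le 2$ since all local indices are $\le 2$), and then the dichotomy \xref{case-prim-1}/\xref{top:imprim} yields either two index-two points (primitive case) or one imprimitive point with $s=2$, $\bar m=1$; your imprimitive branch via Corollary~\xref{zam-imp-Gor-fac} and the remark following it is then essentially fine.

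Second, the decisive step of your primitive branch is circular. You assert $\deg\gr_{C'}^0\upomega=-2$ on the torsion-free cover from ``$w_{P_i'}(0)$ contributions'', but $X'$ is Gorenstein along $C'$ (if it is --- you also assert this without argument), so those invariants vanish and \eqref{eq-grw-w} only gives $-\deg\gr_{C'}^0\upomega=-K_{X'}\cdot C'$; the equality $-K_{X'}\cdot C'=2$ is equivalent to the central scheme fiber of the Gorenstein conic bundle being a reduced conic rather than a double line, which is precisely what has to be excluded, and nothing in your argument excludes it. The gap is repaired by working downstairs: in the primitive case there are two points $P_1,P_2$ of index two, so $2w_{P_i}(0)\in\ZZ_{>0}$ gives $w_{P_i}(0)\ge 1/2$ by Lemma~\xref{lemma:iP-wP}, and $-K_X\cdot C\ge 1/2$ since $2K_X$ is Cartier; then \eqref{eq-grw-w} forces $\deg\gr_C^0\upomega\le -2$, i.e.\ $\gr_C^0\upomega\not\simeq\OOO_C(-1)$, and Corollary~\xref{cor-prop-grw=2-2-points-prim} says directly that a locally primitive such germ is toroidal, with no need to analyze $X'$ at all. (You should also say a word about reducible $C$, since \xref{case-prim-1}, \xref{top:imprim} and \xref{zam-imp-Gor-fac} are stated for $C\simeq\PP^1$; Corollary~\xref{cor:int-non-Gor} is the natural starting point, as it rules out Gorenstein singular points of $C$ when the base is singular.)
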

Index two $\QQ$-conic bundles over a smooth base were classified in \cite[\S~3]{P97} 
and \cite{MP:cb1}. Similar to birational case these are quotients of some elliptic fibrations by an
involution. 
On the other hand, one can note that there exists an embedding to a 
relative weighted projective space:

\begin{theorem}
\label{th-index=2}
Let $f: (X,C)\to (Z,o)$ be a $\QQ$-conic bundle germ of index
two. Assume that $(Z,o)$ is smooth. Fix an isomorphism $(Z,o)\simeq
(\CC^2,0)$. Then there is an embedding
\begin{equation}
\label{eq-diag-last-2}
\vcenter{
\xymatrix{X \ar@{^{(}->}[r] \ar[rd]_{f}& \PP(1,1,1,2)\times \CC^2
\ar[d]^{p}
\\
&\CC^2}}
\end{equation}
such that $X$ is given by two equations
\begin{equation}
\label{eq-eq-index2}
\begin{array}{l}
q_1(y_1,y_2,y_3)-\psi_1(y_1,\dots,y_4;u,v)=0,
\\
q_2(y_1,y_2,y_3)-\psi_2(y_1,\dots,y_4;u,v)=0,
\end{array}
\end{equation}
where $\psi_i$ and $q_i$ are weighted quadratic in $y_1,\dots,y_4$
with respect to $\wt(y_1,\dots,y_4)=(1,1,1,2)$ and
$\psi_i(y_1,\dots,y_4;0,0)=0$. The only non-Gorenstein point of $X$
is $(0,0,0,1; 0,0)$. Up to projective transformations, the
following are the possibilities for $q_1$ and $q_2$:
\par\medskip\noindent
\begin{tabularx}{1\textwidth}{l|X|X|X}
\hline
{\rm no.}& $q_1$ & $q_2$ & $f^{-1}(o)$
\\[5pt]\hline
\nom\label{cla-index-2-4} &
$y_1^2-y_2^2$ & $y_1y_2-y_3^2$ & 
$C_1+C_2+C_3+C_4$
\\
\nom\label{cla-index-2-22}&
$y_1y_2$ & $(y_1 +y_2)y_3$ & $2C_1+C_2+C_3$
\\
\nom\label{cla-index-2-1-3} & 
$y_1y_2 - y_3^2$ & $y_1y_3$ & $3C_1+C_2$
\\
\nom\label{cla-index-2-2-2} & 
$y_1^2-y_2^2$ & $y_3^2$ & $2C_1+2C_2$
\\
\nom\label{cla-index-4a} & 
$y_1y_2 - y_3^2$ & $y_1^2$ & $4C_1$
\\
\nom\label{cla-index-4b} & 
$y_1^2$ & $y_2^2$ & $4C_1$
\end{tabularx}
\par\medskip\noindent
Conversely, if $X\subset\PP(1,1,1,2)\times \CC^2$ is given by
equations of the form \eqref{eq-eq-index2} and singularities of $X$
are terminal, then the projection $f: (X,f^{-1}(0)_{\red}) \to
(\CC^2,0)$ is a $\QQ$-conic bundle of index two.
\end{theorem}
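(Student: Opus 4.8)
The plan is to build the embedding from the geometry of the linear system $|-K_X|$ and the anticanonical ring. Since $(X,C)$ is a $\QQ$-conic bundle of index two with smooth base $(Z,o)\simeq(\CC^2,0)$, by Proposition \ref{index:2} the only non-Gorenstein point $P$ is of type \typec{IA} along each component of $C$, of index two and of type \type{cA/2}, and a general $F\in|-K_X|$ meets $C$ only at $P$ with a Du Val singularity. First I would study the sheaf $\FFF:=f_*\OOO_X(-K_X)$. Using Theorem \ref{th-vanish} and Lemma \ref{lemma-omega-main}\eqref{lemma-omega-main-2} together with relative duality, one sees $R^1f_*\OOO_X(-K_X)=0$, so $\FFF$ is locally free; a rank count (restricting to a general one-dimensional fiber, a plane conic in $\PP^2$) gives $\rk\FFF=3$, and since $(Z,o)$ is a germ, $\FFF\cong\OOO_Z^{\oplus 3}$. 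This produces three sections $y_1,y_2,y_3$ with $\wt=1$. Next, $2K_X$ is Cartier but $-2K_X$ need not be generated by $\OOO_X(-K_X)^{\otimes2}$ at $P$; the failure is exactly one extra generator $y_4$ of weight $2$ coming from the \type{cA/2} point. This yields a morphism $X\to\PP(1,1,1,2)\times\CC^2$ over $\CC^2$, and the diagram \eqref{eq-diag-last-2} is set up.

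Second, I would show this morphism is a closed embedding and extract the two equations. The relative anticanonical ring $\bigoplus_{k\ge0}f_*\OOO_X(-kK_X)$ is generated in degrees $\le 2$ (this is the content of the \type{cA/2} local structure together with the conic-in-$\PP^2$ picture over the generic fiber), so the image is cut out by relations. Over the generic fiber $X$ is a conic in $\PP^2_{y_1,y_2,y_3}$, i.e.\ one quadric; over the special fiber, $f^{-1}(o)_{\red}=C$ is a union of $\PP^1$'s of total degree $\le 2$ in $\PP^2$, and the scheme $f^{-1}(o)$ (possibly nonreduced, as in cases \eqref{cla-index-2-22}--\eqref{cla-index-4b}) is a degenerate conic. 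Counting in the relative weighted projective space $\PP(1,1,1,2)$, whose generic fiber has dimension $3$, one needs two weighted-quadratic relations to cut out the $3$-fold $X$; writing them as in \eqref{eq-eq-index2} with leading parts $q_i(y_1,y_2,y_3)$ (no $y_4$, by weight and by the requirement that the non-Gorenstein point be $(0{:}0{:}0{:}1;0,0)$) and $\psi_i(\cdot;0,0)=0$ is forced. The central scheme fiber $\{q_1=q_2=0\}\subset\PP(1,1,1,2)$ reduces, after eliminating $y_4$, to a conic in $\PP^2$, and its possible shapes give the six rows of the table; I would verify that the non-Gorenstein point being of index two (not four) rules out further degenerations and that each listed pair $(q_1,q_2)$ is realized by an actual terminal germ via the classification in \cite[\S3]{P97}.

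Third, the converse: if $X\subset\PP(1,1,1,2)\times\CC^2$ is defined by equations of the form \eqref{eq-eq-index2} with terminal singularities, then $f:(X,f^{-1}(0)_{\red})\to(\CC^2,0)$ is a $\QQ$-conic bundle of index two. Here one checks $f_*\OOO_X=\OOO_{\CC^2}$ (the fibers are connected degenerate conics), that $-K_X$ is $f$-ample (adjunction on the weighted projective bundle: $K_{\PP(1,1,1,2)\times\CC^2/\CC^2}=\OOO(-5)$ and the two relations have degrees summing to $4$, so $K_{X/\CC^2}=\OOO(-1)$ restricted to $X$, which is $f$-ample), and that $\dim f^{-1}(0)=1$; terminality is the hypothesis, and index two follows from the local form at $(0{:}0{:}0{:}1;0,0)$. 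I expect the main obstacle to be the surjectivity/generation step: proving that the three weight-one sections together with one weight-two section generate $X$ inside $\PP(1,1,1,2)\times\CC^2$ and that exactly two weighted-quadratic relations suffice. This requires a careful local analysis at the \type{cA/2} point $P$ — writing $\OOO_{X,P}=\CC\{x_1,x_2,x_3,x_4\}/(x_1x_2+p(x_3^2,x_4))$ modulo $\mumu_2(1,1,1,0)$ and matching the $\mumu_2$-invariant functions of weight $\le2$ with $y_1,\dots,y_4$ — combined with the global vanishing $R^1f_*\OOO_X(-kK_X)=0$ from Theorem \ref{th-vanish} to push the local generation statement to a global one. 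The remaining verification of the table is then a finite, essentially routine case analysis of degenerate conics compatible with an index-two (rather than index-four) \type{cA/2} point.
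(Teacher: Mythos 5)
Your overall architecture (relative anticanonical algebra, generators of weights $1,1,1,2$, embedding into $\PP(1,1,1,2)\times\CC^2$, converse via adjunction) matches the paper's, and your treatment of the converse is essentially the paper's argument. But the two central steps of the direct statement are not actually carried out, and one rests on a false assumption. First, your local model at $P$ is wrong: Proposition \xref{index:2} only says that each $(X,C_i)$ is of type \typec{IA} at $P$; it does not say $(X,P)$ is of type \type{cA/2}. As noted at the end of this section, \emph{every} type of terminal index-two point (\type{cA/2}, \type{cAx/2}, \type{cD/2}, \type{cE/2}) occurs on $\QQ$-conic bundles as in \xref{cla-index-4a} or \xref{cla-index-4b}, so your proposed matching of $\mumu_2$-invariants in $\CC\{x_1,\dots,x_4\}/(x_1x_2+p(x_3^2,x_4))$ (a normal form belonging to the flipping Theorem \xref{flip-index=2}) does not cover the general case. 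Second, and more importantly, the assertion that $X$ is cut out by exactly two relative weighted quadrics does not follow from a codimension count: a codimension-two subvariety of $\PP(1,1,1,2)\times\CC^2$ need not be a complete intersection, and nothing in your plan bounds the degrees of generators and relations of $\RRR=\bigoplus_i H^0(\OOO_X(-iK_X))$. The paper obtains this from the structure Lemma \xref{lemma-12-fiber-new}: cutting with a general $F\in|-K_X|$ (which meets $C$ only at $P$) and with the scheme fiber $\Gamma$ reduces everything to the finite scheme $F^\sharp\cap\Gamma^\sharp\simeq\CC[x,y]/(xy,\,x^2+y^2)$ with $\mumu_2$-weights $(1,1)$; this determines $\RRR/(zw,u,v)\RRR$ and hence, via the method of \cite[pp.~631--633]{Mori:ci}, the four generators and the two relations with leading parts $y_1y_2+y_3\ell_1$ and $y_1^2+y_2^2+y_3\ell_2$. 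Your ``local analysis at $P$ plus $R^1$-vanishing'' is not a substitute for this reduction; without it the generation in degrees $\le 2$ and the number and degrees of the relations remain unproved.

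A secondary but real error is your description of the central fiber: $\{q_1=q_2=0\}\subset\PP(1,1,1,2)$ is not a degenerate conic in $\PP^2$ obtained ``after eliminating $y_4$''. It passes through the vertex $(0{:}0{:}0{:}1)$ (the non-Gorenstein point) and is a union of up to four curves of anticanonical degree $\tfrac12$ lying over the intersection points of the two conics $\{q_1=0\}$, $\{q_2=0\}$ in $\PP^2$. Accordingly, the six rows of the table classify \emph{pairs} $(q_1,q_2)$ up to projective equivalence by the scheme structure of their intersection (the partitions of $4$, with two cases for the partition $(4)$), not the possible shapes of a single degenerate conic; classifying degenerate conics would give only three types and cannot produce the table.
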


\begin{sremark}
A general member $H\in |\OOO_X|$ is normal in the case \xref{cla-index-4a}
and non-normal in the case \xref{cla-index-4b}.
\end{sremark}

\begin{proof}[Sketch of the proof]
First we prove the last statement. By our assumption $X$ has only terminal
singularities. Then $X$ does not contain the surface
$\{y_1=y_2=y_3=0\} = \Sing(\PP\times \CC^2)$ (otherwise both $\psi_1$
and $\psi_2$ do not depend on $y_4$). By the adjunction formula,
$K_X=-L|_X$, where $L$ is a Weil divisor on $\PP\times \CC^2$ such
that the restriction $L|_{\PP}$ is $\OOO_{\PP}(1)$. Therefore, $X\to
\CC^2$ is a $\QQ$-conic bundle. It is easy to see that the only
non-Gorenstein point of $X$ is $(0,0,0,1;0,0)$ and it is of index
two.

Now let $f: (X,C)\to (Z,o)\simeq (\CC^2,0)$ be a $\QQ$-conic
bundle germ of index two. Let $P\in X$ be a point of index two. 
Let $\pi: (X^\sharp,P^\sharp)\to (X,P)$ be the 
index-one cover.
We need the following lemma.

\begin{slemma}[{\cite[12.1.9]{MP:cb1}}]
\label{lemma-12-fiber-new}
Let $F^\sharp=\pi^{-1} (F)_{\red}$ 
be the pull-back of $F$.
Let $\Gamma:=f^{-1}(o)$ be the scheme fiber and let 
$\Gamma^\sharp=\pi^{-1} (\Gamma)$. Then we have
\begin{equation*}
\OOO_{F^\sharp \cap \Gamma^\sharp} \simeq \CC[x,y]/(xy,\, x^2+y^2).
\end{equation*}
Furthermore, the $\mumu_2$-action is given by $\wt(x,y) \equiv (1,1) \mod 2$.
\end{slemma}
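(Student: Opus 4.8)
The plan is to reduce everything to the local ring $\OOO_{X^\sharp,P^\sharp}$ and to compute there in the coordinates of the local classification. Since $F\in|-K_X|$, i.e.\ $F\sim-K_X$, we have $F^\sharp=\pi^{*}F\sim-K_{X^\sharp}$ in a neighbourhood of $P^\sharp$ (using $K_{X^\sharp}=\pi^{*}K_X$); in particular $F^\sharp$ is Cartier there, and it is Du Val at $P^\sharp$ (by \ref{local:elephant}, as $F$ is Du Val at $P$ by Proposition~\ref{index:2}\ref{index:2c}). By Proposition~\ref{index:2}\ref{index:2c} the surface $F$ meets $C=\Gamma_{\red}$ only at $P$, so $F^\sharp\cap\Gamma^\sharp$ is supported at $P^\sharp$ and the statement is purely local.

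Next I would make the picture explicit. By Proposition~\ref{index:2}\ref{index:2b} the point $P$ is of type \typec{IA} of index $2$, so in the notation of Proposition~\ref{prop:local-primitive} one has $a_1=a_2=1$, $a_4=2$, and by Lemma~\ref{lemma:planar} one may write
\[
(X^\sharp,P^\sharp)=\{x_1x_2+g(x_3^2,x_4)=0\}\subset(\CC^4,0),\qquad\wt(x_1,x_2,x_3,x_4)\equiv(1,1,1,0)\bmod 2,
\]
a \type{cA/2} point with $C^\sharp$ the $x_1$-axis. Taking a general $F$ to be $\{\psi=0\}$ for a weight-$1$ semi-invariant $\psi=a_1x_1+a_2x_2+a_3x_3+(\text{h.o.t.})$ and eliminating $x_3$, one realizes $F^\sharp$ as a Du Val surface whose equation, in the weight-$1$ coordinates $\xi_1,\xi_2$ (linear in $x_1,x_2$) and the weight-$0$ coordinate $x_4$, has nondegenerate binary quadratic leading term $q(\xi_1,\xi_2)$. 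On the other hand $\Gamma=f^{-1}(o)$ is cut near $P$ by the pullbacks $u,v$ of coordinates on $Z\simeq(\CC^2,0)$; these are $\mumu_2$-invariant, so their restrictions $\bar u,\bar v$ to $F^\sharp$ are weight-$0$ germs vanishing at $P^\sharp$, and $F^\sharp\cap\Gamma^\sharp=V(\bar u,\bar v)\subset F^\sharp$.

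The computation then runs as follows. Replacing $\bar u,\bar v$ by suitable combinations, one arranges that $V(\bar u)\subset F^\sharp$ is a \emph{node}: after eliminating the weight-$0$ variable its equation becomes $q(\xi_1,\xi_2)+(\text{h.o.t.})=0$ with nondegenerate quadratic part (inherited from the equation of the Du Val surface $F^\sharp$, or equal to the leading binary quadratic of $\bar u$ when $F^\sharp$ is smooth). Hence $V(\bar u)$ consists of two smooth branches $B_1,B_2$ through $P^\sharp$, and since $\mumu_2$ acts by $-1$ on all weight-$1$ coordinates it fixes each branch and acts on its local parameter — say $s$ on $B_1$ and $t$ on $B_2$ — by $-1$. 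The restriction $\bar v|_{B_1\cup B_2}$ vanishes to order $\ge 2$ on each branch, while $F^\sharp\cap\Gamma^\sharp=V\bigl(\bar v|_{V(\bar u)}\bigr)$ has length $(F^\sharp\cdot\Gamma^\sharp)_{P^\sharp}=4$; so $\bar v$ has order exactly $2$ on each branch and
\[
\OOO_{F^\sharp\cap\Gamma^\sharp}\simeq\CC\{s,t\}/(st,\ as^2+bt^2)\qquad(a,b\neq0),
\]
which after rescaling $s,t$ is $\CC[x,y]/(xy,x^2+y^2)$ with $\wt(x,y)\equiv(1,1)$, as asserted.

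The genuinely delicate part is the input on $\Gamma$ near $P$: both the equality $(F^\sharp\cdot\Gamma^\sharp)_{P^\sharp}=4$ and the transversality making $V(\bar u)$ a true node. As this lemma feeds into Theorem~\ref{th-index=2}, one cannot invoke the classification of fibres proved there, and must argue from the $\QQ$-conic bundle hypotheses: $R^1f_*\OOO_X=0$ and $f_*\OOO_X=\OOO_Z$ give $\chi(\OOO_\Gamma)=1$ and $H^1(\OOO_\Gamma)=0$, and a general fibre $l$ is a smooth conic with $-K_X\cdot l=2$, so $-2K_X\cdot\Gamma=4$; together with $C=\Gamma_{\red}$ and the monomial form of $C^\sharp\subset X^\sharp$ (Lemma~\ref{lemma:planar}) this pins down $\Gamma$, hence $\Gamma^\sharp$, near $P$ up to the coordinate changes compatible with the $\mumu_2$-action — whence the two missing facts, the transversality following from the generality of $F$ (and hence of $\psi$).
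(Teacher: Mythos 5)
There are two genuine gaps here. First, your reduction to the normal form $(X^\sharp,P^\sharp)=\{x_1x_2+g(x_3^2,x_4)=0\}$ is not justified: Proposition \xref{index:2}\xref{index:2b} only says that each $(X,C_i)$ is of type \typec{IA} at $P$, which fixes the orders $(a_1,a_2,a_4)=(1,1,2)$ but not the analytic type of $(X,P)$; an index-two point of such a germ may be of type \type{cAx/2}, \type{cD/2} or \type{cE/2} as well as \type{cA/2} --- indeed the remark following Theorem \xref{th-index=2} records that every index-two terminal type occurs on germs as in \xref{cla-index-4a} or \xref{cla-index-4b}, which are over a smooth base. In those cases $F^\sharp$ is Du Val of type \type{D} or \type{E}, its equation has quadratic part of rank one, and the step where you extract a ``nondegenerate binary quadratic leading term'' (hence the node $V(\bar u)$ with two $\mumu_2$-stable branches) breaks down as written.

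Second, and more seriously, the decisive transversality is asserted rather than proved. What must be shown is that the two $\mumu_2$-invariant functions $\bar u,\bar v$ cutting $\Gamma^\sharp$ out of $F^\sharp$ can be arranged so that $V(\bar u)$ is a node and $\bar v$ has order exactly two on each branch; these functions come from the base through $f$, so the generality of $F$ (i.e.\ of $\psi$) does not control their quadratic parts. Your last paragraph replaces this by numerical data ($\chi(\OOO_\Gamma)=1$, $-2K_X\cdot\Gamma=4$, invariant part of the fibre ring of length $2$) plus the claim that these ``pin down $\Gamma$ near $P$''. They do not: $\CC[x,y]/(x^2,y^2)$ and $\CC[t]/(t^4)$, with all weights $\equiv 1 \bmod 2$, have length $4$ and invariant part of length $2$ exactly as $\CC[x,y]/(xy,\,x^2+y^2)$ does, so the numerology alone cannot exclude them; excluding them is precisely the content of the lemma. (A smaller repairable point: the equality $\len\OOO_{F^\sharp\cap\Gamma^\sharp}=4$ is cleanest via the observation that $F\to Z$ is finite of degree $2$ because $F\cap C=\{P\}$ and $-K_X\cdot l=2$, and that $F^\sharp\to Z$ is then finite and flat of degree $4$ since $F^\sharp$ is Cohen--Macaulay and $Z$ is smooth; your route through $(F^\sharp\cdot\Gamma^\sharp)_{P^\sharp}=2(F\cdot\Gamma)_P$ itself needs justification because $\pi$ is not flat.)
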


Using this lemma one can apply 
arguments of \cite[pp. 631--633]{Mori:ci}
to get the desired embedding $X\subset \PP(1,1,1,2)\times Z$
considering the graded anti-canonical $\OOO_Z$-algebra
\begin{equation*}
\RRR:=\bigoplus_{i\ge 0}\RRR_i,\quad\mbox{where}
\quad \RRR_i:= H^0(\OOO_X(-iK_X)).
\end{equation*}
We sketch the main idea.

Let $w$ be a local generator of $\OOO_{X^\sharp} (-K_{X})$ at
$P^\sharp$, let $u$, $v$ 
be coordinates on $Z=\CC^2$,
and let $z=0$ be the local equation of $F^\sharp$ in 
$(X^\sharp, P^\sharp)$.
Using the vanishing of $H^1(\OOO_X(-K_X))$ for $i>0$
and the exact sequence
\begin{equation*}
0 \to \OOO_X(-(i-1)K_X) \to \OOO_X(-iK_X)\to
\OOO_F(-iK_X)\to 0
\end{equation*}
one can see 
\begin{equation*}
\RRR_i/(zw)\RRR_{i-1}\simeq H^0(\OOO_F(-iK_X)), \quad i>0.
\end{equation*}
Therefore,
\begin{equation*}
\RRR_i/(zw)\RRR_{i-1}+(u,\, v)\RRR_i
=\bigl(\OOO_{F^\sharp\cap \Gamma^\sharp}(-iK_X)\bigr)^{\mumu_2}.
\end{equation*}
By Lemma \xref{lemma-12-fiber-new} we have an embedding
\begin{equation*}
\RRR/(zw,\, u,\, v)\RRR
\hookrightarrow
\left(\CC[x,\, y,\, w]/(xy,\, x^2+y^2)\right)^{\mumu_2}.\end{equation*}
Using $\RRR_0/(u,v)\RRR_0=\CC$, one can easily see that
\begin{equation*}
\RRR/(zw,\, u,\, v)\RRR=
\CC[y_1,y_2,y_4]/(y_1y_2,\, y_1^2+y_2^2),
\end{equation*}
where $y_1=xw$, $y_2=yw$, $y_4=w^2$.
Put $y_3:=zw$. Then similar to \cite[pp. 631--633]{Mori:ci}
we obtain
\begin{equation*}
\RRR \simeq 
\OOO_Z[y_1,y_2,y_3,y_4]/\III,
\end{equation*}
where $\III$ is generated by the following regular
sequence
\begin{equation*}
\begin{array}{ll}
y_1y_2+y_3\ell_1(y_1,\dots,y_3)&+\psi_1(y_1,\dots,y_4;u,v),
\\[5pt]
y_1^2+y_2^2+y_3\ell_2(y_1,\dots,y_3)&+\psi_2(y_1,\dots,y_4;u,v)
\end{array}
\end{equation*}
with $\psi_i(y_1,\dots,y_4;0,0)=0$.
\end{proof}
Note also that the construction \xref{index2:constr1}
in the $\QQ$-conic bundle case produces an elliptic fibration.
It can be used for classification (see \cite[\S~3]{P97}).

\begin{sexample}
Let $X\subset\PP(1,1,1,2)\times\CC^2_{u,v}$
is given by the equations
\begin{eqnarray*}
y_1y_2&=&(au+bu^2+cuv)y_4,
\\
(y_1+y_2+y_3)y_3&=&vy_4,
\end{eqnarray*}
where $a, b, c \in \CC$ are constants.
It is easy to check that the projection $X\to \CC^2$ is a $\QQ$-conic bundle 
as in \xref{cla-index-2-4}. The only singular point 
is of type \type{cA/2}.
If $a\ne 0$, then this point is a cyclic quotient of type
$\frac{1}{2}(1,1,1)$.
\end{sexample}

\begin{sexample}
Let $X\subset\PP(1,1,1,2)\times\CC^2_{u,v}$
is given by the equations
\begin{eqnarray*}
y_1^2&=&uy_3^2+vy_4
\\
y_2^2&=&uy_4+vy_3^2
\end{eqnarray*}
Then the projection $X\to \CC^2$ is a $\QQ$-conic bundle of type \xref{cla-index-4b} 
containing
one singular point of type $\frac{1}{2}(1,1,1)$ and two ordinary double 
points.
\end{sexample}

More examples are given in \cite[\S~7 and Remark~6.7.1]{MP:IA},  and \cite[\S~3]{P97}. 
It can be shown \cite[\S 7]{MP:IA} that every type of terminal index two singularity
can occur on some index two $\QQ$-conic bundle 
as in~\xref{cla-index-4a} or \xref{cla-index-4b}.

\section{Locally imprimitive germs}
\label{sect:imprimitive}
In this section we collect the results concerning extremal curve germs 
with a locally imprimitive point.
Note that in this case  the imprimitive point is unique 
and the splitting cover  is locally primitive along 
arbitrary irreducible component of the central curve (see
Corollary \ref{prop-cyclic-quo}). Moreover, one can show that the imprimitive point 
is the only  
non-Gorenstein point, see
\cite[Th.~6.7, 9.4]{Mori:flip} and \cite[\S~7]{MP:cb1}.

The following theorem summarizes the results contained in \cite{Mori:flip},
{\cite{KM92}}, {\cite{MP:cb1}}, {\cite{MP:IA}}.

\begin{theorem}
\label{th:imprimitive}
Let $f: (X, C\simeq \PP^1)\to (Z,o)$ be an extremal curve germ such that
$(X,C)$ is locally imprimitive. 
Let $P\in X$ be the imprimitive
point and let $m$, $s$ and $\bar m$ be its index, splitting degree
and subindex, respectively. In this case, $P$ is the only
non-Gorenstein point and $X$ has at most one type~\typec{III}
point. Then one of the following holds.

\begin{emptytheorem}[{\cite[1.2.3]{MP:cb1}}]
\label{item-main-th-impr-barm=2-s=4}
$f$ is a $\QQ$-conic bundle, $(X,C)$ is of type~\typec{IE^\vee} at $P$, $(Z,o)$
is Du Val of type~\type{A_3}, $X$ has a cyclic quotient singularity $P$
of type $\frac18(5,1,3)$ and has no other singular points.
Furthermore, $(X,C)$ is the quotient of the index-two $\QQ$-conic
bundle germ given by the following two equations in
$\PP(1,1,1,2)_{y_1,\dots,y_4}\times \CC^2_{u,v}$
\begin{equation*}
\label{eq-imp-exc-8-eq-a}
\begin{array}{lll}
y_1^2-y_2^2&=&u \psi_1(y_1,\dots,y_4;u,v)+v\psi_2(y_1,\dots,y_4;u,v),
\\[5pt]
y_1y_2-y_3^2&=&u \psi_3(y_1,\dots,y_4;u,v)+v\psi_4(y_1,\dots,y_4;u,v)
\end{array}
\end{equation*}
by $\mumu_{4}$-action:
\begin{equation*}
y_1\mapsto -\ii y_1,\quad y_2\mapsto \ii y_2,\quad y_3\mapsto - y_3,\quad 
y_4\mapsto
\ii y_4,\quad u\mapsto \ii u,\quad v\mapsto -\ii v
\end{equation*}
\textup(as an example one can take $\psi_1=\psi_4=y_4$, $\psi_2=\psi_3=0$\textup).
\end{emptytheorem}

\begin{emptytheorem}[{\cite[1.2.4]{MP:cb1}}]
\label{item-main-th-impr-barm=1}
$f$ is a $\QQ$-conic bundle, $(X,C)$ is of type~\typec{ID^\vee} at $P$, $(Z,o)$
is Du Val of type~\type{A_1}, $(X,C)$ is a quotient of a Gorenstein conic
bundle given by the following equation in $\PP^2_{y_1,y_2,y_3}\times
\CC^2_{u,v}$
\begin{equation*}
y_1^2+y_2^2+\psi(u,v)y_3^2=0, \qquad \psi(u,v)\in\CC\{u^2,\, v^2,\, uv\}
\end{equation*}
by $\mumu_{2}$-action:
\begin{equation*}
u\mapsto -u,\quad v\mapsto -v,\quad y_1\mapsto -y_1,\quad y_2\mapsto y_2,\quad 
y_3\mapsto y_3.
\end{equation*}
Here $\psi(u,v)$ has no multiple factors. In this case, $(X,P)$ is
the only singular point and it is of type~\type{cA/2} or \type{cAx/2}.
\end{emptytheorem}

\begin{emptytheorem}[{\cite[1.2.5]{MP:cb1}}]
\label{item-main-th-impr-barm=2-s=2-cycl}
$f$ is a $\QQ$-conic bundle, $(X,C)$ is of type~\typec{IA^\vee} at $P$ with $\bar 
m=2$, $s=2$, $(Z,o)$
is Du Val of type~\type{A_1}, $(X,P)$ is a cyclic quotient singularity
of type $\frac{1}{4}(1,1,3)$,
and $(X,C)$ is the quotient of the index-two $\QQ$-conic bundle germ
given by the following two equations in
$\PP(1,1,1,2)_{y_1,\dots,y_4}\times \CC^2_{u,v}$
\begin{equation*}
\begin{array}{lll}
y_1^2-y_2^2&=&u \psi_1(y_1,\dots,y_4;u,v)+v\psi_2(y_1,\dots,y_4;u,v),
\\[5pt]
y_3^2&=&u \psi_3(y_1,\dots,y_4;u,v)+v\psi_4(y_1,\dots,y_4;u,v)
\end{array}
\end{equation*}
by $\mumu_{2}$-action:
\begin{equation*}
y_1\mapsto y_1,\quad y_2\mapsto - y_2,\quad y_3\mapsto y_3,\quad y_4\mapsto -
y_4,\quad u\mapsto - u,\quad v\mapsto - v.
\end{equation*}
As an example one can take $\psi_1=\psi_4=y_4$, $\psi_2=0$, $\psi_3=uy_2^2+\lambda y_1y_2$,
where $\lambda$ is a constant.
If $\lambda\neq 0$, then $P$ is the only singular point. If 
$\lambda=0$, then $X$ 
has also a type~\typec{III} point.
\end{emptytheorem}

\begin{emptytheorem}[{\cite[1.2.6]{MP:cb1}}]
\label{item-main-th-impr-barm=2-s=2-cAx/4}
$f$ is a $\QQ$-conic bundle, $(X,C)$ is of type~\typec{II^\vee} at $P$, $(Z,o)$
is Du Val of type~\type{A_1}, 
and $(X,C)$ is the quotient of the same form as in
\xref{item-main-th-impr-barm=2-s=2-cycl}.
As an example one can take $\psi_1=u^2y_4$, $\psi_2=\psi_4=y_4$, 
$\psi_3=uy_2^2+\lambda y_1y_2$, 
where $\lambda$ is a constant.
\end{emptytheorem}

\begin{emptytheorem}[{\cite[Theorem~4.11.2]{KM92}}]
\label{imprimitiveII}
$f$ is divisorial, $(X,C)$ is of type~\typec{II^\vee} at $P$, a general member
$H\in |\OOO_X|$ is normal. 
The graph $\Delta(H,C)$ is of the form 
\begin{equation*}
\xymatrix@R=3pt@C=17pt{
\circ\ar@{-}[r]&\circ\ar@{-}[r]&\cdots\ar@{-}[r]&\overset{4}\circ\ar@{-}[rr]
&&\cdots
\ar@{-}[r]&\circ\ar@{-}[r]&\circ
\\
&\circ\ar@{-}[u]&&\circ\ar@{-}[u]\ar@{-}[r]&\circ\ar@{-}[r]&\bullet&\circ\ar@{-}
[u]
} 
\end{equation*}
In this case $(X,C)$ is a quotient of an index two divisorial curve germ $(\bar X,\bar C)$ 
by $\mumu_2$ that acts freely outside $P$ and switches two components of $\bar C$.
The point $(Z,o)$ is terminal of index two given by 
\begin{equation*}
\{t\phi(y_1,t)+y_3^2- y_2^2=0\}/\mumu_2(1,1,0,1)
\end{equation*}
\textup(cf. \eqref{index2equationZ}\textup) where the image of the exceptional divisor 
is the curve $\{y_2=y_3=t=0\}/\mumu_2$. 
\end{emptytheorem}

\begin{emptytheorem}[{\cite[Theorem~1.9]{MP:IA}}]
\label{imprimitiveIA}
$f$ is birational, a general member
$H\in |\OOO_X|$ is normal and has only log terminal singularities of 
class~\type{T} \textup(see \xref{typeT} below\textup).
The graph $\Delta(H,C)$ is of the form 
\begin{equation}
\label{imprimitiveIA-graph}
\vcenter{
\xymatrix@R=5pt{\overset{c_1}\circ\ar@{-}[r]&\overset{c_2}\circ\ar@{-}[r]
&\cdots\ar@{}[r]&\cdots\ar@{-}[r]&
\overset{c_r}\circ\ar@{-}[r]&\cdots\ar@{-}[r]&
\overset{c_n}\circ
\\
&\circ\ar@{-}[r]&\circ\ar@{-}[r]&\cdots\ar@{-}[r]&\bullet\ar@{-}[u]
}}
\end{equation}
Here $r\neq 1,\, n$ and the chain $[c_1,\dots,c_n]$ corresponds to the non-Du 
Val singularity 
$(H,P)$ of class~\type{T}. The chain of $(-2)$-vertices in the bottom line 
corresponds to a Du Val
point $(H,Q)$. It is possible that this chain is empty \textup(i.e., $(H,Q)$ is 
smooth\textup).
The germ $(X,C)$ is of type~\typec{IA^\vee} and $C^\sharp$ explained in 
\xref{splitting-degree} is reducible. 
The contraction $f$ is divisorial or flipping according as $(H'_{Z'},o')$ 
explained in \xref{divisorial-or-flipping} is Gorenstein or not.
\end{emptytheorem}
\end{theorem}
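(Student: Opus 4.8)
The plan is to run the local classification of imprimitive points through the global numerical identities, match each surviving type with one of the listed global models, and conversely build each model by a one-parameter deformation. First I would record the structural input: since $(X,C)$ is locally imprimitive we are in the situation of \xref{top:imprim}, so the torsion free cover $\tau\colon(X',C')\to(X,C)$ has $C'=\bigcup_{i=1}^{s}C_i'$ with all $s$ components meeting at one point $P'$ over $P$ and $d=s$. That $P$ is the only non-Gorenstein point of $X$, and that $X$ carries at most one point of type~\typec{III}, follows by combining the local lower bounds on $w_P(0)$ recorded in Proposition~\xref{prop-imp-types} with $i_P(1)\ge1$ from Lemma~\xref{lemma:iP-wP}, the inequality \eqref{eq-grO-iP1-2}, and $-K_X\cdot C>0$; see \cite[Th.~6.7, 9.4]{Mori:flip} and \cite[\S~7]{MP:cb1}. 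By Proposition~\xref{prop-imp-types} the germ $(X,C)$ is then of exactly one of the local types \typec{IA^\vee}, \typec{IC^\vee}, \typec{II^\vee}, \typec{ID^\vee}, \typec{IE^\vee} at $P$, and I would treat these in turn.

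For \typec{ID^\vee} and \typec{IE^\vee} the last sentence of Proposition~\xref{prop-imp-types} already gives that $f$ is a $\QQ$-conic bundle with $C'=f'^{-1}(o')$ scheme-theoretically. In the \typec{ID^\vee} case the subindex is $1$, so Corollary~\xref{zam-imp-Gor-fac} makes $f'$ in \eqref{eq:base-cover} a Gorenstein conic bundle; Proposition~\xref{prop:Gor}\xref{prop:Gor-cb} embeds $X'$ in a $\PP^2$-bundle over $\CC^2$, and descending the residual $\mumu_2$-action yields the displayed equation in $\PP^2\times\CC^2$, after which Corollary~\xref{base} and a minimal-resolution computation identify $(Z,o)$ as \type{A_1} and force $(X,P)$ to be of type \type{cA/2} or \type{cAx/2}. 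The \typec{IE^\vee} case ($\bar m=2$, $s=4$, $P$ of type \type{cA/8}) is handled similarly, but first factoring the degree-$4$ cover through its index-two intermediate quotient, which is an index-two $\QQ$-conic bundle to which Theorem~\xref{th-index=2} applies; recording the residual $\mumu_4$-action and computing $-K_X\cdot C=1/4$ from Corollary~\xref{lemma:KC}, together with Corollary~\xref{base}, gives $(Z,o)=\type{A_3}$.

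For the remaining types \typec{IA^\vee}, \typec{IC^\vee}, \typec{II^\vee} one separates according to whether $f$ is a $\QQ$-conic bundle or birational. In the $\QQ$-conic bundle case, Corollary~\xref{corollary:gr-w} gives $\gr_C^0\upomega\simeq\OOO_C(-1)$ and Corollary~\xref{base} a Du Val base of type~\type{A}; the torsion free cover of Definition~\xref{torsion-free-cover}, combined with the classification of index-two $\QQ$-conic bundles (Theorem~\xref{th-index=2}) and the inequalities \eqref{eq-grO-iP1}, \eqref{eq-grO-iP1-2} together with $-K_X\cdot C=d/m_1\cdots m_r$ (Corollary~\xref{lemma:KC}, invariants computed as in Example~\xref{ex:ICdual}), forces index two with $\bar m=s=2$ and rules out \typec{IC^\vee}; descent of the residual $\mumu_2$-action then produces the equations in $\PP(1,1,1,2)\times\CC^2$ of the models \xref{item-main-th-impr-barm=2-s=2-cycl} and \xref{item-main-th-impr-barm=2-s=2-cAx/4}. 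In the birational case I would study a general $H\in|\OOO_X|$: normality of $H$ (established in \cite[\S~4]{KM92} and \cite{MP:IA}; Lemma~\xref{lemma-grC} supplies only the necessary condition $H^0(\gr_C^1\OOO)\ne0$), then the dual graph $\Delta(H,C)$ from the minimal resolution, recognizing the non-Du Val vertex as a chain of class~\type{T}, and deciding divisorial versus flipping from the image germ $(H'_{Z'},o')$ as in \xref{divisorial-or-flipping}; for \typec{II^\vee} the $\mumu_2$-quotient presentation of $(X,C)$ and the index-two equation of $(Z,o)$ come from the index-two divisorial germ and \eqref{index2equationZ}. Existence of each listed germ is then obtained from the deformation construction of \xref{def:existence}, starting from the normal surface germ $(H,C)$ with the prescribed graph.

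The main obstacle is exactly this last case analysis: writing down the explicit $\QQ$-conic bundle equations and---above all---computing the dual graphs $\Delta(H,C)$ in the birational case and certifying that the resulting surface singularities lie in class~\type{T}, so that the one-parameter \type{T}-smoothings needed in \xref{def:existence} exist; this absorbs the bulk of \cite[\S~4]{KM92} and \cite{MP:IA}. The elimination of \typec{IC^\vee} and the rigidity $\bar m=s=2$ in the conic bundle \typec{IA^\vee} case are the delicate numerical points, resting on the sharp estimates for $w_P(0)$ that underlie the local classification Lemma~\xref{lemma:planar}.
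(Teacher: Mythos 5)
Your overall strategy coincides with the paper's: reduce \typec{ID^\vee} to a Gorenstein conic bundle via the splitting cover, reduce \typec{IE^\vee} and \typec{II^\vee} to index-two germs, exclude \typec{IC^\vee}, analyze \typec{IA^\vee} through a general $H\in|\OOO_X|$ and class-\type{T} combinatorics, and obtain existence from \xref{def:existence}. However, several concrete steps would fail as written. In the \typec{IE^\vee} case the index-two $\QQ$-conic bundle is the full degree-$4$ splitting cover $(X',C')$ itself (its point over $P$ has index $\bar m=2$), and $X=X'/\mumu_4$ as in \xref{item-main-th-impr-barm=2-s=4}; the intermediate quotient $X'/\mumu_2$ you propose to feed into Theorem~\xref{th-index=2} acquires a point of index $4$, so that theorem does not apply to it, and the ``residual $\mumu_4$-action'' would then live on the wrong object (incidentally, \xref{lemma:KC} gives $-K_X\cdot C=d/m=1/2$ here, not $1/4$). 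Moreover the numerics you invoke do not deliver the rigidity you claim: with $\gr_C^0\upomega\simeq\OOO_C(-1)$, $w_P(0)\ge 1/2$ and $-K_X\cdot C=1/\bar m$, the identity \eqref{eq-grw-w} only yields $\bar m\ge 2$, and it does not contradict \typec{IC^\vee} either (its invariants $w_P(0)=(\bar m-1)/\bar m$, $i_P(1)=2$ from \xref{ex:ICdual} are compatible with \eqref{eq-grw-w} and \eqref{eq-grO-iP1-2}); the exclusion of \typec{IC^\vee} — in the birational case your outline never addresses it at all — and the assertion $\bar m=s=2$, $(X,P)\simeq\frac14(1,1,3)$ in \xref{item-main-th-impr-barm=2-s=2-cycl} are genuinely hard and are only cited by the paper (\cite[Th.~6.1(i)]{Mori:flip}, \cite[\S\S 7--10]{MP:cb1}).

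The more serious gap is in the birational \typec{IA^\vee} case, where you outsource exactly the step the paper actually proves. Normality of the general $H$ is not independent of imprimitivity: in the primitive situation \xref{thm:IA-not-normal} shows it can fail, so it cannot simply be quoted. The paper's argument is: by \xref{lemma-ge} a general $D\in|-K_X|$ meets $C$ only at $P$ with $(D,P)$ Du Val of type \type{A}; Proposition~\xref{prop:lc} then makes $(X,D+H)$ log canonical; if $H$ were non-normal, $C$ would be the minimal lc centre of $(X,H)$, and pulling back to the splitting cover and using normality of minimal lc centres would force $C'$ to be irreducible, contradicting $s>1$ (\xref{pf:IAdual:normality}). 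The same mechanism shows $(X,H)$ is plt, hence $H$ has singularities of class \type{T} since $H$ is Cartier, and that $(H,C)$ cannot be plt — which is precisely what yields $r\ne 1,\,n$ in \eqref{imprimitiveIA-graph} and eliminates the one combinatorially admissible conic-bundle graph (\xref{pf:IAdual:normal}). Your proposal contains no substitute for this argument, yet normality, the class-\type{T} property, and the restriction $r\ne 1,\,n$ are assertions of the theorem itself.
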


\begin{scase} 
\label{typeT}
Recall that a surface log terminal singularity $(H,P)$ is called a singularity of class \type{T}, if it 
admits a one-parameter smoothing $\{H_t\}$, $H_0=H$ whose total space $X=\cup H_t$ is $\QQ$-Gorenstein
\cite{LW86}, \cite{KSh88}. 
By the inversion of adjunction 
this total space must be terminal.
Any singularity of class \type{T} is either Du Val or a cyclic quotient 
\begin{equation*}
\frac{1}{m^2d}(1,\, mdt-1),\qquad \gcd(m,t)=1. 
\end{equation*} 
There is an explicit characterization of such singularities in terms of minimal resolutions, 
see \cite[\S~3]{KSh88} for details. 
\end{scase}

\subsection{}
The rough idea of the proof of Theorem \xref{th:imprimitive} is to apply the construction \eqref{eq:base-cover}.
Then $(X,C)$ can be viewed as a quotient of an extremal curve germ $(X',C')$
with reducible central fiber by $\mumu_s$.
In the case \typec{ID^\vee} we have $\bar m=1$. Hence, $(X',C')$ is a Gorenstein conic bundle
germ \xref{prop:Gor}\xref{prop:Gor-cb}. Then it is easy to write
down the action explicitly \cite[\S~2]{P97}. Similarly, in the cases \typec{IE^\vee}
and \typec{II^\vee} we have $\bar m=2$. Then $(X',C')$ is an extremal 
curve germ of index two and we can apply the results of Sect. \xref{sect:index2}.
The case \typec{IC^\vee} does not occur \cite[Th.~6.1(i)]{Mori:flip}, \cite[7.3]{MP:cb1}.

\subsection{}
Consider the case \typec{IA^\vee}.
We need
the following helpful observation which allows to study a general hyperplane section
$H\in |\OOO_X|$. It will also be used below in the case \typec{IA}.

\begin{slemma}\label{lemma:lc}
Let $(Z,o)$ be a normal threefold singularity and let 
$D_Z\in |-K_X|$ be a general member. Assume that $(D_Z,o)$
is a Du Val singularity of type \type{A}. Then for 
a general hyperplane section $H_Z$, the pair $(X, H_Z+D_Z)$
is lc. In particular, $(H_Z, o)$ is a cyclic quotient singularity.
\end{slemma}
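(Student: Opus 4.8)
The plan is to apply adjunction twice — restricting first to $D_Z$ and then to $H_Z$ — and to combine this with the classification of log terminal and log canonical surface pairs; the hypothesis that $(D_Z,o)$ has type $A$ is precisely what turns ``quotient'' into ``cyclic quotient'' at the end. First I would record that $D_Z\in|-K_Z|$ gives $K_Z+D_Z\sim 0$, so $K_Z+D_Z$ is Cartier and the adjunction formula $(K_Z+D_Z)|_{D_Z}=K_{D_Z}+\Diff_{D_Z}(0)$ makes sense, with $\Diff_{D_Z}(0)$ effective. Near $o$ both $K_Z+D_Z$ and $K_{D_Z}$ are $\QQ$-linearly trivial — the latter because $(D_Z,o)$ is Du Val, hence Gorenstein canonical — so $\Diff_{D_Z}(0)=0$ near $o$, and inversion of adjunction (\cite[\S 3]{Sh:flips}, \cite[Ch.~17]{Utah}) applied to the klt germ $(D_Z,o)$ shows that $(Z,D_Z)$ is plt near $o$.

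Next I would introduce a general hyperplane section $H_Z=\{g=0\}$ and set $B:=D_Z\cap H_Z$. Since $\OOO_{Z,o}\to\OOO_{D_Z,o}$ is surjective, $B$ is cut out on $D_Z$ by a general function in the maximal ideal, i.e.\ $B$ is a general hyperplane section of the Du Val surface germ $(D_Z,o)$, of type $A_n$ with $n\ge1$. A short local computation identifies $B$ as a node: two smooth branches $B_1$, $B_2$ meeting transversally at $o$. By the first paragraph $\Diff_{D_Z}(H_Z)=\Diff_{D_Z}(0)+B=B$ near $o$, so by inversion of adjunction for log canonical pairs — which in this dimension, and with $D_Z$ a plt divisor of $Z$, is elementary — it is enough to prove that $(D_Z,B)$ is log canonical near $o$.

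For that I would compute on the minimal resolution $\mu\colon\widetilde{D_Z}\to D_Z$, whose exceptional locus is a chain $E_1,\dots,E_n$ of $(-2)$-curves. Because $B$ is a general hyperplane section, $\mu^*B=B'+\sum_{i=1}^{n}E_i$ with every $E_i$ of coefficient $1$: the coefficients are governed by the maximal-ideal cycle of $(D_Z,o)$, which for a Du Val singularity coincides with the fundamental cycle, and for $A_n$ the fundamental cycle is $E_1+\dots+E_n$. Since $K_{\widetilde{D_Z}}=\mu^*K_{D_Z}$, every $E_i$ has discrepancy exactly $-1$ for the pair $(D_Z,B)$; moreover $\mu^*B\cdot E_j=0$ forces $B'$ to be disjoint from the interior curves and to meet each end $E_1$, $E_n$ transversally in a single point (in accordance with $\mult_o B=2$). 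Hence the total transform $B'+E_1+\dots+E_n$ is a reduced simple normal crossing chain, $(D_Z,B)$ is log canonical near $o$, and therefore $(Z,D_Z+H_Z)$ is lc near $o$ — the main assertion.

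Finally, to obtain the structure of $(H_Z,o)$ I would restrict the lc pair $(Z,D_Z+H_Z)$ to the normal surface $H_Z$ (general hyperplane sections of the normal threefold $Z$ are normal): then $(H_Z,\Diff_{H_Z}(D_Z))$ is lc near $o$ and $\Diff_{H_Z}(D_Z)\ge D_Z|_{H_Z}=B_1+B_2$, so $(H_Z,B_1+B_2)$ is lc near $o$. Dropping the effective divisor $B_2$, which contains $o$, strictly raises the log discrepancy of every divisor centred at $o$, while away from $o$ the pair $(H_Z,B_1)$ is smooth; so $(H_Z,B_1)$ is plt near $o$, and since $B_1$ is a smooth curve the classification of plt surface pairs (\cite[\S 5]{KM:book}) forces $(H_Z,o)$ to be a cyclic quotient singularity. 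The main obstacle is the computation in the third paragraph: one must pin down the general hyperplane section of the $A_n$ Du Val point together with the coefficients of its total transform on the minimal resolution, and it is exactly the presence of \emph{two} branches of $B$ — a feature of type $A$, not of type $D$ or $E$ — that yields a \emph{cyclic} quotient. The two uses of inversion of adjunction are the only imported inputs, and both occur in their easy, plt-divisor regime.
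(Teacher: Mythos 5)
Your proposal is correct and follows essentially the same route as the paper's proof: identify $H_Z\cap D_Z$ as the general hyperplane section of the type~\type{A} point, i.e.\ two smooth branches forming an lc pair on $D_Z$, use inversion of adjunction (\cite[\S 3]{Sh:flips}, \cite{Kawakita2007}) to conclude that $(Z,D_Z+H_Z)$ is lc, then restrict to $H_Z$ and invoke the classification of lc/plt surface pairs (\cite[Ch.~3]{Utah}, \cite[\S 5]{KM:book}). Your explicit minimal-resolution computation of the fundamental cycle of $A_n$ and your final variant (dropping one branch and applying the plt classification instead of the two-branch lc classification) are just more detailed, cosmetically different versions of the steps the paper asserts with references.
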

\begin{proof}
Clearly,
$H_Z\cap D_Z$ is general hyperplane section of $(D_Z,o)$ and so 
$H_Z\cap D_Z=\Gamma_1+\Gamma_2$ for some irreducible curves $\Gamma_i$
such that the pair $(D_Z, \Gamma_1+\Gamma_2)$ is lc.
By the inversion of adjunction so is the pair $(Z,D_Z+H_Z)$ 
\cite[\S 3]{Sh:flips}, 
\cite{Kawakita2007}.
Hence $(H_Z,\Gamma_1+\Gamma_2)$ is lc and 
$(H_Z,o)$ is a cyclic quotient singularity 
(see, e.g., \cite[Ch. 3]{Utah}).
\end{proof}

\begin{sproposition}
\label{prop:lc}
Let $f: (X,C)\to (Z,o)$ be an extremal curve germ 
\textup($C$ is not necessarily irreducible\textup). 
Let $D\in |{-}K_X|$ and $H\in |\OOO_X|$ be general members. 
Let $\Lambda$ be the non-normal locus of $H$ and let $\nu: H^\n\to H$ be the normalization 
\textup(if $H$ is normal we put $\Lambda=\emptyset$ and 
$\nu=\id$\textup).

Assume that $D\cap C$ is a point $P$ such that
$(D,P)$ is a Du Val singularity of type~\type{A}.
Then the pairs $(X, \, D+H)$ and $(H^\n, \, \nu^{-1}(D)+\nu^{-1}(\Lambda))$ are log canonical. 
In particular, $H$ has only normal crossings in codimension one.
If $f$ is birational, then the pair $(Z,D_Z+H_Z)$ is also log canonical, where $D_Z=f(D)\in |{-}K_Z|$ and $H_Z:=f(H)\in |\OOO_Z|$.
In this case, $(H_Z,o)$ is a cyclic quotient singularity. 
\end{sproposition}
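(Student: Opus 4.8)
The plan is to run the argument of Lemma~\xref{lemma:lc} ``one dimension up'', on $X$ rather than on $Z$, the two new ingredients being the explicit shape of a general hyperplane section of a Du Val singularity of type \type{A} and the adjunction formalism for the possibly non-normal divisor $H$. Throughout, the point that keeps the differents harmless is that $D$ is Gorenstein (a general member of $|{-}K_X|$; see Section~\xref{sect:elephant}) and $K_X+D\sim0$, so that $\Diff_D(0)=0$, and likewise $\Diff_{D_Z}(0)=0$ on $Z$.

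First I would fix the geometry near $C$. By Theorem~\xref{thm:ge} a general $D\in|{-}K_X|$ is normal with only Du Val singularities and $K_X+D\sim0$, so $(X,D)$ is plt. The member $H=f^{*}H_Z$ is a Cartier divisor (the zero locus of a function pulled back from $Z$), and since the base locus of $|\OOO_X|=f^{*}|\OOO_Z|$ is contained in $C$ while $D\cap C=\{P\}$ by hypothesis, near $C$ the surface $D$ has the single singular point $P$, of type \type{A_n}, and $H\cap D$ is a general hyperplane section of the surface germ $(D,P)$. Such a section is a smooth curve if $n=0$, and for $n\ge1$ it is a pair of smooth branches $\Gamma_1,\Gamma_2$ meeting transversally, whose strict transforms on the minimal resolution of $(D,P)$ sit at the two ends of the \type{A_n} chain; in either case all log discrepancies are $\ge0$, so $(D,H\cap D)$ is log canonical. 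As $D$ is Gorenstein and $K_X+D\sim0$, the different vanishes, so inversion of adjunction along the normal divisor $D$ \cite[\S3]{Sh:flips}, \cite{Kawakita2007} yields that $(X,D+H)$, and a fortiori $(X,H)$, is log canonical near $C$.

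Next I would pass from $X$ to $H$. Since $X$ is terminal, hence Cohen--Macaulay (so $S_2$) and with finitely many singular points, at the generic point $\eta$ of any codimension-one subvariety of $H$ — where $X$ is smooth — the localization of $(X,H)$ is a log canonical pair with $H$ a Cartier divisor on a regular two-dimensional local scheme, so there $H$ is smooth or a node; thus $H$ has at most normal crossings in codimension one and its non-normal locus is exactly this codimension-one locus $\Lambda$. Adjunction for the resulting demi-normal divisor then reads
\begin{equation*}
K_{H^{\n}}+\nu^{-1}(\Lambda)+\nu^{-1}(D)=\nu^{*}\bigl((K_X+H+D)|_{H}\bigr),
\end{equation*}
with $\nu^{-1}(\Lambda)$ the reduced conductor, and inversion of adjunction for $H$ (\cite[Ch.~17]{Utah}, \cite[\S5.4]{KM:book}) converts the log canonicity of $(X,D+H)$ into that of $(H^{\n},\nu^{-1}(D)+\nu^{-1}(\Lambda))$. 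I expect this to be the main obstacle: one must check that the conductor occurs with coefficient exactly one and equals $\nu^{-1}(\Lambda)$, and that inversion of adjunction is legitimate in the demi-normal setting — this is precisely where the codimension-one normal-crossing analysis of $H$, together with $H$ being Cartier inside a canonical threefold, is used.

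Finally, the birational case. From $D\sim-K_X$, $D_Z\sim-K_Z$ and $H=f^{*}H_Z$, the crepancy $K_X+D=f^{*}(K_Z+D_Z)$ (negativity lemma) gives $K_X+H+D=f^{*}(K_Z+H_Z+D_Z)$, and $f$ contracts no component of $D$ (which is $f$-ample) or of $H$ (which dominates $H_Z$); hence the log canonicity of $(X,D+H)$ descends to $(Z,D_Z+H_Z)$. Moreover $f_D\colon D\to D_Z$ is crepant birational, so $(D_Z,o)$ is again Du Val of type \type{A} (indeed an isomorphism in the situations of \xref{ge:simple}), and $D_Z=f(D)$ is a general member of $|{-}K_Z|$ by \xref{thm:ge1}; one is then exactly in the setting of Lemma~\xref{lemma:lc}. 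Concretely $H_Z\cap D_Z=\Gamma_1+\Gamma_2$ is a nodal curve through $o$, $(D_Z,\Gamma_1+\Gamma_2)$ and hence $(Z,D_Z+H_Z)$ are log canonical, adjunction to the normal surface $H_Z$ makes $(H_Z,\Gamma_1+\Gamma_2)$ log canonical, and the classification of log canonical surface pairs \cite[Ch.~3]{Utah} forces $(H_Z,o)$ to be a cyclic quotient singularity.
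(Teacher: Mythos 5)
The central step of your argument does not cover all of $C$, and this is a genuine gap. Inversion of adjunction along the normal divisor $D$ (\cite[\S3]{Sh:flips}, \cite{Kawakita2007}) gives log canonicity of $(X,D+H)$ only in a neighborhood of $D$; since $D\cap C=\{P\}$, this controls the pair only near $P$ and says nothing at points $Q\in C\setminus\{P\}$, where $D$ is absent and the pair is just $(X,H)$. Your phrase ``is log canonical near $C$'' is therefore unjustified, and everything you derive from it inherits the gap: the normal-crossing statement in codimension one (a priori $H$ could have, say, a cuspidal edge along $C$ away from $P$), the adjunction to $H^\n$, and the descent to $(Z,D_Z+H_Z)$ in the birational case, which via the crepancy $K_X+D+H=f^*(K_Z+D_Z+H_Z)$ requires lc-ness at \emph{every} point of the fiber $C$. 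The paper closes exactly this gap with a separate argument: since $H$ is smooth outside $C$ by Bertini, a non-lc point $Q\in C\setminus D$ is excluded by Shokurov's connectedness theorem \cite[Th.~6.9]{Sh:flips} applied to the surface pair $(H,D|_H)$ (or to its normalization when $H$ is not normal). Some step of this kind must be added to your proof.

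A second, smaller issue concerns the $\QQ$-conic bundle case. There $H\cap D$ is \emph{not} a general hyperplane section of the germ $(D,P)$: it is $f_D^*H_Z$, cut out only by functions pulled back from the base under the finite double cover $f_D\colon D\to Z$, so the generic behavior (two branches hitting opposite ends of the \type{A_n} chain, hence lc) is not automatic and must be verified for this restricted family. The paper does this via the Hurwitz formula $K_D=f_D^*(K_Z+\frac12\Xi)$, which reduces the question to lc-ness of $(Z,\frac12\Xi+H_Z)$, checked from the explicit branch equation $\xi=u^2+v^{n+1}$ of an \type{A}-point (one needs $\ord_0\xi|_{H_Z}\le 2$). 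In the birational case your local analysis at $P$ is sound and simply reverses the paper's route (the paper first applies Lemma \xref{lemma:lc} on $Z$ and pulls back by crepancy, while you work on $X$ and push down), but either reading still requires the connectedness step described above.
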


\begin{proof}
First we consider the case where $f$ is birational.
Then $(D_Z,o)\simeq (D,P)$ is a Du Val singularity of type~\type{A}.
By Lemma \xref{lemma:lc} the pair $(X, H_Z+D_Z)$
is lc. 
Take $H:=f^*H_Z$. Then $K_X+D+H=f^*(K_Z+D_Z+H_Z)$, i.e., the contraction 
$f$ is $K_X+D+H$-crepant. Hence the pair $(X,D+H)$ is lc
and so is the pair $(H^\n, \, \nu^{-1}(D)+\nu^{-1}(\Lambda))$ again
by the inversion of adjunction. 

Now consider the case where $Z$ is a surface.
First we claim that $(X, \, D+H)$ is lc near $D$.
Consider the restriction 
$\varphi=f_D: (D,P) \to (Z,o)$. 
Let $\Xi\subset Z\simeq \CC^2$ be the branch divisor of $\varphi$.
By the Hurwitz formula we can write 
$K_{D}=\varphi^*\bigl(K_Z+\frac12 \Xi\bigr)$.
Hence,
\begin{equation*}
K_{D}+H|_D=\varphi^*\Bigl(K_Z+\frac12 \Xi+H_Z\Bigr).
\end{equation*} 
Using this and the inversion of adjunction we get the following equivalences:
$(X, \, D+H)$ is lc near $D$ $\Longleftrightarrow$
$(D, H|_D=\varphi^*H_Z)$ is lc $\Longleftrightarrow$
$(Z=\CC^2, \frac 12 \Xi+H_Z)$ is lc.
Thus it is sufficient to show that $(Z, \frac 12 \Xi+H_Z)$ is lc.
Let $\xi(u,v)=0$ be the equation of $\Xi\subset \CC^2$. 
Then $(D,P)$ is given by the equation
$w^2=\xi(u,v)$ in $\CC^3_{u,v,w}$. 
By the classification of Du Val singularities we can choose 
coordinates $u$, $v$ so that 
$\xi=u^2+v^{n+1}$. 
Take $H_Z:=\{v-u=0\}$.
Then $\ord_0 \xi(u,v)|_{H_Z}=2$.
By the inversion of adjunction the pair $(Z,H_Z+\frac12 \Xi)$ is lc. 
Thus we have shown that $(X,D+H)$ is lc near $D$.
Assume that $(X,D+H)$ is not lc at some point $Q\in C$.
By the above, $Q\notin D$. 
Note that $H$ is smooth outside $C$ by Bertini's theorem.
If $H$ is normal, then we have an immediate contradiction by 
a connectedness result \cite[Th.~6.9]{Sh:flips} 
applied to $(H,D|_H)$.
If $H$ is not normal, we can apply the same result on the normalization. 
\end{proof}

\begin{scase}
\label{pf:IAdual:normality}
We claim that $H$ is normal.
Assume the converse, i.e. $H$ is singular along $C$. 
The lemma above implies that in our situation $C$ is the minimal 
log canonical center of $(X,H)$ \cite{Kaw97}. 
Now let $\tau: (X',C')\to (X,C)$ be the torsion free cover \xref{torsion-free-cover}
and let $H':=\tau^*H$. Then the pair $(X',C')$ is log canonical 
and $C'$ is its minimal log canonical center \cite[20.4]{Utah}. 
Since the minimal log canonical center is normal \cite{Kaw97},
we conclude that $C'$ is irreducible. This contradicts imprimitivity of $(X,C)$ at $P$.
\end{scase}

\begin{scase}
\label{pf:IAdual:normal}
Thus $H$ is normal and then $P$ is the only log canonical center of the pair 
$(X,H+D)$. This implies that the pair $(X,H)$ is plt.
Since $H$ is a Cartier divisor, the singularities of $H$ are of class \type{T}
\textup(see \xref{typeT}\textup). 
This gives very strong restriction to the dual graph of the minimal resolution.
If $(X,C)$ is a $\QQ$-conic bundle germ, then 
using completely combinatorial techniques one can show that for 
$\Delta(H,C)$ there is only one possibility (cf. \cite{P04:s}): 
\begin{equation*}
\xymatrix{\overset4\circ\ar@{-}[r]&\bullet\ar@{-}[r]&\circ\ar@{-}[r]&\circ\ar@{-}[r]&\circ} 
\end{equation*} 
But in this case the pair $(H,C)$ is plt, hence so is $(H',C')$, 
whence $C'$ cannot split and this case does not occur.
In the birational case we obtain \eqref{imprimitiveIA-graph}. 
Since $(H,C)$ cannot be plt as above, we have $r\neq 1,\, n$.
This concludes the explanation of the proof of Theorem \xref{th:imprimitive}.
\end{scase}

\subsection{}\label{splitting-degree}
To decide whether an extremal curve germ $(X,C)$ is locally imprimitive at $P$, 
one needs to compute the inverse image $C^\sharp$ of $C$ in the index-one cover 
$(X^\sharp,P^\sharp)$ which can be computed within $H^\sharp$ the pull back of 
$H$ as in \xref{ex:IAdual}, once the diagram like \eqref{imprimitiveIA-graph} is 
exhibited. Indeed, $P$ is imprimitive if and only if the splitting degree $s>1$, which is 
equal to the number of irreducible components of $C^\sharp$, see \xref{splitting}.

\subsection{}
\label{divisorial-or-flipping}
To distinguish divisorial and flipping contractions in the case \xref{imprimitiveIA}
one can use the following arguments. 
Let $f':(X',C')\to (Z',o')$ be the torsion free cover \eqref{eq:base-cover}.
By \xref{theorem-main-Q-Cartier-i}
the germ $(X,C\simeq \PP^1)$ is divisorial if and only if the point $(Z,o)$ is terminal
and if and only if the point $(Z',o')$ is terminal of index one (i.e. isolated \type{cDV}).
Note that in our case $(H_Z,o)$ is a cyclic quotient singularity
and so is its pull-back $(H_Z',o')$.
Hence the divisoriality of $(X,C)$ is equivalent to that $(H_Z',o')$ is 
Gorenstein, that is, Du Val singularity in our case.
Once $(H,C\simeq \PP^1)$ is given, one can find its splitting cover $(H',C')$
and so the surface germ $(H_Z',o')$ can be computed.

\begin{example}
\label{ex:IAdual}
Consider the quotient surface singularity 
\begin{equation*}
(H,P)=(\CC^2_{u,v},0)/\mumu_{m^2}(1,m-1),\qquad m\ge 3.
\end{equation*} 
It is of class \type{T} and for its index-one cover we have 
\begin{equation*}
(H^\sharp,P^\sharp)=(\CC^2,0)/\mumu_{m}(1,m-1).
\end{equation*} 
Hence it is Du Val of type \type{A_{m-1}}. Consider the $\mumu_{m}$-equivariant curve
\begin{equation*}
C^\sharp=\{u^{m-2}-v^{m+2}=0\}/\mumu_{m}\subset H^\sharp
\end{equation*} 
and $C=C^\sharp/\mumu_{m}$.
Then $C^\sharp$ is irreducible (resp. has two irreducible components)
if $m$ is odd (resp. $m$ is even)
and it is easy to see that $C$ is smooth.
Now consider the weighted 
$\frac1{m^2}(1, m-1)$-blowup of $(H,P)$.
In the chart $v\neq 0$ the origin is a Du Val point $\CC^2/\mumu_{m-1}(-1,m^2)$ 
of type \type{A_{m-2}}, the exceptional divisor $\Lambda$ is $v'=0$, and the proper transform $\hat C$ of $C$ 
is given by $v'=u'^{m-2}$. Hence, on the minimal resolution of the 
\type{A_{m-2}}-point, both $\Lambda$ and $\hat C$ meet the same end of the chain.
Therefore, the dual graph $\Delta(H,C)$ is of the form
\begin{equation*}
\xymatrix@R=0pt{
\overset{m+2}\circ\ar@{-}[r]&\circ\ar@{-}[r]&{\overbrace{\circ\lin\cdots\lin\circ}^{m-3}}
\\
&\bullet\ar@{-}[u]
} 
\end{equation*} 
Now suppose that $C$ is a compact curve, $C\simeq \PP^1$ and consider a surface germ $(H,C)$ 
whose minimal resolution has the above form. 
It is easy to see that $K_H\cdot C=-2/m$ and $C$ can be contracted to a cyclic quotient singularity $(H_Z,o)$
of type $\frac14(1,1)$.
There is a Gorenstein threefold germ $X^\sharp$ with $\mumu_{m}$-action containing $H^\sharp$ 
as a $\mumu_{m}$-stable hypersurface.
According to \xref{def:existence} (and \cite[\S~3]{KSh88}) the germ $(H,C)$ has a smoothing in a 
$\QQ$-Gorenstein family. Thus there exists a $\QQ$-Gorenstein threefold $X$ containing $H$
as a Cartier divisor.
By the inversion of adjunction (see \cite[\S~3]{Sh:flips}, \cite[Ch.~17]{Utah}) 
$X$ has only terminal singularities. By arguments similar to \xref{theorem-main-def}
we see that there exists a birational contraction $f:X\to Z$ extending $H\to H_Z$.
We note that $C^\sharp$ can be identified with the pull back of $C$ by the splitting cover of $(X,C)$ at $P$.
Now we distinguish two cases according to the parity of $m$.

a) $m$ is even. Then $(X,C)$ is imprimitive of splitting degree $2$ at $P$.
Since $(H_Z,o)$ is a type~\type{T} singularity of index $2$, 
its pull-back $(H_Z',o')$ in the torsion free (degree $2$) cover \eqref{eq:base-cover}
is Du Val and so $(Z',o')$ is a \type{cDV} point.
Hence, both contractions $f'$ and $f$ are divisorial. 

b) $m$ is odd. Then $(X,C)$ is primitive
and the contraction is flipping by \eqref{eq:KC}.
Note that in this case the singularity $(Z,o)$ is not 
$\QQ$-Gorenstein.
On the other hand, since $(H_Z,o)$ is a singularity of class \type{T}, it has a $\QQ$-Gorenstein
smoothing. This smoothing belongs to a component of the versal deformation space 
which is different from that corresponding to $(Z,o)$ \cite[3.9]{KSh88}.
\end{example}

\section{Cases \typec{IC} and \typec{IIB}}
In this section we consider curve germs of types \typec{IC} and \typec{IIB}.

\subsection{Case \typec{IIB}}
\label{IIB-local-description}
Let $(X,P)$ be the germ of a three-dimensional terminal singularity and let $C\subset (X,P)$ be 
a smooth curve. Recall that the triple $(X,C,P)$ is said to be of type~\typec{IIB} if 
$(X,P)$ is a terminal singularity of type~\type{cAx/4} and
there are analytic isomorphisms
\begin{eqnarray*}
(X,P) &\simeq& \{y_1^2-y_2^3+\alpha =0\}/\mumu_4\subset \CC^4_{y_1,\dots,y_4} /\mumu_4(3,2,1,1),
\\
C&=& \{y_1^2-y_2^3=y_3=y_4=0\}/\mumu_4,
\end{eqnarray*}
where $\alpha=\alpha(y_1,\dots,y_4)\in (y_3,\, y_4)$ 
is a semi-invariant with $\wt(\alpha)\equiv 2\mod 4$ and the quadratic part
$\alpha_2$ of $\alpha(0,0,y_3,y_4)$ is not zero
(see \cite[A.3]{Mori:flip}).
We say that $(X,P)$ is a \emph{simple} (resp. \emph{double}) \type{cAx/4}-point if 
$\rk \alpha_2=2$
(resp. $\rk \alpha_2=1$). 

\begin{theorem}[{\cite{MP:ICIIB}}]
\label{main-IIB}
Let $f:(X,C\simeq\PP^1)\to (Z,o)$ be an extremal curve germ.
Suppose that $X$ contains a point $P$ of type~\typec{IIB}.
Then $(X,C)$ is not flipping \cite[Th.~4.5]{KM92} and $P\in X$ is the unique singular point of $X$ on $C$.
Furthermore, a general member $H\in |\OOO_X|$ is normal, smooth outside $P$,
and has only rational singularities. The following are the only possibilities
for the dual graphs of $(H,C)$ and $H_Z:=f(H)$:
{\rm
\begin{longtable}{c|c|p{0.25\textheight}|c|c}
{\rm No.} & \type{cAx/4}-point & \multicolumn{1}{c|}{$\Delta(H,C)$} &$\Delta(H_Z,o)$ 
\\\hline
\endhead
\nom
\label{IIB:thm-A2case-simple}
&simple&
$
\xymatrix@R=0pt@C=12pt{
\overset{3}\circ\ar@{-}[r]&\overset{4}\circ\ar@{-}[r]&\circ\ar@{-}[r]&\circ\ar@{-}[r]&\circ
\\
&\underset{3}\circ\ar@{-}[u]&\circ\ar@{-}[r]\ar@{-}[u]&\bullet
}
$&\type{A_2}& \type{d}
\\\hline
\nom
\label{IIB:thm-smooth-case-simple}
&simple&
$
\xymatrix@R=0pt@C=11pt{
\overset{3}\circ\ar@{-}[r]&\circ\ar@{-}[r]&\circ\ar@{-}[r]
&\circ\ar@{-}[r]&\circ\ar@{-}[r]&\circ
\\
&&\underset3\circ\ar@{-}[r]&\underset4\circ\ar@{-}[u]&&\bullet \ar@{-}[u]
}
$&\type{A_0}& \type{d}
\\\hline
\nom
\label{IIB:thm-D4case-double}
&double&
$
\xymatrix@R=3pt@C=12pt{
\circ\ar@{-}[r]&\circ\ar@{-}[r]&\circ\ar@{-}[r]&\overset4\circ\ar@{-}[r]&\overset3\circ\ar@{-}[r]&\circ
\\
&\bullet\ar@{-}[r]&\circ\ar@{-}[u]&&\circ\ar@{-}[u]&\circ\ar@{-}[ul]
}
$& \type{D_4}&\type{d}
\\\hline
\nom
\label{IIB-theorem-conic-bundle-case-double}
&double&
\multicolumn{2}{l|}{$
\xymatrix@R=0pt@C=8pt{
\circ\ar@{-}[r]&\overset{3}\circ\ar@{-}[r]&\circ\ar@{-}[r]&\circ\ar@{-}[r]&\circ\ar@{-}[r]&\circ\ar@{-}[r]&\circ\ar@{-}[r]&\circ\ar@{-}[r]&\bullet
\\
&\circ\ar@{-}[u]&&&\underset{4}\circ\ar@{-}[u]
}$}
& \type{c}
\end{longtable}}
The last column indicates if the germ is divisorial \typec{d} or $\QQ$-conic bundle \typec{c};
and the column $\Delta(H_Z,o)$ is not used in the latter case \typec{c}.
\end{theorem}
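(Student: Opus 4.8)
The plan is to deduce the theorem from an explicit study of a general member $H\in|\OOO_X|$ and of its pull-back $H^{\sharp}$ to the index-one $\mumu_4$-cover at $P$, as in the preceding sections. That $(X,C)$ is not flipping is \cite[Th.~4.5]{KM92}, so $f$ is divisorial or a $\QQ$-conic bundle. For the uniqueness of the singular point: by Proposition~\xref{prop:localDeformations} a type~\typec{IIB} point deforms into a type~\typec{IA} point of index $4$ together with a type~\typec{IA} point of index $2$, so if $X$ had any other non-Gorenstein point on $C$ a nearby germ would carry three non-Gorenstein points, contradicting Proposition~\xref{lem-prop-3points}; and a Gorenstein (type~\typec{III}) point $Q\in C\setminus\{P\}$ is ruled out by the general elephant, since by Theorem~\xref{thm:ge1} (case~\xref{ge:IIB}) a general $D\in|-K_X|$ contains $C$ with $\Delta(D,C)$ displaying a single singular point of $D$, of \type{D}-type, at $P$, whereas any divisor of $|-K_X|$ through the Gorenstein singular point $Q$ is singular there and $D$ is normal with Du Val singularities by Theorem~\xref{thm:ge}, so $D$ would have an extra Du Val point at $Q$, enlarging $\Delta(D,C)$. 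Hence $P$ is the only singular point of $X$ on $C$.

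By \xref{H} the surface $H$ is Cohen--Macaulay, so normality means regularity in codimension one; a Bertini/transversality argument together with the smoothness of $X$ along $C\setminus\{P\}$ just established shows $H$ is smooth away from $P$, so only $(H,P)$ is at issue. Since $(X,C)$ is locally primitive at $P$, the imprimitivity argument of \xref{pf:IAdual:normality} does not apply, and since the necessary condition $H^{0}(\gr_C^1\OOO)\neq0$ of Lemma~\xref{lemma-grC} is not sufficient, the normality of $H$ has to be proved by a direct local computation: by \xref{IIB-local-description} the surface $(H^{\sharp},P^{\sharp})$ is the general member through $C^{\sharp}$ of $|\OOO_{X^{\sharp}}|$ on the \type{cAx/4} hypersurface $\{y_1^2-y_2^3+\alpha=0\}$, and one checks from the equations that it is an isolated --- hence normal --- surface singularity; then $(H,P)=(H^{\sharp},P^{\sharp})/\mumu_4$ is normal as well. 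Granting this, $H$ has rational singularities: in the $\QQ$-conic bundle case immediately by \xref{H}, and in the divisorial case by \xref{H} once $(H_Z,o)$ is rational, which is part of the classification below and can anyway be read off from the local resolution.

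The local analysis at $P$ splits according to whether the quadratic part $\alpha_2$ of $\alpha(0,0,y_3,y_4)$ has rank $2$ (\emph{simple}) or $1$ (\emph{double}); a further discrete invariant of $\alpha$ distinguishes two subcases in each. For each I would compute the minimal resolution of $(H^{\sharp},P^{\sharp})$, descend to $(H,P)$, and note which exceptional curve the proper transform of $C$ meets; then close the dual graph using $C\simeq\PP^1$, the value $K_H\cdot C=K_X\cdot C$ (as $H|_H\sim0$ because $H\in f^{*}|\OOO_Z|$), and the fact that the proper transform of $C$ is contractible with negative self-intersection (divisorial case) or is a fibre of $H^{\sharp}\to H_Z$ ($\QQ$-conic bundle case). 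One finds precisely the four graphs of the statement. Since flipping is excluded, a birational germ is automatically divisorial, and then $(H_Z,o)$ is obtained from the resolved $\Delta(H,C)$ by contracting the proper transform of $C$ and the resulting chain of $(-1)$-curves, giving the Du Val types \type{A_2}, \type{A_0} of \xref{IIB:thm-A2case-simple}, \xref{IIB:thm-smooth-case-simple} in the simple case and \type{D_4} of \xref{IIB:thm-D4case-double} in the double divisorial case; the remaining double case is the $\QQ$-conic bundle \xref{IIB-theorem-conic-bundle-case-double}.

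The technical heart is the local computation of the third step for the \emph{double} \type{cAx/4}-point, where the equation of $X^{\sharp}$ is most degenerate: one must determine $(H^{\sharp},P^{\sharp})$ and its minimal resolution precisely and be sure the enumeration of admissible graphs is complete --- which rests entirely on the normality of $H$, the only input that fixes the $\bullet$-vertex in each graph. Carrying out that normality by the direct equation-theoretic argument above, rather than via a log-canonical-centre argument (which would require imprimitivity), is the crux of the proof.
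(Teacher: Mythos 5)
Your reduction of the uniqueness of the singular point (deforming the \typec{IIB} point into an index-$4$ plus an index-$2$ point via Proposition \xref{prop:localDeformations} and invoking Proposition \xref{lem-prop-3points}, then using \xref{ge:IIB} to exclude a type \typec{III} point, since a Cartier anticanonical divisor containing $C$ through a singular Gorenstein point would be singular there) is sound. The genuine gap is in the normality step, which you yourself call the crux. You propose to prove that $H$ is smooth outside $P$ and normal by a purely local computation at $P^\sharp$, taking for $(H^\sharp,P^\sharp)$ ``the general member through $C^\sharp$'' and checking from the equations that its singularity is isolated. But the local equation at $P$ of a general \emph{global} member of $|\OOO_X|=f^*|\OOO_Z|$ is not the general local invariant equation through $C^\sharp$: it is constrained by the global structure of $\gr_C^1\OOO$ (only summands of non-negative degree contribute sections), and it is exactly this global constraint that in the neighbouring cases forces \emph{every} member of $|\OOO_X|$ to be singular along $C$, i.e. $H$ non-normal (see \xref{thm:IA:cb}, \xref{IIA-n-normal-div}, \xref{IIA:n-normal-cb}). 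For the same reason Bertini only gives smoothness of $H$ off the base locus, that is off $C$, and says nothing along $C\setminus\{P\}$. So no local check at $P^\sharp$ can decide normality, and your plan contains no substitute for a global input.

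The paper supplies that input through the general elephant: with $D\in|-K_X|$ as in \xref{ge:IIB} and $\Gamma:=H\cap D$, the image $\Gamma_Z$ is a general hyperplane section of the \type{E_6} point $D_Z$, and the coefficient computation recorded in \eqref{equation-IIB-graph-E6} shows that $C$ occurs with coefficient $1$ in $H|_D$; hence $H\cap D=C+\Gamma_1$ as a scheme, $H$ is generically smooth along $C$, thus smooth outside $P$ and normal, and rationality follows from $f_H\colon H\to H_Z$ ($H_Z$ Du Val in the divisorial case since $(Z,o)$ is terminal of index one, a rational curve fibration in the conic bundle case). Only after this does one obtain $\gr_C^1\OOO\simeq\OOO_{\PP^1}\oplus\OOO_{\PP^1}(-1)$ (using $i_P(1)=2$) and the pinned-down local form $y_3v_3+y_4v_4=0$ of $H$ at $P$, which is what feeds the weighted $\frac14(3,2,1,1)$-blow-up analysis producing exactly the four graphs \xref{IIB:thm-A2case-simple}--\xref{IIB-theorem-conic-bundle-case-double}. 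Your subsequent enumeration of graphs presupposes this pinned-down local equation, so it inherits the same gap; the rest of your outline (simple versus double according to $\rk\alpha_2$, contraction of the graph to read off $\Delta(H_Z,o)$, divisorial versus conic bundle) does match the paper's final steps.
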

An example of divisorial contraction of type \xref{IIB:thm-A2case-simple}
is given in \cite[4.12]{KM92}.
The case \xref{IIB:thm-smooth-case-simple} was studied also by T.~Ducat
\cite[Thm. 4.1(2b)]{Ducat:16} in terms of symbolic blowups of smooth 
threefolds. 

\begin{proof}[Sketch of the proof]
In our case a general member $D\in |-K_X|$ contains $C$, has only Du Val singularities,
and the graph $\Delta(D,C)$ has the form \xref{ge:IIB}.
Under the identifications
of \xref{IIB-local-description}, 
a general member $D\in |-K_X|$ near $P$ is given by $\lambda y_3+\mu y_4=0$
for some $\lambda,\, \mu \in \OOO_X$
such that $\lambda(0)$, $\mu(0)$ are general in $\CC^*$
\cite[2.11]{KM92}, \cite[\S 4]{MP:cb3}.
Let $\Gamma:=H\cap D$. 

By \cite[Th.~4.5]{KM92} the contraction $f$ is not flipping. 
If $f$ is divisorial, we put $D_Z:=f(D)$ and $\Gamma_Z:=f(\Gamma)$. 
Then $D_Z\in |-K_Z|$, $H_Z$ is a general hyperplane section of 
$(Z,o)$, and $\Gamma_Z$ is a general hyperplane section of $D_Z$. 
If $f$ is a $\QQ$-conic bundle, we put $D_Z:=\Spec_{Z}f_*\OOO_D$
(the Stein factorization) and let $\Gamma_Z\subset D_Z$ be the image of $\Gamma$.
In both cases $D_Z$ is a Du Val singularity of type \type{E_6} by \xref{ge:IIB}.

We claim that $\Gamma=C+\Gamma_1$ \textup(as a scheme\textup), where $\Gamma_1$ is a reduced irreducible curve,
and $H$ is normal, smooth outside $P$, and has only rational singularities.
Consider two cases:

\begin{scase}\label{case-IIB-H-divisorial}
{\bf Case: $f$ is divisorial.}
Since the point $(Z,o)$ is terminal of index $1$, the germ $(H_Z,o)$ is a Du Val singularity.
Since $\Gamma_Z$ is a general hyperplane section of $D_Z$
we see that the graph $\Delta(D,\Gamma)$ has the form
\begin{equation}
\label{equation-IIB-graph-E6}
\vcenter{
\xymatrix@R=0pt{
&&\overset{2}\circ\ar@{-}[d]&\vartriangle\ar@{-}[l]
\\
\underset{1}\circ\ar@{-}[r]&\underset{2}\circ\ar@{-}[r]&\underset{3}\circ\ar@{-}[r]&\underset{2}\circ\ar@{-}[r]&\underset{1}{\diamond}
}}
\end{equation}
where $\vartriangle$ corresponds to the proper transform of $\Gamma_Z$
and numbers attached to vertices are coefficients of corresponding exceptional curves in the pull-back of $\Gamma_Z$.
By Bertini's theorem $H$ is smooth outside $C$.
Since the coefficient of $C$ equals $1$, $D\cap H=C+\Gamma_1$ (as a scheme), so $H$ is smooth 
outside $P$. In particular, $H$ is normal.
Since $f_H: H\to H_Z$ is a birational contraction and $(H_Z,o)$
is a Du Val singularity, the singularities of $H$ are rational.
\end{scase}

\begin{scase}\label{case-IIB-H-conic-bundle}
{\bf Case: $f$ is a $\QQ$-conic bundle.}
We may assume that, in a suitable coordinate system, 
the germ $(D_Z, o_Z)$ is given by
$x^2+y^3+z^4=0$ and the double cover $(D_Z, o_Z) \longrightarrow (Z,o)$
is just the projection to the $(y,z)$-plane.
Then $\Gamma_Z$ is given by $z=0$.
As in the case above we see that the graph $\Delta(D,\Gamma)$ has the form
\eqref{equation-IIB-graph-E6}. Therefore, $H$ is smooth outside $P$.
The restriction $f_H: H\to H_Z$ is a rational curve fibration.
Hence $H$ has only rational singularities. This proves our claim.
\end{scase}
\begin{scase}
Further, $\gr_C^1\OOO\simeq \OOO_{\PP^1}(d_1)\oplus \OOO_{\PP^1}(d_2)$ for some 
$d_1\ge d_2$. Since $H^1(\gr_C^1\OOO)=0$ by Corollary \ref{cor-C-pa=0}\ref{cor-C-pa=0a},
we have $d_2\ge -1$. 
Since $H$ is normal, we have $d_1\ge 0$ (see Lemma 
\ref{lemma-grC}).
On the other hand, 
$\deg \gr_C^1\OOO=1-i_P(1)$ by \eqref{eq-grO-iP1}.
One can compute $i_P(1)=2$ from \cite[(2.12)]{Mori:flip} for $P$ 
of type (IIB) described in \ref{IIB-local-description}.
Therefore, 
\[
\gr_C^1\OOO\simeq \OOO_{\PP^1}\oplus \OOO_{\PP^1}(-1)
\]
and $\OOO_C(-H)=\OOO\subset \gr_C^1\OOO$, i.e. the local equation of $H$
must be a generator of $\OOO\subset \gr_C^1\OOO$.
In the notation of \xref {IIB-local-description} the surface 
$H\subset X$ is locally near $P$ given by 
the equation $y_3 v_3+ y_4 v_4=0$, where $v_3,\, v_4\in \OOO_{P^\sharp, X^\sharp}$
are semi-invariants with $\wt(v_i)\equiv 3$
and at least one of $v_3$ or $v_4$ contains a linear term in $y_1$.
Therefore,
the surface germ $(H,P)$ can be given in $\CC^4/\mumu_4(3,2,1,1)$ by 
two equations:
\begin{eqnarray*}
\label{equation-IIB-H}
y_1^2-y_2^3+\eta(y_3,y_4)+ \phi(y_1,y_2,y_3,y_4)&=&0,
\\
y_1l(y_3,y_4)+y_2q(y_3,y_4)+\xi(y_3,y_4)+\psi(y_1,y_2,y_3,y_4) &=&0,
\end{eqnarray*}
where $\eta$, $l$, $q$ and $\xi$ are homogeneous polynomials 
of degree $2$, $1$, $2$ and $4$, respectively, $\eta\neq 0$, $l\neq 0$,
$\phi, \, \psi \in (y_3,\, y_4)$, $\sigma\mbox{-}\ord \phi\ge 3/2$, $\sigma\mbox{-}\ord \psi\ge 2$.
Moreover, $\rk \eta=2$
(resp. $\rk \eta=1$) if $(X,P)$ is a simple (resp. double) \type{cAx/4}-point. 
Then considering the weighted $\frac14(3,2,1,1)$-blowup and using 
rationality of $(H,P)$, as well as, generality of $H$ in $|\OOO_X|$
one can obtain the possibilities in Theorem \xref{main-IIB}. See \cite[\S~3]{MP:ICIIB}
for details.\qedhere
\end{scase}
\end{proof}

\subsection{Case \typec{IC}}
Let $(X,C\simeq \PP^1)$ be an extremal curve germ. Assume that $(X,C)$ has
a type~\typec{IC} point $P$ of index $m$. Then $P$ is the only singular point of $X$,
$m$ is odd $\ge 5$,
and $w_p (0) = (m - 1)/m$. 
Moreover, $i_P(1) = a_1=2$ and
\begin{equation*}
(X,C,P)\simeq \bigl(\CC_{y_1,y_2,y_4}^3,\{y_1^{m-2}-y_2^2=y_4=0\}, 0\bigr)/\mumu_m(2, m - 2, 1).
\end{equation*}
(see Lemma \xref{lemma:local-eq}\xref{lemma:local-eq1} and \cite[5.5, 6.5, A.3]{Mori:flip}).
Below is a complete classification of extremal curve germs of type \typec{IC}:

\begin{theorem}[{\cite[\S 8]{KM92}}, {\cite{MP:ICIIB}}]
\label{IC-main}
Let $f:(X,C\simeq \PP^1)\to (Z,o)$ be an extremal curve germ of type~\typec{IC}.
Let $P\in X$ be \textup(a unique\textup) singular point
and let $m$ be its index.
Then a general member $H\in |\OOO_X|$ is normal, smooth outside $P$,
has only rational singularities.
Moreover, $(X,C)$ is not divisorial and
we have one of the following:
\begin{emptytheorem}
$(X,C)$ is flipping and the following are the only possibilities
for the dual graphs of $(H,C)$ and $H_Z=f(H)$\textup:
\begin{equation*}
\xymatrix@R=-2pt@C=10pt{
&\bullet\ar@{-}[d]&\overset{(m+3)/2}\circ\ar@{-}[d]&
&\circ\ar@{-}[d]&
&&&\circ\ar@{-}[d]&
\\
\circ\ar@{-}[r]&
\circ\ar@{-}[r]&
\underset{3}\circ\ar@{-}[r]&
\mbox{$\underset{(m-7)/2}{\underbrace{\circ\lin \cdots\lin\circ}}$}\ar@{-}[r]
&
\underset{3}\circ\ar@{-}[r]&
\circ
&&
\underset{4}\circ\ar@{-}[r]&\circ\ar@{-}[r]&\circ
}
\end{equation*}
\begin{equation*}
\xymatrix@R=1pt@C=10pt{
&&\circ\ar@{-}[d]&\overset{(m+3)/2}\circ\ar@{-}[l]&
&\circ\ar@{-}[d]&
&&\quad&\circ\ar@{-}[d]&
\\
\bullet\ar@{-}[r]&\circ\ar@{-}[r]&
\circ\ar@{-}[r]&
\underset{3}\circ\ar@{-}[r]&
\mbox{$\underset{(m-7)/2}{\underbrace{\circ\lin \cdots\lin\circ}}$}\ar@{-}[r]
&
\underset{3}\circ\ar@{-}[r]&
\circ
&&
\underset{3}\circ\ar@{-}[r]&\circ\ar@{-}[r]&\circ
}
\end{equation*}
where 
\mbox{$\underset{3}\circ\lin\underset{(m-7)/2}{\underbrace{\circ\lin \cdots\lin\circ}}\lin\underset{3}\circ$}
must be replaced with $\underset {4}\circ$ in the case $m=5$.

\end{emptytheorem}

\begin{emptytheorem}
\label{IC-(8.3.2)}
$(X,C)$ is a $\QQ$-conic bundle, $m=5$, 
and $\Delta(H,C)$ has the form\textup:
\begin{equation*}
\xymatrix@R=-2pt@C=14pt{
&&&\circ\ar@{-}[d]&&\circ\ar@{-}[r]&\overset3\circ
\\
\bullet\ar@{-}[r]&\circ\ar@{-}[r]&\circ\ar@{-}[r]&\circ\ar@{-}[r]&
\underset3\circ\ar@{-}[rd]\ar@{-}[ru]
\\
&&&&&\underset3\circ\ar@{-}[r]&\circ
}
\end{equation*} 
\end{emptytheorem}
\end{theorem}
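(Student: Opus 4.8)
The plan is to reduce the statement to an explicit local computation on the general hyperplane section $H \in |\OOO_X|$, using the local description of the \typec{IC} point recalled above together with the classification of the general member $D \in |-K_X|$ from \xref{ge:IC}. First I would establish normality of $H$: by Lemma \xref{lemma-grC} this amounts to showing $H^0(\gr_C^1\OOO) \neq 0$, and in fact I would pin down $\gr_C^1\OOO$ precisely. Since $i_P(1) = 2$ for a \typec{IC} point (this is recorded in Example~\xref{ex:ICdual} and in the local description at the start of \S\ref{IC-main}), equation \eqref{eq-grO-iP1} gives $\deg \gr_C^1\OOO = 1 - i_P(1) = -1$; combined with $H^1(\gr_C^1\OOO) = 0$ from Corollary \xref{cor-C-pa=0}\xref{cor-C-pa=0a} and the fact that $(X,C)$ is not toroidal (it has a non-Gorenstein point of index $m \geq 5$, so $\gr_C^0\upomega \simeq \OOO_C(-1)$ by Corollary \xref{corollary:gr-w}), the splitting $\gr_C^1\OOO \simeq \OOO_{\PP^1}(d_1) \oplus \OOO_{\PP^1}(d_2)$ forces $d_2 \geq -1$, hence $\{d_1, d_2\} = \{0, -1\}$. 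This yields $H^0(\gr_C^1\OOO) \neq 0$, so $H$ is normal, and moreover identifies the local equation of $H$ near $P$ as a generator of the $\OOO_C$-summand of $\gr_C^1\OOO$ — exactly the mechanism used in the \typec{IIB} case above.

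Next I would write down $(H,P)$ explicitly. Using the local form $(X,C,P) \simeq (\CC^3_{y_1,y_2,y_4}, \{y_1^{m-2} - y_2^2 = y_4 = 0\}, 0)/\mumu_m(2, m-2, 1)$ and the constraint that the equation of $H$ generates $\OOO \subset \gr_C^1\OOO$ (i.e. its weight is determined and its leading term involves a specific monomial), one gets the surface germ $(H, P) \subset \CC^3/\mumu_m(\cdots)$ as a $\mumu_m$-quotient of an explicit planar curve singularity times a line, or rather as a hypersurface in the quotient. Then I would take the weighted blowup of $(H,P)$ adapted to these weights, compute the exceptional configuration and the proper transform $\hat C$, and — using that $f_D : D \to D_Z$ is crepant with $D_Z$ the Du Val point dictated by \xref{ge:IC}, namely $\Delta(D,C)$ the $E_6$-type tree appearing there (with the caveat $m$ odd, $m \geq 5$) — match the constraints. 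The rationality of $(H,P)$: since $f_H : H^{\n} \to H_Z$ is either a birational contraction to a point or a rational curve fibration (see \S\xref{H}), and $H$ is normal, $H$ has rational singularities provided $(H_Z, o)$ does in the birational case; but here one shows $(X,C)$ is \emph{not} divisorial, so $f$ is flipping or a $\QQ$-conic bundle, and in the flipping case rationality of $(H_Z,o)$ follows once we produce the dual graph and check it is log terminal or at least rational. For $\QQ$-conic bundles, $H^{\n}$ automatically has rational singularities.

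To see $(X,C)$ is not divisorial: if $f$ were divisorial, Theorem \xref{thm:div:Q-Cartier} would make $(Z,o)$ terminal, and the base-change construction \eqref{base-change} would force $(Z,o)$ to have index divisible by $m$; combining the explicit equation of $H_Z$ obtained from the blowup with the discrepancy bound $a(E, Z) > 1/m$ for any exceptional divisor over $o$ (using $K$-negativity of $f$, as in the proof of Corollary \xref{corollary:IB}), one derives a contradiction — this is the argument of \cite[\S 8]{KM92}. Finally, to separate the flipping case from the $\QQ$-conic bundle case and to arrive at the enumerated graphs, I would run the deformation/combinatorial bookkeeping: the flipping subcase produces, for each odd $m \geq 5$, the two families of graphs listed (the $(m+3)/2$ and $3,\dots,3$ chains, degenerating to $\overset{4}\circ$ when $m=5$), and the $\QQ$-conic bundle subcase forces $m = 5$ by Corollary \xref{lemma:KC} (since $-K_X \cdot C = 1/m$ must be compatible with the structure of the central scheme fiber, which a weighted-blowup computation shows is only possible for $m=5$), giving the single graph in \xref{IC-(8.3.2)}. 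The main obstacle I expect is the last step: extracting the precise dual graphs and proving that $(X,C)$ is never divisorial requires a delicate and somewhat lengthy analysis of the weighted blowup of $(H,P)$ and its interaction with the $E_6$-configuration on $D$ — this is where \cite{KM92} and \cite{MP:ICIIB} do the real work, and I would cite those for the full details rather than reproduce the case-by-case computation.
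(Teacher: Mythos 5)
Your overall plan (work with the explicit local model of the \typec{IC} point, pin down the equation of $H$ near $P$, blow up, and play this off against the configuration of the general member $D\in|-K_X|$ from \xref{ge:IC}) is the right one, and indeed the survey itself gives no proof of Theorem \xref{IC-main} but only cites \cite[\S 8]{KM92} and \cite{MP:ICIIB}; the closest model in the text is the sketch for the \typec{IIB} case. Measured against that, however, two steps you present as settled are not. The first is normality of $H$. From $i_P(1)=2$, \eqref{eq-grO-iP1} and $H^1(\gr_C^1\OOO)=0$ you correctly get $\gr_C^1\OOO\simeq\OOO\oplus\OOO(-1)$ and hence $H^0(\gr_C^1\OOO)\neq 0$, but the further inference ``hence $H$ is normal'' is invalid: Lemma \xref{lemma-grC} gives only the converse implication, and the paper warns immediately after that lemma that nonvanishing of $H^0(\gr_C^1\OOO)$ is necessary but not sufficient for normality, citing \cite{MP:IA}, where general members are in fact non-normal. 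In the \typec{IIB} sketch normality is obtained quite differently (see \xref{case-IIB-H-divisorial}--\xref{case-IIB-H-conic-bundle}): one intersects $H$ with the general $D\in|-K_X|$ containing $C$, uses the dual graph \xref{ge:IIB} (here \xref{ge:IC}) and the coefficients of the exceptional curves in the pull-back of the hyperplane section of $D_Z$ to conclude $H\cap D=C+\Gamma_1$ as a scheme, whence $H$ is smooth outside $P$ and normal; only afterwards is the $\gr_C^1\OOO$ computation used to fix the local equation of $H$. Some argument of this kind (or Proposition \xref{prop:lc}) is needed here and is missing from your proposal.

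The second problem is the exclusion of the divisorial case. A \typec{IC} germ has a unique non-Gorenstein point which is locally primitive, so $\Clsc(X)\simeq\ZZ$ is torsion free and there is no cover \'etale in codimension one; the construction \xref{base-change} therefore does not force $(Z,o)$ to have index divisible by $m$ --- on the contrary, if $f$ were divisorial then $(Z,o)$ would be terminal of index one (cf. \xref{IA:check-divisoriality}), so $(H_Z,o)$ would have to be Du Val, and the contradiction has to come from the computed graphs $\Delta(H,C)$, not from the discrepancy argument you transplant from the proof of Corollary \xref{corollary:IB}, which essentially uses two points of the same index $m$ to create torsion and a degree-$m$ cover of the base. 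Similarly, $m=5$ in the $\QQ$-conic bundle case does not follow from \xref{lemma:KC}, which only says $-K_X\cdot C=1/m$ for every $m$; it emerges from the case analysis of the possible graphs. Since these points, together with the derivation of the listed dual graphs, are exactly where the substance of \cite[\S 8]{KM92} and \cite{MP:ICIIB} lies and you defer them to citation, the proposal as written does not yet amount to a proof of the statement.
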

In the flipping case, the general member $H^+\in |\OOO_{X^+}|$
of the flipped variety is also computed. Here $X^+$ is either
of index two or Gorenstein \cite[A.3]{KM92}.

\section{Case \typec{IA}}
\label{sect:IA}
\subsection{}
\label{IA:setup}
An extremal curve germ $(X, C\simeq \PP^1)$ is said to be of type~\typec{IA} if it 
contains exactly one non-Gorenstein point $P$ which is of type~\typec{IA}. 
For readers' convenience, we note the following characterization 
for an extremal curve germ $(X, C\simeq \PP^1)$ 
to be of type~\typec{IA} with $D\in |-K_X |$ a general member (see \xref{prop:local-primitive}
and \xref{thm:ge1}): 
\begin{quote}
$(X, C)$ is of type~\typec{IA} if and only if (i) $P$ is locally primitive, (ii) 
$D \cap C$ is a single point, and (iii) $(X,P)$ is not of 
type~\type{cAx/4}.
\end{quote}

\begin{scase}\label{IA:setup:c}
From now on we assume that the germ $(X, C\simeq \PP^1)$
satisfies the assumptions of \xref{IA:setup}.
The following are the only possibilities for the singularity
$(X,P)$:
\begin{enumerate}
\item 
$(X,P)$ is of type \type{cA/m}, in this case $(X,C)$ is said to be of type \typec{k1A}
according to \cite{KM92};
\item 
$(X,P)$ is of type \type{cD/3};
\item 
$(X,P)$ is of type \type{cAx/2}, \type{cD/2} or \type{cE/2}.
\end{enumerate}
Thus in our case of type \typec{IA}, $(X,C)$ is semistable if and only if it is of type \typec{k1A}.
Extremal curve germs of index two are classified in 
Sect.~\xref{sect:index2}. Thus we discuss here cases \typec{k1A} and \type{cD/3}. 
We start with $\QQ$-conic bundles: 
\end{scase}

\begin{theorem} [{\cite[1.6]{MP:IA}}]
\label{thm:IA:cb}
Let $(X,C\simeq\PP^1)$ be a $\QQ$-conic bundle germ of index $m>2$ and
of type~\typec{IA}. 
Let $P\in X$ be the non-Gorenstein point.
Then $(X,P)$ is a point of type~\type{cA/m}
and a general member $H\in |\OOO_X|$ is not normal.
Furthermore, the dual graph of $(H^{\n},C^{\n})$, the normalization $H^{\n}$
and the inverse image $C^{\n}$ of $C$, has the form:
\begin{equation}
\label{IA-n-normal-graph:cb}
\underbrace{\overset{a_r}\circ\lin\cdots\lin\overset{a_1}\circ}_{\varDelta_1}
\lin\bullet\lin 
\underbrace{\overset{b_1}\circ\lin\cdots\lin\overset{b_s}\circ}_{\varDelta_2}
\end{equation}
\par\noindent
\textup(in particular, $C^{\n}$ is irreducible\textup).
Here 
the chain $\varDelta_1$ \textup(resp., $\varDelta_2$\textup) 
corresponds to the singularity of type $\frac1m(1,a)$
\textup(resp., $\frac1m(1,-a)$\textup) 
for some integer $a$ \ \textup($1\le a<m$\textup) relatively prime to $m$.
The germ $(H,C)$ is analytically isomorphic to the germ 
along the line $y=z=0$
of the hypersurface given by the following weighted polynomial of degree $2m$
in variables $x$, $y$, $z$, $u$:
\begin{equation*}
\phi:= x^{2m-2a}y^2+x^{2a}z^2+yzu
\end{equation*}
in $\PP(1,a,m-a,m)$. 
Furthermore $(X,C)$ is given as an analytic germ of a subvariety of
$\PP(1,a,m-a,m) \times \CC_t$ along $C \times 0$ given by
\begin{equation*}
\phi+ \alpha_1x^{2m-a}y+\alpha_2x^{m-a}uy+\alpha_3x^{2m}+\alpha_4x^{m}u+\alpha_5u^{2}=0
\end{equation*}
for some $\alpha_1,\ldots \alpha_5\in t\OOO_{0,{\CC_t}}$ and there is a $\QQ$-conic
bundle structure $X \to \CC^2$ through which the second projection $X \to \CC_t$ factors.
\end{theorem}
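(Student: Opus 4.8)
My plan has four steps: identify the singularity $(X,P)$; analyse a general $H\in|\OOO_X|$ and compute the dual graph of $(H^{\n},C^{\n})$; reconstruct $(H,C)$ explicitly; and reconstruct $(X,C)$ as a one-parameter deformation of $H$. Throughout I would use the local classification of primitive points (Proposition~\xref{prop:local-primitive}), the general elephant results (Theorem~\xref{thm:ge1}, Lemma~\xref{lemma-ge}, Case~\xref{local:elephant}), Proposition~\xref{prop:lc}, and the deformation theory around~\xref{def:existence} and Theorem~\xref{theorem-main-def}. \emph{Step 1.} Since $(X,C)$ is of type~\typec{IA}, the point $P$ is locally primitive and $(X,P)$ is not of type~\type{cAx/4}; Proposition~\xref{prop:local-primitive} puts $P$ in case~\typec{IA}, and by the list in~\xref{IA:setup:c} the only options with $m>2$ are \type{cA/m} and \type{cD/3}. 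As $(X,C)$ is locally primitive with $P$ its unique non-Gorenstein point, $\Clsc(X)$ is torsion-free (see \cite[(1.10)]{Mori:flip}), the topological index is $1$, $-K_X\cdot C=1/m$ by Corollary~\xref{lemma:KC}, and the base surface is smooth. I would rule out \type{cD/3} via the general elephant: a general $D\in|-K_X|$ meets $C$ only at $P$ (Lemma~\xref{lemma-ge}\xref{lemma-ge1}), where it would be Du Val of type~\type{E_6} (Case~\xref{local:elephant}), while $D_Z:=\Spec_Z f_*\OOO_D\cong D$ is a degree-two cover of the smooth germ $(Z,o)$ coming from the conic bundle, which is incompatible with type~\type{E_6}; (if this is too delicate I would instead quote the classification of \type{cD/3} germs in \cite{MP:cb3}). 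Hence $(X,P)$ is of type~\type{cA/m}, so $(D,P)$ is Du Val of type~\type{A} by Case~\xref{local:elephant}, and Proposition~\xref{prop:lc} applies: $H$ has normal crossings in codimension one and $H^{\n}$ has only rational singularities.

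\emph{Step 2.} Next I would compute $\gr_C^1\OOO$. By Corollary~\xref{corollary:gr-w}, $\gr_C^0\upomega\simeq\OOO(-1)$ (we are neither toroidal --- a toroidal germ of index $m>1$ has two non-Gorenstein points --- nor of type~\typec{ID^\vee}, which is imprimitive); a direct computation with the $\ell$-equation of $(X^{\sharp},P^{\sharp})$, as in \cite{Mori:flip}, gives $i_P(1)=3$ (through Lemma~\xref{equation-iP}). Together with \eqref{eq-grO-iP1}, \eqref{eq-grO-iP1-1} and $H^1(\gr_C^1\OOO)=0$, this forces $\gr_C^1\OOO\simeq\OOO(-1)^{\oplus 2}$ and excludes any further singular point of $X$ on $C$. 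In particular $H^0(\gr_C^1\OOO)=0$, so $H$ is non-normal by Lemma~\xref{lemma-grC}. The same computation identifies $(H,P)$ analytically with $\{yz+x^{2m-2a}y^2+x^{2a}z^2=0\}/\mumu_m(1,a,m-a)$ for a suitable $a$ coprime to $m$; the key point is that the quadratic $x^{2m-2a}y^2+yz+x^{2a}z^2$ factors in $\CC\{x,y,z\}$ into two smooth $\mumu_m$-semi-invariant factors meeting exactly along $C=\{y=z=0\}$. Hence the normalization of $(H,P)$ is $\frac1m(1,a)\sqcup\frac1m(1,-a)$, whose boundary lines pull back $C$; the Hirzebruch--Jung chains of these two cyclic quotients are the $\varDelta_1$, $\varDelta_2$ of the statement.

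\emph{Step 3.} Globally $H=f^{-1}(H_Z)$ for a general line $H_Z\subset Z$ through $o$, hence irreducible, while along $C$ it has two branches; therefore $\nu: H^{\n}\to H$ restricts to a $2{:}1$ cover $C^{\n}\to C$. Since $H^{\n}\to H_Z$ has connected fibres (see~\xref{H}), $C^{\n}$ is connected, hence $\cong\PP^1$, with the two branch points of the cover being pinch points of $H$ on $C$ --- permissible because $H$ has normal crossings only in codimension one (Proposition~\xref{prop:lc}). The self-intersection $(C^{\n})^2=-1$ follows from adjunction on $H^{\n}$ and $-K_X\cdot C=1/m$. This gives the dual graph $\varDelta_1\lin\bullet\lin\varDelta_2$, and in particular the irreducibility of $C^{\n}$. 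The germ $(H,C)$ is then determined by this minimal resolution together with the fibration $H^{\n}\to H_Z$ onto the disc, and one verifies that $\{x^{2m-2a}y^2+x^{2a}z^2+yzu=0\}\subset\PP(1,a,m-a,m)$, normalized along $\{y=z=0\}$, realizes exactly this configuration; hence $(H,C)$ is analytically isomorphic to it.

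\emph{Step 4, and the main obstacle.} Finally, $H$ is a Cartier divisor in $X$, so $X$ is a one-parameter $\QQ$-Gorenstein deformation of $H$; by the construction of~\xref{def:existence} and Theorem~\xref{theorem-main-def} this is realized inside $\PP(1,a,m-a,m)\times\CC_t$. The versal weighted-homogeneous deformation of $\phi$ of degree $2m$, after coordinate changes absorbing the monomials $x^{m+a}z$, $x^a z u$ and $x^m yz$ into $\phi$, takes the stated form $\phi+\alpha_1x^{2m-a}y+\alpha_2x^{m-a}uy+\alpha_3x^{2m}+\alpha_4x^{m}u+\alpha_5u^{2}=0$ with $\alpha_i\in t\OOO_{0,\CC_t}$; the $\QQ$-conic bundle $X\to\CC^2$ is the one associated with $|\OOO_X|=f^*|\OOO_Z|$, and $X\to\CC_t$ is its composition with a projection. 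The converse assertion is checked by adjunction on $\PP(1,a,m-a,m)\times\CC_t$, exactly as in the last paragraph of the proof of Theorem~\xref{th-index=2}. The crux is Step~2: both $i_P(1)=3$ and the analytic normal form of $(H,P)$ rest on delicate $\ell$-equation computations (cf. \cite{Mori:flip}, \cite{MP:IA}), and the passage to the global normalization --- connectedness of $C^{\n}$ and $(C^{\n})^2=-1$ --- also requires care; the \type{cD/3} exclusion in Step~1 is a lesser point, best settled by citing \cite{MP:cb3} if the elephant argument is awkward.
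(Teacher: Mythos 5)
Your Step 2 is where the argument breaks, and it is the load-bearing step. The claim that ``a direct computation gives $i_P(1)=3$'' (and that $X$ then has no further singular point) is neither justified nor true in general: by Lemma \xref{equation-iP} one has $i_P(1)=\lfloor\ell(P)/m\rfloor+1$, and the normal form in Theorem \xref{thm:IA:cb} itself allows $X^\sharp$ to be smooth at $P^\sharp$ (e.g.\ when the $\alpha_i$ contain suitable linear terms), so that $(X,P)$ is a cyclic quotient with $\ell(P)=0$ and $i_P(1)=1$; moreover the statement does not exclude extra type~\typec{III} points, and such members do occur in families of this kind (compare Example \xref{example-conic-bundle-lP=4+III}). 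Hence the chain $i_P(1)=3\Rightarrow\gr_C^1\OOO\simeq\OOO(-1)^{\oplus2}\Rightarrow H^0(\gr_C^1\OOO)=0\Rightarrow H$ non-normal collapses at its first link, and with it your proof of the key assertion that $H$ is not normal (recall that the criterion of Lemma \xref{lemma-grC} only goes one way, so you genuinely needed that vanishing). The argument actually used is different: one assumes $H$ normal, applies Proposition \xref{prop:lc} to get that $(X,H)$ is plt, so $H$ has only singularities of class \type{T}, and then the combinatorial classification of $K$-negative $\PP^1$-fibration germs with \type{T}-singularities as in \xref{pf:IAdual:normal} (cf.\ \cite{P04:s}) leaves only a graph of index two, contradicting $m>2$. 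Likewise your analytic identification of $(H,P)$ with $\{yz+x^{2m-2a}y^2+x^{2a}z^2=0\}/\mumu_m$ is asserted rather than derived; in \cite{MP:IA} it comes from the slc/lc analysis of $(H,C)$ together with the Koll\'ar--Shepherd-Barron list \cite{KSh88} of non-normal singularities admitting $\QQ$-Gorenstein smoothings (see \xref{IA:H:points}), not from an $\ell$-equation computation.

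Two further points. In Step 1 the exclusion of \type{cD/3} via ``an \type{E_6} point cannot be a finite double cover of a smooth surface germ'' is false: $x^2+y^3+z^4=0$ is exactly such a double cover (project to the $(y,z)$-plane), and this configuration really occurs for type \typec{IIB} $\QQ$-conic bundles (\xref{ge:IIB}); the correct way out is to invoke Theorem \xref{cD/3:thm)} (a germ with a \type{cD/3} point is never a $\QQ$-conic bundle), whose proof is in \cite{KM92} and \cite{MP:IA}, rather than your elephant argument or \cite{MP:cb3}. In Step 4, since $H$ is non-normal, the construction \xref{def:existence} does not apply as stated; one needs Tziolas' $\QQ$-Gorenstein deformation theory for non-normal surfaces \cite{Tziolas2009} and the explicit computation of the miniversal deformation of $(H,C)$ in \cite[6.8.3]{MP:IA} to conclude that the germ $(X,C)$ is pulled back from the stated family --- ``the versal weighted-homogeneous deformation of $\phi$'' is not a substitute for that computation. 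Finally, in Step 3 the $-1$ attached to the bullet refers to the proper transform of $C^{\n}$ on the minimal resolution; $(C^{\n})^2$ on $H^{\n}$ itself is fractional because $C^{\n}$ passes through the two quotient points.
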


\begin{theorem}[{\cite[1.9]{MP:IA}}, see also \cite{Tzi:05}]
\label{theorem-main-birational}
Let $(X,C)$ be a birational extremal curve germ of type~\typec{k1A}.
Let $P\in X$ be the point of index $m\ge 2$.

\begin{emptytheorem}
\label{thm:IA-normal}
If a general element $H$ is normal, then the graph $\Delta(H,C)$ has
the same form as in \eqref{imprimitiveIA-graph}, however the cases $r=1$ and $r=n$ are not excluded.
\end{emptytheorem}

\begin{emptytheorem}
\label{thm:IA-not-normal}
If every member of $|\OOO_X|$ is non-normal, then the dual graph of the normalization $(H^{\n},C^{\n})$ 
is of the form
\begin{equation}
\label{IA-n-normal-graph}
\underbrace{\overset{a_r}\circ\lin\cdots\lin\overset{a_1}\circ}_{\varDelta_1}
\lin\bullet\lin
\underbrace{\overset{c_1}\circ\lin\cdots\lin\overset{c_l}\circ}_{\varDelta_3}
\lin\diamond\lin
\underbrace{\overset{b_1}\circ\lin\cdots\lin\overset{b_s}\circ}_{\varDelta_2}
\end{equation}
\textup(in particular, $C^{\n}$ is reducible\textup).
The chain $\varDelta_1$ \textup(resp., $\varDelta_2$\textup) 
corresponds to the singularity of type $\frac1m(1,a)$
\textup(resp., $\frac1m(1,-a)$\textup) for some $a$ with $\gcd(m,a)=1$ 
and the chain $\varDelta_3$ 
corresponds to the point $(H^{\n},Q^{\n})$, where $Q^{\n}=C_1^{\n}\cap C_2^{\n}$.
Moreover, 
\begin{equation*}
\sum (c_i-2)\le 2 \quad\text{and}\quad \widetilde C_1^2 +\widetilde C_2^2 +5-\sum(c_i-2)\ge 0,
\end{equation*}
where $\widetilde C=\widetilde C_1 +\widetilde C_2$ is the proper transform of $C$ on the minimal resolution
$\widetilde H$.
Both components of $\widetilde C$ are contracted on the minimal model 
of $\widetilde H$.
In this case, 
\begin{equation*}
(X,C,P)\simeq \bigl(\{\alpha(x_1,\dots,x_4)=0\}, \text{$x_1$-axis}, 0\bigr)/\mumu_m(1,a,-a,0),
\end{equation*} 
where $\gcd(m,a)=1$ and $\alpha=0$ is the equation of 
a terminal \type{cA/m}-point in $\CC^4/\mumu_m$. 
\textup(In particular, $(X,C)$ is of type~\typec{IA}\textup).
\end{emptytheorem}

Conversely, for any germ $(H,C\simeq \PP^1)$ of the form 
\xref{thm:IA-normal} or \xref{thm:IA-not-normal}
admitting 
a birational contraction $(H,C)\to (H_Z,o)$ there exists 
a threefold birational contraction 
$f: (X,C)\to (Z,o)$ as in \xref{IA:setup} of type~\typec{IA}
such that $H\in |\OOO_X|$.
\end{theorem}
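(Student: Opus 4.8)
The plan is to derive all three assertions --- the normal case \xref{thm:IA-normal}, the non-normal case \xref{thm:IA-not-normal}, and the converse --- from a single logarithmic input coming from the general elephant, supplemented by deformation theory for the existence statement. The common starting point is this. Since $(X,C)$ is of type~\typec{IA}, Lemma~\xref{lemma-ge}\xref{lemma-ge1} provides a general member $D\in|-K_X|$ with $D\cap C=\{P\}$; by Theorem~\xref{thm:ge} the surface $D$ is Du~Val, and as $(X,P)$ is of type~\type{cA/m} it is of type~\type{A} by \xref{local:elephant}. Proposition~\xref{prop:lc} then applies and shows that both $(X,\,D+H)$ and $(H^{\n},\,\nu^{-1}(D)+\nu^{-1}(\Lambda))$ are log canonical, with $H$ normal crossing in codimension one and $(H_Z,o)$ a cyclic quotient singularity. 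This reduces the whole classification to the study of the log canonical surface germ $(H^{\n},C^{\n})$ together with its contraction to $(H_Z,o)$.

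First I would settle the normal case \xref{thm:IA-normal}. When $H$ is normal, the argument of \xref{pf:IAdual:normal} shows that $P$ is the only log canonical centre of $(X,H+D)$, whence $(X,H)$ is plt. As $H$ is Cartier on the terminal threefold $X$, its singularity at $P$ is of class~\type{T} (see~\xref{typeT}), so the chain $[c_1,\dots,c_n]$ on the minimal resolution is forced into an admissible class~\type{T} shape. The proper transform of $C$ is a complete $(-1)$-curve attached through a string of $(-2)$-curves (the Du~Val point $Q$) to a vertex $\overset{c_r}\circ$, giving the graph \eqref{imprimitiveIA-graph}. The only point to watch is that, unlike the imprimitive situation of \xref{pf:IAdual:normal}, here the point is primitive and $C^\sharp$ is \emph{irreducible}, so $(H,C)$ is allowed to be plt; consequently the extremal positions $r=1$ and $r=n$ are no longer excluded.

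Next I would treat the non-normal case \xref{thm:IA-not-normal}. Since $H$ is Cohen--Macaulay (hence $S_2$) and non-normal, it must fail $R_1$, so it is singular along a curve; as a general $H$ is smooth off $C$ by Bertini, the non-normal locus $\Lambda$ equals $C$. Passing to the normalization $\nu\colon H^{\n}\to H$, the conductor is supported on $C^{\n}=\nu^{-1}(C)$, which splits into two components $C_1^{\n}\cup C_2^{\n}$ meeting at a single point $Q^{\n}$ over $P$. This reflects the two branches of $H$ through $C$ near the \type{cA/m}-point: restricting the quotient $\mumu_m(1,a,-a,0)$ to each sheet produces the cyclic quotient singularities $\frac1m(1,a)$ and $\frac1m(1,-a)$, whose Hirzebruch--Jung strings are the end chains $\varDelta_1,\varDelta_2$, while the middle string $\varDelta_3$ resolves the Du~Val point $Q^{\n}$; together with Proposition~\xref{prop:local-primitive} this pins down the local normal form $(X,C,P)\simeq(\{\alpha=0\},\,x_1\text{-axis},\,0)/\mumu_m(1,a,-a,0)$ and the shape \eqref{IA-n-normal-graph}. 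The two inequalities come from numerical bookkeeping on $\widetilde H$: log-canonicity of $(H^{\n},C^{\n})$ bounds the inner chain by $\sum(c_i-2)\le2$, while contractibility of $\widetilde C=\widetilde C_1+\widetilde C_2$ onto $(H_Z,o)$ forces $\widetilde C_1^2+\widetilde C_2^2+5-\sum(c_i-2)\ge0$ and shows that both $\widetilde C_i$ are contracted on the minimal model. I expect this last numerical step to be the main obstacle, since it requires simultaneously balancing the self-intersections $\widetilde C_i^2$ against the discrepancies along $\varDelta_3$ and genus-zero adjunction on each branch.

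Finally, for the converse I would invoke the existence machinery of~\xref{def:existence}. Given $(H,C\simeq\PP^1)$ of either shape admitting a birational contraction $(H,C)\to(H_Z,o)$, every singular point of $H$ is Du~Val or of class~\type{T}, hence admits a one-parameter $\QQ$-Gorenstein smoothing (see~\xref{typeT} and \cite[\S~3]{KSh88}). Since $\Def H\to\prod\Def(H,P_i)$ is smooth, these local smoothings globalise to a family $\{H_t\}$ whose total space $X=\bigcup H_t$ contains $H$ as a Cartier divisor; by inversion of adjunction $X$ has only terminal singularities, and the contraction $f\colon X\to Z$ extending $H\to H_Z$ exists by arguments parallel to Theorem~\xref{theorem-main-def}. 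The single non-\type{T} (of type~\type{cA/m}) point of $H$ yields the unique non-Gorenstein point $P$ of $X$, and the normal form above identifies $(X,C)$ as a germ of type~\typec{IA}, as required.
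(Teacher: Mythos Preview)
Your overall strategy matches the paper's: both use Proposition~\xref{prop:lc} as the logarithmic input, reduce the normal case to the argument of~\xref{pf:IAdual:normal}, analyse the normalization in the non-normal case, and construct the threefold by deformation theory. However, there are two genuine gaps in your non-normal analysis and in the converse.

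First, your geography of the normalization is wrong. The point $Q^{\n}=C_1^{\n}\cap C_2^{\n}$ does \emph{not} lie over $P$. The slt point $P$ has local model $\{xy=0\}/\mumu_m(a,-a,1)$, whose normalization consists of two \emph{disjoint} charts carrying the cyclic quotients $\frac1m(1,a)$ and $\frac1m(1,-a)$; these give the outer chains $\varDelta_1$, $\varDelta_2$. The meeting point $Q^{\n}$ lies over a \emph{different} dissident point $Q\in C$, which by the Koll\'ar--Shepherd-Barron classification \cite[4.24, 5.2]{KSh88} (applied because $H$ sits as a Cartier divisor in the terminal $X$, hence has a $\QQ$-Gorenstein smoothing) must be a degenerate cusp of embedding dimension at most~$4$; see~\xref{IA:H:points}. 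Consequently $Q^{\n}$ is a cyclic quotient, not a Du~Val point in general, and the inequality $\sum(c_i-2)\le 2$ comes from the embedding-dimension bound on the degenerate cusp, not from log canonicity of $(H^{\n},C^{\n})$: the latter is automatic at $Q^{\n}$, since two branches through a cyclic quotient attached at the ends of its Hirzebruch--Jung string form the toric boundary and are lc for any $c_i$.

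Second, your converse argument only works when $H$ is normal. In the non-normal case the singularities of $H$ are not Du~Val or of class~\type{T} --- they are pinch points, degenerate cusps, and the slt point $\{xy=0\}/\mumu_m$ --- so the smoothing theory of \cite{KSh88} does not apply directly. The paper invokes Tziolas's results \cite{Tziolas2009} on $\QQ$-Gorenstein deformations of non-normal surfaces to produce the one-parameter family; you need to do the same.
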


\begin{scase}
To study a general member $H\in |\OOO_X|$
we can use Lemma \xref{prop:lc}.
However we cannot assert as in \xref{pf:IAdual:normality}
that $H$ is normal. In fact, arguments similar to 
\xref{pf:IAdual:normal} show that the case of normal $H$ does not occur 
if $(X,C)$ is a $\QQ$-conic bundle. 
\end{scase}

\subsection{}
Let us outline the proofs of Theorems \ref{thm:IA:cb} and \ref{theorem-main-birational}.
The case where $H$ is normal is teated in the same way as \xref{imprimitiveIA} (see \xref{pf:IAdual:normal})
and $X$ can be recovered as a one-parameter deformation space by \xref{def:existence}.
Examples are given in \xref{ex:IA-n} below.

Suppose that $H$ is not normal.
Let $\nu: H^\n\to H$ be the normalization and let $C^\n\subset H^\n$ be the inverse image of $C$.
By the inversion of adjunction the pair $(H,C)$ is slc, the pair $(H^\n,C^\n)$ is lc,
and the point $P\in (H,C)$ is slt \cite[16.9]{Utah}.
In particular, $H$ is a generically normal crossing divisor.
At certain (finite number) of dissident points $H$ may have singularities worse than 
just normal crossing points. 

\begin{scase}\label{IA:H:points}
Since $H$ has $\QQ$-Gorenstein smoothing, 
by \cite[Theorem 4.24, 5.2]{KSh88} the only possibilities are:
\begin{itemize}
\item 
Pinch point: $\{x^2-y^2z = 0\} \subset \CC^3$.

\item
Degenerate cusp of embedding dimension at most $4$, where a degenerate cusp 
is a non-normal Gorenstein singularity having a semi-resolution whose 
exceptional divisor is a cycle of smooth rational curves or a rational
nodal curve (see \cite{SB:degen}). 

\item
Slt singularity of the form
\begin{equation*}
\{xy = 0\}/\mumu_m(a,-a, 1),\quad \gcd(a, n) = 1\}
\end{equation*} 
(this point corresponds to $P\in H$).
\end{itemize}
\end{scase}

\begin{scase}
The restriction $\nu_C: C^\n\to C$ of the normalization to the 
inverse image of $C$ is a double cover. 
We distinguish two possibilities:
\begin{enumerate}
\item
\label{case:n-normal:irre}
$C^\n$ is smooth irreducible and $\nu_C$ is branched 
at two points,
\item
\label{case:n-normal:red}
$C^\n$ has two irreducible components meeting at one point and 
the restriction of $\nu_C$ to each of them is an isomorphism.
\end{enumerate}

A detailed analysis (see \cite{MP:IA} and also \cite{Tzi:05}) shows that 
\xref{case:n-normal:irre} leads to the $\QQ$-conic bundle case \eqref{IA-n-normal-graph:cb}
while \xref{case:n-normal:red} leads to the birational case \eqref{IA-n-normal-graph}.
In both cases the subgraphs $\varDelta_1$ and $\varDelta_2$ correspond to points $P^n_1,\, P^\n_2\in H^\n$
lying over $P\in H$. 
\qed
\end{scase}

\begin{scase}
To recover $X$ as a one-parameter deformation space
we also can apply arguments as in \xref{def:existence}.
However, in the case of non-normal surface $H$, it needs some restriction to singularities and 
additional technical tools \cite{Tziolas2009}. 
Fortunately, the results of \cite{Tziolas2009} are applicable if $H$ has singularities described above.
Moreover, the miniversal deformation family of $(H,C)$ in the $\QQ$-conic bundle case 
is computed explicitly \cite[6.8.3]{MP:IA}. \qed
\end{scase}

\begin{scase}
\label{IA:check-divisoriality}
To check divisoriality one can use the criterion
\xref{theorem-main-Q-Cartier-i}. Indeed, if $f$ is divisorial, then 
$(Z,o)$ is a terminal point and its index equals $1$ because 
$(X,C)$ is primitive (see \xref{base-change}). Therefore, 
its general hyperplane section $(H_Z,o)$ must be a Du Val singularity.
If on the contrary $f$ is flipping, then $(Z,o)$ is not $\QQ$-Gorenstein and 
$(H_Z,o)$ cannot be Du Val. Given a graph $\Delta(H, C)$ of type \eqref{imprimitiveIA-graph}
one can easily draw the graph $\Delta(H_Z)$ contracting 
black vertices successfully. Thus the Du Val condition
of $(H_Z,o)$ can be checked in purely combinatorial terms. 
\end{scase}

\begin{sremark}
Assume that in the assumptions of \xref{thm:IA-normal} and \eqref{imprimitiveIA-graph} 
we have $r=1$ or $r=n$.
Then the graph $\Delta(H,C)$ is a chain. In this case there exists 
an element $D\in |-K_X|$ \emph{containing} $C$ and having Du Val singularities only.
This is a particular case of the situation considered in \cite{Mori:ss}
where a powerful algorithm to construct $(X,C)$ was obtained.
\end{sremark}

\begin{scase}
\label{HTU}
One special case of Theorem \xref{theorem-main-birational} was studied in details in \cite{HTU}.
There the authors assumed that the nearby fiber $H_t$ of the one-parameter deformation 
$\cup H_t=X$ has $b_2(H_t)=1$. This strong assumption is equivalent to that
$H$ is normal and has so-called \emph{Wahl singularity} at $P$: $(H,P)\simeq \CC^2/\mumu_{m^2}(1, ma-1)$.
Under this assumption, it is shown that birational germs of this type 
belong to the same deformation family as those of \typec{k2A} studied in \cite{Mori:ss},
constructed the universal family, and the algorithm \cite{Mori:ss} of computing 
flips was extended. 
\end{scase}

\begin{sexamples}
\label{ex:IA-n}
Consider several examples of extremal germs of type \xref{thm:IA-normal}:
\begin{enumerate}
\item 
The index two germs \xref{index2flipping}-\xref{KM:4.7.3.1.4}
are of type \typec{IA}. By using arguments of \xref{IA:check-divisoriality}
one can conclude that the germ as in \xref{index2flipping} is flipping and those in
\xref{KM:4.7.3.1.1}-\xref{KM:4.7.3.1.4} are divisorial.
\item
Let $\Delta(H,C)$ be of the form
\begin{equation*}
\xymatrix{
\bullet\ar@{-}[r]&\overset{c_1}\circ\ar@{-}[r]&\cdots\ar@{-}[r]&\overset{c_n}\circ
} 
\end{equation*} 
where the white vertices form a dual graph of a non-Du Val \type{T}-singularity \textup(see \xref{typeT} \textup).
It is easy to see that $C$ can be contracted to 
a cyclic quotient non-Du Val point. 
Therefore, the one-parameter deformation 
produces a flipping contraction. Since $(H,C)$ is plt, the contraction is primitive.

\item
Let $\Delta(H,C)$ be of the form
\begin{equation*}
\xymatrix@R=1pt{
\overset{3}\circ\ar@{-}[r]&\overset{5}\circ\ar@{-}[r]&\circ
\\
&\bullet\ar@{-}[u]\ar@{-}[r]&\circ\ar@{-}[r]&\circ\ar@{-}[r]&\circ
} 
\end{equation*} 
This is an example of a divisorial contraction to a smooth point. 

\item
A series of examples were given in \xref{ex:IAdual} b).
\end{enumerate}
\end{sexamples}

For completeness, we provide an example of birational curve germ
with non-normal $H$.

\begin{sexample}[{\cite[Ex. 2]{Tzi:05}}, {\cite[Ex. 6.10.3]{MP:IA}}]
Consider a surface $\tilde H$ containing a configuration with the following graph
\[
\xymatrix{
\overset{}\circ\ar@{-}[r]&
\overset{4}{\circ} \ar@{-}[r]&
\underset{C_1} {\overset{4}\diamond}\ar@{-}[r]&
\underset{C_2}\bullet\ar@{-}[r]&
\overset{}\circ\ar@{-}[r]&
\overset{}\circ\ar@{-}[r]&
\overset{3}\circ
}
\]
Contracting all curves except those marked by $C_1$ and $C_2$, we obtain a 
normal surface $H^n$ having two cyclic quotient singularities
$P_1$ and $P_2$ of types $\frac17(1,2)$ and $\frac 17(1,-2)$.
Identifying the curves $C_1$ and $C_2$ we obtain a non-normal surface 
$H$ so that the map $\nu: H^n\to H$ is the normalization. 
The dissident singularities of $H$ are a degenerate cusp of multiplicity $2$ and embedding dimension $3$
at $\nu(C_1\cap C_2)$, and one point of type
$\{xy = 0\}/\mumu_7(2,-2, 1)$. The results of \cite{Tziolas2009} are applicable here 
and so there exists a one-parameter smoothing $X\supset H\supset C$
which is a divisorial curve germ and $H$ is general in $|\OOO_X|$,
see \cite[Prop. 6.3 and Th. 6.10]{MP:IA}.
\end{sexample}

\subsection{Points of type~\type{cD/3}.}
Let $(X,C,P)$ be a triple of type~\typec{IA}, where $(X,P)$ is 
a singularity of type~\type{cD/3} \cite{Mori:sing}, \cite{Reid:YPG}.
These triples are described as follows
(see \cite[6.5]{KM92}). 
Put $\sigma:=(1,1,2,3)$.
Up to coordinate 
change the point $(X,C,P)$ is given in $\CC^4_{y_1,\dots,y_4}$ as follows
\begin{equation}
\label{eq:cD/3}
\begin{array}{l}
(X,C,P)=\bigl(\{\alpha=0\}, \ \{\text{$y_1$-axis}\},\ 0\bigr)/\mumu_3(1,1,2,0),
\\[1pt]
\alpha=y_4^2+y_3^3+\delta_3(y_1,y_2)
+(\text{terms of degree $\ge 4$}),
\end{array}
\end{equation}
where 
$\delta_3\neq 0$ is homogeneous of degree $3$ and
$\alpha$ is invariant. Moreover, 
\begin{equation*}
\alpha\equiv y_1^\ell y_i \mod (y_2, y_3,y_4)^2,
\end{equation*} 
where
$\ell=\ell(P)$ and $i =2$ (resp. $3$, $4$) if $\ell\equiv 2$ (resp. $1$, $0$) $\mod 3$ 
\cite[(2.16)]{Mori:flip}.
If $\delta_3(y_1,y_2)$ is square free (resp. has a double factor, is a cube of a linear form),
then $(X,P)$ is said to be a \emph{simple} (resp. \emph{double}, \emph{triple}) \type{cD/3} point.

Extremal curve germs containing a terminal 
singular point of type~\type{cD/3} 
are described by the following theorem. 

\begin{stheorem}[{\cite[Th.~6.2-6.3]{KM92}}, {\cite[Th.~4.5, 4.8]{MP:IA}}]
\label{cD/3:thm)}
Let \mbox{$f:(X,C\simeq \PP^1)\to (Z,o)$} be an extremal curve germ
having a point $P$ of type~\type{cD/3}.
Then $f$ is a birational contraction, not a $\QQ$-conic bundle.
General members $H\in |\OOO_X|$ and $H_Z=f(H)\in |\OOO_Z|$ are normal and 
have only rational singularities.
We have the following possibilities for graphs $\Delta(H,C)$ and $\Delta(H_Z,o)$
and local invariants.
\par\medskip\noindent
{\rm
\begin{longtable}{c|c|c|p{0.2\textheight}|l|c}
{\rm No.} & $\ell(P)$ & $i_P(1)$ & $\Delta(H,C)$ &$\Delta(H_Z,o)$ 
\\\hline
\endhead
\multicolumn{5}{c}{Cases of simple \type{cD/3} point $P$}
\\\hline
\nom
\label{cD/3:flip:.3.1)}
& $2$& $1$ &
$\xymatrix@R=0pt@C=15pt{
\bullet\ar@{-}[r]&\overset3\circ\ar@{-}[r]&\circ\ar@{-}[r]&\overset3\circ
\\
&&\underset3\circ\ar@{-}[u]
}$
&
$\xymatrix@R=0pt@C=10pt{
&\overset3\circ\ar@{-}[d]
\\
\circ\ar@{-}[r]&\circ\ar@{-}[r]&\underset3\circ
}$ & \type{f}
\\\hline
\nom
\label{cD/3:thm:A2}
& $2$ & $1$ &
$
\xymatrix@R=3pt@C=15pt{
\bullet\ar@{-}[r]&\overset{3}\circ\ar@{-}[r]&\circ\ar@{-}[r]&\overset{3}\circ
\\
\circ\ar@{-}[u]&&\underset{3}\circ\ar@{-}[u]&
} 
$
&\type{A_2} & \type{d}
\\\hline
\multicolumn{5}{c}{Cases of double \type{cD/3} point $P$}
\\\hline
\nom
\label{cD/3:flip:.3.2)}
& $2$ & $1$ &
$\xymatrix@R=7pt@C=15pt{
\bullet\ar@{-}[d]&&\circ\ar@{-}[d]
\\
\underset3\circ\ar@{-}[r]&\circ\ar@{-}[r]&\underset3\circ\ar@{-}[r]&\circ
\\
&&\circ\ar@{-}[u]
}$
&
$\xymatrix@R=7pt@C=10pt{
\circ\ar@{-}[d]&\circ\ar@{-}[d]
\\
\circ\ar@{-}[r]&\underset3\circ\ar@{-}[r]&\circ
\\
&\circ\ar@{-}[u]
}$ & \type{f}
\\\hline
\nom
\label{cD/3:thm:D4}
& $2$ & $1$ &
$
\xymatrix@R=9pt@C=15pt{
\bullet\ar@{-}[d]&\circ\ar@{-}[l]&\circ\ar@{-}[d]&
\\
\underset{3}\circ
\ar@{-}[r]&\circ\ar@{-}[r]&\underset{3}\circ\ar@{-}[r]&\circ
\\
&&\circ\ar@{-}[u]&
}
$
& \type{D_4} & \type{d}
\\\hline
\nom
\label{cD/3:flip:iP=34)}
& $3$, $4$ & $2$ &
$
\xymatrix@R=9pt@C=15pt{
&\bullet\ar@{-}[d]&\circ\ar@{-}[d]&\overset3\circ 
\\
&\circ\ar@{-}[r]&\underset3\circ\ar@{-}[r]&\circ\ar@{-}[u]
\\
&&\circ\ar@{-}[u]
}
$
&
$
\xymatrix@R=7pt@C=15pt{
\circ\ar@{-}[d]
\\
\circ\ar@{-}[r]&\circ\ar@{-}[r]&\underset3\circ
\\
\circ\ar@{-}[u]
}
$ & \type{f}
\\\hline
\multicolumn{5}{c}{Case of triple \type{cD/3} point $P$}
\\\hline
\nom
\label{cD/3:thm:E6}
&$3$, $4$& $2$ &
$
\xymatrix@R=9pt@C=15pt{
\bullet\ar@{-}[d]&&\circ\ar@{-}[d]\ar@{-}[r]&\circ
\\
\circ\ar@{-}[r]&\circ\ar@{-}[r]&\overset{3}\circ\ar@{-}[r]&\circ
\\
&&\circ\ar@{-}[u]&\circ\ar@{-}[l]
}
$& \type{E_6} & \type{d}
\end{longtable}}
In the cases 
\xref{cD/3:flip:.3.1)}, \xref{cD/3:flip:.3.2)}, \xref{cD/3:flip:iP=34)}, and \xref{cD/3:thm:E6} 
the variety $X$ is smooth outside $P$ and in the cases \xref{cD/3:thm:A2} and \xref{cD/3:thm:D4}\ $X$
may has at most one type \typec{III} point.
The last column indicates if the germ is flipping \typec{f} or divisorial \typec{d}.
\end{stheorem}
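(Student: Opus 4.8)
The plan is to follow \cite[\S~6]{KM92} and \cite[\S~4]{MP:IA}: reduce the classification to an explicit computation on a general member $H$ of $|\OOO_X|$. First I would fix the global type of $f$. As $(X,P)$ is of type~\type{cD/3}, it has index $3>2$ and is not of type~\type{cA/m}; Theorem~\xref{thm:IA:cb} says a $\QQ$-conic bundle germ of type~\typec{IA} of index $>2$ must carry its non-Gorenstein point as a point of type~\type{cA/m}, so $(X,C)$ cannot be a $\QQ$-conic bundle, and hence $f$ is birational. I would then record the normal form \eqref{eq:cD/3}, the trichotomy (simple, double, triple) according to the factorisation of the cubic $\delta_3(y_1,y_2)$, and fix the weights $\sigma=(1,1,2,3)$, which are compatible with the $\mumu_3$-action $(1,1,2,0)$ since $3\equiv 0\bmod 3$.

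Next I would pin down the local invariants. By Lemma~\xref{lemma-ge}\xref{lemma-ge1} a general $D\in|-K_X|$ meets $C$ only at $P$, and by Theorem~\xref{thm:ge1} together with \xref{local:elephant} the surfaces $D$ and $D_Z:=f(D)$ are Du Val of type~\type{E_6} with $f_D\colon D\to D_Z$ crepant and birational; in particular $D_Z$ has rational singularities. Since $(X,P)$ is neither smooth nor of type~\type{cAx/4}, Lemma~\xref{equation-iP} gives $i_P(1)=\lfloor\ell(P)/3\rfloor+1$. Because $f$ is birational, $\gr_C^0\upomega\simeq\OOO_C(-1)$ (Corollary~\xref{cor-gr-omega-=0}\xref{cor-gr-omega-=0a}), so $\deg\gr_C^1\OOO=1-i_P(1)$ by \eqref{eq-grO-iP1} and $H^1(\gr_C^1\OOO)=0$ by Corollary~\xref{cor-C-pa=0}\xref{cor-C-pa=0a}. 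The inequality \eqref{eq-grO-iP1-2}, combined with $w_P(0)>0$ (Lemma~\xref{lemma:iP-wP}) and the contribution $i_Q(1)\ge1$, $w_Q(0)=0$ of any Gorenstein singular point $Q\in C$, forces $\ell(P)\in\{2,3,4\}$, whence $i_P(1)\in\{1,2\}$ as in the table, and bounds the number of further singular points of $X$ on $C$; any such point is of type~\typec{III}, and its precise count (none, or at most one, depending on the case) drops out of the resolution below.

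The heart of the argument is the surface $H$. Take a general $H\in|\OOO_X|$; it contains $C$, and $H_Z:=f(H)$ is normal by Bertini's theorem (see \xref{H}). To compute $\Delta(H,C)$ I would pass to the index-one cover, realise $H^\sharp\subset X^\sharp=\{\alpha=0\}$ as the complete intersection cut out by a general semi-invariant of $\I_{C^\sharp}$, and resolve the singularity of $H$ at $P$ by the weighted $\sigma$-blowup of $(\CC^4,0)$ restricted to $X^\sharp$ and descended to $X$. The $\sigma$-weighted leading part of $\alpha$ is $\delta_3(y_1,y_2)$ (of $\sigma$-degree $3$, against $6$ for both $y_3^3$ and $y_4^2$), so the number and the valences of the exceptional chains $\varDelta_1,\varDelta_2,\varDelta_3$, and the way they meet the proper transforms of $C$ and of $H$, are governed by the factorisation type of $\delta_3$; iterating to a full resolution, and using the value of $\ell(P)$, produces exactly the six graphs of the table. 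In each case the computation exhibits $H$ as a normal surface (so $H=H^{\n}$) whose resolution graph one recognises as rational; since $f_H\colon H\to H_Z$ is birational and $R^i(f_H)_*\OOO_H=0$ for $i>0$ (from Theorem~\xref{th-vanish} applied to $0\to\OOO_X(-H)\to\OOO_X\to\OOO_H\to0$, using $\OOO_X(-H)=f^*\OOO_Z(-H_Z)$), it follows that $H_Z$ has only rational singularities as well.

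Finally I would separate the flipping and divisorial cases. By Corollary~\xref{theorem-main-Q-Cartier-i}, $f$ is divisorial if and only if $(Z,o)$ is terminal; since $(X,C)$ is primitive of type~\typec{IA}, this forces $(Z,o)$ to be terminal of index one by Construction~\xref{base-change}, so the general hyperplane section $(H_Z,o)$ is Du Val, whereas in the flipping case $(Z,o)$ fails to be $\QQ$-Gorenstein and $(H_Z,o)$ is not Du Val. Hence $f$ is divisorial exactly in the three cases where $\Delta(H_Z,o)$ (obtained by contracting the $\bullet$-vertices of $\Delta(H,C)$) is $\type{A_2}$, $\type{D_4}$ or $\type{E_6}$, and flipping in the other three, which yields the last column; reading off $\ell(P)$ and $i_P(1)$ line by line is then immediate. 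The main obstacle is the weighted-blowup analysis of the third paragraph: carrying the resolution far enough in the simple, double and triple cases, keeping track of the proper transforms of $C$ and $H$, matching each outcome to the correct value of $\ell(P)$, and ruling out spurious configurations --- this is where essentially all the real work lies.
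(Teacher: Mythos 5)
Your overall route matches the one behind the cited proofs that the paper summarises: exclude the $\QQ$-conic bundle case via Theorem~\xref{thm:IA:cb}, use the general elephant of type \type{E_6} from \xref{thm:ge1} and \xref{local:elephant}, compute $i_P(1)=\lfloor\ell(P)/3\rfloor+1$, analyse a general $H\in|\OOO_X|$ by the weighted $\sigma=(1,1,2,3)$-blowup governed by the factorisation of $\delta_3$, and separate \typec{f} from \typec{d} by whether $(H_Z,o)$ is Du Val (\xref{theorem-main-Q-Cartier-i}, \xref{IA:check-divisoriality}). However, there are two concrete gaps. First, you tacitly assume that $P$ is the \emph{only} non-Gorenstein point: Theorem~\xref{thm:IA:cb} applies to germs of type~\typec{IA} in the sense of \xref{IA:setup}, i.e.\ with a unique non-Gorenstein point, and your numerical step likewise only allows additional \emph{Gorenstein} points $Q$. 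That a \type{cD/3} point cannot coexist with a second non-Gorenstein point is part of the content of the theorem (it is what ``$X$ smooth outside $P$ or at most one type \typec{III} point'' encodes) and must be proved or cited (it comes from the local analysis in \cite{Mori:flip} and \cite{KM92}); without it your very first step, the exclusion of the $\QQ$-conic bundle case, is not justified.

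Second, the claim that \eqref{eq-grO-iP1-2} ``forces $\ell(P)\in\{2,3,4\}$'' is not correct as stated. In the birational case $-\deg\gr_C^0\upomega=1$ already accounts for $-K_X\cdot C+\sum w_P(0)$, so the inequality gives only $\sum_P i_P(1)\le 3$; this permits $\ell(P)=5$ (where $i_P(1)=2$) and, if $P$ is the only singular point, even $i_P(1)=3$, i.e.\ $\ell(P)$ up to $8$. The bound $\ell(P)\ge 2$ does follow from the normal form \eqref{eq:cD/3}, but the exclusions of $\ell(P)=5$ and of $i_P(1)=3$ --- like the normality of $H$, the rationality statements, and the count of extra \typec{III} points --- are outputs of the detailed weighted-blowup/local computation of \cite[\S~6]{KM92} and \cite[\S~4]{MP:IA}, which you defer. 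So the table's restriction on $\ell(P)$ should be attributed to that analysis rather than to the numerics; as written, the sketch misplaces where the real work occurs at this point, although the rest of the plan is sound and consistent with the paper.
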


Note that \cite[\S~6]{KM92} and \cite[\S~4]{MP:IA} provide much more information about 
these contractions:
infinitesimal structure, criterion for an arbitrary germ to be of the corresponding type,
and computations of flipped varieties \cite[A.1]{KM92}. Flipping contractions 
can be constructed explicitly by patching certain open subsets:

\begin{sexample}[{\cite[6.11]{KM92}}]
\label{ex:cD/3:flip}
Let $V\supset C$ be a germ of a smooth threefold along $C\simeq \PP^1$
such that $\NNN_{C/V}\simeq \OOO_C\oplus\OOO_C$. 
Pick a point $P\in C$ and let $(v_1, v_2, v_3)$ be coordinates at
$(V, P)$ such that $(C, P) = \{\text{$v_1$-axis}\}$. Let $(X,C, P)$ be a \type{cD/3} point as in
\eqref{eq:cD/3} with $\ell=2$.
For suitable $\varepsilon_1$ and $\varepsilon_2$ such that $0<\varepsilon_1<\varepsilon_1\ll 1$, 
$(y_1^3, y_4, y_1y_3)$ form 
coordinates for $U = (X, P)\cap \{\varepsilon_1 < | y_1^3| < \varepsilon_2\}$ by the implicit function theorem. 
Thus
$v_1 = y_1^3$, $v_2 = y_4$, and $v_3 =y_1y_3$
patch $(X, P)$ and $V\setminus (V, P) \cap \{|v_1| < \varepsilon_1\}$ along $U$. 
By \cite[6.2.4]{KM92} the germ $(X,C)$ is a flipping
curve germ of type \type{cD/3} as in \xref{cD/3:flip:.3.1)}
or \xref{cD/3:flip:.3.2)} (depending on the choice of $\delta_3$ in \eqref{eq:cD/3}).
\end{sexample}

More examples of flipping contractions
are given in \cite[6.17 and 6.21]{KM92}.
To show that all the possibilities in Theorem~\xref{cD/3:thm)} occur one can also use 
the deformation arguments \xref{def:existence}:

\begin{sexample}
\label{ex:cD/3:E6}
Consider the surface contraction $f_H: H\to H_Z$ with dual graph 
\xref{cD/3:thm:E6} and consider the following triple of germs:
\begin{equation*}
(X, H, P) = \bigl(\{y_2^3+y_3^3+y_3y_1^4+y_4^2 \}, \{y_4=y_1y_3\}, 0\bigr) /\mumu_3(1,1,2,0),
\end{equation*} 
where $H$ is cut out by $y_4=y_1y_3$. 
Here $(X,P)$ is a triple \type{cD/3}-singularity (see 
\xref{eq:cD/3}).
By \cite[4.12]{MP:IA} the dual graph of the 
minimal resolution of $(H,P)$ is the same as that in \xref{cD/3:thm:E6}.
By \xref{def:existence} one obtains a
birational contraction $f: X\to Z$ extending $f_H: H\to H_Z$,
which is as in \xref{cD/3:thm:E6}. Examples similar to \xref{cD/3:thm:A2} and
\xref{cD/3:thm:D4} are given in \cite[4.14]{MP:IA}.
\end{sexample}

Divisorial contractions of type \xref{cD/3:thm:E6} were studied also in 
\cite[5.1(2)]{Tzi:10} by a different method.

\section{Case \typec{IIA}} 
\begin{setup}
\label{Set-up}
Let $(X,C)$ be an extremal
curve germ and let \mbox{$f: (X, C)\to (Z,o)$} be the corresponding
contraction.
Assume that $(X,C)$ has a point $P$ of type~\typec{IIA}.
Then by \cite[6.7, 9.4]{Mori:flip} and
\cite[8.6, 9.1, 10.7]{MP:cb1} $P$ is the only non-Gorenstein point of $X$
and $(X,C)$ has at most one Gorenstein singular point $R$ \cite[6.2]{Mori:flip},
\cite[9.3]{MP:cb1}. Since $P\in (X,C)$ is locally primitive,
the topological index of $(X,C)$ equals $1$.
Hence the base $(Z,o)$ is smooth
in the $\QQ$-conic bundle case,
and 
is a \type{cDV} point (or smooth) in the divisorial case
(cf. \xref{thm:div:Q-Cartier}).
\end{setup}

\subsection{}
\label{(7.5)}
According to \cite[A.3]{Mori:flip} we can express the \typec{IIA} point as
\begin{equation}
\label{equation-XC}
\begin{array}{rcl}
(X, P)&=&
\{\alpha=0\}/\mumu_4(1, 1, 3, 2)\subset\CC^4_{y_1,\dots, y_4}/\mumu_4(1, 1, 3, 2),
\\[1pt]
C&=&\{y_1\text{-axis}\}/\mumu_4,
\end{array}
\end{equation}
where $\alpha=\alpha(y_1,\dots, y_4)$ is a semi-invariant such that
\begin{equation}
\label{equation-alpha}
\wt\alpha\equiv 2\mod 4,\qquad \alpha\equiv y_1^{\ell(P)}y_j\mod (y_2, y_3, y_4)^2,
\end{equation}
where $j= 2$ (resp. $3$, $4$)\ if $\ell(P)\equiv 1$ (resp. $3$, $0$) $\mod 4$
(see \eqref{equation-iP-lP}) and $(\I_C^\sharp)^{(2)}=(y_j)+(\I_C^\sharp)^{2}$.
Moreover, $y_2^2,\, y_3^2\in \alpha$ (because $(X,P)$ is a terminal point of type
\type{cAx/4}).
Note that $\ell(P)\not\equiv 2\mod 4$ because
of the lack of a variable with $\wt\equiv 0\mod 4$.

\begin{theorem}[{\cite[7.2-7.4]{KM92}}, {\cite{MP:IIA-1}}, {\cite{MP:IIA-2}}]
\label{IIA:thm} 
Let $f:(X,C\simeq \PP^1)\to (Z,o)$ be an extremal curve germ
having a point $P$ of type~\typec{IIA}.
We have the following possibilities for graphs $\Delta(H,C)$ and $\Delta(H_Z,o)$
and local invariants.
\par\medskip\noindent
{\rm
\begin{longtable}{c|c|c|p{0.4\textwidth}|l|c}
{\rm No.} & $i_P(1)$ & $\ell(P)$ & \multicolumn{1}{c}{$\Delta(H,C)$} &\multicolumn{1}{|c|}{$\Delta(H_Z,o)$} &
\\\hline\endhead
\multicolumn{6}{c}{\rm Cases: $H$ is normal}
\\\hline
\nom
\label{IIA:flip:iP=1a}
&$1$& $1$ &
$\xymatrix@R=0pt@C=13pt{
&&\overset{4}\circ\ar@{-}[d]
\\
\bullet\ar@{-}[r]&\underset{4}\circ\ar@{-}[r]&\circ\ar@{-}[r]&\circ
}$
&
$\xymatrix@R=0pt@C=13pt{
&\overset{4}\circ\ar@{-}[d]
\\
\underset{3}\circ\ar@{-}[r]&\circ\ar@{-}[r]&\circ
}$
&
\type{f}

\\\hline
\nom
\label{IIA:flip:iP=1b}
&$1$& $1$ &
$\xymatrix@R=0pt@C=13pt{
\circ\ar@{-}[d] &&\overset{4}\circ\ar@{-}[d]
\\
\bullet\ar@{-}[r]&\underset{4}\circ\ar@{-}[r]&\circ\ar@{-}[r]&\circ
}$
&
$\xymatrix@R=0pt@C=13pt{
&\overset{4}\circ\ar@{-}[d]
\\
\circ\ar@{-}[r]&\circ\ar@{-}[r]&\circ
}$
&
\type{f}

\\\hline
\nom
\label{IIA:flip:iP=2}
&$2$& $3$, $4$ &
$\xymatrix@R=10pt@C=13pt{
\circ\ar@{-}[r]&\circ\ar@{-}[rd]&\circ\ar@{-}[d]
\\
&&\overset{4}{\circ}\ar@{-}[r]&\circ
\\
\bullet\ar@{-}[r]&\circ\ar@{-}[ru]&\circ\ar@{-}[u]
}$
&
$\xymatrix@C=13pt@R=10pt{
\circ\ar@{-}[d] &\circ\ar@{-}[d]
\\
\circ\ar@{-}[r]&\overset3\circ\ar@{-}[r]&\circ
\\
&\circ\ar@{-}[u]
}$
&
\type{f}

\\\hline
\nom\label{IIA:normalA1:a}
&$1$& $1$ &
$
\xymatrix@R=3pt@C=10pt{
\circ\ar@{-}[r]\ar@{-}[d]&\circ&\circ\ar@{-}[d]
\\
\bullet\ar@{-}[r]&\underset4\circ\ar@{-}[r]
&\circ\ar@{-}[r]&\underset4\circ
}
$&\type{A_1}
&
\type{d}

\\\hline
\nom\label{IIA:normalA1:b}
&$1$& $1$ &
$
\xymatrix@R=3pt@C=10pt{
\circ\ar@{-}[d]&&\overset3\circ\ar@{-}[d]&\overset4\circ\ar@{-}[d]
\\
\bullet\ar@{-}[r]&\underset3\circ\ar@{-}[r]
&\circ\ar@{-}[r]&\circ
}
$& \type{A_1} &\type{d}

\\\hline
\nom\label{IIA:normalD5}
&$2$ & $3$, $5$ &
$
\xymatrix@R=7pt@C=10pt{
\bullet\ar@{-}[d]&\circ\ar@{-}[d]&\circ\ar@{-}[d]&\circ\ar@{-}@/^1pt/[dl]
\\
\circ\ar@{-}[r]&\circ\ar@{-}[r]
&\underset{4}\circ\ar@{-}[r]&\circ\ar@{-}[r] &\circ 
}
$&\type{D_{5}}&\type{d}

\\\hline
\nom\label{IIA:normal:cb}
& $2$ & $4$, $5$&
\multicolumn{1}{l|}{
$
\xymatrix@R=3pt@C=13pt{
\bullet\ar@{-}[r]&\circ\ar@{-}[d]&&\circ\ar@{-}[d]&\circ\ar@{-}[d]
\\
\circ\ar@{-}[r]&\circ\ar@{-}[r]
&\underset3\circ\ar@{-}[r]&\circ\ar@{-}[r]&\underset3\circ\ar@{-}[r]&\circ
}
$}&&\type{c}

\\\hline
\multicolumn{5}{c}{Cases: $H$ is not normal}
\\\hline
\nom\label{IIA-n-normal-div}
\footnote{This case was erroneously
omitted in \cite[Th. 3.6 and Cor. 3.8]{Tzi:05D}.}
&& &
$
\xymatrix@R=7pt@C=13pt{
&\circ\ar@{-}[d]
\\
\underset {} \bullet \ar@{-}[r]
&\underset 3\circ\ar@{-}[r]&\circ\ar@{-}[r]&\circ
\\
&\circ\ar@{-}[u]
}
$&\type{D_{5}}&\type{d}

\\\hline
\nom\label{IIA:n-normal-cb}
&& &
\multicolumn{1}{l|}{$
\xymatrix@R=7pt@C=11pt{
&\circ\ar@{-}[r]&\overset {3}\circ\ar@{-}[d]\ar@{-}[r]&\circ
\\
\bullet \ar@{-}[r] &\underset {}\circ\ar@{-}[r]&\circ\ar@{-}[r]&\circ
}
$}&&\type{c}
\end{longtable}}
The variety $X$ can have \textup(at most one\textup) extra 
type \typec{III} singular point in all cases except for 
\xref{IIA:flip:iP=1a},
\xref{IIA:flip:iP=2},
\xref{IIA:normalD5}, and
\xref{IIA:normal:cb} where the singular point is unique. 
\end{theorem}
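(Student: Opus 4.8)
The plan is to run, for the point of type~\typec{IIA}, the same machine that settled types \typec{IC}, \typec{IIB} and \type{cD/3}: describe a general $D\in|{-}K_X|$ and a general $H\in|\OOO_X|$ explicitly, resolve $(H,C)$ by a weighted blowup, and match the outcome against the combinatorics of the contraction $f_H: H\to H_Z$. The essential novelty is that, unlike in the semistable and \typec{IA^\vee} situations, a general $D\in|{-}K_X|$ meets $C$ only at $P$ while $(D,P)$ is a Du Val point of type~\type{D} (not of type~\type{A}), so Lemma~\xref{lemma:lc} and Proposition~\xref{prop:lc} do not apply and one cannot conclude that $(H_Z,o)$ is a cyclic quotient; indeed $(H_Z,o)$ turns out to be a \type{D_5}-point in some cases. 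Hence $H$ has to be analysed by hand.

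First I would fix the local normal form \xref{(7.5)} of $(X,C,P)$ and bound the invariant $\ell(P)$. By Lemma~\xref{equation-iP} (the \type{cAx/4} case) $i_P(1)=\lfloor(\ell(P)+6)/4\rfloor$, and a standard local computation as in \cite{Mori:flip} expresses $w_P(0)$ through $\ell(P)$; feeding these into \eqref{eq-grO-iP1-2}, together with $-K_X\cdot C>0$, $w_P(0)>0$ and $i_R(1)\ge 1$ for a possible extra type~\typec{III} point $R$ (Lemma~\xref{lemma:iP-wP}), forces $i_P(1)\le 2$, hence $\ell(P)\in\{1,3,4,5\}$ since $\ell(P)\not\equiv 2\bmod 4$. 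The same inequality, via \eqref{eq-grO-iP1}, restricts $\gr_C^1\OOO\simeq\OOO_{\PP^1}(d_1)\oplus\OOO_{\PP^1}(d_2)$ to finitely many splitting types; Corollary~\xref{cor-C-pa=0}\xref{cor-C-pa=0a} gives $d_2\ge-1$, and Lemma~\xref{lemma-grC} shows $H^0(\gr_C^1\OOO)\ne 0$ is necessary for $H$ to be normal. A local generator of the subsheaf $\OOO_C(-H)\subset\gr_C^1\OOO$ then gives the equation of $H$ along $C$.

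The argument then splits into the cases ``$H$ normal'' and ``$H$ non-normal''. In the normal case one writes $(H,P)$ as a codimension-two complete intersection in $\CC^4/\mumu_4(1,1,3,2)$ cut out by the equation $\alpha$ of \xref{(7.5)} and a local generator of $\OOO_C(-H)$; performing the weighted $\frac14(1,1,3,2)$-blowup and using that $H$ has only rational singularities (inherited from rationality of $(H_Z,o)$ in the birational case, or from the rational curve fibration $H\to H_Z$ in the $\QQ$-conic bundle case, see \xref{H}) together with the generality of $H$ reduces $\Delta(H,C)$ to the listed graphs; a parallel, easier analysis at the optional type~\typec{III} point $R$ accounts for the extra $(-2)$-chains. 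In the non-normal case one takes the normalization $\nu: H^{\n}\to H$ with $C^{\n}:=\nu^{-1}(C)$; by inversion of adjunction $(H,C)$ is slc and $(H^{\n},C^{\n})$ is lc, and since $H$ is Cartier in the terminal threefold $X$ it carries a $\QQ$-Gorenstein smoothing, so by \cite{KSh88} its dissident singularities are only pinch points, degenerate cusps of embedding dimension $\le 4$, and the slt quotient point lying over $P$. The double cover $C^{\n}\to C$ is then either branched at two points with $C^{\n}$ irreducible, or unramified with $C^{\n}$ a pair of lines through one point; in either case analysing $\Delta(H^{\n},C^{\n})$ — with subchains $\varDelta_1,\varDelta_2$ of types $\frac1m(1,a),\frac1m(1,-a)$ over $P$ — yields exactly the two non-normal rows.

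Having the surface germ $(H,C)$, I would read off the type of $f$: since $(X,C)$ is locally primitive its topological index is $1$, so by Corollary~\xref{theorem-main-Q-Cartier-i} the birational germ is divisorial precisely when $(Z,o)$ is terminal, equivalently when the hyperplane section $(H_Z,o)$ — obtained by successively contracting the black vertices of $\Delta(H,C)$ — is Du Val; and $f$ is a $\QQ$-conic bundle precisely when $C$ cannot be contracted to a point on $H$, i.e. when $H_Z$ is a curve, which is again combinatorially visible. Finally, realizability of each entry follows from the deformation construction \xref{def:existence}: for normal $H$ one smooths $H\to H_Z$ and invokes Theorem~\xref{theorem-main-def}, while for non-normal $H$ one uses the smoothing results of \cite{Tziolas2009}, whose hypotheses are met by the restricted list of dissident singularities above; the uniqueness of the non-Gorenstein point and the bound of one extra type~\typec{III} point are already recorded in \xref{Set-up}. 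I expect the non-normal case to be the main obstacle — both pinning down exactly which dissident configurations occur and verifying that each is genuinely smoothable to a threefold extremal curve germ — with the long weighted-blowup bookkeeping of the normal case a close second.
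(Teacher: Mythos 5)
Your overall plan (the local normal form \xref{(7.5)}, numerical invariants, weighted blowup analysis of a general $H$, the normal/non-normal dichotomy via \cite{KSh88} and \cite{Tziolas2009}, the combinatorial divisorial-versus-flipping test through $(H_Z,o)$, and existence by \xref{def:existence}) is indeed the strategy of \cite{KM92} and \cite{MP:IIA-1}, \cite{MP:IIA-2}, to which this survey defers (the survey itself only records \xref{Set-up}, \xref{(7.5)} and existence examples). But your numerical step contains a genuine error. From \eqref{eq-grO-iP1-2} together with $\gr_C^0\upomega\simeq\OOO_C(-1)$ (Corollary \xref{corollary:gr-w}) one only obtains $\sum i_P(1)\le 3$, and the extra type \typec{III} point $R$ need not exist, so you may not add $i_R(1)\ge 1$ unconditionally; hence $i_P(1)\le 2$ is not forced. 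Consequently the asserted list $\ell(P)\in\{1,3,4,5\}$ is false: by \eqref{equation-iP-lP} the inequality only gives $\ell(P)\le 9$ (with $\ell(P)\not\equiv 2\mod 4$), and germs with $i_P(1)=3$ actually occur --- Example \xref{example-conic-bundle-lP=8} is a $\QQ$-conic bundle of type \xref{IIA:n-normal-cb} with $\ell(P)=8$, and Example \xref{ex:IIA-n-normal} points to a divisorial germ of type \xref{IIA-n-normal-div} with $\ell(P)=7$. This is precisely why the table leaves $i_P(1)$ and $\ell(P)$ unspecified in the non-normal rows. Your bound would therefore eliminate at the outset part of the non-normal cases you later set out to classify, so the case analysis built on it cannot be complete.

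A secondary gap: in the flipping (normal-$H$) cases you treat rationality of $(H_Z,o)$, hence of $H$, as an input, but by \xref{H} this is automatic only in the $\QQ$-conic bundle case or when $(H_Z,o)$ is already known to be rational; in the divisorial case it follows from $(Z,o)$ being \type{cDV} (\xref{Set-up}), while for flipping germs $(Z,o)$ is not $\QQ$-Gorenstein and the normality and rationality of $H$ are part of what has to be proved (a substantial step in \cite{MP:IIA-1}), not a consequence of the framework you quote; deducing it from the theorem stated in the introduction would be circular, since that statement is itself derived from the classification.
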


Examples of flipping contractions can be constructed similar to~\xref{ex:cD/3:flip}.

\begin{example}[{\cite[7.6.4]{KM92}}]
\label{ex:IIA:flip}
Let $V\supset C$ be a germ of a smooth threefold along $C\simeq \PP^1$
such that $\NNN_{C/V}\simeq \OOO_C\oplus\OOO_C$. 
Pick a point $P\in C$ and let $(v_1, v_2, v_3)$ be coordinates at
$(V, P)$ such that $(C, P) = \{\text{$v_1$-axis}\}$. 
Let $(X,C, P)$ be a \typec{IIA}-point as in
\eqref{equation-XC}-\eqref{equation-alpha} with $\alpha\equiv y_1y_2 \mod (y_2, y_3, y_4)^2$.
For suitable $\varepsilon_1$ and $\varepsilon_2$ such that $0<\varepsilon_1<\varepsilon_1\ll 1$, 
$(y_1^4, y_1^2y_4, y_1y_3)$ form 
coordinates for $U = (X, P)\cap \{\varepsilon_1 < | y_1^4| < \varepsilon_2\}$ by the implicit function theorem. 
Thus
$v_1 = y_1^4$, $v_2 =y_1^2 y_4$, and $v_3 =y_1y_3$
patch $(X, P)$ and $V\setminus (V, P) \cap \{|v_1| < \varepsilon_1\}$ along $U$. 
By \cite[7.2.4]{KM92} the germ $(X,C)$ is a flipping
curve germ of type \typec{IIA} as in \xref{IIA:flip:iP=1a}.
See \cite[7.9.4, 7.12.5]{KM92} for more examples of flipping contractions.
\end{example}

The existence in the above theorem in the case where $H$ is normal can be established 
by using arguments of \xref{def:existence}.
In the case \xref{IIA:normalD5} we have also explicit example:

\begin{example}[{\cite[6.6]{MP:IIA-1}}]
\label{ex:IIA-normal}
Let $Z \subset\CC^5_{z_1,\ldots,z_5}$ be defined by two equations:
\begin{eqnarray*}
0&=& z_2^2+z_3+z_4z_5^k+z_1^3,\qquad k\ge 1,\\
0&=& z_1^2z_2^2 + z_4^2-z_3z_5+z_1^3z_2+cz_1^2z_4.
\end{eqnarray*}
By eliminating $z_3$ using the first equation,
one sees easily that $(Z,0)$ is a
threefold singularity of type~\type{cD_{5}}.
Let $B \subset Z$ be the $z_5$-axis, and
let $f: X \to Z$ be the weighted blowup of $B$ with
weight $(1,1,4,2,0)$. 
By an easy computation one sees that
$C:=f^{-1}(0)_{\red} \simeq {\PP}^1$
and $X$ is covered by two charts: $z_1$-chart
and $z_3$-chart. The origin
of the $z_3$-chart
is a type~\typec{IIA} point $P$ with $\ell(P)=3$:
\begin{equation*}
\{y_1^3y_3+y_2^2+y_3^2+y_4(y_1^2y_2^2+y_4^2+y_1^3y_2+c y_1^2 y_4)^k=0\}/\mumu_{4}(1,1,3,2),
\end{equation*}
where $(C,P)$ is the $y_1$-axis.
Moreover, $X$ is smooth outside $P$.
Thus $X\to Z$ is a divisorial contraction of type~\xref{IIA:normalD5}. 
See also {\cite[8.3.3]{MP:IIA-1}} for an example with $\ell(P)=5$.
\end{example}

The case \xref{IIA:normalD5} was also studied by N.~Tziolas \cite[Th. 3.6]{Tzi:05D}.

The existence of \xref{IIA-n-normal-div} can be shown similar to Example~\xref{ex:IIA-normal}: 

\begin{example}[{\cite[3.6]{MP:IIA-2}}]
\label{ex:IIA-n-normal}
Let $Z \subset {\CC}^5_{z_1,\ldots,z_5}$ be
defined by
\begin{eqnarray*}
0&=& z_2^2+z_3+z_4z_5^k-z_1^3,\qquad k\ge 1,\\
0&=& z_1^2z_2^2+z_4^2-z_3z_5.
\end{eqnarray*}
Then $(Z,0)$ is a
threefold singularity of type~\type{cD_{5}}.
Let $B \subset Z$ be the $z_5$-axis and
let $f: X \to Z$ be the weighted $(1,1,4,2,0)$-blowup.
The origin
of the $z_3$-chart
is a type~\typec{IIA} point $P$ with $\ell(P)=3$:
\begin{equation*}
\{-y_1^3y_3+y_2^2+y_3^2+y_4(y_1^2y_2^2+y_4^2)^k=0\}/\mumu_{4}(1,1,3,2),
\end{equation*}
where $(C,P)$ is the $y_1$-axis.
In the $z_1$-chart we have a type~\typec{III} point.
See also \cite[3.7]{MP:IIA-2} for an example of 
a divisorial germ as in \xref{IIA-n-normal-div} whose singular locus 
consists of a single \typec{IIA} point $P$ with $\ell(P)=7$.
\end{example}

\begin{example}[{\cite[4.8]{MP:IIA-2}}]
\label{example-conic-bundle-lP=4+III}
Let $X$ be the the hypersurface of weighted degree $10$ in 
the weighted projective space $\PP(1,1,3,2,4)_{x_1,\dots, x_4, w}$ 
given by the equation 
\begin{equation*}w\phi_6 -x_1^6\phi_4=0,\quad \text{where}\quad
\begin{array}{lll}
\phi_6&:=&x_1^4x_4+x_3^2+x_2^2w+\delta x_4^3,
\\
\phi_4&:=&x_4^2+\nu x_2x_3+\eta x_1^2x_4+\mu x_1^3x_2
\end{array}
\end{equation*}
(for simplicity we assume that the coefficients $\delta$, $\nu$, $\eta$ are 
general).
Regard $X$ as a small analytic neighborhood of $C$.
In the affine chart $U_w:=\{w\neq 0\}\simeq \CC^4/\mumu_{4}(1,1,3,2)$
the variety $X$ is given by 
\begin{equation*}
\phi_6(y_1,y_2,y_3,y_4, 1) - y_1^6\phi_4(y_1,y_2,y_3,y_4, 1)=0
\end{equation*}
and $C$ is the $y_1$-axis.
Clearly, it has the form \eqref{equation-alpha}.
So, the origin $P\in (X,C)$ is a type~\typec{IIA} point with $\ell(P)=4$. 

In the affine chart $U_1:=\{x_1\neq 0\}\simeq \CC^4$
the variety $X$ is defined by 
\begin{equation*}
w\phi_6(1,z_2,z_3,z_4, w) - \phi_4(1,z_2,z_3,z_4, w)=0.
\end{equation*}
If $\mu\neq 0$, then $X$ is smooth outside $P$, i.e. $(X,C)$ is as 
in the case \xref{IIA:normal:cb}. If $\mu=0$, then 
$(X,C)$ has a type~\typec{III} point at $(0,0,0,\eta)$.

We claim that $(X,C)$ admits a structure of a $\QQ$-conic bundle germ 
as in \xref{IIA:n-normal-cb} (resp. \xref{IIA:normal:cb}) if 
$\mu=0$ (resp. $\mu\neq 0$).
\end{example}
\begin{proof}
Consider the surface $H=\{\phi_6=\phi_4=0\}\subset X$.
Let $\psi: H^{\n}\to H$ be the normalization (we put $H^{\n}=H$ if $H$ is normal)
and let $C^{\n}:=\psi^{-1}(C)$. 
One can explicitly check that $H$ is normal and smooth outside $P$ if $\mu \neq 0$
and $H$ is singular along $C$, the curve
$C^{\n}$ is irreducible and rational, and $\psi_C:= C^{\n}\to C$ is a double cover
if $\mu =0$.
Moreover, the singularities of $H^{\n}$ are rational.
Note that $H$ is a fiber of the fibration $\pi: X\to D$ over a small disk around the origin 
given by the rational function $ \phi_4/w =\phi_6/x_1^6$
which is regular in a neighborhood of $C$.
Analyzing the minimal resolution
one can show that 
there exists a rational curve fibration $f_H: H\to B$, where $B\subset \CC$
is a small disk around the origin, such that $C=f_H^{-1}(0)_{\red}$.
Now the existence of a contraction is a consequence of the following.
\end{proof}

\begin{sclaim} 
\label{IIA:claim:exist}
\begin{enumerate}
\item \label{IIA:claim:exist1}
$H^1(\hat X,\OOO_{\hat X})=0$,
where $\hat X$ denotes the completion of $X$ along $C$.
\item \label{IIA:claim:exist2}
The contraction \mbox{$f_H: H\to B$} extends to a contraction \mbox{$\hat f: \hat X\to \hat Z$}.
\item \label{IIA:claim:exist3}
There exists a contraction \mbox{$f:X\to Z$} that approximates \mbox{$\hat f: \hat X\to \hat Z$}.
\end{enumerate}
\end{sclaim}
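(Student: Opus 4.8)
\textbf{Part \xref{IIA:claim:exist1}.} The idea is to propagate the vanishing outward from the central curve by cohomology and base change. Let $t$ be the coordinate of $\pi$ and $s$ that of $B$. Then $\hat X$ is proper over $\hat D$ (the completion of $D$ at $0$) with central fibre the completion $\hat H$ of $H$ along $C$, and $\hat H$ is proper over $\hat B$ (the completion of $B$ at $0$) with central fibre the scheme fibre $\Gamma_0:=f_H^{-1}(0)$, whose reduction is $C\simeq\PP^1$. Since $H$ is Cohen--Macaulay, $f_H$ is flat with one‑dimensional fibres, so $R^{\ge 2}(f_H)_*\OOO_H=0$ and base change applies: from $(f_H)_*\OOO_H=\OOO_B$ one gets $H^0(\Gamma_0,\OOO_{\Gamma_0})=\CC$, hence $\p(\Gamma_0)=0$ because $\chi(\OOO_{\Gamma_0})=\chi(\OOO_{\PP^1})=1$, i.e. $H^1(\Gamma_0,\OOO_{\Gamma_0})=0$. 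Base change and Nakayama applied to $\hat H\to\hat B$ then give $H^1(\hat H,\OOO_{\hat H})=0$ (and $H^2(\hat H,\OOO_{\hat H})=0$, the fibres being one‑dimensional), and running the same argument once more for $\hat X\to\hat D$ (killing $R^2$ first, then $R^1$) yields $H^1(\hat X,\OOO_{\hat X})=0$.

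\textbf{Part \xref{IIA:claim:exist2}.} First I would lift the function defining $f_H$. Regard $f_H$ as the morphism to the disc $B$ given by $s\in H^0(H,\OOO_H)$; since $\pi$ is flat, $t^{n}\OOO_{\hat X}/t^{n+1}\OOO_{\hat X}\cong\OOO_{\hat H}$, and the exact sequences $0\to\OOO_{\hat H}\to\OOO_{\hat X}/t^{n+1}\OOO_{\hat X}\to\OOO_{\hat X}/t^{n}\OOO_{\hat X}\to 0$ together with $H^1(\hat H,\OOO_{\hat H})=0$ allow one to lift $s$ step by step to $\tilde s\in H^0(\hat X,\OOO_{\hat X})$ with $\tilde s\equiv s\bmod t$. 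Set $\hat f:=(\tilde s,\pi)$ and let $\hat Z$ be defined by $\OOO_{\hat Z}:=\hat f_*\OOO_{\hat X}$. The fibres of $\hat f$ are one‑dimensional with vanishing $H^1$ (the general one is a $\PP^1$, the special one is $\Gamma_0$), so $R^1\hat f_*\OOO_{\hat X}=0$; hence $\OOO_{\hat Z}$ is flat over $\CC[[t]]$, and base change over $\hat D$ shows its closed fibre $\OOO_{\hat Z}/t\,\OOO_{\hat Z}$ is $\OOO_{\hat B}\cong\CC[[s]]$. A flat complete local $\CC[[t]]$‑algebra with closed fibre $\CC[[s]]$ is isomorphic to $\CC[[s,t]]$ (lift the generator $s$ and use flatness to see that the resulting surjection from $\CC[[s,t]]$ has trivial kernel); so $\hat Z$ is a regular two‑dimensional formal germ, $\hat f_*\OOO_{\hat X}=\OOO_{\hat Z}$, the morphism $\hat f$ is proper with $\hat f|_{\hat H}=f_H$, and $\hat f^{-1}(o)=\{\tilde s=t=0\}\cap\hat X=\Gamma_0$, so that $C=\hat f^{-1}(o)_{\red}$. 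Finally $-K_X$ is $\hat f$‑ample: on the general fibre $\simeq\PP^1$ it has degree $2$, and positivity along $C$ is read off from the explicit model of \xref{example-conic-bundle-lP=4+III} (equivalently, from the structure of $f_H$). Thus $\hat f$ is a formal $\QQ$‑conic bundle germ extending $f_H$.

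\textbf{Part \xref{IIA:claim:exist3}.} The plan is to algebraize exactly as in the existence argument \xref{def:existence} and \cite[11.4.1]{KM92}: on a sufficiently small representative of the analytic germ $X$, approximate $\tilde s$ by an analytic $s'\in H^0(X,\OOO_X)$ with $s'\equiv\tilde s$ modulo a high power of $\I_C$ (Artin approximation), set $f:=(s',\pi)$ and $Z:=\Spec_D f_*\OOO_X$; since $f$ then agrees with $\hat f$ to high order and the conditions ``$f_*\OOO_X=\OOO_Z$'', ``every fibre has dimension $\le 1$'', ``$-K_X$ is $f$‑ample'' and ``$C=f^{-1}(o)_{\red}$'' are open, $f\colon(X,C)\to(Z,o)$ is a $\QQ$‑conic bundle germ. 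Combined with the computation of $\Delta(H^{\n},C^{\n})$ already carried out in \xref{example-conic-bundle-lP=4+III}, this shows that $(X,C)$ is of type \xref{IIA:n-normal-cb} if $\mu=0$ and of type \xref{IIA:normal:cb} if $\mu\ne 0$.

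The step I expect to be the main obstacle is Part \xref{IIA:claim:exist2}: the vanishing $R^1\hat f_*\OOO_{\hat X}=0$ and the identification $\OOO_{\hat Z}\cong\CC[[s,t]]$, which guarantee that $\hat Z$ is genuinely a surface and $\hat f$ a contraction rather than something larger, rest essentially on Part \xref{IIA:claim:exist1} via cohomology and base change, while the $\hat f$‑ampleness of $-K_X$ has to be extracted from the explicit equations. Part \xref{IIA:claim:exist3} is then a standard, if technical, application of Artin approximation in the analytic category.
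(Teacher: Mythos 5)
Your treatment of parts \xref{IIA:claim:exist1} and \xref{IIA:claim:exist2} is close in spirit to the paper's: the paper lifts a divisor through the exact sequence $0\to\OOO_X\to\OOO_X(H)\to\OOO_H(H)\to 0$ using the vanishing of \xref{IIA:claim:exist1} (which it quotes from \cite[4.8.4]{MP:IIA-2}), and your lifting of the function $s$ through the $t$-adic filtration is the same mechanism. However, your argument for \xref{IIA:claim:exist1} has a circular step: deducing $H^0(\OOO_{\Gamma_0})=\CC$ from $(f_H)_*\OOO_H=\OOO_B$ ``by base change'' requires $R^1(f_H)_*\OOO_H$ to be locally free (e.g. zero) at $0$, which is essentially what is being proved; since $\Gamma_0$ is non-reduced, $h^0(\OOO_{\Gamma_0})=1$ is not automatic from connectedness. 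In the normal case ($\mu\neq 0$) one can argue through the minimal resolution, but in the non-normal case ($\mu=0$) this vanishing is exactly the content of \cite[4.8.4]{MP:IIA-2} and needs a genuine argument, not a base-change formality.

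The serious gap is part \xref{IIA:claim:exist3}. ``Artin approximation'' does not produce an analytic function $s'\in H^0(X,\OOO_X)$ on a neighborhood of the compact curve $C$ agreeing with the formal section $\tilde s\in H^0(\hat X,\OOO_{\hat X})$ modulo a high power of $\I_C$: the approximation theorems concern Henselian or complete local rings, not germs along a compact curve, and the image of $H^0(X,\OOO_X)\to H^0(\OOO_X/\I_C^N)$ is not controlled (the obstructions live in $H^1$ of an open neighborhood, not of the formal completion), so there is no reason that any convergent function is congruent to $\tilde s$. This formal-to-analytic passage is precisely the nontrivial point, and the paper handles it by a different mechanism: the computations show that the deformations of the scheme fiber $F=f_H^{-1}(0)$ inside $X$ are unobstructed, hence the corresponding component of the Douady space of $X$ is smooth and two-dimensional, and the universal family over it produces the analytic contraction $f\colon X\to Z$ extending $f_H$. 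To repair your proof you would have to replace the approximation step by this Douady-space argument (or by a bona fide algebraization theorem applicable to the formal fibration $\hat f$); appealing to openness of the contraction conditions cannot substitute for the existence of the missing convergent function.
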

\begin{proof}
For \xref{IIA:claim:exist1} we refer to \cite[4.8.4]{MP:IIA-2}.

\xref{IIA:claim:exist2}
Since $H^1(\OOO_{\hat X})=0$, from the exact sequence
\begin{equation*}
0 \xrightarrow{\hspace*{20pt}} \OOO_X \xrightarrow{\hspace*{20pt}} \OOO_X (H) \xrightarrow{\hspace*{20pt}} \OOO_H (H)\xrightarrow{\hspace*{20pt}} 0 
\end{equation*}
we see that the map $H^0(\OOO_{\hat X} (\hat H))\to H^0(\OOO_{\hat H} (\hat H))$
is surjective.
Hence there exists a divisor $\hat H_1\in |\OOO_{\hat X}|$ such that $\hat H_1|_{\hat H}=\hat \CCC$. Then the divisors 
$\hat H$ and $\hat H_1$ define a contraction $\hat f: \hat X\to \hat Z$.

\xref{IIA:claim:exist3}
Let $F$ be the scheme fiber of $f_H: H\to B$ over the origin.
The above arguments shows that the deformations of $F$ are unobstructed.
Therefore the corresponding component of the Douady space is smooth and two-dimensional.
This allow us to produce a contraction on $X$.
\end{proof}

\begin{example}[{\cite[4.9]{MP:IIA-2}}]
\label{example-conic-bundle-lP=8}
Similarly to Example \xref{example-conic-bundle-lP=4+III}, let
$X\subset \PP(1,1,3,2,4)$ be a small analytic neighborhood of $C=
\{\text{$(x_1,w)$-line}\}$ given by the equation
$x_1^6\phi_4-w \phi_6 =0$, where 
\begin{eqnarray*}
\phi_6&:=&x_3^2+x_2^2w+\delta x_4^3+cx_1^2x_4^2,
\\
\phi_4&:=&x_4^2+\nu x_2x_3+\eta x_1^2x_4.
\end{eqnarray*}
It is easy to check that $P:=(0:0:0:0:1)$ is the only 
singular point of $X$ on $C$ and it is a type~\typec{IIA} point with $\ell(P)=8$.
The rational function $\phi_4/w=\phi_6/x_1^6$ near $C$ defines a fibration
whose central fiber $H$ is given by $\phi_4=\phi_6 =0$ such that $\Delta(H,C)$ is
of type \xref{IIA:n-normal-cb}.
The existence of a contraction
$f: X\to Z$ can be shown similar to Claim \xref{IIA:claim:exist}.
\end{example}

\begin{example}[{\cite[4.9.1]{MP:IIA-2}}]
\label{example-conic-bundle-normal-H}
In a similar way we can construct an example of a $\QQ$-conic bundle with $\ell(P)=5$
and normal $H$ as in \xref{IIA:normal:cb}.
Consider $X\subset \PP(1,1,3,2,4)$ given by $w\phi_6-x_1^6\phi_4=0$, where
\begin{eqnarray*}
\phi_6&:=&x_1^5 x_2+x_2^2w+x_3^2+\delta x_4^3+cx_1 ^2 x_4^2
\end{eqnarray*}
and $\phi_4$ is as in \xref{example-conic-bundle-lP=4+III}.
In the affine chart $U_w\simeq \CC^4/\mumu_{4}(1,1,3,2)$
the origin $P\in (X,C)$ is a type~\typec{IIA} point with $\ell(P)=5$. 
It is easy to see that $X$ is smooth outside $P$.
The rational function $\phi_4/w=\phi_6/x_1^6$ defines 
a fibration on $X$ near $C$ with central fiber $H=\{\phi_4=\phi_6=0\}$.
\end{example}

\appendix
\renewcommand{\theequation}{\Alph{section}.\arabic{subsection}.\arabic{equation}}
\renewcommand{\thesubsection}{\Alph{section}.\arabic{subsection}}
\section{A remark on divisorial contractions}

\begin{proposition}\label{prop:mult}
Let $(X,C\simeq \PP^1)$ be a divisorial curve germ 
with one non-Gorenstein point which is not of type~\type{cA/m} with $m>2$
and let
$f : (X, C) \to (Z,o)$ be corresponding contraction.
Let $E\subset X$ be the exceptional divisor and let $B:=f(E)$ be the 
blowup curve. Then the multiplicity $\mult_{o}(B)$ is given by the following table.
{\rm
\begin{center}
\begin{tabularx}{1\textwidth}{lXcll}
$(X,C)$ & $\Delta(H,C)$ &\mbox{$\mult_{o}(B)$}&$H_Z$&$D_Z$
\\
\hline
\\[-9pt]
\typec{IIA}&\xref{IIA:normalA1:a}, \xref{IIA:normalA1:b} & 3
&\type{A_1}&\type{D_{2n+1}}
\\
\typec{IIA}&\xref{IIA:normalD5}, \xref{IIA-n-normal-div} &1
&\type{D_5}&\type{D_{2n+1}}
\\
\type{(IIB)}& \xref{IIB:thm-A2case-simple}, \xref{IIB:thm-D4case-double} & 2
&\type{A_2}, \type{D_4}&\type{E_6}
\\
\type{(IIB)}& \xref{IIB:thm-smooth-case-simple} & 5
&\type{A_0}&\type{E_6}
\\
\type{cD/3}& \xref{cD/3:thm:A2}, \xref{cD/3:thm:D4} & 2
&\type{A_2}, \type{D_4}&\type{E_6}
\\
\type{cD/3}& \xref{cD/3:thm:E6} & 1
&\type{E_6}&\type{E_6}
\\
\type{cA/2}&\xref{KM:4.7.3.1.1}& $n$&\type{A_1}&\type{A}
\\
\type{cA/2}&\xref{KM:4.7.3.1.2}& 3&\type{A_0}&\type{A}
\\
\type{cA/2}& \xref{KM:4.7.3.1.3} & 1&\type{A_2}&\type{A}
\\
\type{cA/2}& \xref{KM:4.7.3.1.4}& 4&\type{A_0}&\type{A}
\\
\type{cAx/2}, \type{cD/2}&\xref{KM:4.7.4}-\xref{KM:4.7.5} &1&\type{D}&\type{D_{}}
\\
\type{cE/2}& \xref{KM:4.7.4}-\xref{KM:4.7.6} &1&\type{D}, \type{E_6}&\type{E_7}
\end{tabularx}
\end{center}}
\noindent
where $H_Z$ is a general hyperplane section of $(Z,o)$
and $D_Z$ is a general hyperplane section of $(Z,o)$ passing through $B$.
In the \type{cA/2}-case the meaning of $n$ is the same as in \xref{KM:4.7.3.1.1}.
\end{proposition}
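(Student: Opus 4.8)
The plan is to treat the finitely many rows of the table one by one — in each of them the contraction $f$ is explicitly described in the quoted classifications (Theorems \xref{thm:index2}, \xref{cD/3:thm)}, \xref{main-IIB}, \xref{IIA:thm}) — and to extract $\mult_o(B)$ from the dual graph $\Delta(H,C)$ by one uniform recipe. The last two columns need nothing new: by Theorem \xref{thm:ge1} a general $D_Z\in|{-}K_Z|$ is the Du Val surface $f(D)$ for $D\in|{-}K_X|$ general (and it does contain $B$, since $D$ meets every fibre of $E\to B$), whose type is given by \xref{local:elephant} in the cases $D\cap C=\{P\}$ and by \xref{ge:IIB} for type~\typec{IIB}; the type of $H_Z$ is already part of the quoted theorems.

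The first step is to pin down the discrepancy: $K_X=f^*K_Z+E$. Since $X$ and $Z$ are terminal they are smooth in codimension two, and at a general point of $B$ the symbolic and ordinary powers of $\I_B$ coincide ($B$ being smooth there), so near such a point $f$ is the ordinary blow-up of a smooth curve in a smooth threefold; hence for the general fibre $\ell\simeq\PP^1$ of $E\to B$ one has $-K_X\cdot\ell=1$ and $E\cdot\ell=-1$, and restricting $K_X=f^*K_Z+aE$ to $\ell$ gives $a=1$. Intersecting with $C$ and using $f^*K_Z\cdot C=0$ together with Corollary~\xref{lemma:KC} (the germ is locally primitive with a unique non-Gorenstein point, of index $m\in\{2,3,4\}$, so its topological index is $1$ and $-K_X\cdot C=1/m$), we obtain $E\cdot C=K_X\cdot C=-1/m$.

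The second step realizes $\mult_o(B)$ cohomologically. Let $H=f^*H_Z\in|\OOO_X|$ be general and $h$ a local equation of $H_Z$ at $o$. As $B\not\subset H_Z$, the function $h|_E=(f|_E)^*(h|_B)$ on the reduced irreducible surface $E$ is non-zero, with divisor $(f|_E)^*(B\cap H_Z)$, where $B\cap H_Z$ is an effective divisor on the curve $B$ of degree $\mult_o(B)$ by the definition of multiplicity. Since the general fibre $\ell$ of $f|_E\colon E\to B$ is a $\PP^1$ with $\ell^2=0$ (so $K_E\cdot\ell=-2$), Riemann--Roch on $E$ gives $\chi\bigl(\OOO_E(-(f|_E)^*(B\cap H_Z))\bigr)=\chi(\OOO_E)-\mult_o(B)$, all the higher direct images that intervene vanishing by Theorem~\xref{th-vanish}; hence
\[
\mult_o(B)=\chi(\OOO_{E\cap H}),
\]
the scheme $E\cap H=\{h|_E=0\}$ being $S_1$ (as $E$ is Cohen--Macaulay and $h|_E$ a non-zerodivisor). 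Because $E\cap H$ is supported on $C=f^{-1}(o)_{\red}$, and $H$ is normal in every row of the table except~\xref{IIA-n-normal-div}, we have $E\cap H=\lambda C$ as a Weil divisor on $H$, with $\lambda\in\ZZ_{>0}$ determined by $\lambda\,C^2_H=E\cdot C=-1/m$.

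Finally, on the minimal resolution $\pi\colon\tilde H\to H$ everything is read off from $\Delta(H,C)$: solving $\pi^*C\cdot F_i=0$ for the (rational) coefficients $a_i$ in $\pi^*C=\tilde C+\sum a_iF_i$ gives $C^2_H=\pi^*C\cdot\tilde C$, hence $\lambda$; and then, $\OOO_H(-\lambda C)$ pulling back to $\OOO_{\tilde H}\bigl(-\lceil\pi^*(\lambda C)\rceil\bigr)$ with $R^1\pi_*$ vanishing (rational singularities),
\[
\mult_o(B)=\chi(\OOO_{\lambda C})=1-\chi\bigl(\tilde H,\OOO_{\tilde H}(-\lceil\pi^*(\lambda C)\rceil)\bigr),
\]
which is evaluated by Riemann--Roch on $\tilde H$ using $\chi(\OOO_{\tilde H})=1$, $\tilde C^2=-1$ (the $\bullet$-vertex), and $K_{\tilde H}\cdot F_i=-2-F_i^2$. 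Running this through each row reproduces the table; the one non-normal row~\xref{IIA-n-normal-div} is settled by the explicit model of Example~\xref{ex:IIA-n-normal}, where $B$ is a smooth coordinate axis and $\mult_o(B)=1$ at once, and the other rows may be cross-checked against the explicit (weighted blow-up) models in \cite{KM92}, \cite{MP:IA}, \cite{MP:IIA-1}, \cite{MP:IIA-2}, \cite{MP:ICIIB}. The main obstacle is precisely this last step: $E|_H$ is usually \emph{not} Cartier on the singular surface $H$, which is why the round-up enters and why one must verify case by case that $\lambda=-1/(mC^2_H)$ is a positive integer and that the graphs quoted from the literature are read with the present sign conventions; the remainder is routine if lengthy bookkeeping.
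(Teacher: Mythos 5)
Your strategy is workable and its first half is essentially correct: the discrepancy computation giving $E\cdot C=K_X\cdot C=-1/m$ (via \xref{lemma:KC}), and the identity $\mult_o(B)=\chi(\OOO_{E\cap H})$, which however should not be argued by ``Riemann--Roch on $E$'' (the Euler characteristics $\chi(\OOO_E)$, $\chi(\OOO_E(-H))$ are not finite, $E$ being a non-compact germ) but by pushing $0\to\OOO_E(-H)\to\OOO_E\to\OOO_{E\cap H}\to 0$ down to $B$ and using $R^1f_*\OOO_E=0$, which does follow from Theorem \xref{th-vanish}. The genuine gap is in your final step, converting $\chi(\OOO_{E\cap H})$ into the round-up Riemann--Roch expression on the minimal resolution. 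First, at the non-Gorenstein point $P$ the scheme $E\cap H$ is cut out on $H$ by the non-principal ideal $\I_E\OOO_H$, and you silently identify it with the subscheme defined by the reflexive sheaf $\OOO_H(-\lambda C)$; these ideals agree only in codimension one, and a finite-length discrepancy at $P$ would change $\chi$ by exactly that length---nothing in your argument excludes this. Second, the formula $\chi(\OOO_{\lambda C})=1-\chi\bigl(\OOO_{\tilde H}(-\lceil\pi^*(\lambda C)\rceil)\bigr)$ needs both $\pi_*\OOO_{\tilde H}(-\lceil\pi^*(\lambda C)\rceil)=\OOO_H(-\lambda C)$ and $R^1\pi_*\OOO_{\tilde H}(-\lceil\pi^*(\lambda C)\rceil)=0$, and the latter does not follow from rationality of the singularities: relative Kawamata--Viehweg is not applicable since $-\lceil\pi^*(\lambda C)\rceil-K_{\tilde H}$ fails to be $\pi$-nef on these graphs (it is already negative on the $(-4)$-curve adjacent to $\bullet$ in \xref{IIA:normalA1:a}). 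Finally, for the non-normal row \xref{IIA-n-normal-div} the explicit model \xref{ex:IIA-n-normal} verifies the entry in one example only; it does not prove $\mult_o(B)=1$ for every germ of that type, which is the assertion of the table.

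For comparison, the paper's proof bypasses all of this by quoting Tziolas \cite[Lemma~5.1]{Tzi:05}: when $H$ is normal, $\mult_o(B)=-\frac{1}{m^2(C^2)_H}$, where $(C^2)_H$ is then read off the dual graph by solving $\pi^*C\cdot\Theta_j=0$ --- exactly the linear algebra you already carry out to get the coefficients of $\pi^*C$ --- and the non-normal case \xref{IIA-n-normal-div} is treated by the analogous computation on the normalization. So your argument is repaired most economically by replacing the two unjustified identifications above with that citation (or by proving a statement of that kind); this also removes the need to check case by case that $\lambda=-1/(m(C^2)_H)$ is an integer. Your treatment of the columns $H_Z$ and $D_Z$ (via the quoted classification theorems, \xref{thm:ge1} and \xref{local:elephant}) agrees with the paper and is fine.
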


The cases with $\mult_{o}(B)=1$, i.e. those with smooth $B$, were studied 
in details by N.~Tziolas 
\cite{Tzi:03}, \cite{Tzi:05}, \cite{Tzi:05D}, \cite{Tzi:10}.

\begin{proof}
Recall that $Z$ is $\QQ$-Gorenstein and $E$ is $\QQ$-Cartier divisor
(Theorem~\xref{thm:div:Q-Cartier}).
By classification in all our cases $Z$ is in fact Gorenstein
(that is, $H_Z$ has at worst Du Val singularity).
Hence, $E\in |K_X|$.
Let $H:=f^*(H_Z)$. Let $D\in |-K_X|$ be a general member.
We have 
$-K_X\cdot C=1/m$, 
where
$m$ is the index of the non-Gorenstein point (see \xref{lemma:KC}).
For simplicity assume that $H$ is normal.
The case \xref{IIA-n-normal-div} can be treated in a similar way.
\begin{lemma}[{\cite[Lemma~5.1]{Tzi:05}}]
If in the above notation $H$ is normal, then
\begin{equation}
\mult_o(B)=-\frac{(K_C\cdot C)^2}{(C^2)_H}=-\frac{1}{m^2(C^2)_H}.
\end{equation} 
\end{lemma}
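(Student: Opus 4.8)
The plan is to evaluate the triple intersection number $H\cdot E\cdot E$ on the germ $X$ in two different ways — once by restriction to the normal surface $H$ and once by restriction to the exceptional divisor $E$ — and to read off $\mult_o(B)$ from the comparison.

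First I would set up the inputs coming from the classification, as already recalled in the proof of Proposition~\xref{prop:mult}: here $(Z,o)$ is Gorenstein, so $K_Z$ is Cartier and $E\in|K_X|$, that is $K_X\sim f^{*}K_Z+E$ near $o$. This has two consequences. Since $C$ is contracted by $f$, the projection formula gives $K_X\cdot C=f^{*}K_Z\cdot C+E\cdot C=E\cdot C$. And if $\ell$ is a general one-dimensional fibre of $E\to B$, which is a reduced $\PP^{1}$ lying in the smooth locus of $X$ with $N_{\ell/E}\cong\OOO_{\ell}$, then adjunction along $\ell$ reads $-2=K_X\cdot\ell+E\cdot\ell$; combined with $K_X\cdot\ell=E\cdot\ell$ (from $E\in|K_X|$) this forces $E\cdot\ell=-1$. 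Finally recall $-K_X\cdot C=1/m$ by \xref{lemma:KC}, the topological index being $1$ since $(X,C)$ is locally primitive with a single non-Gorenstein point.

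Next I would analyse the two restrictions of $E$, using that, $f$ being divisorial, $f^{-1}(o)=C$ lies in $E$ and is its fibre over $o$. On $H$: the divisor $E|_{H}:=E\cap H$ is supported on $f_{H}^{-1}(B\cap H_{Z})$, and for a general hyperplane section $H_{Z}\ni o$ the scheme $B\cap H_{Z}$ is zero-dimensional of length $\mult_{o}(B)$ at $o$; hence $E|_{H}$ is supported on $C$, say $E|_{H}=aC$ with $a>0$, and intersecting on $H$ gives $a\,(C^{2})_{H}=E|_{H}\cdot C=E\cdot C=K_X\cdot C$, so $a=(K_X\cdot C)/(C^{2})_{H}$. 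On $E$: the divisor $H|_{E}=(f|_{E})^{*}(H_{Z}\cap B)$ is the pull-back to the ruled surface $E\to B$ of a divisor of degree $\mult_{o}(B)$ on $B$, hence is numerically $\mult_{o}(B)$ times the class of $\ell$. Therefore the number $H\cdot E\cdot E$ equals $(E|_{H})^{2}=a^{2}(C^{2})_{H}$ when computed on $H$, and equals $H|_{E}\cdot E|_{E}=\mult_{o}(B)\,(E\cdot\ell)=-\mult_{o}(B)$ when computed on $E$. Comparing, and using $-K_X\cdot C=1/m$,
\begin{equation*}
\mult_{o}(B)=-a^{2}(C^{2})_{H}=-\frac{(K_X\cdot C)^{2}}{(C^{2})_{H}}=-\frac{1}{m^{2}(C^{2})_{H}}.
\end{equation*}

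The step needing care is the legitimacy of computing $H\cdot E\cdot E$ both ways: $H$ may be singular along $C$, so that $C$ and $E|_{H}$ are only $\QQ$-Cartier there, and $E\cap(E\cap H)$ is an improper intersection because $C\subset E$. One handles this by replacing $E$ by $mE$, which is Cartier, or by passing to the minimal resolution of $H$, noting that $E\cdot C$ is computed by $N_{E/X}|_{C}$ and that neither $E|_{H}=aC$ nor the numerical identity $H|_{E}\equiv\mult_{o}(B)[\ell]$ on $E$ is affected. The remaining ingredients — $E\cdot\ell=-1$ and the reducedness of the general fibre $\ell$ (hence smoothness of $X$ along it) — rest on $(Z,o)$ being Gorenstein, which is exactly why the lemma is stated only for the cases at hand; for those finitely many cases one can instead verify the formula, together with the value of $(C^{2})_{H}$, directly from the explicit graphs $\Delta(H,C)$ by contracting $C$ and the adjacent Du Val chains.
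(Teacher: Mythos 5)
Your computation is correct, and the survey itself gives no proof of this lemma at all: it is imported verbatim from \cite[Lemma~5.1]{Tzi:05}, so the comparison is with that reference rather than with anything in the text, and your double evaluation of the triple product $E\cdot E\cdot H$ is exactly the kind of intersection-theoretic argument used there. The $H$-side is fine as you wrote it: $E\cap H=C$, so $E|_H=aC$ with $a\,(C^2)_H=E\cdot C=K_X\cdot C$, whence $E\cdot E\cdot H=(K_X\cdot C)^2/(C^2)_H$, and $-K_X\cdot C=1/m$ by Corollary~\xref{lemma:KC}. For the $E$-side, the cleanest way to make your ``$H|_E\equiv\mult_o(B)[\ell]$'' rigorous (and to dispose of your own worry about the improper intersection $C\subset E$ and about intersection theory on the possibly singular surface $E$) is the projection formula: since $H=f^*H_Z$ and the $1$-cycle class $E\cdot E$ is supported on $E$, one has $E\cdot E\cdot H=f_*(E\cdot E)\cdot H_Z=(E\cdot\ell)\,(B\cdot H_Z)_o=-\mult_o(B)$, using that over a general point of $B$ the map is the blowup of a smooth curve in a smooth threefold, so $\ell$ is a reduced $\PP^1$ with $E\cdot\ell=K_X\cdot\ell=-1$. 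Two small inaccuracies in your closing remarks, neither of which affects the argument: reducedness of $\ell$ and $E\cdot\ell=-1$ do not rest on $(Z,o)$ being Gorenstein, but only on $Z$ being smooth at the general point of $B$ (terminal singularities are isolated) together with $K_Z$ being $\QQ$-Cartier (Theorem~\xref{thm:div:Q-Cartier}) and discrepancy one along $E$; and the proposed fallback of ``verifying the formula directly from the graphs'' would be circular, since $\mult_o(B)$ is precisely what the lemma is used to compute in Proposition~\xref{prop:mult}. Finally, you read the statement correctly: ``$K_C\cdot C$'' there is a misprint for $K_X\cdot C$.
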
 

Now let $\psi: \hat H\to H$ be the minimal resolution.
Write $\psi^* C= \hat C+ \Theta$,
where $\Supp(\Theta)\subset \Exc(\psi)$ and $\Theta=\sum \theta_i\Theta_i$.
Since $\hat C^2=-1$, we have
\begin{equation}
\label{equation-proposition-C2}
C^2= -1+\hat C\cdot \Theta=-1+\sum\nolimits' \theta_i,
\end{equation}
where $\sum'$ runs through all the components $\Theta_i$ meeting $\hat C$.
The coefficients $\theta_i$ are computed from the standard system
of linear equations:
\begin{equation*}
0=-\Theta_j\cdot\psi^* C=\Theta_j\cdot \hat C+\sum_i \theta_i\Theta_j\cdot \Theta_i.
\end{equation*}
Now $\mult_{o}(B)$ can be computed by using \eqref{equation-proposition-C2}.
Consider for example the cases 
\xref{IIA:normalA1:a}, \xref{IIA:normalA1:b} and \xref{IIA:normalD5} of Theorem \xref{IIA:thm}
(other cases are similar).
In the graphs below
we attach the coefficients $\theta_i$ of $\Theta=\psi^*C-\hat C$
to the corresponding vertices and indicate the value of $C^2$. This immediately
gives us the values of $\mult_o(B)$ as desired.\qedhere

\begin{align*}\text{\xref{IIA:normalA1:a}}&
\xymatrix@R=0pt@C=17pt{
&&&& &\overset{2/16}\circ\ar@{-}[d]
\\
&\underset{1/3}\circ\ar@{-}[r]&\underset{2/3}\circ\ar@{-}[r]&\underset{1}\bullet\ar@{-}[r]&
\underset{5/16}\circ\ar@{-}[r]
&\underset{4/16}\circ\ar@{-}[r]&\underset{1/16}\circ&
}
&
\scriptstyle C^2=-1/48
\\
\text{{\xref{IIA:normalA1:b}}}&
\xymatrix@R=0pt@C=17pt{
&&&& &\overset{7/48}{\circ}\ar@{-}[d]
\\
&&\underset{1/2}\circ\ar@{-}[r]&\underset{1}\bullet\ar@{-}[r]&\underset{23/48}\circ\ar@{-}[r]
&\underset{7/16}\circ\ar@{-}[r]&\underset{1/4}\circ\ar@{-}[r]&\underset{1/16}\circ
}
&
\scriptstyle C^2=-1/48
\\
\text{\xref{IIA:normalD5}}&
\vcenter{
\xymatrix@R=0pt@C=17pt{
&&&\overset{7/16}\circ\ar@{-}[d]&\overset{3/16}\circ\ar@{-}[d]&
\overset{3/16}\circ\ar@{-}@/^7pt/[dl]
\\
&\underset 1 \bullet\ar@{-}[r]&\underset{15/16}\circ\ar@{-}[r]&\underset{7/8}\circ\ar@{-}[r]
&\underset{3/8}\circ\ar@{-}[r]&\underset{1/4}\circ\ar@{-}[r]&\underset{1/8}\circ
}}
&\scriptstyle C^2=-1/16
\end{align*}     
\end{proof}


\begin{thebibliography}{CKM88}

\bibitem[CKM88]{CKM}
Herbert Clemens, J{\'a}nos Koll{\'a}r, and Shigefumi Mori.
\newblock {Higher-dimensional complex geometry}.
\newblock {\em Ast{\'e}risque}, (166):144 pp. (1989), 1988.

\bibitem[Cut88]{Cutkosky-1988}
Steven Cutkosky.
\newblock {Elementary contractions of {G}orenstein threefolds}.
\newblock {\em Math. Ann.}, 280(3):521--525, 1988.

\bibitem[Duc16]{Ducat:16}
Tom Ducat.
\newblock {Divisorial extractions from singular curves in smooth 3-folds}.
\newblock {\em Internat. J. Math.}, 27(1):1650005, 23, 2016.

\bibitem[Feu71]{Feuer-1971}
R.~D. Feuer.
\newblock {Torsion-free subgroups of triangle groups}.
\newblock {\em Proc. Amer. Math. Soc.}, 30:235--240, 1971.

\bibitem[HTU17]{HTU}
Paul Hacking, Jenia Tevelev, and Giancarlo Urz{\'u}a.
\newblock {Flipping surfaces}.
\newblock {\em J. Algebraic Geom.}, 26(2):279--345, 2017.

\bibitem[Ish14]{Ishii:book}
Shihoko Ishii.
\newblock {\em {Introduction to singularities}}.
\newblock Springer, Tokyo, 2014.

\bibitem[Isk96]{I96}
V.~A. Iskovskikh.
\newblock {On a rationality criterion for conic bundles}.
\newblock {\em Sbornik: Math.}, 187(7):1021--1038, 1996.

\bibitem[Kaw88]{Kaw:Crep}
Yujiro Kawamata.
\newblock {Crepant blowing-up of {$3$}-dimensional canonical singularities and
  its application to degenerations of surfaces}.
\newblock {\em Ann. of Math. (2)}, 127(1):93--163, 1988.

\bibitem[Kaw92]{Kawamata:discr}
Yujiro Kawamata.
\newblock {The minimal discrepancy coefficients of terminal singularities in
  dimension three. {A}ppendix to {V}.{V}. {S}hokurov's paper {"}3-fold log
  flips{"}}.
\newblock {\em Russ. Acad. Sci., Izv., Math.}, 40(1):95--202, 1992.

\bibitem[Kaw96]{Kaw:div}
Yujiro Kawamata.
\newblock {Divisorial contractions to {$3$}-dimensional terminal quotient
  singularities}.
\newblock In {\em {Higher-dimensional complex varieties (Trento, 1994)}}, pages
  241--246. de Gruyter, Berlin, 1996.

\bibitem[Kaw97]{Kaw97}
Yujiro Kawamata.
\newblock {On {F}ujita's freeness conjecture for {$3$}-folds and {$4$}-folds}.
\newblock {\em Math. Ann.}, 308(3):491--505, 1997.

\bibitem[Kaw07]{Kawakita2007}
Masayuki Kawakita.
\newblock {Inversion of adjunction on log canonicity}.
\newblock {\em Invent. Math.}, 167:129--133, 2007.

\bibitem[KM92]{KM92}
J{\'a}nos Koll{\'a}r and Shigefumi Mori.
\newblock {Classification of three-dimensional flips}.
\newblock {\em J. Amer. Math. Soc.}, 5(3):533--703, 1992.

\bibitem[KM98]{KM:book}
J{\'a}nos Koll{\'a}r and Shigefumi Mori.
\newblock {\em {Birational geometry of algebraic varieties}}, volume 134 of
  {\em {Cambridge Tracts in Mathematics}}.
\newblock Cambridge University Press, Cambridge, 1998.
\newblock With the collaboration of C. H. Clemens and A. Corti, Translated from
  the 1998 Japanese original.

\bibitem[Kol86]{Kollar-1986-I}
J{\'a}nos Koll{\'a}r.
\newblock {Higher direct images of dualizing sheaves. {I}}.
\newblock {\em Ann. of Math. (2)}, 123(1):11--42, 1986.

\bibitem[Kol92]{Utah}
J{\'a}nos Koll{\'a}r, editor.
\newblock {\em {Flips and abundance for algebraic threefolds}}.
\newblock Soci{\'e}t{\'e} Math{\'e}matique de France, Paris, 1992.
\newblock Papers from the Second Summer Seminar on Algebraic Geometry held at
  the University of Utah, Salt Lake City, Utah, August 1991, Ast{\'e}risque No.
  211 (1992).

\bibitem[Kol99]{Kollar-1999-R}
J{\'a}nos Koll{\'a}r.
\newblock {Real algebraic threefolds. {III}. {C}onic bundles}.
\newblock {\em J. Math. Sci. (New York)}, 94(1):996--1020, 1999.
\newblock Algebraic geometry, 9.

\bibitem[KSB88]{KSh88}
J.~Koll{\'a}r and N.~I. Shepherd-Barron.
\newblock Threefolds and deformations of surface singularities.
\newblock {\em Invent. Math.}, 91(2):299--338, 1988.

\bibitem[LW86]{LW86}
Eduard Looijenga and Jonathan Wahl.
\newblock {Quadratic functions and smoothing surface singularities}.
\newblock {\em Topology}, 25(3):261--291, 1986.

\bibitem[Mor75]{Mori:ci}
Shigefumi Mori.
\newblock {On a generalization of complete intersections}.
\newblock {\em J. Math. Kyoto Univ.}, 15(3):619--646, 1975.

\bibitem[Mor85]{Mori:sing}
Shigefumi Mori.
\newblock {On {$3$}-dimensional terminal singularities}.
\newblock {\em Nagoya Math. J.}, 98:43--66, 1985.

\bibitem[Mor88]{Mori:flip}
Shigefumi Mori.
\newblock {Flip theorem and the existence of minimal models for {$3$}-folds}.
\newblock {\em J. Amer. Math. Soc.}, 1(1):117--253, 1988.

\bibitem[Mor02]{Mori:ss}
Shigefumi Mori.
\newblock {On semistable extremal neighborhoods}.
\newblock In {\em {Higher dimensional birational geometry (Kyoto, 1997)}},
  volume~35 of {\em {Adv. Stud. Pure Math.}}, pages 157--184. Math. Soc. Japan,
  Tokyo, 2002.

\bibitem[MP08a]{MP:cb1}
Shigefumi Mori and Yuri Prokhorov.
\newblock {On {$\mathbf Q$}-conic bundles}.
\newblock {\em Publ. Res. Inst. Math. Sci.}, 44(2):315--369, 2008.

\bibitem[MP08b]{MP:cb2}
Shigefumi Mori and Yuri Prokhorov.
\newblock {On {$\mathbf Q$}-conic bundles. {II}}.
\newblock {\em Publ. Res. Inst. Math. Sci.}, 44(3):955--971, 2008.

\bibitem[MP09]{MP:cb3}
S.~Mori and Yu. Prokhorov.
\newblock {On {$\mathbf Q$}-conic bundles, {III}}.
\newblock {\em Publ. Res. Inst. Math. Sci.}, 45(3):787--810, 2009.

\bibitem[MP11]{MP:IA}
S.~Mori and Yu. Prokhorov.
\newblock {Threefold extremal contractions of type {IA}}.
\newblock {\em Kyoto J. Math.}, 51(2):393--438, 2011.

\bibitem[MP14]{MP:ICIIB}
S.~Mori and Yu. Prokhorov.
\newblock {Threefold extremal contractions of types {(IC)} and {(IIB)}}.
\newblock {\em Proc. Edinburgh Math. Soc.}, 57(1):231--252, 2014.

\bibitem[MP16]{MP:IIA-1}
S.~Mori and Yu. Prokhorov.
\newblock {Threefold extremal contractions of type {(IIA)}, {I}}.
\newblock {\em Izv. Math.}, 80(5):884--909, 2016.

\bibitem[MP17]{MP:IIA-2}
Shigefumi Mori and Yuri Prokhorov.
\newblock {Threefold extremal contractions of type {(IIA)}, {II}}.
\newblock {\em ArXiv e-print}, 1707.05047, 2017.
\newblock to appear in Festschrift in honour of Nigel Hitchin, (2018), Oxford
  University Press.

\bibitem[PR16]{Prokhorov-Reid}
Yuri Prokhorov and Miles Reid.
\newblock {On {Q}-{F}ano threefolds of {F}ano index 2}.
\newblock In {\em {Minimal Models and Extremal Rays (Kyoto 2011)}}, volume~70
  of {\em {Adv. Stud. Pure Math.}}, pages 397--420. Mathematical Society of
  Japan, Kinokuniya, Tokyo, 2016.

\bibitem[Pro97]{P97}
Yu. Prokhorov.
\newblock {On the existence of complements of the canonical divisor for {M}ori
  conic bundles}.
\newblock {\em Russian Acad. Sci. Sb. Math.}, 188(11):1665--1685, 1997.

\bibitem[Pro04]{P04:s}
Yu. Prokhorov.
\newblock {On semistable {M}ori contractions}.
\newblock {\em Izv. Math.}, 68(2):147--158, 2004.

\bibitem[Pro18]{P17}
Yuri Prokhorov.
\newblock The rationality problem for conic bundles.
\newblock {\em Russian Math. Surveys}, 73(3), 2018.

\bibitem[Rei83]{Reid:Pagoda}
Miles Reid.
\newblock {Minimal models of canonical {$3$}-folds}.
\newblock In {\em {Algebraic varieties and analytic varieties (Tokyo, 1981)}},
  volume~1 of {\em {Adv. Stud. Pure Math.}}, pages 131--180. North-Holland,
  Amsterdam, 1983.

\bibitem[Rei87]{Reid:YPG}
Miles Reid.
\newblock Young person's guide to canonical singularities.
\newblock In {\em {Algebraic geometry, Bowdoin, 1985 (Brunswick, Maine,
  1985)}}, volume~46 of {\em {Proc. Sympos. Pure Math.}}, pages 345--414. Amer.
  Math. Soc., Providence, RI, 1987.

\bibitem[SB83]{SB:degen}
N.~I. Shepherd-Barron.
\newblock {Degenerations with numerically effective canonical divisor}.
\newblock In {\em {The birational geometry of degenerations ({C}ambridge,
  {M}ass., 1981)}}, volume~29 of {\em {Progr. Math.}}, pages 33--84.
  Birkh{\"a}user Boston, Boston, MA, 1983.

\bibitem[Sho93]{Sh:flips}
V.~V. Shokurov.
\newblock {3-fold log flips}.
\newblock {\em Russ. Acad. Sci., Izv., Math.}, 40(1):95--202, 1993.

\bibitem[Tzi03]{Tzi:03}
Nikolaos Tziolas.
\newblock {Terminal 3-fold divisorial contractions of a surface to a curve. I.}
\newblock {\em Compos. Math.}, 139(3):239--261, 2003.

\bibitem[Tzi05a]{Tzi:05D}
Nikolaos Tziolas.
\newblock {Families of {$D$}-minimal models and applications to 3-fold
  divisorial contractions}.
\newblock {\em Proc. Lond. Math. Soc., III. Ser.}, 90(2):345--370, 2005.

\bibitem[Tzi05b]{Tzi:05}
Nikolaos Tziolas.
\newblock {Three dimensional divisorial extremal neighborhoods}.
\newblock {\em Math. Ann.}, 333(2):315--354, 2005.

\bibitem[Tzi09]{Tziolas2009}
Nikolaos Tziolas.
\newblock {Q-{G}orenstein deformations of nonnormal surfaces}.
\newblock {\em Amer. J. Math.}, 131(1):171--193, 2009.

\bibitem[Tzi10]{Tzi:10}
N.~Tziolas.
\newblock {Three-fold divisorial extremal neighborhoods over {$cE_7$} and
  {$cE_6$} compound {D}u{V}al singularities}.
\newblock {\em Internat. J. Math.}, 21(1):1--23, 2010.

\end{thebibliography}

\def\cprime{$'$}

\end{document}